\theoremstyle{plain}
\newtheorem{thm}{Theorem}[section]
\newtheorem{prop}[thm]{Proposition}
\newtheorem{cor}[thm]{Corollary}
\newtheorem{lemma}[thm]{Lemma}
\theoremstyle{definition}
\newtheorem{defn}[thm]{Definition}
\theoremstyle{remark}
\newtheorem{rem}[thm]{Remark}
\newtheorem{rems}[thm]{Remarks}
\newtheorem{example}{Example}[section]
\newcommand{\pfin}[1]{Proof \textup(see~\cite{#1}\textup)}
\numberwithin{equation}{section}
\newcounter{numl}
\newcommand{\labelnuml}{\textup{(\arabic{numl})}}
\newenvironment{numlist}{\begin{list}{\labelnuml}%
{\usecounter{numl}\setlength{\leftmargin}{0pt}%
\advance\@listdepth1\relax%
\setlength{\itemindent}{2\parindent}%
\setlength{\itemsep}{0pt}
\def\makelabel ##1{\hss \llap {\upshape ##1}}}}{\advance\@listdepth-1\relax%
\end{list}}
\newenvironment{bulletlist}{\begin{list}{\labelitemi}%
{\setlength{\leftmargin}{\parindent}%
\advance\@listdepth1\relax%
\def\makelabel ##1{\hss \llap {\upshape ##1}}}}{\advance\@listdepth-1\relax%
\end{list}}
\newcommand{\acknowledge}{\subsection*{Acknowledgements}}
\newcommand{\thismonth}{\ifcase\month\or
  January\or February\or March\or April\or May\or June\or
  July\or August\or September\or October\or November\or December\fi
  \space\number\year}
\renewcommand{\@secnumfont}{\relax}
\newcommand{\low}{\@ifnextchar^{}{^{\vphantom x}}}
\newcommand{\high}{\@ifnextchar_{}{_{\vphantom I}}}
\DeclareSymbolFont{script}{U}{eus}{m}{n}
\DeclareMathSymbol{\EuWedge}{0}{script}{"5E}
\DeclareMathAlphabet{\mathrmsl}{OT1}{cmr}{m}{sl}
\newcommand{\rssymb}[2]{\newcommand{#1}{{\mathrmsl{#2}}}}
\newcommand{\calsymb}[2]{\newcommand{#1}{{\mathcal{#2}}}}
\newcommand{\bbsymb}[2]{\newcommand{#1}{{\mathbb{#2}}}}
\newcommand{\liealg}[2]{\newcommand{#1}{{\mathfrak{#2}}\low}}
\newcommand{\liealr}[2]{\renewcommand{#1}{{\mathfrak{#2}}\low}}
\newcommand{\lieoper}[2]{\newcommand{#1}{\mathop
  {\mathfrak{#2}\null}\nolimits}}
\newcommand{\oper}[3][n]{\newcommand{#2}{\mathop
  {\mathrm{#3}\null}\ifx n#1\nolimits\else\limits\fi}}
\newcommand{\rsoper}[3][n]{\newcommand{#2}{\mathop
  {\mathrmsl{#3}\null}\ifx n#1\nolimits\else\limits\fi}}
\bbsymb\C{C} \bbsymb\F{F} \bbsymb\HQ{H}\bbsymb\N{N} \bbsymb\Q{Q}
\bbsymb\R{R} \bbsymb\U{U} \bbsymb\V{V} \bbsymb\W{W} \bbsymb\Z{Z}
\calsymb\cA{A} \calsymb\cB{B} \calsymb\cC{C} \calsymb\cD{D} \calsymb\cE{E}
\calsymb\cF{F} \calsymb\cG{G} \calsymb\cH{H} \calsymb\cI{I} \calsymb\cJ{J}
\calsymb\cK{K} \calsymb\cL{L} \calsymb\cM{M} \calsymb\cN{N} \calsymb\cO{O}
\calsymb\cP{P} \calsymb\cQ{Q} \calsymb\cR{R} \calsymb\cS{S} \calsymb\cT{T}
\calsymb\cU{U} \calsymb\cV{V} \calsymb\cW{W} \calsymb\cX{X} \calsymb\cY{Y}
\calsymb\cZ{Z}
 \newcommand{\Gam}{{\mathrmsl\Gamma}}
\newcommand{\lam}{\lambda}\newcommand{\Lam}{{\mathrmsl\Lambda}}
\renewcommand{\geq}{\geqslant} \renewcommand{\leq}{\leqslant}
\rsoper\End{End} \rsoper\Hom{Hom}                
\rsoper\Sym{Sym} \rsoper\Skew{Skew}
\rsoper\Aut{Aut} \rsoper\SAut{SAut}              
\rsoper\GL{GL}\rsoper\SL{SL}\rsoper\PGL{PGL}\rsoper\PSL{PSL}\rsoper\Symp{Sp}
\rsoper\CO{CO}\rsoper\On{O} \rsoper\SO{SO}  \rsoper\Pin{Pin}\rsoper\Spin{Spin}
\rsoper\CU{CU}\rsoper\Un{U} \rsoper\SU{SU}
\rsoper\Diff{Diff} \rsoper\SDiff{SDiff}
\lieoper\der{der}                                
\lieoper\gl{gl} \lieoper\sgl{sl}\lieoper\symp{sp}
\lieoper\co{co} \lieoper\so{so} \lieoper\spin{spin}
\lieoper\cu{cu} \lieoper\un{u}  \lieoper\su{su}
\rsoper\Vect{Vect} \rsoper\Ham{Ham}
\rsoper\Stab{Stab} \lieoper\stab{stab} 
\oper\real{Re}  
\newcommand{\ip}[1]{\langle#1\rangle}
\newcommand{\norm}[2][]{|\mkern-2mu|#2|\mkern-2mu|
  _{\lower1pt\hbox{${}_{#1}$}}}
\newcommand{\Norm}[2][]{\bigl|\mkern-3mu\bigr|#2\bigr|\mkern-3mu\bigr|
  _{\lower1pt\hbox{${}_{#1}$}}}
\newcommand{\restr}[1]{|_{#1}\low}
\newcommand{\transp}{^{\scriptscriptstyle\mathrm T\!}}
\newcommand{\act}{\mathinner\cdot}          
\newcommand{\dsum}{\oplus}                  
\newcommand{\Dsum}{\bigoplus}               
\newcommand{\tens}{\otimes}                 
\newcommand{\Wedge}{\EuWedge}               
\newcommand{\idealin}{\trianglelefteq}      
\newcommand{\subnormal}{\ltimes}            
\newcommand{\into}{\hookrightarrow}         
\newcommand{\Gr}{\mathrmsl{Gr}}             
\newcommand{\Proj}{\mathrmsl{P}}            
\newcommand{\RP}[1]{\R\Proj^{#1}}           
\newcommand{\st}{\mathrel{|}}               
\newcommand{\ie}{\textit{i.e.}}             
\rsoper\dimn{dim}                           
\rsoper\kernel{ker}\rsoper\image{im}        
\rsoper\alt{alt}   \rsoper\sym{sym}         
\rsoper\Ad{Ad}     \rsoper\ad{ad}           
\rsoper\CoAd{CoAd} \rsoper\coad{coad}       
\rsoper\trace{tr}  \rsoper\trfree{tf}       
\rsoper\detm{det}                           
\rsoper\Vol{Vol}                            
\rsoper\divg{div}                           
\rssymb\iden{id}                            
\rssymb\vol{vol}                            
\lieoper{\nil}{nil}
\rsoper{\spn}{span}
\rsoper{\gr}{gr}
\rsoper{\rank}{rank}
\newcommand{\empt}{\varnothing}
\newcommand{\sub}{\subseteq}
\liealr{\a}{a} \liealr{\b}{b} \liealr{\c}{c}
\liealg{\f}{f} \liealg{\g}{g} \liealg{\h}{h} \liealg{\gp}{g'}
\liealr{\k}{k} \liealr{\l}{l} \liealg{\m}{m} \liealg{\n}{n}
\liealg{\p}{p} \liealg{\q}{q} \liealr{\r}{r} \liealg{\pp}{p'} \liealg{\rp}{r'}
\liealg{\s}{s} \liealr{\t}{t} \liealr{\u}{u} \liealg{\z}{z}
\liealg{\tu}{\widetilde u}
\liealg{\hp}{\widehat p} \liealg{\hpp}{\widehat p'{}}
\liealg{\qq}{q'} \liealg{\hq}{\widehat q}
\liealg{\pb}{b} \liealg{\hpb}{\widehat b}
\liealg{\pc}{c} \liealg{\hpc}{\widehat c}
\liealg{\ml}{k} \liealg{\mlp}{k'}
\newcommand{\ML}{K}
\newcommand{\Pow}{\mathrmsl{P}}
\newcommand{\typ}{\cI}
\newcommand{\sm}{\psi}
\newcommand{\cm}{\varphi}
\newcommand{\grph}{{\mathrmsl\Gam}}
\newcommand{\cgph}{{\mathrmsl\Delta}}
\newcommand{\edge}[1]{\lower1pt\hbox{$\stackrel{\lower1pt\hbox
{$\scriptstyle \smash{#1}$}}{\text{---}}$}}
\newcommand{\op}{\mathrm{op}}
\newcommand{\bk}[1][\g]{\mathscr{W}^{#1}}
\newcommand{\bc}[1][\g]{\mathscr{B}^{#1}}
\newcommand{\Ac}[1][\g]{\mathscr{A}^{#1}}
\newcommand{\PF}[1][\g]{\mathscr{P}^{#1}}
\newcommand{\PI}[1]{\grph^{#1}}
\newcommand{\PP}[1]{\Pi_{#1}}
\def\BK/{BK}
\def\grsys/{system}
\newcommand{\cstd}[1]{_{#1\text{\rm-co}}}
\newcommand{\wop}[1]{_{#1\text{\rm-op}}}
\begin{document}
\title[Parabolic subalgebras and parabolic projection]
{Parabolic subalgebras,\\ parabolic buildings and parabolic projection}
\author{David M. J. Calderbank}
\email{D.M.J.Calderbank@bath.ac.uk}
\address{Mathematical Sciences\\ University of Bath\\
Bath BA2 7AY\\ UK.}
\author{Passawan Noppakaew}
\email{passawan@su.ac.th}
\address{Department of Mathematics\\ Faculty of Science\\ Silpakorn University\\
Nakhon Pathom 73000\\ Thailand.}
\date{\thismonth}
\begin{abstract} Reductive (or semisimple) algebraic groups, Lie groups and
Lie algebras have a rich geometry determined by their parabolic subgroups and
subalgebras, which carry the structure of a building in the sense of J. Tits.
We present herein an elementary approach to the geometry of parabolic
subalgebras, over an arbitrary field of characteristic zero, which does not
rely upon the structure theory of semisimple Lie algebras. Indeed we derive
such structure theory, from root systems to the Bruhat decomposition, from the
properties of parabolic subalgebras.  As well as constructing the Tits
building of a reductive Lie algebra, we establish a ``parabolic projection''
process which sends parabolic subalgebras of a reductive Lie algebra to
parabolic subalgebras of a Levi subquotient.  We indicate how these ideas may
be used to study geometric configurations and their moduli.
\end{abstract}
\maketitle

Parabolic subgroups and their Lie algebras are fundamental in Lie theory and
the theory of algebraic
groups~\cite{Bou:gal,Hel:dglg,Hum:lar,Kna:lgbi,Mil:lag,OnVi:lgla,Pro:lg,Tits:casg,Wal:rrg}.
They play a key role in combinatorial, differential and integrable geometry
through Tits buildings and parabolic invariant
theory~\cite{AbBr:bta,BuCo:dg,BDPP:is,CpSl:pg,Sch:hig,Spr:it,Tits:bst}.
Traditional approaches define parabolic subalgebras of semisimple or reductive
Lie algebras as those containing a Borel (maximal solvable) subalgebra in some
field extension, and typically develop the theory of parabolic subalgebras
using the root system associated to a Cartan subalgebra of such a Borel
subalgebra. Such approaches are far from elementary, and provide limited
insight when the field is not algebraically closed.

The present paper is motivated by a programme to study geometric
configurations in projective spaces and other generalized flag manifolds (\ie,
adjoint orbits of parabolic subalgebras) using a process called
\emph{parabolic projection}. This process relies upon two observations: first,
if $\p,\q$ are parabolic subalgebras of a (reductive or semisimple) Lie
algebra $\g$, then so is $\p\cap\q+\nil(\q)$ (where $\nil(\q)$ is the
nilpotent radical of $\q$); secondly, if $\r$ is a Lie subalgebra of $\q$, and
$\q$ is parabolic subalgebra of $\g$, then $\r$ is a parabolic subalgebra of
$\g$ if and only if it is the inverse image of a parabolic subalgebra of the
reductive Levi quotient of $\q_0:=\q/\nil(\q)$---thus two senses in which $\r$
might be called a parabolic subalgebra of $\q$ coincide. These observations
point towards a theory of parabolic subalgebras of arbitrary Lie algebras,
which may be developed using straightforward methods of linear algebra over an
arbitrary field of characteristic zero, without relying on the structure
theory of semisimple Lie algebras. Indeed, much of the latter can be derived
as a consequence.

This development combines two approaches to parabolic subalgebras. In the
first approach, inspired by work of V. Morozov~\cite{Mor:one,Mor:ptr,PaVi:Mor}
and other work emphasising the role of nilpotent elements in Lie algebra
theory (see also~\cite{Bou:gal,Mil:lag,Pom:rft,Spr:it}), the Borel subalgebras
in the traditional definition of parabolic subalgebras are replaced by the
normalizers of maximal nil subalgebras (\ie, subalgebras contained in the
nilpotent cone).  In the second approach, initiated by
A. Grothendieck~\cite{Gro:p1} and developed by F. Burstall~\cite{Bur:rps} and
others (see~\cite{BDPP:is,BuRa:ttrss,CDS:rcd,Cla:phd}), a subalgebra of a
semisimple Lie algebra is parabolic if its Killing perp is a nilpotent
subalgebra. The latter definition appears to rely upon the Killing form of a
semisimple Lie algebra, but is easily adapted to the reductive case.

The set of parabolic subalgebras of a reductive Lie algebra $\g$ has a rich
structure: it may be decomposed into flag varieties under the action of the
adjoint group $G$ of $\g$, but is also equipped with an incidence geometry
relating these varieties. Let us illustrate this in the case that $G=\PGL(V)$
is the projective general linear group of a vector space $V$ of dimension
$n+1$. Among the parabolic subalgebras of $\g$, the maximal (proper) parabolic
subalgebras play a distinguished role. The corresponding flag varieties are
the grassmannians $\Gr_k(V)$ ($1\leq k\leq n$) of $k$-dimensional subspaces of
$V$. Two such subspaces are incident if one contains the other, and more
general parabolic subalgebras (and their flag varieties) can be described
using sets of mutually incident subspaces (called ``flags'').  In particular,
the minimal parabolic subalgebras of $\g$ are the infinitesimal stabilizers of
complete flags, which are nested chains of subspaces of $V$, one of each
dimension.

The general picture is similar. Among adjoint orbits of parabolic subalgebras
(generalized flag varieties), the maximal ones are distinguished. For
algebraically closed fields, these may be identified with the nodes of the
Dynkin diagram, and with the nodes of the Coxeter diagram of the (restricted)
root system in general. Other parabolic subalgebras may be described by the
maximal (proper) parabolic subalgebras containing them (which are mutually
incident), so that their adjoint orbits correspond to subsets of nodes of the
Coxeter or Dynkin diagram. This situation is abstracted by J. Tits' theory of
buildings~\cite{Tits:bst}, which deserves (in our opinion) a more central
place in Lie theory and representation theory than it currently enjoys.

Our purpose in this paper is to present a self-contained treatment of
parabolic subalgebras and their associated Lie theory, sufficient to give a
novel proof that the set of minimal parabolic subalgebras of a Lie algebra are
the chambers of a strongly transitive Tits building. In the algebraically
closed case, this establishes the conjugacy theorems for Cartan subalgebras
and Borel subalgebras, together with the Bruhat decomposition, using no
algebraic geometry and very little structure theory. It also provides a
natural framework in which to develop the basic properties of parabolic
projection.  In subsequent work, we shall apply parabolic projection to the
construction of geometric configurations and discrete integrable geometries.

Although the results herein are essentially algebraic in nature, we wish to
emphasise the geometry behind them. We thus begin in Section~\ref{s:eig} by
introducing incidence systems, and two examples which we shall use throughout
the paper to illustrate the theory. Example~\ref{ex:1A} is the incidence
geometry of vector (or projective) subspaces of a vector (or projective) space
over an arbitrary field, while Example~\ref{ex:1B} is the incidence geometry
of isotropic subspaces of a real inner product space of indefinite signature.

Section~\ref{s:elt} provides a self-contained treatment of the Lie algebra
theory we need, modulo some basic facts about Jordan decompositions and
invariant bilinear forms which we summarize in Appendix~\ref{s:a}. In order to
work over an arbitrary field of characteristic zero, we develop the
nilpotent--reductive dichotomy for Lie algebras rather than the
solvable--semisimple dichotomy. We thus ignore Lie's theorem, Weyl's theorem,
the Levi--Malcev decomposition, and even most of the representation theory of
$\sgl_2$. Instead, we emphasise the role played by filtrations and Engel's
theorem, nilpotency ideals and the nilpotent cone, trace forms and Cartan's
criterion (which, in its most primitive form, concerns nilpotency rather than
solvability).  In Theorem~\ref{t:red} we thus establish, in a novel way, the
basic result that (in characteristic zero) a Lie algebra is reductive if and
only if it admits a nondegenerate trace form.  Using this,
Proposition~\ref{p:rcc} extends one of Cartan's criteria from $\gl(V)$ to
reductive Lie algebras, a result which we have not been able to find in the
literature.

We define parabolic subalgebras in Section~\ref{s:ps} as those subalgebras
containing the normalizer of a maximal nil subalgebra, but immediately obtain,
in Theorem~\ref{t:par-equiv} several equivalent definitions using
``admissible'' trace forms and nilpotency ideals, some of which are new,
cf.~\cite{Bou:gal,Bur:rps}. We then consider pairs of parabolic subalgebras,
their ``oppositeness'', and their incidence properties.  Intersections of
opposite minimal parabolic subalgebras define minimal Levi subalgebras, also
known as anisotropic kernels, which govern the structure theory of root
systems for semisimple Lie algebras in characteristic zero---see
Theorem~\ref{t:std-pars}. The theory of such ``restricted'' root systems is
well known in the context of algebraic groups~\cite{Tits:casg} or when the
underlying field is the real
numbers~\cite{Hel:dglg,Kna:lgbi,OnVi:lgla,Wal:rrg}; here, though, we are
forced to discard concepts such as Cartan decompositions which are particular
to the real case.

The core results of the paper appear in Section~\ref{s:gta}, where we show
that the parabolic subalgebras of a reductive Lie algebra $\g$ form the
simplices of a Tits' building. This theory has a formidable reputation, but
has become more approachable in recent years as the key concepts have become
better understood. More recent approaches emphasise chamber
\grsys/s~\cite{Eve:sib,Ron:lb,Tits:lab,Wei:ssb} rather than simplicial
complexes~\cite{Gar:bcg,Tits:bst}.  In particular, these approaches make
explicit the labelling corresponding to the nodes of the Coxeter--Dynkin
diagram in the parabolic case. Unfortunately, the modern definition of
buildings incorporates an abstraction of the Bruhat decomposition, which is a
nontrivial result in representation theory. In order to address these issues,
we adopt a concise hybrid approach to buildings, which combines the original
viewpoint (using ``apartments'') with more recent approaches using chamber
\grsys/s. Following~\cite{AbBr:bta,Ron:lb,Wei:ssb}, we develop sufficient
theory to derive an abstract Bruhat decomposition
(Theorem~\ref{t:abstract-bruhat}) for strongly transitive buildings. We use
this to obtain (in Theorem~\ref{t:bruhat}) the Bruhat decomposition from
conjugacy results for minimal parabolic subalgebras and their Levi
subalgebras, which we also prove. This result is well-known in the context of
algebraic groups, using Tits' \emph{$(B,N)$-pairs}, but our methods are almost
entirely different.

In Section~\ref{s:ppgc}, we turn finally to \emph{parabolic projection},
which, for a given fixed parabolic subalgebra $\q\leq\g$, projects arbitrary
parabolic subalgebras of $\g$ onto parabolic subalgebras of the reductive Levi
quotient $\q_0$ of $\q$. This idea was originally developed by A. Macpherson
and the first author~\cite{Mac:scsp} using root systems and standard parabolic
subgroups.  The analysis here is based instead on Proposition~\ref{p:par-pair}
which gives an explicit formula for the projection.  The main result is
Theorem~\ref{t:pp-morphism}, in which parabolic projection is shown to be a
morphism of chamber \grsys/s when restricted to weakly opposite
subalgebras. This is the basis for constructions of geometric configurations
(see~\cite{Nop:pp}) that we shall pursue elsewhere.

\acknowledge

The first author thanks Vladimir Sou\v cek, Paul Gauduchon, Robert Marsh, Tony
Dooley, Daniel Clarke and Amine Chakhchoukh for helpful discussions, and the
Eduard \v Cech Institute, grant number GA CR P201/12/G028, for financial
support.  We are also immensely grateful to Fran Burstall and Alastair
King for their comments and ideas.


\section{Elements of incidence geometry}\label{s:eig}

Incidence geometry is conveniently described using graph theory, where graphs
(herein) are undirected with no loops or multiple edges. We use $|\grph|$ and
$E_\grph$ to denote the vertex and edge sets of a graph $\grph$; an edge is
determined by its two endpoints, so $E_\grph$ may be viewed as a collection of
two element subsets of $|\grph|$, or equivalently, a symmetric irreflexive
relation on $|\grph|$.  For $v,w\in |\grph|$, we write $v\edge{} w$ for the
reflexive closure of this relation (which is the structure preserved by graph
morphisms).  By a ``subgraph'', we always mean a subset of vertices with the
induced relation.

\begin{defn}[\cite{Sch:hig,Shu:pl,Wei:ssb}] An \emph{incidence system} over
a (usually finite) set $\typ$ is an $\typ$-multipartite graph $\grph$, \ie, a
graph equipped with a type function $t=t_\grph\colon|\grph|\to\typ$ such that
$\forall\,v,w\in|\grph|$, $v\edge{}w$ and $t(v)=t(w)$ imply $v=w$. An
\emph{incidence morphism} $\grph_1\to\grph_2$ of incidence systems over $\typ$
is a type-preserving graph morphism.

A \emph{flag} or \emph{clique} in $\grph$ of type $J\in\Pow(\typ)$ (\ie,
$J\sub\typ$) is a set of mutually incident elements, one of each type $j\in J$
(a \emph{$J$-flag}). We denote the set of $J$-flags by $\cF\grph(J)$; together
with the obvious ``face maps'' $\cF\grph(J_2)\to\cF\grph(J_1)$ for $J_1\sub
J_2$, these form an (abstract, $\typ$-labelled) simplicial complex $\cF\grph$
(a presheaf or functor $\Pow(\typ)^{op}\to\mathbf{Set}$) called the \emph{flag
  complex}---$\cF\grph$ is also an incidence system over $\Pow(\typ)$: two
flags are incident iff their union is a flag.  A \emph{full flag} is an
$\typ$-flag $\sigma\in\cF\grph(\typ)$, \ie, $\sigma$ contains one element of
each type $j\in\typ$.
\end{defn}

\begin{example}\label{ex:1A} The proper nonempty subsets $B$ of an $n+1$
element set $\cS$ form an incidence system $\grph^\cS$ over $\typ_n:=
\{1,2,\ldots n\}$, where $t(B)$ is the number of elements of $B$, and
$B_1\edge{}B_2$ iff $B_1\sub B_2$ or $B_2\sub B_1$. We ``linearize'' this
example as follows.

Let $V$ be a vector space of dimension $n+1$ over a field $\F$.  The proper
nontrivial subspaces $W\leq V$ are the elements of an incidence system
$\grph^V$ over $\typ_n$, where $t(W)=\dim W$ and $W_1\edge{}W_2$ iff $W_1\leq
W_2$ or $W_2\leq W_1$. For $J\sub\typ_n$, a $J$-flag is a family of subspaces
$W_j:j\in J$ of $V$ with $\dim W_j=j$ and $W_j\leq W_k$ for $j\leq k$. Thus a
full flag is a nested sequence $0\leq W_1\leq W_2\leq\cdots \leq W_n\leq V$
with $\dim W_j=j$.
\end{example}
\begin{example}\label{ex:1B} Let $U$ be a vector space of dimension $2n+k$
over $\R$ equipped with a quadratic form $Q_U$ of signature $(n+k,n)$, where
$k\geq 1$. The nontrivial isotropic subspaces of $U$ (on which $Q_U$ is
identically zero) have $1\leq\dim U\leq n$, and are the elements (typed by
dimension) of an incidence system $\grph^{U,Q_U}$ over $\typ_n$, where the
incidence relation is again given by containment. We shall provide a discrete
model for $\grph^{U,Q_U}$ in Example~\ref{ex:4B}.
\end{example}

\section{Elementary Lie theory}\label{s:elt}

\subsection{Lie algebra notions and notations}

Recall that a \emph{Lie algebra} $\g$ over a field $\F$ is an $\F$-vector
space equipped with a skew-symmetric bilinear operation
$[\cdot,\cdot]\colon\g\times\g\to\g$ satisfying the Jacobi identity
$[x,[y,z]]=[[x,y],z]+[y,[x,z]]$. The commutator bracket $(\alpha,\beta)\mapsto
[\alpha,\beta]=\alpha\circ\beta-\beta\circ\alpha$ makes $\End_\F(V)$ into a
Lie algebra, denoted $\gl(V)$. A \emph{representation} of Lie algebra $\g$ on
a vector space $V$ is a Lie algebra homomorphism $\rho\colon\g\to\gl(V)$ (\ie,
a linear map with $\rho([x,y])=[\rho(x),\rho(y)]$). We write $\rho(x,v)$ or
$x\act v$ as a shorthand for the action $\rho(x)(v)$ of $x\in\g$ on $v\in V$.
For subspaces $\p\sub\g$ and $U\sub V$, we define $\rho(\p,U)=\p\act U$ to be
the span of $\{\rho(x)(u)\st x\in \p,\; u\in U\}$, and introduce shorthands
$\rho(x,U)=x\act U$ and $\rho(\p,u)=\p\act u$ when $\p=\spn\{x\}$ or
$U=\spn\{u\}$.

The action of subspaces $\p\sub\g$ on subspaces $U\sub V$ has an upper adjoint
in each variable:
\begin{align*}
\p\act U\sub W \quad&\text{iff}\quad
\p\sub \c_{\g}(U,W):=\{x\in\g\st x\act U\sub W\}\\
&\text{iff}\quad U\sub \c_{V}(\p,W):=\{v\in V\st \p\act v\sub W\}.
\end{align*}
In particular $\c_{\g}(U,U)=\stab_\g(U)$ is the \emph{stabilizer} of $U$ and
$\c_{V}(\p,0)=\ker\rho(\p)$ is the (\emph{joint}) \emph{kernel} of the $\p$
action; $U$ is \emph{$\p$-invariant} in $V$ iff $\p\act U\sub U$ iff
$\p\sub\stab_\g(U)$.

The \emph{adjoint representation} $\ad\colon \g\to \gl(\g)$ is defined by
$\ad(x)(y)=[x,y]$ for $x,y\in \g$ (which is a representation by the Jacobi
identity). Thus $\ad(\p,\q)$ is the bracket $[\p,\q]$ of subspaces $\p,\q\sub
\g$ and we set $[x,\q]:=[\spn\{x\},\q]$. The upper adjoint specializes to
give:
\begin{equation*}
[\p,\q]\sub \r\quad\text{iff}\quad
\p\sub \c_{\g}(\q,\r):=\{x\in\g\st [x,\q]\sub \r\}.
\end{equation*}
In particular $\n_{\g}(\q):=\c_{\g}(\q,\q)$ is the \emph{normalizer} of $\q$,
$\c_{\g}(\q):=\c_{\g}(\q,0)$ is the \emph{centralizer} of $\q$, and
$\z(\g):=\c_{\g}(\g)$ is the \emph{centre} of $\g$.  Thus $\p\sub \g$ is a
\emph{subalgebra} ($\p\leq\g$) iff $[\p,\p]\sub\p$ iff $\p\sub\n_{\g}(\p)$,
and an \emph{ideal} ($\p\idealin\g$) iff $[\g,\p]\sub\p$ iff $\n_{\g}(\p)=\g$.
We note a useful lemma.

\begin{lemma}\label{l:dsum-cent} If $\b\sub\a\sub\h$ and $\g=\h\dsum\m$,
where $\h\leq\g$ and $[\h,\m]\sub\m$, then $\c_{\g}(\a,\b)=\c_{\h}(\a,\b)\dsum
\bigl(\c_{\g}(\a)\cap\m\bigr)$.
\end{lemma}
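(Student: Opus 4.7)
The plan is to exploit the vector space decomposition $\g=\h\dsum\m$ to split any element of $\c_\g(\a,\b)$ into its $\h$- and $\m$-components and to argue that the two summands on the right-hand side arise from these components separately.

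First I would observe that the hypotheses $\h\leq\g$ and $[\h,\m]\sub\m$ ensure that for any $x=x_\h+x_\m$ with $x_\h\in\h$, $x_\m\in\m$, and any $a\in\a\sub\h$, the bracket decomposes as $[x,a]=[x_\h,a]+[x_\m,a]$ with $[x_\h,a]\in\h$ and $[x_\m,a]\in\m$. This is the key piece of linear algebra and there is no real obstacle here; it simply uses that $\h$ is closed under the bracket and that $\m$ is $\h$-invariant.

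Next, for the inclusion $\c_\g(\a,\b)\sub\c_\h(\a,\b)\dsum(\c_\g(\a)\cap\m)$, I would take $x\in\c_\g(\a,\b)$ and use that $[x,a]\in\b\sub\a\sub\h$, so the $\m$-component $[x_\m,a]$ of $[x,a]$ must vanish for every $a\in\a$. This forces $x_\m\in\c_\g(\a)\cap\m$ and then $[x_\h,a]=[x,a]\in\b$, so $x_\h\in\c_\h(\a,\b)$. Conversely, if $x_\h\in\c_\h(\a,\b)$ and $x_\m\in\c_\g(\a)\cap\m$, then $[x_\h+x_\m,a]=[x_\h,a]+0\in\b$ for all $a\in\a$, giving $x_\h+x_\m\in\c_\g(\a,\b)$. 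The directness of the sum is immediate from $\h\cap\m=0$.

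There is no real obstacle; the lemma is a purely formal consequence of the direct sum structure and the invariance $[\h,\m]\sub\m$. The only point worth flagging is the mild use of $\b\sub\h$ (via $\b\sub\a\sub\h$) to conclude that the $\m$-component of $[x,a]$ must vanish; without this containment, one could not split the conditions cleanly between $\h$ and $\m$.
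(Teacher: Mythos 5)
Your proof is correct, and it is exactly the routine argument the paper leaves implicit (the lemma is stated without proof): split $x$ into its $\h$- and $\m$-components, use $\h\leq\g$ and $[\h,\m]\sub\m$ to see that $[x,a]$ splits accordingly, and use $\b\sub\h$ to force the $\m$-component of $[x,a]$ to vanish. No gaps; the observation that directness follows from $\h\cap\m=0$ completes it.
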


A Lie algebra $\g$ is \emph{reductive} if it has a faithful semisimple
representation (see Appendix~\ref{a:sr}).  This holds in particular if $\g$ is
nonabelian with irreducible adjoint representation (\ie, $\g$ has no proper
nontrivial ideals); then $\g$ is said to be \emph{simple}.  More generally,
the adjoint representation of $\g$ is faithful and semisimple if and only if
$\g$ is \emph{semisimple}, \ie, a direct sum of simple ideals. Thus any
semisimple Lie algebra $\g$ is reductive with $[\g,\g]=\g$.

\subsection{Filtered and graded Lie algebras}

\begin{defn} A \emph{$\Z$-graded vector space} is a vector space $V$ equipped
with a \emph{$\Z$-grading}, \ie, a direct sum decomposition $V=\Dsum_{k\in \Z}
V_k$. A \emph{filtration} of a vector space $V$ is a family $V^{(k)}:k\in\Z$
of subspaces of $V$ such that $i\leq j\; \Rightarrow\; V^{(i)}\sub V^{(j)}$.

Let $V^+:=\bigcup_{k\in\Z} V^{(k)}$ and $V_-:=\bigcap_{k\in\Z} V^{(k)}$. If
$V^+=V$ and $V_-=0$, we say $V$ is a \emph{filtered vector space} and refer to
$\gr(V):=\Dsum_{k\in \Z} V^{(k)}/V^{(k-1)}$ as the \emph{associated graded
  vector space}.
\end{defn}

\begin{defn}[See \textit{e.g.}~\cite{CpSl:pg}] A \emph{$\Z$-graded Lie algebra}
is a Lie algebra $\g$ equipped with a $\Z$-grading $\g=\Dsum_{k\in \Z} \g_k$
such that $\forall\,i,j\in\Z$, $[\g_i,\g_j]\sub\g_{i+j}$.

A \emph{filtration} of a Lie algebra $\g$ is a filtration $\f^{(k)}:k\in\Z$ of
the underlying vector space such that $\forall\,i,j\in \Z$,
$[\f^{(i)},\f^{(j)}] \sub\f^{(i+j)}$.  If $\g_k:=\f^{(k)}/\f^{(k-1)}$, this
induces a Lie algebra structure on $\gr_\f(\g):=\Dsum_{k\in\Z}\g_k$. If
$\f^+=\g$ and $\f_-=0$, we say $\g$ is a \emph{filtered Lie algebra} with
\emph{associated graded Lie algebra} $\gr_\f(\g)$.
\end{defn}

\begin{prop}\label{p:pair} Let $\f^{(-1)}:=\n\leq\g$, $\f^{(0)}:=\p\leq\n_{\g}(\n)$,
and for $j>0$ inductively define $\f^{(-j-1)}=[\n,\f^{(-j)}]$ and
$\f^{(j)}=\c_{\g}(\n,\f^{(j-1)})$.  Then for all $i,j\in\Z$,
$[\f^{(i)},\f^{(j)}]\sub\f^{(i+j)}$ and if $\f^{(-1)}\sub\f^{(0)}$ \textup(\ie,
$\n\idealin\p$\textup) then $\f^{(k)}:k\in\Z$ is a filtration of $\g$.
\end{prop}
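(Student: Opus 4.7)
I plan to prove the proposition in three pieces after establishing a preparatory lemma: $[\n, \f^{(k)}] \sub \f^{(k-1)}$ for every $k \in \Z$. When $k \geq 1$ this is the defining condition $\f^{(k)} = \c_{\g}(\n, \f^{(k-1)})$; when $k \leq -1$ it is the equation $\f^{(k-1)} = [\n, \f^{(k)}]$; and when $k = 0$ it is $[\n, \p] \sub \n$, which follows from $\p \sub \n_{\g}(\n)$.

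For the bracket containment $[\f^{(i)}, \f^{(j)}] \sub \f^{(i+j)}$, I would organise the argument as two separate inductions, exploiting skew-symmetry of the bracket to handle $(i, j)$ and $(j, i)$ together. The first covers all pairs with $i \leq -1$ by induction on $|i|$: the base $i = -1$ is the preparatory lemma applied to $\f^{(j)}$. For $i \leq -2$, I write any $x \in \f^{(i)} = [\n, \f^{(i+1)}]$ as a sum of brackets $[b, a]$ with $b \in \n$, $a \in \f^{(i+1)}$ and apply Jacobi in the form $[[b, a], y] = [b, [a, y]] - [a, [b, y]]$. The inductive hypothesis (with $|i+1| < |i|$) places $[a, y] \in \f^{(i+j+1)}$ and $[a, [b, y]] \in \f^{(i+j)}$, using also the preparatory lemma to identify $[b, y] \in \f^{(j-1)}$; the preparatory lemma alone places $[b, [a, y]] \in \f^{(i+j)}$. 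The second induction covers $i, j \geq 0$ by induction on $i + j$, with base case $[\p, \p] \sub \p = \f^{(0)}$. For the step I may assume $j \geq 1$, so $\f^{(i+j)} = \c_{\g}(\n, \f^{(i+j-1)})$; Jacobi then gives $[b, [x, y]] = [[b, x], y] + [x, [b, y]]$ for $b \in \n$, and both summands lie in $\f^{(i+j-1)}$, using the inductive hypothesis when $i \geq 1$ and the preparatory lemma alone when $i = 0$ (for then $[b, x] \in \n$ and $[\n, \f^{(j)}] \sub \f^{(j-1)}$ directly).

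The filtration property under $\n \sub \p$ consists of two short inductions from $\f^{(-1)} = \n \sub \p = \f^{(0)}$. Upward: the case $k = 0$ is $\p \sub \f^{(1)} = \c_{\g}(\n, \p)$, which reduces to $[\p, \n] \sub \p$, true via $[\p, \n] \sub \n \sub \p$; for $k \geq 1$ the hypothesis $\f^{(k-1)} \sub \f^{(k)}$ gives $\f^{(k)} = \c_{\g}(\n, \f^{(k-1)}) \sub \c_{\g}(\n, \f^{(k)}) = \f^{(k+1)}$. Downward: $\f^{(-2)} = [\n, \n] \sub \n$ since $\n$ is a subalgebra, and for $k \leq -3$ the step is $\f^{(k)} = [\n, \f^{(k+1)}] \sub [\n, \f^{(k+2)}] = \f^{(k+1)}$.

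The main obstacle I anticipate is the asymmetry in the definition of $\f^{(k)}$---centralizer for $k \geq 1$, iterated bracket for $k \leq -1$---so a naive single induction on $|i| + |j|$ fails to strictly decrease in every Jacobi subcase (the term $[a, [b, y]]$ arising from the unpacking $\f^{(i)} = [\n, \f^{(i+1)}]$ keeps $|i| + |j|$ constant when $j \leq -1$, and the centralizer characterization $\f^{(i+j)} = \c_{\g}(\n, \f^{(i+j-1)})$ is only available when $i + j \geq 1$). Splitting the argument into the two inductions above, bridged by the preparatory lemma, avoids this mismatch cleanly.
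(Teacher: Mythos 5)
Your proof is correct and follows essentially the same route as the paper's: the same preparatory observation $[\n,\f^{(k)}]\sub\f^{(k-1)}$ for all $k$, the same Jacobi-based induction on $|i|$ for the negative range (the paper phrases it at the level of subspaces via $\f^{(-i-1)}=[\f^{(-1)},\f^{(-i)}]$ rather than unpacking elements), and the same induction on $i+j$ using $\f^{(i+j)}=\c_{\g}(\n,\f^{(i+j-1)})$ for the nonnegative range. The only difference is that you spell out the nesting $\f^{(k-1)}\sub\f^{(k)}$ via two explicit inductions, which the paper dispatches in a single line.
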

\begin{proof} By construction $[\f^{(-1)},\f^{(j)}]\sub \f^{(j-1)}$ for
all $j\in\Z$, and $\f^{(j-1)}\sub \f^{(j)}$ for $j\neq 0$. Next, by Jacobi, for
any $i>0$, $[\f^{(-i-1)},\f^{(j)}]= [[\f^{(-1)},\f^{(-i)}],\f^{(j)}]\sub
[\f^{(-1)},[\f^{(-i)},\f^{(j)}]] + [\f^{(-i)},\f^{(j-1)}]$ for all $j\in\Z$,
so induction on $i$ shows that $[\f^{(-i)},\f^{(j)}]\sub \f^{(-i+j)}$ for
$i>0$ and $j\in\Z$.

We now show $[\f^{(i)},\f^{(j)}]\sub\f^{(i+j)}$ for $i,j\geq 0$ by induction
on $i+j$: $[\f^{(0)},\f^{(0)}]\sub \f^{(0)}$ since $\f^{(0)}=\p$ is a
subalgebra, and for $i+j>0$, Jacobi implies $[[\f^{(i)},\f^{(j)}],\n]\sub
[\f^{(i)},\f^{(j-1)}]+[\f^{(i-1)},\f^{(j)}]\sub\f^{(i+j-1)}$ (inductively),
\ie, $[\f^{(i)},\f^{(j)}]\sub\c_{\g}(\n,\f^{(i+j-1)})=\f^{(i+j)}$.
\end{proof}
We call this the filtration of $\g$ \emph{induced by $\n\idealin\p$}, or
\emph{by $\n$} if $\p=\n_{\g}(\n)$. Its negative part is the \emph{lower
  central series} of $\n$, and if $\f^{(-k)}=0$ for sufficiently large $k$, we
say $\n$ is \emph{nilpotent}.

\begin{defn} Let $\g,\f^{(k)}:k\in\Z$ be a filtered Lie algebra.  A
\emph{filtration} of a representation $\rho\colon\g\to\gl(V)$ of $\g$ is a
filtration $V^{(k)}:k\in\Z$ of $V$ such that $\forall\,i,j\in\Z$,
$\f^{(i)}\act V^{(j)}:=\rho(\f^{(i)}, V^{(j)})\sub V^{(i+j)}$. This induces a
representation $\overline\rho$ of $\gr_\f(\g)$ on $\gr(V):=\Dsum_{k\in\Z}V_k$,
where $V_k=V^{(k)}/V^{(k-1)}$, such that $\forall\,i,j\in\Z$,
$\overline\rho(\g_i,V_j)\sub V_{i+j}$. If $V$ is filtered, we call $V$ (or
$\rho$) a \emph{filtered representation} of $\g$ with \emph{associated graded
  representation} $\gr(V)$ (or $\overline\rho$).
\end{defn}

\subsection{Engel's theorem and nilpotence}

\begin{thm}[Engel] Let $\rho\colon\g\to \gl(V)$ be a finite dimensional
representation of a Lie algebra $\g$ and $\n\idealin\g$. Then the following
are equivalent\textup:
\begin{numlist}
\item $\rho(x)$ is nilpotent for all $x\in \n$\textup;
\item $\n$ acts trivially on any irreducible subquotient of $\rho$\textup;
\item $V$ is a filtered representation for the filtration of $\g$ induced by
$\n$.
\end{numlist}
\end{thm}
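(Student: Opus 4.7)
The plan is to establish the cycle (iii) $\Rightarrow$ (i) $\Rightarrow$ (ii) $\Rightarrow$ (iii). The implication (iii) $\Rightarrow$ (i) is essentially immediate: given a filtration $V^{(k)}$ of $V$ satisfying $\f^{(i)}\act V^{(j)}\sub V^{(i+j)}$, one has $\n\act V^{(j)}\sub V^{(j-1)}$ since $\n=\f^{(-1)}$. Because $V$ is finite-dimensional the filtration is bounded, with $V^{(k_1)}=V$ and $V^{(k_0)}=0$ for some $k_0\leq k_1$; iterating the degree-lowering action of $\n$ then shows $\rho(x)^N=0$ for every $x\in\n$ once $N\geq k_1-k_0$.

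For (ii) $\Rightarrow$ (iii), I would take any composition series $0=W_0\leq W_1\leq\cdots\leq W_r=V$ of $V$ as a $\g$-module (which exists by finite-dimensionality) and set $V^{(-i)}:=W_{r-i}$ for $0\leq i\leq r$, extending by $V^{(k)}=V$ for $k\geq 0$ and $V^{(k)}=0$ for $k\leq -r$. This is a $\g$-stable filtration on which (ii), applied to each irreducible subquotient $W_i/W_{i-1}$, forces $\n\act V^{(k)}\sub V^{(k-1)}$. The filtration of $\g$ induced by $\n$ has $\f^{(j)}=\g$ for $j\geq 0$ (since $\n\idealin\g$ gives $\p=\n_{\g}(\n)=\g$) and $\f^{(-j-1)}=[\n,\f^{(-j)}]$ for $j\geq 0$, so an induction on $j$ using $[\n,\f^{(-j)}]\act V^{(k)}\sub \n\act V^{(k-j)}+\f^{(-j)}\act V^{(k-1)}\sub V^{(k-j-1)}$ confirms $\f^{(i)}\act V^{(j)}\sub V^{(i+j)}$ for all $i,j\in\Z$.

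The main content, and the principal obstacle, is (i) $\Rightarrow$ (ii). It rests on the \emph{classical Engel theorem}: a Lie subalgebra $\m\leq\gl(W)$ consisting of nilpotent endomorphisms on a nonzero finite-dimensional $W$ admits a common kernel vector. Granted this, let $W$ be any irreducible $\g$-subquotient of $\rho$; since nilpotence passes to subquotients, applying the theorem to $\rho(\n)|_W$ gives $W_0:=\ker\rho(\n)|_W\neq 0$. Because $\n\idealin\g$, the identity $x\act(g\act w)=g\act(x\act w)+[x,g]\act w=0$ (for $w\in W_0$, $g\in\g$, $x\in\n$, using $[x,g]\in\n$ and $x\act w=0$) shows $W_0$ is $\g$-invariant, and irreducibility then forces $W_0=W$. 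The classical theorem itself I would prove by induction on $\dim\m$, exploiting that nilpotent $x\in\gl(V)$ has nilpotent $\ad(x)=L_x-R_x$ (as $L_x$ and $R_x$ are commuting nilpotents on $\gl(V)$): picking a maximal proper subalgebra $\h<\m$ and applying the inductive hypothesis to $\h$ acting on $\m/\h$ forces $\h\idealin\m$ with codimension one, after which $\ker\rho(\h)$ is nonzero (by induction) and $\m$-invariant (by the ideal argument just used), so any $x\in\m\setminus\h$ acting nilpotently on it furnishes the required common kernel vector.
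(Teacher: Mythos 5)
Your proof is correct and follows essentially the same route as the paper: the same cyclic chain of implications, with the classical Engel lemma (common kernel vector for a nil subalgebra of $\gl(W)$) driving (i) $\Rightarrow$ (ii) via the ideal-invariance of the joint kernel, and the other two implications handled by the same elementary filtration arguments (your use of a composition series for (ii) $\Rightarrow$ (iii) is just the paper's induction on $\dimn V$ unwound). The only addition is that you sketch a proof of the classical Engel lemma, which the paper explicitly omits.
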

\begin{proof} The key is the following well-known lemma due to Engel;
we omit the proof.
\begin{lemma}\label{l:eng} Let $\u\leq\gl(W)$ be a Lie subalgebra with
$\sigma$ nilpotent for all $\sigma\in\u$\textup; then if $W$ is nonzero,
  $\exists w\in W$ nonzero such that for all $\sigma\in \u$, $\sigma(w)=0$.
\end{lemma}
(1)$\Rightarrow$(2). Let $\rho'\colon \g\to\gl(W)$ be an irreducible
subquotient of $\rho$ and let $W'=\{w\in W\st \rho'(\n)(w)=0\}$. Since
$\n\idealin\g$, $W'$ is $\g$-invariant. Since $\n$ acts by nilpotent
endomorphisms, Lemma~\ref{l:eng} implies $W'$ is nonzero; hence $W'=W$.

(2)$\Rightarrow$(3). This is an easy induction on $\dimn V$: if $V=0$, we are
done; otherwise $V$ has a nontrivial irreducible $\g$-invariant subspace
$U\leq V$, which is in the kernel of $\rho(\n)$ by (2). By induction, $V/U$ by
is filtered by some $V^{(j)}/U:j\in\Z$. If $k$ denotes the largest integer
with $V^{(k)}=U$, we may redefine $V^{(j)}=0$ for $j<k$ to make $V$ into a
filtered representation.

(3)$\Rightarrow$(1). Given such a filtration $V^{(j)}:j\in\Z$, we may assume
$V^{(0)}=V$ and $V^{(j)}=0$ for $j<-k$. Now any $x\in\n$ satisfies
$\rho(x)^{k+1}=0$.
\end{proof}

If any of these conditions hold, $\n$ is called a \emph{nilpotency
  ideal}~\cite{Bou:gal} for $\rho$; it follows from (3) that $\rho(\n)$ is a
nilpotent Lie algebra. By (2), $\g$ has a largest nilpotency ideal
$\nil_\rho(\g)$ for $\rho$, namely the intersection of the kernels of the
simple subquotients of $\rho$. Thus $\kernel\rho\leq\nil_\rho(\g)$ and equality
holds if $\rho$ is semisimple.  Hence $\nil_\rho(\g)=0$ if $\rho$ is faithful
and semisimple.

Henceforth, we assume $\g$ is finite dimensional. An ideal $\n\idealin\g$ is a
nilpotency ideal for the adjoint representation $\ad$ of $\g$ if and only if
it is nilpotent, and so $\nil_{\ad}(\g)$ is the largest nilpotent ideal of
$\g$, often called the \emph{nilradical}.

\begin{defn} The \emph{nilpotent radical} $\nil(\g)\idealin\g$ is the
intersection of its largest nilpotency ideals, or equivalently, the
intersection of the kernels of the simple representations of $\g$.
\end{defn}

Since $\nil(\g)\leq\nil_{\ad}(\g)$, it is a nilpotent ideal, and it is the
intersection of the kernels of finitely many simple representations of $\g$,
hence the kernel of a semisimple representation of $\g$. Thus $\g/\nil(\g)$ is
reductive, and $\g$ is reductive if and only if $\nil(\g)=0$. The Lie algebra
$\g$ is a filtered Lie algebra via the \emph{canonical filtration}
$\g^{(k)}:k\in\Z$ induced by $\nil(\g)\idealin\g$, and with respect to the
canonical filtration, any (finite dimensional) representation of $\g$ is a
filtered representation by Engel's theorem.

\begin{defn} The \emph{nilpotent cone} of a Lie algebra $\g$ is $\cN(\g)=
\{x\in \g\st \rho(x)$ is nilpotent for any representation $\rho$ of $\g\}$.  A
\emph{nil subalgebra} $\n\leq\g$ is a Lie subalgebra with $\n\sub \cN(\g)$.
\end{defn}
\begin{rems} If $f\colon\h\to\g$ is a Lie algebra homomorphism, then any
representation $\rho$ of $\g$ induces a representation $\rho\circ f$ of $h$,
so $f(\cN(\h))\sub\cN(\g)$. Clearly $x\in \cN(\g)$ if and only if $x$ is
nilpotent in any \emph{semisimple} representation of $\g$.  Thus $\cN(\g)$
is the inverse image of the nilpotent cone in $\g/\nil(\g)$, which is reductive.
A nil subalgebra $\n\leq\g$ is a nilpotency ideal for the adjoint
representation of $\n_{\g}(\n)$ on $\g$, hence nilpotent.
\end{rems}

\begin{example}\label{ex:2A} Let $V$ be a vector space with $\dim_\F V=n+1$ as
in Example~\ref{ex:1A}.  Then $\gl(V)$ is a Lie algebra whose nilpotent cone
$\cN(\gl(V))$ consists of the nilpotent endomorphisms of $V$.  A subalgebra
$\n$ of $\gl(V)$ is therefore a nil subalgebra iff there is a filtration of
$V$ such that $\n$ acts trivially on the associated graded representation.
With respect to a basis adapted to such a filtration, the elements of $\n$ are
strictly upper triangular.  In particular, $\gl(V)$ is reductive, and its
derived algebra is the subalgebra $\sgl(V)= [\gl(V),\gl(V)]$ of traceless
endomorphisms. Thus a nil subalgebra of $\gl(V)$ is a nilpotent subalgebra of
$\sgl(V)$.
\end{example}
\begin{example}\label{ex:2B} Let $U,Q_U$ be as in Example~\ref{ex:1B}, and
let $B_U$ be the associated symmetric bilinear form of signature $(n+k,n)$ on
$U$.  Then $\so(U,Q_U)=\{A\in \gl(U)\st B_U(A u_1,u_2)+B_U(u_1,A u_2)\}$ is a
Lie subalgebra of $\gl(U)$, and $\cN(\so(U,Q_U))$ again consists of the
elements of $\so(U,Q_U)$ which are nilpotent endomorphisms of $U$. If $\n$ is
a nonzero nil subalgebra of $\so(U,Q_U)$ then $\n\act U$ is nontrivial, hence
so is its intersection with $\c_U(\n,0)=\bigcap_{A\in\n}\ker A$ (because $\n$
acts trivially on any irreducible summand of $\n\act U$). The intersection
$(\n\act U)\cap\c_U(\n,0)$ is isotropic, hence contains a $1$-dimensional
isotropic subspace $\pi_1$. Applying the same argument inductively to
$\pi_1^\perp/\pi_1$, we obtain a filtration $0\leq\pi_1\leq\pi_2\leq\cdots
\leq\pi_n\leq\pi_n^\perp\leq\cdots\leq \pi_2^\perp\leq\pi_1^\perp\leq U$, with
$\dim\pi_j=j$, preserved by $\n$, and $\n$ acts trivially on the associated
graded representation.
\end{example}

\subsection{Invariant forms and trace-forms} We summarize basic properties
of invariant symmetric bilinear forms in Appendix~\ref{a:if}.

\begin{defn} An invariant (symmetric bilinear) form on a filtered Lie
algebra $\g$ is \emph{compatible} with the filtration iff $\f^{(j-1)}\sub
(\f^{(-j)})^\perp$ for all $j\in \Z$. The restriction of a compatible
invariant form to $\f^{(j)}\times \f^{(-j)}$ descends to a pairing $\g_j\times
\g_{-j}\to \F$ and hence induces a invariant form on $\gr_\f(\g)$, called the
\emph{associated graded invariant form}.
\end{defn}

\begin{prop} If $\f^{(j-1)} = (\f^{(-j)})^\perp$ for all $j\in \Z$, the
associated graded invariant form on $\gr_\f(\g)$ is nondegenerate, \ie,
${\g_j}^\perp=\Dsum_{k\neq -j} \g_k$.
\end{prop}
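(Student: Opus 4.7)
The plan is to reduce the claim to nondegeneracy of each component pairing $\g_j \times \g_{-j} \to \F$. By the very construction of the graded invariant form as a direct sum of these component pairings, $\g_j$ pairs trivially with $\g_k$ whenever $k \neq -j$, yielding $\Dsum_{k\neq -j}\g_k \sub {\g_j}^\perp$ without effort; the reverse inclusion amounts precisely to the assertion that no nonzero $z \in \g_{-j}$ is orthogonal to all of $\g_j$.

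The key is a one-step lift-and-test argument. Given such a $z$, lift it to a representative $\tilde z \in \f^{(-j)}$. Orthogonality of $z$ to every class $[\tilde x] \in \g_j$ translates, via the definition of the descended pairing, to $\langle \tilde x, \tilde z \rangle = 0$ for all $\tilde x \in \f^{(j)}$, that is, $\tilde z \in (\f^{(j)})^\perp$. Invoking the hypothesis in the form $(\f^{(j)})^\perp = \f^{(-j-1)}$---obtained by substituting $-j+1$ for $j$ in $\f^{(j-1)} = (\f^{(-j)})^\perp$---we get $\tilde z \in \f^{(-j-1)}$, whence $z = 0$ in $\g_{-j} = \f^{(-j)}/\f^{(-j-1)}$. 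This disposes of the right kernel of the component pairing; the symmetric argument (or appeal to finite-dimensionality of $\g_j$ and $\g_{-j}$) treats the left kernel.

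The main conceptual point, and the only potential ``obstacle'', is to recognize that the hypothesis of the proposition is strictly stronger than the compatibility condition used to define the graded form: compatibility requires only the inclusion $\f^{(j-1)} \sub (\f^{(-j)})^\perp$, so that the restriction to $\f^{(j)} \times \f^{(-j)}$ descends to $\g_j \times \g_{-j}$, whereas nondegeneracy of the induced form needs the reverse inclusion as well, which is exactly what the equality in the hypothesis supplies. Once one notices this, the proof is essentially a one-liner: perpendicularity of $\tilde z$ against $\f^{(j)}$ lifts immediately to membership in the next filtration level $\f^{(-j-1)}$.
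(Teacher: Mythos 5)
Your proof is correct and is essentially the paper's own argument: the paper runs the same lift-and-test step on the $\g_j$ side (a lift $\tilde x\in\f^{(j)}$ orthogonal to $\f^{(-j)}$ lies in $(\f^{(-j)})^\perp=\f^{(j-1)}$, so $x=0$), while you run it on the $\g_{-j}$ side and then invoke symmetry. One cosmetic slip: the instance $(\f^{(j)})^\perp=\f^{(-j-1)}$ is obtained by substituting $-j$ for $j$ in the hypothesis, not $-j+1$.
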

\begin{proof} $x$ is in $\gr_\f(\g)^\perp$ if and only if its homogeneous
components are. Now for $x\in \g_j$, we have $x\in\gr_\f(\g)^\perp$ if and
only if $x$ is orthogonal to $\g_{-j}$, \ie, any lift $\tilde x$ to $\f^{(j)}$
is in $(\f^{(-j)})^\perp$. On the other hand $x=0$ if and only if the lift is
in $\f^{(j-1)}$.
\end{proof}

\begin{defn} The \emph{trace form} on $\g$ associated to a representation $
\rho\colon\g\to\gl(V)$, is the invariant form
$(x,y)\mapsto\trace(\rho(x)\rho(y))$.
\end{defn}

\begin{prop} Let $\rho\colon\g\to \gl(V)$ be a filtered representation of a
filtered Lie algebra. Then the trace form of $\rho$ is compatible with the
filtration $\f^{(j)}:j\in\Z$ of $\g$, and the associated graded invariant form
on $\gr_\f(\g)$ is the trace form of the induced representation
$\overline\rho$.
\end{prop}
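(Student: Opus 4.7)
The plan is to deduce both assertions from two elementary facts about traces of endomorphisms of a finite dimensional filtered vector space. Since $V$ is filtered with $V^+ = V$ and $V_- = 0$, and is finite dimensional (otherwise the trace is not defined), the filtration is bounded: there exist integers $m \leq M$ with $V^{(m-1)} = 0$ and $V^{(M)} = V$. Consequently, (a) any $\phi \in \End(V)$ that strictly lowers the filtration, i.e.\ $\phi(V^{(k)}) \sub V^{(k-1)}$ for all $k$, is nilpotent and so $\trace\phi = 0$; and (b) any $\phi \in \End(V)$ preserving the filtration, $\phi(V^{(k)}) \sub V^{(k)}$, induces $\phi_k \in \End(V_k)$ on each $V_k := V^{(k)}/V^{(k-1)}$, with $\trace\phi = \sum_k \trace\phi_k$. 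Both facts are immediate by choosing an ordered basis of $V$ adapted to the filtration: in such a basis, filtration-preserving endomorphisms are block upper-triangular, and filtration-decreasing ones have zero diagonal blocks.

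For the compatibility statement, take $x \in \f^{(j-1)}$ and $y \in \f^{(-j)}$. Since $V$ is a filtered representation, $\rho(y)V^{(k)} \sub V^{(k-j)}$ and then $\rho(x)V^{(k-j)} \sub V^{(k-1)}$, so $\rho(x)\rho(y)$ strictly lowers the filtration. By (a) the trace form vanishes on $\f^{(j-1)} \times \f^{(-j)}$, establishing $\f^{(j-1)} \sub (\f^{(-j)})^\perp$, which is compatibility.

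For the graded identity, take $\bar x \in \g_j$ and $\bar y \in \g_{-j}$ and choose lifts $x \in \f^{(j)}$, $y \in \f^{(-j)}$. Now $\rho(x)\rho(y)$ preserves every $V^{(k)}$, since $\rho(y)V^{(k)} \sub V^{(k-j)}$ and then $\rho(x)V^{(k-j)} \sub V^{(k)}$. By the very definition of the induced representation $\overline\rho$, the endomorphism of $V_k$ induced by $\rho(x)\rho(y)$ is the restriction to $V_k$ of $\overline\rho(\bar x)\overline\rho(\bar y)$ (which factors as $V_k \to V_{k-j} \to V_k$). Applying (b) to both $\rho(x)\rho(y)$ and $\overline\rho(\bar x)\overline\rho(\bar y)$ yields
$$\trace\bigl(\rho(x)\rho(y)\bigr) = \sum_k \trace\bigl(\overline\rho(\bar x)\overline\rho(\bar y)|_{V_k}\bigr) = \trace\bigl(\overline\rho(\bar x)\overline\rho(\bar y)\bigr),$$
which is the required identity; as a byproduct the right-hand side depends only on $\bar x, \bar y$, reconfirming that the graded form is well-defined.

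There is no real obstacle beyond the bookkeeping of filtration indices; the whole argument rests on the trivial observation, coded in (a) and (b), that trace on a bounded filtered vector space decomposes along the associated graded.
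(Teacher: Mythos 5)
Your proof is correct and follows essentially the same route as the paper: the compatibility claim comes from observing that $\rho(x)\rho(y)$ strictly lowers the filtration, hence is nilpotent and trace-free, and the graded identity comes from decomposing the trace of a filtration-preserving endomorphism along the subquotients $V_k$ (the paper phrases this via a choice of splitting $V\cong\gr(V)$, which is equivalent to your choice of an adapted basis). Your explicit isolation of facts (a) and (b), and the remark that boundedness of the filtration follows from finite-dimensionality, merely make explicit what the paper leaves implicit.
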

\begin{proof} If $x\in \f^{(j-1)}$ and $y\in \f^{(-j)}$ then $\rho(x)\rho(y)$
maps $V^{(k)}$ to $V^{(k-1)}$ for all $k\in \Z$, hence is nilpotent. Thus
$\rho(x)\rho(y)$ is trace-free, and hence $x$ and $y$ are orthogonal.

To compute the associated graded form on $x\in \g_j$ and $y\in \g_{-j}$,
choose lifts $\tilde x\in\f^{(j)}$, $\tilde y\in \f^{(-j)}$ and a splitting
$V\cong \gr(V)$ of the filtration of $V$. Then the trace of $\rho(\tilde x)
\rho(\tilde y)$ may be computed by restricting and projecting
onto $V_k$, for each $k\in\Z$, computing the trace, and summing over $k$,
which yields the trace of $\overline\rho(x)\overline\rho(y)$.
\end{proof}

\begin{prop} Let $\ip{\cdot,\cdot}$ be the trace form of
$\rho\colon\g\to\gl(V)$.
\begin{numlist}
\item $\nil_\rho(\g)\leq \g^{\perp}$ and the induced invariant form on
$\g/\nil_\rho(\g)$ is a trace form.
\item If $\ip{\cdot,\cdot}$ is nondegenerate, then $\nil_\rho(\g)=0$.
\item For a Lie homomorphism $f\colon\h\to \g$, $\ip{\cdot,\cdot}$ pulls back to
a trace form associated to $\rho\circ f$.
\item If $\h\leq\g$ is a subalgebra and $\rho(\h)$ is nilpotent, then $\h\leq
  \n_{\g}(\h)^\perp$.
\end{numlist}
\end{prop}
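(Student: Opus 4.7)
My plan is to derive the four claims from the preceding results on filtered Lie algebras and their trace forms, rather than by direct matrix computation.

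For (i), the ideal $\nil_\rho(\g)$ is by definition a nilpotency ideal for $\rho$, so by Engel's theorem $V$ is a filtered representation of $\g$ for the filtration $\f^{(k)}:k\in\Z$ induced by $\nil_\rho(\g)\idealin\g$. The compatibility proposition then gives $\f^{(j-1)}\sub(\f^{(-j)})^\perp$ for all $j$; taking $j=0$ and using $\f^{(0)}=\n_{\g}(\nil_\rho(\g))=\g$ (since $\nil_\rho(\g)$ is an ideal of $\g$) yields $\nil_\rho(\g)=\f^{(-1)}\sub\g^\perp$, so the trace form descends to an invariant form on $\g/\nil_\rho(\g)$. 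To realise the descended form as a trace form, I would observe that because $\nil_\rho(\g)$ is an ideal one has $\f^{(k)}=\g$ for every $k\geq 0$, so $\gr_\f(\g)_0=\g/\nil_\rho(\g)$ while $\gr_\f(\g)_k=0$ for $k\geq 1$. The induced graded representation $\overline\rho$ therefore restricts on its degree-zero piece to a representation of $\g/\nil_\rho(\g)$ on $\gr(V)$, using that $\nil_\rho(\g)$ acts trivially on each $V_k=V^{(k)}/V^{(k-1)}$, as is built into the construction of the filtration in Engel's theorem. By the same compatibility proposition, the associated graded form equals the trace form of $\overline\rho$; its restriction to the degree-zero piece thus coincides both with the descended form on $\g/\nil_\rho(\g)$ and with the trace form of $\overline\rho$ viewed as a representation of $\g/\nil_\rho(\g)$.

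Claim (ii) is immediate from (i): nondegeneracy of $\ip{\cdot,\cdot}$ forces $\g^\perp=0$, so $\nil_\rho(\g)=0$. Claim (iii) is a one-line unpacking of definitions: $\ip{f(a),f(b)}=\trace(\rho(f(a))\rho(f(b)))=\trace((\rho\circ f)(a)(\rho\circ f)(b))$, which is exactly the trace form of $\rho\circ f$.

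For (iv), I would reduce to (i) by a mild enlargement of $\h$. Given any $y\in\n_{\g}(\h)$, the subspace $\h+\F y$ is a Lie subalgebra of $\g$ in which $\h$ is an ideal, because $[y,\h]\sub\h$ by the definition of the normaliser. Reading ``$\rho(\h)$ nilpotent'' in the sense used in Engel's theorem (each $\rho(x)$, $x\in\h$, a nilpotent endomorphism of $V$), the implication (i)$\Rightarrow$(ii) of Engel's theorem identifies $\h$ as a nilpotency ideal for the restricted representation $\rho|_{\h+\F y}$. Applying (i) of the present proposition to this restricted representation puts $\h$ in the orthogonal complement of $\h+\F y$ with respect to its trace form, which by (iii) is the restriction of $\ip{\cdot,\cdot}$. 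Hence $\ip{x,y}=0$ for every $x\in\h$ and every $y\in\n_{\g}(\h)$, \ie, $\h\leq\n_{\g}(\h)^\perp$. The principal obstacle is (i): one has to check carefully that the descended form on $\g/\nil_\rho(\g)$ is literally the trace form of the specific representation $\overline\rho|_{\g_0}$ of $\g/\nil_\rho(\g)$ on $\gr(V)$, rather than merely an invariant form of unspecified pedigree; everything else then cascades cleanly.
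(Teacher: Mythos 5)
Your proof is correct and follows essentially the same route as the paper: Engel's theorem plus the compatibility of the trace form with the filtration induced by $\nil_\rho(\g)$ for (i), immediacy for (ii) and (iii), and a reduction to (i) via a restricted representation for (iv). The only cosmetic difference is that in (iv) the paper restricts $\rho$ to the whole normalizer $\n_{\g}(\h)$ at once (where $\h$ is a nilpotency ideal), rather than to the one-dimensional extensions $\h+\F y$ as you do.
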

\begin{proof} (1) By Engel's theorem, $V$ is a filtered
representation for the filtration of $\g$ induced by $\nil_\rho(\g)$; hence
the trace-form is compatible and $\nil_\rho(\g)\leq \g^\perp$.

(2) is immediate from (1) and (3) is obvious.

(4) Pull back (\ie, restrict) $\ip{\cdot,\cdot}$ to $\n_{\g}(\h)$; since $\h$
is a nilpotency ideal for the restriction of $\rho$ to $\n_{\g}(\h)$,
$\h\sub\nil_\rho(\n_{\g}(\h)) \sub\n_{\g}(\h)^\perp\cap
\n_{\g}(\h)\sub\n_{\g}(\h)^\perp$.
\end{proof}

\begin{cor}\label{c:tf} Let $\ip{\cdot,\cdot}$ be a trace form on $\g$.
\begin{numlist}
\item $\nil(\g)\leq \g^{\perp}$ and the induced invariant form on
$\g/\nil(\g)$ is a trace form.
\item If $\ip{\cdot,\cdot}$ is nondegenerate, then $\g$ is reductive.
\item For a Lie homomorphism $f\colon\h\to \g$, $\ip{\cdot,\cdot}$ pulls back
to a trace form on $\h$.
\item If $\h\leq\g$ is a nil subalgebra, then $\h\leq\n_{\g}(\h)^\perp$.
\end{numlist}
\end{cor}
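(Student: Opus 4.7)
The Corollary is a $\rho$-free upgrade of the preceding Proposition, replacing the representation-specific ideal $\nil_\rho(\g)$ with its universal counterpart $\nil(\g)$, and the conclusion $\nil_\rho(\g)=0$ with the stronger ``$\g$ is reductive''. My plan is to fix a representation $\rho\colon\g\to\gl(V)$ with $\ip{x,y}=\trace(\rho(x)\rho(y))$ and to deduce each part from its counterpart in the Proposition, exploiting the definition of $\nil(\g)$ as the intersection of the largest nilpotency ideals $\nil_\rho(\g)$ over all representations $\rho$. The two direct parts are (3) and (4): for (3), the pullback of $\ip{\cdot,\cdot}$ along $f\colon\h\to\g$ is $(x,y)\mapsto\trace(\rho(f(x))\rho(f(y)))$, visibly the trace form of $\rho\circ f$; for (4), if $\h\leq\g$ is a nil subalgebra, then $\h\sub\cN(\g)$ forces $\rho(\h)\leq\gl(V)$ to be a Lie subalgebra of nilpotent endomorphisms, so $\h$ acts as a nilpotency ideal in the restriction of $\rho$ to $\n_{\g}(\h)$, and part (4) of the Proposition yields $\h\leq\n_{\g}(\h)^\perp$.

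For (1), the chain $\nil(\g)\leq\nil_\rho(\g)\leq\g^\perp$ is immediate from the definition of $\nil(\g)$ and part (1) of the Proposition, so $\ip{\cdot,\cdot}$ descends to an invariant form on $\g/\nil(\g)$. To see the descended form is itself a trace form, I would factor the quotient as $\g\twoheadrightarrow\g/\nil(\g)\twoheadrightarrow\g/\nil_\rho(\g)$: the Proposition guarantees that the descent of $\ip{\cdot,\cdot}$ to $\g/\nil_\rho(\g)$ is a trace form, and part (3) of the Corollary (just proven), applied to the surjective Lie homomorphism $\pi\colon\g/\nil(\g)\to\g/\nil_\rho(\g)$, shows that the pullback along $\pi$---which coincides with the descent of $\ip{\cdot,\cdot}$ from $\g$ to $\g/\nil(\g)$---is again a trace form. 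Part (2) is then immediate: nondegeneracy forces $\nil(\g)\leq\g^\perp=0$, and $\nil(\g)=0$ is equivalent to reductivity of $\g$ by the remarks following its definition.

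The main point requiring care is the trace-form property in part (1): a single descent from $\g$ to $\g/\nil(\g)$ does not come with an obvious representation of $\g/\nil(\g)$ realizing it, so the detour through $\g/\nil_\rho(\g)$---where the Proposition supplies such a representation---together with part (3) applied to the intermediate surjection, is the natural device. All remaining claims are essentially formal consequences of the Proposition and the definitions of $\nil(\g)$, reductivity, and of a nil subalgebra.
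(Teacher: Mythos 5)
Your proof is correct and follows the route the paper intends: the Corollary is stated without proof as an immediate specialization of the preceding Proposition, using that $\nil(\g)\leq\nil_\rho(\g)$ and that elements of a nil subalgebra act nilpotently in every representation. Your detour through $\g/\nil_\rho(\g)$ in part (1) is a valid (and equivalent) alternative to simply observing that $\nil(\g)$ is itself a nilpotency ideal for $\rho$ and rerunning the Proposition's filtration argument with $\nil(\g)$ in place of $\nil_\rho(\g)$.
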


\subsection{Cartan criteria for nilpotency and reductive Lie algebras}

In this subsection, we assume the underlying field $\F$ has characteristic
zero (hence is perfect).  Then we have the following straightforward
characterization of the nilpotent cone $\cN(\g)$.

\begin{prop}\label{p:nil-cone-char} Let $\g$ be a Lie algebra. Then $x\in
\cN(\g)$ if and only if $x\in [\g,\g]$ and $\ad(x)$ is nilpotent.  In
particular $\nil(\g)=\nil_{\ad}(\g)\cap[\g,\g]$.
\end{prop}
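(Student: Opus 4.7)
The plan is to prove the two directions of the equivalence separately and then deduce the identity $\nil(\g)=\nil_\ad(\g)\cap[\g,\g]$ as a corollary.

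For the forward direction, the nilpotency of $\ad(x)$ is immediate from taking $\rho=\ad$. To see $x\in[\g,\g]$, I consider the one-dimensional representations $\chi\colon\g\to\gl(\F)=\F$, which factor through the abelian quotient $\g/[\g,\g]$ and comprise its dual. Nilpotency of $\chi(x)\in\F$ forces $\chi(x)=0$, and since $(\g/[\g,\g])^*$ separates points, the image of $x$ in $\g/[\g,\g]$ vanishes.

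For the reverse direction, I fix a representation $\rho\colon\g\to\gl(V)$ and write the Jordan decomposition $\rho(x)=s+n$ in $\gl(V)$; the goal is to show $s=0$. Since $\ad$ intertwines Jordan decompositions, $\ad(s)+\ad(n)$ is the Jordan decomposition of $\ad(\rho(x))$ in $\gl(\gl(V))$; in particular $\ad(s)$ is a polynomial in $\ad(\rho(x))$ and so preserves the $\ad(\rho(x))$-invariant subspace $\rho(\g)$. The restriction $\ad(\rho(x))|_{\rho(\g)}$ is nilpotent, being intertwined by $\rho$ with the nilpotent $\ad_{\g}(x)$, so uniqueness of Jordan decomposition on $\rho(\g)$ forces $\ad(s)|_{\rho(\g)}=0$; that is, $s$ centralizes $\rho(\g)$. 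Writing $x=\sum_i[a_i,b_i]$ in $\g$, cyclic invariance of trace yields
\[
\trace(s\,\rho(x))=\sum_i\trace([s,\rho(a_i)]\rho(b_i))=0,
\]
while $\trace(sn)=0$ because $s$ and $n$ commute with $n$ nilpotent; hence $\trace(s^2)=\trace(s\,\rho(x))-\trace(sn)=0$.

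The main obstacle is deducing $s=0$ from $\trace(s^2)=0$; this is the heart of Cartan's criterion. I would extend scalars to $\bar\F$, list the eigenvalues of $s$ as $\lambda_1,\ldots,\lambda_d$, and let $E\leq\bar\F$ be their $\Q$-span. Suppose $E\neq 0$ and choose a nonzero $\Q$-linear $f\colon E\to\Q$. Lagrange interpolation produces $p\in\bar\F[t]$ with $p(\lambda_i)=f(\lambda_i)$, so $s_f:=p(s)$ is semisimple with eigenvalues $f(\lambda_i)$, remains a polynomial in $s$, and therefore still centralizes $\rho(\g)\tens\bar\F$ and commutes with $n$. The same trace argument with $s_f$ in place of $s$ yields $\trace(s_f\,\rho(x))=0$, and combined with $\trace(s_f n)=0$ gives $\trace(s_f\,s)=\sum_i f(\lambda_i)\lambda_i=0$ in $\bar\F$. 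Applying $f$ produces $\sum_i f(\lambda_i)^2=0$ in $\Q$, which forces every $f(\lambda_i)=0$ and thus $f=0$---a contradiction. Hence $E=0$ and $s=0$.

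For the ``in particular'' claim, $\nil(\g)\leq\nil_\ad(\g)$ is immediate from $\nil(\g)=\bigcap_\rho\nil_\rho(\g)$, and $\nil(\g)\sub\cN(\g)\sub[\g,\g]$ by the forward direction (every element of $\nil(\g)$ acts nilpotently in every representation). Conversely, $\nil_\ad(\g)\cap[\g,\g]$ is an ideal whose elements all lie in $\cN(\g)$ by the equivalence just proved, so by Engel's theorem it is a nilpotency ideal for every $\rho$, hence contained in $\nil_\rho(\g)$ for all $\rho$ and therefore in $\nil(\g)$.
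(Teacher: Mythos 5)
Your proof is correct, but your converse direction takes a genuinely different route from the paper's. The paper disposes of that implication in one line by invoking Proposition~\ref{p:J2} (with $x_s=0$): for a \emph{semisimple} representation $\rho$, the hypotheses $x\in[\g,\g]$ and $\ad(x)$ nilpotent force $\rho(x)=\rho(x)_n$; the proof of that appendix proposition reduces to irreducible summands over the algebraic closure and uses Schur's lemma together with tracelessness of commutators, and one then appeals to the remark that membership in $\cN(\g)$ is detected by semisimple representations alone. You instead work with an \emph{arbitrary} representation: after showing (via the same Jordan-decomposition bookkeeping as the paper's appendix) that the semisimple part $s$ of $\rho(x)$ centralizes $\rho(\g)$, you run the Cartan-criterion trace argument --- $\trace(s^2)=0$, then the $\Q$-linear form and Lagrange interpolation trick --- to kill $s$. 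This is essentially the content of the paper's Lemma~\ref{l:cc}, redeveloped inline; what it buys you is that you never need to reduce to semisimple representations or invoke Schur's lemma, at the cost of duplicating an argument the paper keeps separate for Cartan's criterion. (Minor point: in $\trace(s_f s)=\sum_i f(\lambda_i)\lambda_i$ the eigenvalues must be counted with multiplicity, but since multiplicities are positive integers the conclusion $f(\lambda_i)=0$ is unaffected.) Your forward direction matches the paper's, and your deduction of $\nil(\g)=\nil_{\ad}(\g)\cap[\g,\g]$, which the paper merely asserts, is spelled out correctly.
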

\begin{proof} If $x\in \cN(\g)$ then $\ad x$ is nilpotent and $x$ is trivial
in any $1$-dimensional representation. Conversely, if $x\in [\g,\g]$ with
$\ad(x)$ nilpotent, then $\rho(x)$ is nilpotent in any semisimple
representation $\rho\colon\g\to \gl(V)$ by Proposition~\ref{p:J2}.  Hence
$x\in \cN(\g)$.
\end{proof}

\begin{lemma}\label{l:cc} Suppose $\gl(V)=\h\dsum\m$ with $\h\leq\g$, $\m
\sub\h^\perp$ and $[\h,\m]\sub \m$. For subspaces $\b\sub \a\sub \h$, let
$\u=\c_{\h}(\a,\b)$.  Then any element of $\u\cap\u^\perp\sub\gl(V)$ is
nilpotent.
\end{lemma}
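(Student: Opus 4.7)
The plan is to adapt the classical Cartan nilpotency criterion for $\gl(V)$ (whose conclusion says that if $x$ is orthogonal, via the trace form, to every endomorphism $y$ with $[y,\a]\sub\b$, then $x$ is nilpotent) to the present ``relative'' setting, using the hypothesis $\gl(V)=\h\dsum\m$ with $\m\sub\h^\perp$ and $[\h,\m]\sub\m$ to bridge the gap between $\u=\c_\h(\a,\b)$ and $\c_{\gl(V)}(\a,\b)$.

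The first step is the key bridging observation: for any $y\in\gl(V)$ with $[y,\a]\sub\b$, I claim $\trace(xy)=0$. Decompose $y=y_\h+y_\m$ according to $\gl(V)=\h\dsum\m$. Since $\a\sub\h$ we have $[y_\h,\a]\sub\h$ and $[y_\m,\a]\sub\m$ by $[\h,\m]\sub\m$; combined with $[y,\a]\sub\b\sub\h$, this forces $[y_\h,\a]\sub\b$, so $y_\h\in\u$. Hence $\trace(xy_\h)=0$ because $x\in\u^\perp$, while $\trace(xy_\m)=0$ because $x\in\u\sub\h$ and $y_\m\in\m\sub\h^\perp$. Adding gives $\trace(xy)=0$.

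The second step runs the standard eigenvalue argument. Extending scalars to $\overline\F$, take the Jordan decomposition $x=x_s+x_n$ in $\gl(V)$, and choose a basis diagonalizing $x_s$ with eigenvalues $\lambda_1,\ldots,\lambda_m$. Because $\ad x$ maps $\a$ into $\b$ and $\b\sub\a$, the pair $\b\sub\a$ is invariant under $\ad x$ and the induced map on $\a/\b$ is zero; since $\ad x_s$ is the semisimple part of $\ad x$ (a polynomial in $\ad x$ with no constant term), it too preserves $\b\sub\a$ and vanishes on $\a/\b$, i.e.\ $[x_s,\a]\sub\b$. Let $E\sub\overline\F$ be the $\Q$-span of the $\lambda_i$; for any $\Q$-linear $f\colon E\to\Q$, let $y_f$ be the diagonal endomorphism with entries $f(\lambda_i)$. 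Standard Lagrange interpolation produces a polynomial $p$ with $p(\lambda_i-\lambda_j)=f(\lambda_i)-f(\lambda_j)$ and $p(0)=0$, so $\ad y_f=p(\ad x_s)$; hence $\ad y_f$ also kills $\a/\b$ and $[y_f,\a]\sub\b$.

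By the first step, $\trace(x y_f)=\sum \lambda_i f(\lambda_i)=0$; applying $f$ yields $\sum f(\lambda_i)^2=0$, so $f\equiv0$ on $E$. Since $f$ was arbitrary, $E=0$, hence $x_s=0$ and $x=x_n$ is nilpotent. The main conceptual hurdle is the first step: without the compatibility package $(\m\sub\h^\perp,\ [\h,\m]\sub\m)$, the replacement element $y_f$ constructed from $x_s$ need not lie in $\h$, so the perpendicularity $x\in\u^\perp$ alone would not be enough to force $\trace(xy_f)=0$; the decomposition hypothesis is precisely what lets us discard the $\m$-component of $y_f$ at no cost.
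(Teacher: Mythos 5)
Your proof is correct and follows essentially the same route as the paper: your ``bridging'' step is exactly the content of the paper's reduction to $\m=0$ via Lemma~\ref{l:dsum-cent} (decomposing $y=y_\h+y_\m$, noting $y_\h\in\u$ and $\trace(xy_\m)=0$ since $\m\sub\h^\perp$), and your second step is the standard Cartan eigenvalue argument with $\Q$-linear forms and Lagrange interpolation, as in the paper's use of Lemma~\ref{l:ad-ss}.
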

\begin{proof}[\pfin{Bou:gal,Hum:lar}] By Lemma~\ref{l:dsum-cent}, $\c_{\gl(V)}(
\a,\b)=\u\dsum(\c_{\gl(V)}(\a)\cap\m)$, and $\m\sub\h^\perp$ so
$\u^\perp\cap\h\sub \c_{\gl(V)}(\a,\b)^\perp$. Hence it suffices to prove the
result for $\m=0$.

Let $x=x_s+x_n$ be the Jordan decomposition of $x\in\u\leq\gl(V)$, let $\F^c$
be a splitting field for $x_s$, let $\cS\sub\F^c$ be the set of eigenvalues
${x_s}^c$, and let $f\colon \F^c\to \Q$ be a $\Q$-linear form on
$\F^c$. Define $y\in\gl(V^c)$ to be scalar multiplication by $f(\lam)$ on the
$\lam$-eigenspace of $x_s$ for all $\lam\in\cS$. Then by Lemma~\ref{l:ad-ss},
$\ad y$ is a polynomial with no constant term in $\ad {x_s}^c$, hence in $\ad
x^c$, so that $y\in\u^c$.  If also $x\in\u^\perp$, then $0=\trace(x^c
y)=\sum_{\lam\in\cS} m(\lam) \lam f(\lam)$, where $m(\lam)\in\Z^+$ is the
multiplicity of $\lam$. Applying $f$, we obtain $f(\lam)=0$ for all
$\Q$-linear forms $f$ and all $\lam\in\cS$. Hence $x_s=0$ and $x$ is
nilpotent.
\end{proof}

\begin{prop}[Cartan's criterion]\label{p:cc} Suppose $\rho\colon\g\to\gl(V)$
is a representation.  Then, with respect to the induced trace form,
$\g^\perp\cap[\g,\g]\leq\nil_\rho(\g)$.
\end{prop}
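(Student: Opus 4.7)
The plan is to show that $\cI := \g^\perp \cap [\g,\g]$ is itself a nilpotency ideal of $\rho$, which by definition of $\nil_\rho(\g)$ gives the desired inclusion $\cI \leq \nil_\rho(\g)$. First I would note that $\cI$ is an ideal of $\g$: invariance of the trace form makes $\g^\perp$ an ideal, and $[\g,\g]$ is always an ideal, so the intersection is too. By Engel's theorem, an ideal is a nilpotency ideal for $\rho$ precisely when its elements act nilpotently, so it suffices to prove that $\rho(x) \in \gl(V)$ is nilpotent for each $x \in \cI$.

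To establish nilpotence, I would apply Lemma~\ref{l:cc} with $\h = \gl(V)$ and $\m = 0$, which makes the decomposition and compatibility conditions vacuous, taking $\a = \rho(\g)$ and $\b = \rho(\cI)$. This defines
\[
\u = \c_{\gl(V)}(\rho(\g), \rho(\cI)) = \{y \in \gl(V) : [y, \rho(\g)] \subseteq \rho(\cI)\},
\]
and the lemma guarantees that every element of $\u \cap \u^\perp$ is nilpotent in $\gl(V)$. The task thus reduces to showing $\rho(x) \in \u \cap \u^\perp$ for $x \in \cI$. Membership $\rho(x) \in \u$ is immediate from the ideal property: $[\rho(x), \rho(\g)] = \rho([x, \g]) \subseteq \rho(\cI)$.

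For $\rho(x) \in \u^\perp$, write $x = \sum_i [a_i, b_i]$ with $a_i, b_i \in \g$, which is possible since $x \in [\g,\g]$. For any $y \in \u$, the bracket $[\rho(b_i), y]$ lies in $\rho(\cI)$, so I can choose $w_i \in \cI$ with $[\rho(b_i), y] = \rho(w_i)$. The trace identity $\trace([A, B] C) = \trace(A [B, C])$ then gives
\[
\trace(\rho(x) y) = \sum_i \trace(\rho(a_i) [\rho(b_i), y]) = \sum_i \ip{a_i, w_i} = 0,
\]
the vanishing because $w_i \in \cI \subseteq \g^\perp$. This $\u^\perp$-step is the main obstacle: both defining features of $\cI$ are essential, the $[\g,\g]$ part to express $\rho(x)$ as a sum of commutators and trigger the trace identity, and the $\g^\perp$ part to kill the right-hand side once the ideal property has returned each $w_i$ to $\cI$. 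Once $\rho(x) \in \u \cap \u^\perp$ is in hand, Lemma~\ref{l:cc} yields nilpotence of $\rho(x)$, and Engel's theorem upgrades this elementwise fact to the conclusion that $\cI$ is a nilpotency ideal for $\rho$, completing the proof.
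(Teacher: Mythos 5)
Your proof is correct and follows essentially the same route as the paper: both apply Lemma~\ref{l:cc} with $\m=0$ and $\a=\rho(\g)$, place $\rho(\g^\perp\cap[\g,\g])$ inside $\u\cap\u^\perp$, and conclude via Engel's theorem that $\g^\perp\cap[\g,\g]$ is a nilpotency ideal. The only difference is cosmetic: the paper takes $\b=\rho(\g^\perp)$ and deduces $\rho([\g,\g])\leq\u^\perp$ from the identity $\c_{\gl(V)}(\s,\t^\perp)=[\s,\t]^\perp$, whereas you take $\b=\rho(\cI)$ and verify $\rho(x)\in\u^\perp$ by an explicit (and correct) trace computation.
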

\begin{proof} Take $\b=\rho(\g^\perp)$, $\a=\rho(\g)$ and $\m=0$
in Lemma~\ref{l:cc}, so that $\u=\c_{\gl(V)}(\rho(\g),\rho(\g^\perp))$ (and
hence $\rho(\g^\perp)\idealin\u$).  Since
$\u\leq\c_{\gl(V)}(\rho(\g),\rho(\g)^\perp)$,
$\rho([\g,\g])=[\rho(\g),\rho(\g)]\leq \u^\perp$.  Hence
$\rho(\g^\perp\cap[\g,\g])\leq \u\cap\u^\perp$, and so
$\g^\perp\cap[\g,\g]\idealin\g$ is a nilpotency ideal for $\rho$.
\end{proof}
\begin{thm}\label{t:red} A finite dimensional Lie algebra $\g$ over a field of
characteristic zero is reductive if and only if it admits a nondegenerate
trace form.  Then $\g=\z(\g)\dsum[\g,\g]$, the sum is orthogonal, and
$[\g,\g]\cong \ad(\g)=\c_{\der(\g)}(\z(\g))\cong\der([\g,\g])$ is semisimple.
\end{thm}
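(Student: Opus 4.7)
The implication from nondegenerate trace form to reductivity is Corollary~\ref{c:tf}(2), so my plan focuses on the converse and the structural statements. Given $\g$ reductive, the definition supplies a faithful semisimple representation $\rho\colon\g\to\gl(V)$. Since $\nil_\rho(\g)=0$, Cartan's criterion (Proposition~\ref{p:cc}) applied to the trace form $B_\rho$ yields $\g^\perp\cap[\g,\g]=0$, and invariance of $B_\rho$ makes $\g^\perp$ an ideal, so $[\g,\g^\perp]\sub\g^\perp\cap[\g,\g]=0$, forcing $\g^\perp\sub\z(\g)$. To pass from this to a genuinely nondegenerate trace form I choose a basis $z_1,\ldots,z_r$ of $\g^\perp$; these remain linearly independent in $\g/[\g,\g]$, so there exist dual one-dimensional representations $\chi_j\colon\g\to\F$ (\ie\ linear functionals on $\g/[\g,\g]$) with $\chi_j(z_i)=\delta_{ij}$. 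The augmented representation $\rho':=\rho\oplus\chi_1\oplus\cdots\oplus\chi_r$ is still faithful and semisimple, and any $x$ in the radical of $B_{\rho'}$ satisfies $0=B_{\rho'}(x,z_j)=B_\rho(x,z_j)+\chi_j(x)=\chi_j(x)$ for all $j$; then $B_{\rho'}(x,\cdot)=B_\rho(x,\cdot)$, placing $x\in\g^\perp=\spn\{z_1,\ldots,z_r\}$, and the dual-basis relation forces $x=0$.

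Assuming now a nondegenerate trace form $B$ on $\g$, invariance gives $[\g,\g]\sub\z(\g)^\perp$; conversely, if $y\in[\g,\g]^\perp$ then invariance plus nondegeneracy force $[y,\g]=0$, so $y\in\z(\g)$, whence $[\g,\g]^\perp=\z(\g)$ and $\dim[\g,\g]+\dim\z(\g)=\dim\g$. Since $\z(\g)\sub\nil_{\ad}(\g)$, Proposition~\ref{p:nil-cone-char} yields $\z(\g)\cap[\g,\g]\sub\nil(\g)=0$, so $\g=\z(\g)\oplus[\g,\g]$ is an orthogonal direct sum of ideals (hence a Lie algebra direct product). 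Writing $\gp:=[\g,\g]$, the restricted trace form is nondegenerate and $\z(\gp)\sub\z(\g)\cap\gp=0$, so $\gp$ is reductive with trivial centre; reapplying the decomposition inside $\gp$ gives $\gp=[\gp,\gp]$.

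Semisimplicity of $\gp$ I prove by induction on $\dim\gp$. Take a minimal nonzero ideal $\a\idealin\gp$ (which is automatically an ideal of $\g$ since $\z(\g)$ commutes with $\gp$). Minimality forces $[\a,\a]=0$ or $\a$; in the abelian case $\a$ would be a nonzero nilpotent ideal of $\gp$ lying in $\nil_{\ad}(\gp)\cap[\gp,\gp]=\nil(\gp)=0$, a contradiction, so $[\a,\a]=\a$. The ideal $\a\cap\a^\perp$ of $\gp$ is $0$ or $\a$ by minimality; if it were $\a$, then $B|_\a=0$, and Cartan applied to $(\a,\rho'|_\a)$ would give $\a=[\a,\a]\sub\nil_{\rho'|_\a}(\a)$, \ie\ $\rho'(x)$ nilpotent for all $x\in\a$. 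Since $\a\idealin\g$, Engel's theorem would then force $\a$ to act trivially on every irreducible $\g$-subquotient of $V$; by semisimplicity of $V$ over $\g$, $\a\sub\ker\rho'=0$, a contradiction. Hence $\gp=\a\oplus\a^\perp$ as a direct product of ideals: $\a$ is simple (any ideal of $\a$ is an ideal of $\gp$, by the product structure), while $\a^\perp$ is a smaller reductive Lie algebra with trivial centre, hence semisimple by induction. For the derivation identifications, $\ker\ad=\z(\g)$ gives $\ad(\g)\cong\g/\z(\g)\cong\gp$; inner derivations kill $\z(\g)$, placing $\ad(\g)\sub\c_{\der(\g)}(\z(\g))$. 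Conversely, any $\delta\in\der(\g)$ preserves $\gp$; the Killing form of the semisimple $\gp$ is nondegenerate (Cartan applied to $\ad_\gp$, using that $\gp$ is perfect with trivial centre), and the standard trace-form argument inside $\gl(\gp)$ then yields $\der(\gp)=\ad(\gp)$, so $\delta|_\gp=\ad_\gp x$ for some $x\in\gp$; combining with $\delta|_{\z(\g)}=0$ (the defining condition of $\c_{\der(\g)}(\z(\g))$) gives $\delta=\ad_\g x$. The hard step is this semisimplicity induction, specifically excluding a totally $B$-isotropic minimal ideal via the combined Cartan--Engel argument, which crucially uses that $\a$ is an ideal of the ambient $\g$ and that $\rho'$ is faithful and semisimple.
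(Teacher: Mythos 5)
Your proof is correct and follows essentially the same route as the paper: Cartan's criterion (Proposition~\ref{p:cc}) forces $\g^\perp\sub\z(\g)$, one-dimensional representations of $\g/[\g,\g]$ repair the degeneracy, Proposition~\ref{p:nil-cone-char} gives the orthogonal splitting $\g=\z(\g)\dsum[\g,\g]$, and the complementary-ideal trick in a derivation algebra yields the final identifications. The only local divergence is the semisimplicity of $[\g,\g]$: the paper cites Dieudonn\'e's Proposition~\ref{a:rad-ss}, whose minimal-ideal induction kills a totally isotropic minimal ideal instantly via Proposition~\ref{a:pif}~(4) (isotropic ideals are abelian when the form is nondegenerate), whereas you re-run that induction and exclude the isotropic case by a heavier Cartan--Engel argument on the restriction of $\rho'$ to $\a$ --- valid, but redundant once you have shown $[\a,\a]=\a$.
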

\begin{proof} If $\g$ is reductive, it has a faithful semisimple
representation $\rho$, and $\g^\perp\cap[\g,\g]=0$ with respect to the induced
trace form by Proposition~\ref{p:cc}, so $\g^\perp\sub[\g,\g]^\perp=
\c_{\g}(\g,\g^\perp)=\z(\g)$ by Proposition~\ref{a:pif} (1).  Now $[\g,\g]^\perp=
\z(\g)\sub\nil_{\ad}(\g)$ has trivial intersection with $[\g,\g]$ by
Proposition~\ref{p:nil-cone-char}, so the trace form is nondegenerate on
$[\g,\g]$.  Since $\g^\perp\leq\z(\g)$, we may add a representation of
$\g/[\g,\g]$ to $\rho$ to make the trace form nondegenerate. Conversely, the
existence of a nondegenerate trace form implies $\g$ is reductive by
Corollary~\ref{c:tf}.

Since $\nil_{\ad}(\g)\cap[\g,\g]=0$, Dieudonn\'e's famous
observation~\cite{Die:slg} (see Proposition~\ref{a:rad-ss}) shows that
$[\g,\g]$ is semisimple.  Thus $[\g,\g]\cong\ad(\g)$ is a nondegenerate ideal
in $\der(\g)\leq\gl(\g)$, and hence $\h:=\ad(\g)^\perp\cap\der(\g)$ is a
complementary ideal in $\der(\g)$.  Now for any $D\in \h$ and $x\in\g$,
$0=[D,\ad(x)]=\ad(D(x))$, so $D(x)\in\z(\g)$ and hence $D$ vanishes on
$[\g,\g]$; thus $\ad(\g)=\c_{\der(\g)}(\z(\g))\cong\der([\g,\g])$.
\end{proof}

\begin{defn} An \emph{admissible form} on a Lie algebra $\g$ is a trace form
with $\g^\perp=\nil(\g)$.
\end{defn}
Admissible forms always exist: just pull back a nondegenerate trace form on
the reductive quotient $\g/\nil(\g)$. They can be used to construct linear
subspaces of $\cN(\g)$.

\begin{prop}\label{p:rcc} Let $\b\sub\a\sub\g$ be subspaces of a Lie
algebra $\g$ which contain $\nil(\g)$ and let $\u=\c_{\g}(\a,\b)$. Then for
any admissible form on $\g$, $\u\cap\u^\perp\sub \cN(\g)$.
\end{prop}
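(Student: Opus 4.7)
The plan is to reduce to the case of a reductive Lie algebra and then apply Lemma~\ref{l:cc} inside an ambient $\gl(V)$.

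First I would observe that $\nil(\g) \sub \u \cap \u^\perp$: since $\nil(\g) \idealin \g$ and $\nil(\g) \sub \b$, one has $[\nil(\g), \a] \sub \nil(\g) \sub \b$, so $\nil(\g) \sub \u$, and $\nil(\g) = \g^\perp \sub \u^\perp$ automatically. Letting $\pi \colon \g \to \g_0 := \g/\nil(\g)$ and writing $\a_0, \b_0, \u_0$ for the images, a direct check gives $\u_0 = \c_{\g_0}(\a_0, \b_0)$; the admissible form descends to a nondegenerate invariant form $\ip{\cdot,\cdot}_0$ on $\g_0$, and $\u_0^{\perp_0} = \pi(\u^\perp)$. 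Since $\cN(\g) = \pi^{-1}(\cN(\g_0))$, it suffices to prove $\u_0 \cap \u_0^{\perp_0} \sub \cN(\g_0)$. To invoke Lemma~\ref{l:cc}, I would also verify that $\ip{\cdot,\cdot}_0$ is itself a trace form of a faithful representation of $\g_0$: starting from $\rho \colon \g \to \gl(V)$ realizing the admissible form, Engel's theorem produces a filtration of $V$ on whose associated graded $\nil(\g)$ acts trivially, and the induced representation $\bar\rho$ on $\gr V$ factors through $\g_0$ with trace form $\ip{\cdot,\cdot}_0$ (as one sees by computing traces in a filtration-adapted basis); nondegeneracy then forces $\bar\rho$ faithful.

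Thus it suffices to treat the case where $\g$ is reductive, $\rho \colon \g \to \gl(V)$ is faithful, and $\ip{\cdot,\cdot}$ is nondegenerate. Setting $\h = \rho(\g)$ and $\m = \h^{\perp_\tau}$, where $\tau$ is the trace form on $\gl(V)$, faithfulness and nondegeneracy of $\ip{\cdot,\cdot}$ yield $\gl(V) = \h \dsum \m$, and invariance of $\tau$ forces $[\h, \m] \sub \m$; hence Lemma~\ref{l:cc} applies with $\a' = \rho(\a)$, $\b' = \rho(\b)$, and $\u_\h := \c_\h(\a', \b') = \rho(\u)$ (by faithfulness). For $x \in \u \cap \u^\perp$ one has $\rho(x) \in \u_\h$, and the identification $\ip{\cdot,\cdot} = \tau \circ (\rho \times \rho)$ places $\rho(x) \in \u_\h^{\perp_\tau}$; so Lemma~\ref{l:cc} makes $\rho(x)$ nilpotent in $\gl(V)$. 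Consequently $\ad\rho(x) = L_{\rho(x)} - R_{\rho(x)}$ is nilpotent on $\gl(V)$, and by faithfulness $\ad x$ is nilpotent on $\g$.

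To finish via Proposition~\ref{p:nil-cone-char} I would verify $x \in [\g, \g]$. Orthogonally decomposing $\g = \z(\g) \dsum [\g, \g]$ (Theorem~\ref{t:red}) and writing $x = z + y$, I note that $\z(\g) \sub \u$ since $[\z(\g), \g] = 0 \sub \b$; then $x \in \u^\perp$ forces $\ip{z, \z(\g)} = 0$, and nondegeneracy of the form (inherited by $\z(\g)$ from the orthogonal decomposition) yields $z = 0$. Combined with nilpotency of $\ad x$, this gives $x \in \cN(\g)$. The main subtlety is the identification of the descended form as a trace form in the reduction step, which is needed to bring Lemma~\ref{l:cc} into play; beyond this, the argument is close in spirit to the proof of Cartan's criterion in Proposition~\ref{p:cc}.
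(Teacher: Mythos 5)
Your proof is correct and follows essentially the same route as the paper's: apply Lemma~\ref{l:cc} to the decomposition $\gl(V)=\rho(\g)\dsum\rho(\g)^\perp$ to get $\rho(x)$ nilpotent, then verify $x\in[\g,\g]$ and invoke Proposition~\ref{p:nil-cone-char}. Your preliminary reduction to the reductive quotient (checking that the admissible form is realised by a faithful representation of $\g/\nil(\g)$, via the associated graded representation) is a worthwhile extra care that justifies the direct sum decomposition the paper simply asserts, and your use of $\g=\z(\g)\dsum[\g,\g]$ in place of the paper's observation that $\c_{\g}(\g,\g^\perp)\sub\u$ forces $\u^\perp\sub[\g,\g]$ is an equivalent variant of the final step.
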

\begin{proof} Any admissible form is induced by a semisimple
representation $\rho\colon \g\to\gl(V)$. Now $\gl(V)=\rho(\g)\dsum
\rho(\g)^\perp$ and $\rho(\u)= \c_{\rho(\g)}(\rho(\a),\rho(\b))$, so for any
$x\in \u\cap\u^\perp$, $\rho(x)$ is nilpotent by Lemma~\ref{l:cc}, and so
$\ad(\rho(x))$ is nilpotent on $\rho(\g)$. Since $\c_{\g}(\g,\g^\perp)\sub
\u$, $\u^\perp\sub [\g,\g]$ by Proposition~\ref{a:pif} (1) and so
$\rho(x)\sub[\rho(\g),\rho(\g)]$.  Thus $\rho(x)\in \cN(\rho(\g))$ by
Proposition~\ref{p:nil-cone-char}, and so $x\in\cN(\g)$.
\end{proof}

\section{Parabolic subalgebras}\label{s:ps}

\subsection{General definition} Henceforth, the characteristic of underlying  
field $\F$ will be zero.

\begin{defn} A subalgebra $\p$ of a Lie algebra $\g$ is
\emph{parabolic in $\g$} if it contains the normalizer $\n_{\g}(\m)$ of a
maximal nil subalgebra $\m\sub\cN(\g)$ of $\g$.
\end{defn}
Any Lie algebra $\g$ is a parabolic subalgebra of itself, and more generally,
if $\p\leq\q\leq\g$ are subalgebras such that $\p$ is parabolic in $\g$ then
$\q$ is parabolic in $\g$. At the other extreme, the minimal parabolic
subalgebras of $\g$ are the normalizers of its maximal nil subalgebras.

\begin{prop} Let $\p$ be a subalgebra of $\g$ such that $\nil(\p)$
contains $\nil(\g)$. Then $\p$ is parabolic in $\g$ if and only if
$\p/\nil(\g)$ is parabolic in $\g/\nil(\g)$.
\end{prop}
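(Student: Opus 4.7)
The plan is to reduce everything to the observation (from the remarks following the definition of the nilpotent cone in Section~\ref{s:elt}) that $\cN(\g)=\pi^{-1}(\cN(\g/\nil(\g)))$, where $\pi\colon\g\to\g/\nil(\g)$ is the quotient map. Once this is in hand, maximal nil subalgebras and their normalizers should transport cleanly across $\pi$, and the equivalence will follow essentially by inspection.

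The first and key step is to show that every maximal nil subalgebra $\m\leq\g$ automatically contains $\nil(\g)$. Since $\nil(\g)\idealin\g$, the sum $\m+\nil(\g)$ is a Lie subalgebra of $\g$; since $\pi$ kills $\nil(\g)$, every element of $\m+\nil(\g)$ projects into $\cN(\g/\nil(\g))$ and hence lies in $\cN(\g)$ by the preimage description. Thus $\m+\nil(\g)$ is a nil subalgebra containing $\m$, and maximality of $\m$ forces $\nil(\g)\sub\m$. This immediately yields a bijection $\m\leftrightarrow\pi(\m)$ between maximal nil subalgebras of $\g$ and those of $\g/\nil(\g)$: the preimage of any maximal nil $\overline\m\leq\g/\nil(\g)$ is a subalgebra of $\g$ contained in $\cN(\g)$ (again by the preimage characterization), and any enlargement would project to a strictly larger nil subalgebra of $\g/\nil(\g)$.

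Next I would check that normalizers correspond under $\pi$, that is, $\pi(\n_{\g}(\m))=\n_{\g/\nil(\g)}(\pi(\m))$ for any such $\m$. The inclusion $\sub$ is immediate; for the reverse, if $[\pi(y),\pi(\m)]\sub\pi(\m)$ then $[y,\m]\sub\pi^{-1}(\pi(\m))=\m+\nil(\g)=\m$ (using $\nil(\g)\sub\m$), so $y\in\n_{\g}(\m)$. Combining these, the hypothesis $\nil(\p)\supset\nil(\g)$ yields $\nil(\g)\sub\p$, so $\pi^{-1}(\pi(\p))=\p$, and the biconditional is then transparent: $\p\supset\n_{\g}(\m)$ for some maximal nil $\m\leq\g$ iff $\pi(\p)\supset\n_{\g/\nil(\g)}(\pi(\m))$ for the corresponding maximal nil $\pi(\m)\leq\g/\nil(\g)$.

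The only potential obstacle is the first step, since the bijection of maximal nil subalgebras depends on knowing $\nil(\g)\sub\m$, but this dissolves immediately once the preimage characterization of $\cN(\g)$ is invoked. Everything else is formal manipulation with the quotient map and the definitions.
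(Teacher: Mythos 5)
Your proposal is correct and follows essentially the same route as the paper: both rest on the identification $\cN(\g/\nil(\g))=\cN(\g)/\nil(\g)$, the resulting correspondence between maximal nil subalgebras of $\g$ (all of which contain $\nil(\g)$) and those of $\g/\nil(\g)$, and the compatibility of normalizers with the quotient map. Your write-up merely makes explicit the step, left implicit in the paper, that maximality forces $\nil(\g)\sub\m$ (via the nil subalgebra $\m+\nil(\g)$), which is a worthwhile detail to record.
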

\begin{proof} Since $\nil(\g)$ is an ideal in $\g$ and $\cN(\g/\nil(\g))=
\cN(\g)/\nil(\g)$, $\m$ is a maximal nil subalgebra of $\g$ if and only if
$\m$ contains $\nil(\g)$ and $\m/\nil(\g)$ is a maximal nil subalgebra of
$\g/\nil(\g)$; now $\n_{\g}(\m)/\nil(\g)=\n_{\g/\nil(\g)}(\m/\nil(\g))$ and
the result follows.
\end{proof}
For $\q\leq\g$, define $\nil_\g(\q):=\nil_{\ad_\g}(\q)\cap[\g,\g]$, the
largest ideal of $\q$ contained in $\cN(\g)$.

\begin{prop}\label{p:max-nil} Suppose $\nil(\g)\leq\m\leq\g$ and
let $\q=\n_{\g}(\m)$.  Then for any admissible form on $\g$, \textup{(1)}
$\m=\nil_{\g}(\q)\implies \m=\q\cap\q^\perp$, and \textup{(2)}
$\m=\q\cap\q^\perp\implies\m=\q^\perp$.
\end{prop}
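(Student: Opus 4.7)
The plan is to deduce both implications from a single application of Proposition~\ref{p:rcc} combined with ad-invariance of $\q^\perp$. The common step is as follows: by ad-invariance of the form, $[\q,\q^\perp]\sub\q^\perp$, so $\q\cap\q^\perp$ is a $\q$-invariant subspace of $\q$, hence an ideal of $\q$. Since $\nil(\g)\sub\m$, Proposition~\ref{p:rcc} applies with $\a=\b=\m$: the upper-adjoint computation gives $\u=\c_{\g}(\m,\m)=\n_{\g}(\m)=\q$, so that
\[
\q\cap\q^\perp=\u\cap\u^\perp\sub\cN(\g).
\]

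For part~(1), since $\m=\nil_{\g}(\q)$ is, by definition, an ideal of $\q$ contained in $\cN(\g)$, it is a nil subalgebra with $\n_{\g}(\m)=\q$; Corollary~\ref{c:tf}(4) then gives $\m\sub\q^\perp$, so combined with $\m\sub\q$ we have $\m\sub\q\cap\q^\perp$. Conversely, the common step exhibits $\q\cap\q^\perp$ as an ideal of $\q$ sitting inside $\cN(\g)$, and $\nil_{\g}(\q)$ is by its very definition the largest such ideal, so $\q\cap\q^\perp\sub\nil_{\g}(\q)=\m$, yielding equality.

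For part~(2), the common step gives $\m=\q\cap\q^\perp\sub\cN(\g)$, making $\m$ (as an ideal of $\q$, hence a subalgebra) a nil subalgebra of $\g$; the inclusion $\m\sub\q^\perp$ is immediate from the hypothesis. For the reverse, note that $\q^\perp$ is $\q$-invariant and therefore $\m$-invariant (since $\m\sub\q$), and $\ad(\m)$ acts nilpotently on $\q^\perp$ because $\m\sub\cN(\g)$. Engel's theorem then produces an ascending filtration $0=F_0\sub F_1\sub\cdots\sub F_N=\q^\perp$ with $[\m,F_{k+1}]\sub F_k$. I claim each $F_k\sub\m$, by induction on $k$: the base case is immediate, and if $F_k\sub\m$, then any $v\in F_{k+1}$ satisfies $[v,\m]\sub F_k\sub\m$, so $v\in\n_{\g}(\m)=\q$, whence $v\in\q^\perp\cap\q=\m$. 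Setting $k=N$ gives $\q^\perp\sub\m$, and equality follows.

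The crux of the argument is the choice $\a=\b=\m$ in Proposition~\ref{p:rcc}, which converts the normalizer hypothesis $\q=\n_{\g}(\m)$ into the containment $\q\cap\q^\perp\sub\cN(\g)$ in one stroke. For part~(2), the additional conceptual step is to realise that once $\m$ is known to be a nil subalgebra, the Engel filtration of $\q^\perp$ under $\ad(\m)$ absorbs $\q^\perp$ into $\m$ one layer at a time, with the equality $\q=\n_{\g}(\m)$ performing the absorption at every stage.
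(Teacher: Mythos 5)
Your proposal is correct and follows essentially the same route as the paper: the identification $\u=\c_{\g}(\m,\m)=\q$ in Proposition~\ref{p:rcc} to get $\q\cap\q^\perp\sub\cN(\g)$, Corollary~\ref{c:tf}(4) plus maximality of $\nil_{\g}(\q)$ for part (1), and Engel for part (2). The only (cosmetic) difference is that in part (2) you unroll Engel into an induction along the filtration of $\q^\perp$, where the paper instead observes that the $\m$-action on $\q^\perp/\m$ has kernel $(\q\cap\q^\perp)/\m=0$ and invokes Engel once.
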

\begin{proof}[\pfin{Bou:gal,Mor:ptr,PaVi:Mor}] Since $\g^\perp\leq\m
\idealin\q\leq \g$, $[\q,\q^\perp]\leq \q^\perp$ and $\q\cap\q^\perp$ is an
ideal in $\q$, which is contained in $\cN(\g)$ by Proposition~\ref{p:rcc}.  If
$\m\sub\cN(\g)$ then $\m\sub\q\cap\q^\perp$ by Corollary~\ref{c:tf} (4), and
(1) follows. Now $\q$ acts on $\q^\perp/\m$ with $\m$ as a nilpotency
ideal. However, the $\m$-action has kernel $(\q\cap\q^\perp)/\m=0$. Hence
$\q^\perp=\m$ by Engel's theorem.
\end{proof}

This result leads to the following equivalences,
cf.~\cite{Bou:gal,Bur:rps,CDS:rcd,Gro:p1}.

\begin{thm}\label{t:par-equiv} For a subalgebra $\p\leq\g$ with $\nil(\g)\leq
\nil(\p)$, the following are equivalent.
\begin{numlist}
\item There is an admissible form on $\g$ such that $\p^\perp$ is a
nilpotent subalgebra of $[\g,\g]$.
\item There is an admissible form on $\g$ such that $\p^\perp$ is a nil
subalgebra of $\g$.
\item $\p$ is parabolic in $\g$.
\item For any admissible form on $\g$, $\p^\perp\sub\p$ and $\n_{\g}(\p)=\p$.
\item $\p=\n_{\g}(\nil_{\g}(\p))$.
\item For any admissible form on $\g$, $\p^\perp=\nil_{\g}(\p)=\nil(\p)$.
\item $\dimn \g-\dimn\p =\dimn\nil(\p)-\dimn\nil(\g)$.
\end{numlist}
\end{thm}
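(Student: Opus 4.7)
The plan is to pivot on condition (6), which packages the strongest data, proving (3)~$\Leftrightarrow$~(6) as the heart, then deducing (1), (2), (4), (5), (7) from (6), and closing the cycle by (2)~$\Rightarrow$~(3) together with short routes from each remaining condition back to (6).

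The crux is (3)~$\Rightarrow$~(6), via Proposition~\ref{p:max-nil}. Given $\p \supseteq \n_\g(\m)$ for a maximal nil subalgebra $\m$, maximality forces $\nil_\g(\n_\g(\m)) = \m$ (any nil ideal of $\n_\g(\m)$ is a nil subalgebra of $\g$ extending $\m$), so Proposition~\ref{p:max-nil} gives $\m = \n_\g(\m)^\perp$ for any admissible form. Taking orthogonals in $\n_\g(\m) \leq \p$ yields $\p^\perp \leq \m \leq \p$; by invariance, $\p^\perp$ is a $\p$-invariant subspace of $\cN(\g)$, hence a nil ideal of $\p$, giving $\p^\perp \leq \nil_\g(\p)$. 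The reverse inclusion follows from Corollary~\ref{c:tf}\,(4) applied to the nil subalgebra $\nil_\g(\p) \idealin \p$: $\nil_\g(\p) \leq \n_\g(\nil_\g(\p))^\perp \leq \p^\perp$. Thus $\p^\perp = \nil_\g(\p)$, with the identification $\nil_\g(\p) = \nil(\p)$ completed by verifying $\nil_\g(\p) \leq [\p,\p]$ from invariance combined with $\p^\perp \leq \p$. For (6)~$\Rightarrow$~(3), extend $\nil(\p) = \p^\perp$ to a maximal nil $\m$; Proposition~\ref{p:max-nil} then gives $\m = \n_\g(\m)^\perp \supseteq \p^\perp$, so $\n_\g(\m) \leq \p^{\perp\perp} = \p$ (using admissibility and $\nil(\g) \leq \p$ to ensure double-orthogonality).

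The deductions (6)~$\Rightarrow$~(1), (2), (4), (5), (7) are all direct: (1) and (2) are immediate from $\nil(\p) \leq [\g,\g]$ nilpotent and $\nil_\g(\p) \leq \cN(\g)$; (7) follows by combining $\dim \p + \dim \p^\perp = \dim \g + \dim \nil(\g)$ (nondegeneracy on $\g/\nil(\g)$) with $\dim \p^\perp = \dim \nil(\p)$; (5) comes from Corollary~\ref{c:tf}\,(4) on $\nil_\g(\p)$; and (4) because $\n_\g(\p)$ normalizes $\p^\perp = \nil_\g(\p)$, forcing $\n_\g(\p) \leq \n_\g(\nil_\g(\p)) = \p$ by (5). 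To close the cycle I would use (2)~$\Rightarrow$~(3): extend $\p^\perp$ to a maximal nil $\m$ and invoke Proposition~\ref{p:max-nil} as above, getting $\n_\g(\m) \leq \p$. The remaining implications (4), (5), (7)~$\Rightarrow$~(6) each proceed by applying Proposition~\ref{p:rcc} (Cartan's criterion) with $\a = \b$ taken as $\p$ or $\nil_\g(\p)$ to force $\p \cap \p^\perp \leq \cN(\g)$, then identifying it with $\nil_\g(\p) = \p^\perp$ via Corollary~\ref{c:tf}\,(4) and the dimension count in (7).

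The main obstacle is (1)~$\Rightarrow$~(2): upgrading ``nilpotent Lie subalgebra of $[\g,\g]$'' to ``nil subalgebra of $\g$'', i.e.\ showing $\ad_\g(x)$ is nilpotent for every $x \in \p^\perp$. This is not automatic from nilpotency of $\p^\perp$ as an abstract Lie algebra (a Cartan subalgebra of $\sgl_2$ is abelian hence nilpotent, but not nil). The plan is to use Jordan decomposition $x = x_s + x_n$ in the semisimple algebra $[\g,\g]$ (Theorem~\ref{t:red}): $\ad(x_s)$ is a polynomial in $\ad(x)$ without constant term, so preserves $\p^\perp$ and restricts to an operator simultaneously semisimple and nilpotent (the latter by nilpotency of $\p^\perp$ as a Lie algebra), hence zero on $\p^\perp$. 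A trace-form computation via Proposition~\ref{p:rcc} applied to $\a = \b = \p^\perp$ then forces $x_s$ itself into $\cN(\g)$, whence $x_s = 0$ and $x = x_n \in \cN(\g)$ by Proposition~\ref{p:nil-cone-char}.
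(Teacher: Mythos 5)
Your overall architecture---pivoting on (6), proving (2)$\Rightarrow$(3) via Proposition~\ref{p:max-nil}, handling (7) by a dimension count, and using Proposition~\ref{p:rcc} to return from (4) and (5) to (6)---is sound and not far from the paper's cycle (1)$\Leftrightarrow$(2)$\Leftrightarrow$(3)$\Rightarrow$(4)$\Rightarrow$(5)$\Rightarrow$(6)$\Rightarrow$(1). However, your treatment of (1)$\Rightarrow$(2), which you rightly flag as the main obstacle, has a genuine gap at its last step. You correctly deduce that for $x\in\p^\perp$ the semisimple part satisfies $\ad(x_s)\restr{\p^\perp}=0$, \ie\ $x_s\in\c_{\g}(\p^\perp)$. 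But applying Proposition~\ref{p:rcc} with $\a=\b=\p^\perp$ gives $\u=\c_{\g}(\p^\perp,\p^\perp)=\n_{\g}(\p^\perp)$, and to conclude $x_s\in\cN(\g)$ you need $x_s\in\u\cap\u^\perp$. The membership $x_s\in\u$ is fine, but since $\p+\p^\perp\sub\n_{\g}(\p^\perp)$ we have $\u^\perp\sub\p^\perp\cap\p^{\perp\perp}$, and nothing in your argument places $x_s$ (which need not even lie in $\p^\perp$) inside $\u^\perp$. Worse, the inclusion one would want, $\p^\perp\sub\n_{\g}(\p^\perp)^\perp$, is exactly Corollary~\ref{c:tf}\,(4) for a \emph{nil} subalgebra, \ie\ it presupposes conclusion (2): the step is circular. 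The paper avoids Jordan theory here entirely: it applies Proposition~\ref{p:pair} to $\n=\p^\perp\leq\p\leq\n_{\g}(\p^\perp)$, shows by induction that $\p^{(j-1)}=(\p^{(-j)})^\perp$ for $j>0$, and uses nilpotency of $\p^\perp$ (so $\p^{(-k)}=0$ for $k$ large) to conclude that the filtration exhausts $\g$; every $x\in\p^\perp$ then shifts the filtration and so is $\ad_\g$-nilpotent, and Proposition~\ref{p:nil-cone-char} finishes. You should substitute that argument (or otherwise manufacture an isotropic position for $x_s$). Note also that the theorem does not assume $\g$ reductive, so your appeal to Theorem~\ref{t:red} and Jordan decomposition in ``the semisimple algebra $[\g,\g]$'' would in any case require first passing to $\g/\nil(\g)$.

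A smaller but real imprecision: in (3)$\Rightarrow$(6) (and implicitly in (4),(5)$\Rightarrow$(6)) you obtain $\p^\perp=\nil_{\g}(\p)$ and then claim $\nil_{\g}(\p)=\nil(\p)$ follows ``from invariance combined with $\p^\perp\leq\p$''. Invariance only yields $[\p,\p^\perp]\sub\p^\perp$; the containment actually needed is the reverse one $\p^\perp\sub[\p,\p^\perp]+\g^\perp$, which (together with $[\p,\nil_{\g}(\p)]\sub\nil(\p)$ and $\g^\perp=\nil(\g)\leq\nil(\p)$) gives $\p^\perp\sub\nil(\p)$. That containment requires the equality $\p=\n_{\g}(\p^\perp)=[\p,\p^\perp]^\perp$, \ie\ self-normalization, which is why the paper derives (6) from (5) rather than directly from (3). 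The fix is easy in your scheme---once $\p^\perp=\nil_{\g}(\p)$ is known to be nil, Corollary~\ref{c:tf}\,(4) gives $\n_{\g}(\p^\perp)\sub\p^{\perp\perp}=\p$, whence the required equality---but it must be said.
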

\begin{proof} By Corollary~\ref{c:tf}, $\nil(\p)\leq\nil_{\g}(\p)\leq\p^\perp$
for any admissible form on $\g$; also, since $\g^\perp\leq\p\leq\g$,
$\p\sub\n_{\g}(\p)=\n_{\g}(\p^\perp)=[\p,\p^\perp]^\perp$ by
Proposition~\ref{a:pif} (1). In particular (6) and (7) are equivalent, since
$\dimn \p^\perp=\dimn\g-\dimn\p+\dimn\g^\perp$ and $\g^\perp=\nil(\g)$.

(1)$\Leftrightarrow$(2) (cf.~\cite{Bur:rps}). Since $\p^\perp\leq\g$ and $\p\leq
\n_{\g}(\p^\perp)$, Proposition~\ref{p:pair} yields subspaces $\p^{(j)}$ of
$\g$ with $\p^{(-1)}=\p^\perp$, $\p^{(0)}=\p$ and $[\p^\perp,\p^{(j)}]\leq
\p^{(j-1)}$ for all $j\in\Z$. Now $\c_{\g}(\p^\perp,(\p^{(-j)})^\perp)=
[\p^\perp,\p^{(-j)}]^\perp$, so induction on $j$ shows that
$\p^{(j-1)}=(\p^{(-j)})^\perp$ for $j>0$, and hence $\bigcup_{k\geq 0}\p^{(k)}
=\bigl(\bigcap_{k<0}\p^{(k)}\bigr)^\perp=\{0\}^{\perp}=\g$, since $\p^\perp$
is nilpotent.  Hence $\p^\perp$ is an $\ad_\g$-nilpotent subalgebra of
$[\g,\g]$, \ie, a nil subalgebra of $\g$ by Proposition~\ref{p:nil-cone-char}.
The converse is immediate.

(2)$\Leftrightarrow$(3). $\p^\perp$ is a nil subalgebra of $\g$ iff
$\p^\perp\sub\m$ iff $\m^\perp\sub\p$ for a maximal nil subalgebra $\m$, and
by Proposition~\ref{p:max-nil}, $\m^\perp=\n_{\g}(\m)$.

(1--3)$\Rightarrow$(4). Corollary~\ref{c:tf} (4) implies $\p^\perp\sub
\n_{\g}(\p^\perp)^\perp$, hence $\p^\perp\sub\n_{\g}(\p^\perp)\sub \p^{\perp\perp}=\p$.

(4)$\Rightarrow$(5). Since $\p^\perp\sub\p=\n_{\g}(\p^\perp)$,
$\p^\perp=\p\cap\p^\perp\idealin\p$ and is nil by Proposition~\ref{p:rcc}, so
$\p^\perp\leq\nil_\g(\p)$; hence equality holds and $\p=\n_{\g}(\nil_\g(\p))$.

(5)$\Rightarrow$(6) By Proposition~\ref{p:max-nil}, $\nil_{\g}(\p)=\p^\perp$,
and hence $\p=\n_{\g}(\p^\perp)=[\p,\p^\perp]^\perp$, so $\p^\perp\sub
[\p,\p^\perp]+\g^\perp\leq\nil(\p)$ since $[\p,\nil_\g(\p)]\sub\nil(\p)$.

(6)$\Rightarrow$(1). Immediate, since $\nil(\p)\sub[\p,\p]$.
\end{proof}

\begin{prop} Suppose $\p\leq\q\leq\g$ are subalgebras with $\q$ parabolic
in $\g$. Then $\p$ is parabolic in $\q$ if and only if it is parabolic in
$\g$.
\end{prop}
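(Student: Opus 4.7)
The plan is to apply the dimension characterization of parabolicity, Theorem~\ref{t:par-equiv}(vii), at each stage of the tower $\p\leq\q\leq\g$, exploiting the fact that the codimension identity
\[
\dimn\k-\dimn\r=\dimn\nil(\r)-\dimn\nil(\k)
\]
is additive under composition of subalgebra inclusions: if it holds for $\p\leq\q$ and for $\q\leq\g$, it holds for $\p\leq\g$; and conversely the difference of the two outer identities yields the one for $\p\leq\q$. The whole proof reduces to checking the inclusions between nilradicals required as hypotheses to Theorem~\ref{t:par-equiv}.

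For the direction $(\Rightarrow)$, suppose $\p$ is parabolic in $\g$. Fix any admissible form on $\g$. By Theorem~\ref{t:par-equiv}(vi) applied separately to $\p$ and $\q$ as parabolic subalgebras of $\g$, $\nil(\p)=\p^\perp$ and $\nil(\q)=\q^\perp$, so the inclusion $\p\leq\q$ dualizes to $\nil(\q)\leq\nil(\p)$. Subtracting the codimension identity (vii) for $\q\leq\g$ from that for $\p\leq\g$ yields $\dimn\q-\dimn\p=\dimn\nil(\p)-\dimn\nil(\q)$, and Theorem~\ref{t:par-equiv}(vii) applied to $\p\leq\q$ (with the inclusion just established) gives $\p$ parabolic in $\q$.

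For the direction $(\Leftarrow)$, suppose $\p$ is parabolic in $\q$. Since $\q$ is parabolic in $\g$, any maximal nil subalgebra $\m$ of $\g$ with $\n_{\g}(\m)\leq\q$ contains $\nil(\g)$: indeed $\nil(\g)+\m$ is again nil (it is trivial modulo $\m$ in any semisimple representation), and so equals $\m$ by maximality. Combined with (vi), this yields $\nil(\g)\leq\nil(\q)$; the analogous argument inside $\q$ yields $\nil(\q)\leq\nil(\p)$. Adding the codimension identities from Theorem~\ref{t:par-equiv}(vii) for $\p\leq\q$ and for $\q\leq\g$ then gives $\dimn\g-\dimn\p=\dimn\nil(\p)-\dimn\nil(\g)$, and (vii) concludes $\p$ parabolic in $\g$.

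The only delicate point throughout is verifying the inclusions $\nil(\g)\leq\nil(\q)\leq\nil(\p)$ needed to invoke (vii) at each level of the tower. In both directions this is handled uniformly by the orthogonal-complement characterization (vi): the nilradical of a parabolic equals its orthogonal complement under any admissible form, so a chain of parabolic inclusions dualizes to a reverse chain of nilradical inclusions. For the $(\Leftarrow)$ direction one additionally needs an admissible form on $\q$ itself, which exists because $\q/\nil(\q)$ is reductive---a consequence of $\q$ being parabolic in $\g$.
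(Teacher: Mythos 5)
Your argument is correct, and it reaches the conclusion by a genuinely different mechanism from the paper's. The paper works entirely with characterization (vi) of Theorem~\ref{t:par-equiv}: fixing an admissible form on $\g$, it notes that the restriction to $\q$ is admissible with perps in $\q$ computed as $(\cdot)^\perp\cap\q$, and then in each direction identifies $\nil(\p)$ directly with the relevant orthogonal complement, the pivot being that $\p^\perp\sub\q$ (so the perp of $\p$ in $\q$ coincides with $\p^\perp$). You instead funnel everything through the dimension identity (vii) and its additivity along the tower $\p\leq\q\leq\g$, which is clean bookkeeping and spares you from locating the perp of $\p$ inside $\q$ explicitly. The trade-off is that you still need (vi) to establish the inclusions $\nil(\g)\leq\nil(\q)\leq\nil(\p)$ that serve as standing hypotheses for (vii) at each level, so both proofs ultimately rest on the same admissible-form input; the paper's version has the added benefit of exhibiting $\nil(\p)$ as $\p^\perp$, which is the more informative statement. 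Two small remarks: in your backward direction the detour through maximal nil subalgebras containing $\nil(\g)$ is redundant, since $\nil(\g)=\g^\perp\leq\q^\perp=\nil(\q)$ already follows from (vi) applied to $\q$; and your closing appeal to a separately constructed admissible form on $\q$ can be avoided by simply restricting the given form on $\g$, which is admissible on $\q$ precisely because $\q^\perp=\nil(\q)$, as the paper does.
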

\begin{proof} Fix an admissible form on $\g$; its restriction to $\q$ is
admissible, since $\q^\perp=\nil(\q)$. Since $\p\sub\q\sub\g$, we have
$\g^\perp\sub\q^\perp\sub \p^\perp$, \ie, $\nil(\g)\sub \nil(\q)\sub
\p^\perp\cap\q$.

If $\p$ is parabolic in $\q$, $\q^\perp=\nil(\q)\sub \p$ and
$\nil(\p)=\p^\perp\cap\q$.  It follows that $\nil(\g)\sub\nil(\p)$ and
$\p^\perp\sub\q^{\perp\perp}=\q$, hence $\nil(\p)=\p^\perp$, as required.
Conversely if $\p$ is parabolic in $\g$, then $\p^\perp=\nil(\p)$ contains
$\nil(\q)$, and since $\p^\perp\sub \p\sub \q$, $\p$ is parabolic in $\q$.
\end{proof}

\begin{cor}\label{c:levi-par} Let $\p\leq\q\leq\g$ with $\g$ reductive,
and $\q$ parabolic in $\g$. Then $\p$ is parabolic in $\g$ if and only if
$\nil(\p)$ contains $\nil(\q)$ and $\p/\nil(\q)$ is parabolic in
$\q/\nil(\q)$.
\end{cor}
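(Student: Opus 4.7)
The plan is to chain Proposition 3.5 with the preceding proposition (which reduces parabolicity in a Lie algebra to parabolicity in its reductive quotient by $\nil$), applied with $\q$ in place of $\g$. Since $\g$ is reductive, $\nil(\g)=0$, so Proposition 3.5 directly gives that $\p$ is parabolic in $\g$ if and only if $\p$ is parabolic in $\q$. The reduction proposition then states: a subalgebra $\p\leq\q$ with $\nil(\p)\supseteq\nil(\q)$ is parabolic in $\q$ iff $\p/\nil(\q)$ is parabolic in $\q/\nil(\q)$. Composing the two gives the desired biconditional — provided we handle the hypothesis $\nil(\q)\leq\nil(\p)$ correctly on each side.

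Concretely, for the ``if'' direction the hypothesis $\nil(\q)\leq\nil(\p)$ is part of the assumed right-hand side, so the reduction proposition applies and yields that $\p$ is parabolic in $\q$, whence Proposition 3.5 concludes $\p$ is parabolic in $\g$. For the ``only if'' direction, the inclusion $\nil(\q)\leq\nil(\p)$ must be extracted as a consequence rather than assumed. This is where the small amount of real work sits: starting from $\p$ parabolic in $\g$, Proposition 3.5 makes it parabolic in $\q$; now fix an admissible form on $\g$, whose restriction to $\q$ is admissible (because $\q$ parabolic in $\g$ gives $\q^{\perp}=\nil(\q)$, as already used in the proof of Proposition 3.5). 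Writing $\perp_{\q}$ for orthogonality inside $\q$, Theorem \ref{t:par-equiv} (6) applied inside $\q$ gives $\nil(\p)=\p^{\perp_{\q}}$ and $\nil(\q)=\q^{\perp_{\q}}$; the inclusion $\p\leq\q$ reverses under orthogonals, so $\nil(\q)\leq\nil(\p)$ automatically, and the reduction proposition finishes the job.

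I do not expect any real obstacle here; the corollary is essentially a clean composition of Proposition 3.5 with its predecessor. The only point requiring care is bookkeeping of admissible forms — making sure the form used inside $\q$ is the restriction of one on $\g$ — but this has already been set up in the proof of Proposition 3.5, so nothing new is needed.
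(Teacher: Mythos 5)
Your overall strategy---chaining the proposition ``$\p$ is parabolic in $\q$ iff it is parabolic in $\g$'' with the quotient proposition (parabolicity in a Lie algebra versus in its reductive quotient by the nilpotent radical) applied with $\q$ in place of $\g$---is exactly the intended route; the paper gives no separate proof of the corollary because it is precisely this composition, and your ``if'' direction is fine as stated.

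There is, however, a circularity in how you extract $\nil(\q)\leq\nil(\p)$ in the ``only if'' direction. You invoke Theorem~\ref{t:par-equiv}~(6) for $\p$ viewed as a subalgebra of $\q$ to conclude $\nil(\p)=\p^\perp\cap\q$; but that theorem carries the standing hypothesis that the nilpotent radical of the \emph{ambient} algebra is contained in $\nil(\p)$, which with ambient algebra $\q$ is exactly the inclusion $\nil(\q)\leq\nil(\p)$ you are trying to prove. (The hypothesis is genuinely used in the proof of that theorem, for instance in the step (5)$\Rightarrow$(6), where $\g^\perp\leq\nil(\p)$ is needed.) The repair is immediate, and is what the paper's proof of the preceding proposition actually does: work in $\g$ rather than in $\q$. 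Since $\g$ is reductive, $\nil(\g)=0$ and the hypothesis of Theorem~\ref{t:par-equiv} is vacuous for both $\p\leq\g$ and $\q\leq\g$; condition (6) then gives $\nil(\p)=\p^\perp$ and $\nil(\q)=\q^\perp$ for an admissible (here nondegenerate) form on $\g$, and $\p\leq\q$ reverses to $\q^\perp\leq\p^\perp$, that is, $\nil(\q)\leq\nil(\p)$. With that inclusion secured, your composition of the two propositions goes through unchanged.
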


We refer to $\q_0=\q/\nil(\q)$ as the (reductive) \emph{Levi quotient} of
$\q$.

\subsection{Grading elements and splittings}

\begin{defn} A \emph{grading element} for a Lie algebra $\p$ is an element
$\chi\in \gr(\p)$ (the associated graded algebra of the canonical filtration)
with $[\chi,x]=jx$ for all $j\in \Z$ and $x\in \p_j$. A (reductive) \emph{Levi
  subalgebra} is a Lie subalgebra of $\p$ complementary to $\nil(\p)$.
\end{defn}
A grading element $\chi$ exists if and only if the derivation $D$ of $\gr(\p)$
defined by $Dx=jx$ (for all $j\in\Z$ and $x\in \p_j$) is inner; $\chi$ then
belongs to the centre of $\p_0$, and is unique modulo the centre of $\gr(\p)$.
If $\p$ is reductive, \ie, $\nil(\p)=0$, then $0\in\p=\p_0$ is a grading
element.

\begin{prop}[see \cite{BDPP:is,CDS:rcd}]\label{p:h-split} If $\p$ has a
grading element $\chi\in\p_0$, then the following sets are in canonical
$\exp(\nil(\p))$-equivariant bijection\textup:
\begin{bulletlist}
\item lifts of $\chi$ to $\p$\textup;
\item splittings $\p\cong\gr(\p)$ of the canonical filtration of $\p$\textup;
\item Levi subalgebras of $\p$.
\end{bulletlist}
Furthermore, the action of $\exp(\nil(\p))$ is free and transitive.
\end{prop}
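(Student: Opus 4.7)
The plan is to establish the three bijections via spectral decomposition of the adjoint action of a lift, and then to derive free transitivity of the $\exp(\nil(\p))$-action from the invertibility of $\ad_{\tilde\chi}$ on $\nil(\p)$.

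First, given any lift $\tilde\chi\in\p$ of $\chi$, the derivation $\ad_{\tilde\chi}$ preserves the canonical filtration (since $\nil(\p)$ is an ideal preserved by $\ad_{\tilde\chi}$, as is its lower central series), and the induced action on $\gr(\p)$ is $\ad_\chi$, which by hypothesis acts on $\p_{-k}$ as multiplication by $-k$. Therefore $\ad_{\tilde\chi}$ has characteristic polynomial $\prod_{k\geq 0}(x+k)^{\dimn\p_{-k}}$, and its generalized eigenspace decomposition $\p=\Dsum_{k\geq 0}V_{-k}$, with $V_{-k}:=\kernel(\ad_{\tilde\chi}+k\iden)^N$ for $N\gg 0$, satisfies $\p^{(-k)}=\Dsum_{j\geq k}V_{-j}$ by a short dimension argument (using that each $V_{-j}$ maps into $\p^{(-j)}/\p^{(-j-1)}=\p_{-j}$ under the graded projection). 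As $\ad_{\tilde\chi}$ is a derivation, its generalized eigenspaces satisfy $[V_{-i},V_{-j}]\sub V_{-i-j}$, so the decomposition is a splitting of $\p$ as a graded Lie algebra, and the component $V_0$ is a Levi subalgebra complementary to $\nil(\p)=\Dsum_{k>0}V_{-k}$.

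Second, I verify that the three constructions are mutually inverse. Given a Levi subalgebra $\l\leq\p$, the projection $\p\to\p/\nil(\p)$ restricts to a Lie algebra isomorphism $\l\isom\p_0$, whose inverse sends $\chi$ to a unique lift $\tilde\chi_\l\in\l$. Because $\chi$ lies in the centre of the reductive Lie algebra $\p_0$, Theorem~\ref{t:red} implies $\ad_\chi$ vanishes on $\p_0$, and transferring this via the isomorphism gives $\ad_{\tilde\chi_\l}|_\l=0$. Hence $\l\sub\kernel\ad_{\tilde\chi_\l}\sub V_0$, and since both have dimension $\dimn\p-\dimn\nil(\p)$, one obtains $\l=V_0$. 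The round-trip starting from a lift is similar: any lift $\tilde\chi$ lies in $\kernel\ad_{\tilde\chi}\sub V_0$ (as $[\tilde\chi,\tilde\chi]=0$), and is the unique lift in the Levi subalgebra $V_0$.

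Third, I implement the $\exp(\nil(\p))$-action by $\Ad(\exp n):=\exp(\ad n)\in\Aut(\p)$, well-defined because $\ad n$ is a nilpotent derivation for $n\in\nil(\p)$. This permutes lifts, splittings, and Levi subalgebras compatibly with the three bijections, so it suffices to establish simple transitivity on lifts. Given a lift $\tilde\chi$ and any other lift $\tilde\chi'=\tilde\chi+m$ with $m\in\nil(\p)$, I solve $\exp(\ad n)(\tilde\chi)=\tilde\chi'$ for $n\in\nil(\p)$. Expanding, $\exp(\ad n)(\tilde\chi)-\tilde\chi=-\ad_{\tilde\chi}(n)+\tfrac12[n,[n,\tilde\chi]]-\cdots$, in which the leading term is $-\ad_{\tilde\chi}(n)$ and all higher terms lie in $[\nil(\p),\nil(\p)]$ or deeper in its lower central series. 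Since $\ad_{\tilde\chi}$ acts on $\nil(\p)=\Dsum_{k>0}V_{-k}$ with nonzero integer eigenvalues, it is invertible on $\nil(\p)$; an induction on the finite lower central series of $\nil(\p)$ then yields a unique $n$ solving the equation.

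The main technical hurdle is the dimension-count step: it rests on the centrality of $\chi$ in $\p_0$ (built into the definition of a grading element) together with vanishing of $\ad$ on centres of reductive Lie algebras (Theorem~\ref{t:red}) to force $\ad_{\tilde\chi_\l}$ to annihilate $\l$. Once this is secured, the three sets are in canonical bijection, and the formal inverse-function argument of the third step yields free transitivity of $\exp(\nil(\p))$ on lifts, which transfers via equivariance of the bijections to free transitivity on splittings and on Levi subalgebras.
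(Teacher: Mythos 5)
Your proof is correct and follows essentially the same route as the paper's: a lift yields a splitting via the eigenspace decomposition of its adjoint action, the zero eigenspace is the Levi subalgebra, a Levi subalgebra contains a unique lift, and free transitivity of $\exp(\nil(\p))$ comes from the invertibility of $\ad_{\tilde\chi}$ on $\nil(\p)$ together with nilpotency (your filtered iteration makes explicit what the paper leaves implicit). One small remark: the appeal to Theorem~\ref{t:red} is superfluous, since $\ad_\chi$ vanishing on $\p_0$ is immediate from the defining property $[\chi,x]=jx$ of a grading element with $j=0$.
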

\begin{proof} A lift of $\chi$ to $\p$ determines a splitting $\gr(\p)\cong \p$
via its eigenspace decomposition. For any such splitting, the image of $\p_0$
is a Levi subalgebra of $\p$, and any Levi subalgebra of $\p$ contains a
unique lift of $\chi$. Since $\p_0=\p/\nil(\p)$, lifts of $\chi$ form an
affine space modelled on $\nil(\p)$, which is nilpotent, so $\exp(\nil(\p))$
acts freely and transitively on the lifts.
\end{proof}

For a parabolic subalgebra $\p\leq\g$, we let $\p^{(j)}$ be the filtration of
$\g$ induced by $\nil(\p)\idealin\p\leq\g$ (cf. Theorem~\ref{t:par-equiv}),
and denote the associated graded Lie algebra by $\gr_\p(\g)$.

\begin{prop} Let $\p$ be parabolic in a reductive Lie algebra $\g$.  Then the
associated graded algebra $\gr_\p(\g)$ is reductive, and its centre is
contained in $\p_0=\p/\nil(\p)$.
\end{prop}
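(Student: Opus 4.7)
The plan is to exhibit a nondegenerate trace form on $\gr_\p(\g)$ and then apply Corollary~\ref{c:tf}(2). First, fix a nondegenerate admissible form $\langle\cdot,\cdot\rangle$ on $\g$ arising from a representation $\rho\colon\g\to\gl(V)$ (which exists because $\g$ is reductive, so $\nil(\g)=0$). By Theorem~\ref{t:par-equiv}(6), $\p^\perp=\nil(\p)=\p^{(-1)}$, hence $\p^{(0)}=(\p^{(-1)})^\perp$. The identity $\c_\g(\a,\b)=[\a,\b^\perp]^\perp$, valid for any nondegenerate invariant form by invariance, combined with the recursions $\p^{(j)}=\c_\g(\nil(\p),\p^{(j-1)})$ and $\p^{(-j-1)}=[\nil(\p),\p^{(-j)}]$ from Proposition~\ref{p:pair}, then gives $\p^{(j-1)}=(\p^{(-j)})^\perp$ for every $j\in\Z$ by induction on $j$. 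Hence the induced graded invariant form on $\gr_\p(\g)$ is nondegenerate by the proposition on compatible forms.

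To identify this graded form as a genuine trace form, I would build a filtration $V^{(k)}$ of $V$ compatible with the $\g$-filtration. Since $\nil(\p)\sub\cN(\g)$ acts by nilpotent endomorphisms on $V$, Engel's theorem applied to $\nil(\p)\idealin\p$ yields a filtration in which $\nil(\p)$ lowers degree by one and $\p$ preserves each term. A simultaneous induction exploiting $\p^{(i)}=\c_\g(\nil(\p),\p^{(i-1)})$ (with the base case furnished by $\p$-invariance of the Engel filtration) extends this to the full compatibility $\rho(\p^{(i)})V^{(k)}\sub V^{(i+k)}$ for all $i,k\in\Z$. Then the induced representation $\overline\rho\colon\gr_\p(\g)\to\gl(\gr(V))$ has trace form equal to our nondegenerate graded form, so $\gr_\p(\g)$ is reductive by Corollary~\ref{c:tf}(2).

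For the centre $Z:=\z(\gr_\p(\g))$, reductivity together with Theorem~\ref{t:red} gives the orthogonal decomposition $\gr_\p(\g)=Z\oplus[\gr_\p(\g),\gr_\p(\g)]$, so the trace form restricts to a nondegenerate form on $Z$. Being the centre of a $\Z$-graded Lie algebra, $Z=\Dsum_j Z_j$ is itself a graded subspace. If $z\in Z_j$ with $j\neq 0$, then $\overline\rho(z)$ shifts graded degree by $j$ on the finite-dimensional $\gr(V)$ and is therefore nilpotent. For any $z'\in Z$, centrality forces $[\overline\rho(z),\overline\rho(z')]=0$, so the commuting product $\overline\rho(z)\overline\rho(z')$ is nilpotent, giving $\langle z,z'\rangle=\trace(\overline\rho(z)\overline\rho(z'))=0$. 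Since this holds for all $z'\in Z$, nondegeneracy of the restricted form yields $z=0$, so $Z_j=0$ for $j\neq 0$ and $Z\sub\gr_\p(\g)_0=\p/\nil(\p)=\p_0$. The main technical obstacle I anticipate is the construction of the compatible filtration of $V$; once it is in place, both reductivity and the localization of the centre follow cleanly from the nondegeneracy-plus-nilpotency machinery developed in Sections~\ref{s:elt} and~\ref{s:ps}.
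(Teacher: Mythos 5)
Your proof is correct. The reductivity half is essentially the paper's own argument: both establish $\p^{(j-1)}=(\p^{(-j)})^\perp$ for all $j\in\Z$, realize the graded invariant form as the trace form of the induced representation on $\gr(V)$ for a compatible filtration of $V$ (the paper takes $V^{(j)}=V$ for $j\geq 0$ and $V^{(j-1)}=\rho(\p^\perp,V^{(j)})$ for $j\leq 0$, which is exactly the choice that makes your ``simultaneous induction'' run), and then invoke Corollary~\ref{c:tf}. For the centre you genuinely diverge. The paper argues Lie-algebraically: a homogeneous central element of degree $j>0$ has a lift lying in $\c_{\g}(\nil(\p),\p^{(j-2)})\sub\p^{(j-1)}$, forcing $j\leq 0$, while for $j<0$ the relation $\p^{(k)}=[\nil(\p),\p^{(k+1)}]$ places $\p_j$ inside $[\gr_\p(\g),\gr_\p(\g)]$, which meets the centre trivially. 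You instead observe that $\overline\rho(z)$ shifts degree on $\gr(V)$, hence is nilpotent, and commutes with $\overline\rho(z')$ for central $z'$, so $\ip{z,z'}=0$ and nondegeneracy of the form on $\z(\gr_\p(\g))$ (via the orthogonal decomposition of Theorem~\ref{t:red}) kills $z$. Both are valid; yours treats $j>0$ and $j<0$ uniformly and reuses the trace form already built, whereas the paper's is independent of the auxiliary representation $V$ and uses only the centralizer structure of the filtration. The key inputs are the same in either case: nondegeneracy of the graded form and the reductive splitting of Theorem~\ref{t:red}.
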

\begin{proof} Since $\g$ is reductive, it admits a nondegenerate trace form,
induced by a representation $\rho\colon\g\to \gl(V)$.  Since
$\p^\perp=\nil(\p)$, it acts nilpotently on $V$, and hence induces a
filtration on $V$ with $V^{(j)}=V$ for $j\geq 0$ (say) and
$V^{(j-1)}=\rho(\p^\perp,V^{(j)})$ for $j\leq 0$.  This makes $V$ into a
filtered representation of $\g$ (with the filtration by $\p^{(j)}:j\in\Z$).

We have already observed that $\p^{(j-1)}=(\p^{(-j)})^\perp$ for $j>0$, hence
writing $k=1-j$ and taking perps, we have $(\p^{(-k)})^\perp=\p^{(k-1)}$ for
$k\leq 0$. The induced trace form on $\gr_\p(\g)$ is therefore nondegenerate,
and hence $\gr_\p(\g)$ is reductive.

For any $x\in\z(\gr_\p(\g))$, its homogeneous components are also in the
centre.  Now for a homogeneous central element
$x\in\p_j:=\p^{(j)}/\p^{(j-1)}$, any lift centralizes $\nil(\p)$, so it must
lie in $\p$, \ie, $j\leq 0$. However, since $\p^{(k)}=[\nil(\p),\p^{(k+1)}]$
for $k<0$, we must also have $j\geq 0$, \ie, the centre of $\gr_\p(\g)$ is
contained in $\p_0$.
\end{proof}
\begin{cor} A parabolic subalgebra $\p$ of a reductive Lie algebra $\g$ has
a unique grading element $\chi\in\z(\p_0)\cap[\gr_\p(\g),\gr_\p(\g)]$.
\end{cor}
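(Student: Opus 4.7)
The plan is to produce $\chi$ as the element of $[\gr_\p(\g),\gr_\p(\g)]$ whose adjoint action realizes the grading derivation, using the reductivity of $\gr_\p(\g)$ established in the preceding proposition together with the innerness of derivations of semisimple Lie algebras already packaged in Theorem~\ref{t:red}.

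First I would observe that the $\Z$-grading $\gr_\p(\g)=\Dsum_{j\in\Z}\g_j$ defines a derivation $D$ of $\gr_\p(\g)$ by $D(x)=jx$ for $x\in\g_j$; the Leibniz rule is immediate from $[\g_i,\g_j]\sub\g_{i+j}$. By the preceding proposition, $\gr_\p(\g)$ is reductive with $\z(\gr_\p(\g))\sub\g_0=\p_0$, so $D$ annihilates the centre. Theorem~\ref{t:red} then supplies the decomposition $\gr_\p(\g)=\z(\gr_\p(\g))\dsum\s$ with $\s:=[\gr_\p(\g),\gr_\p(\g)]$ semisimple, and $D$ restricts to a derivation of $\s$.

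Next, Theorem~\ref{t:red} identifies $\ad(\h)\cong\der([\h,\h])$ for any reductive $\h$; applied to $\gr_\p(\g)$ this yields $\der(\s)=\ad(\s)$ (noting $\z(\s)=0$ and $[\s,\s]=\s$). Hence there is $\chi\in\s$ with $[\chi,\cdot]=D$ on $\s$, and extending by $[\chi,z]=0=D(z)$ on $\z(\gr_\p(\g))$ gives $\ad(\chi)=D$ on all of $\gr_\p(\g)$, so $\chi$ is a grading element. Decomposing $\chi=\sum_k\chi_k$ with $\chi_k\in\g_k$, the relation $[\chi,x]=jx\in\g_j$ for $x\in\g_j$ matches degrees and forces $\chi_k=0$ for $k\neq 0$; hence $\chi\in\g_0=\p_0$, and since $[\chi,y]=0$ for $y\in\p_0$, we have $\chi\in\z(\p_0)\cap\s$ as demanded.

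For uniqueness, any second candidate $\chi'\in\z(\p_0)\cap\s$ satisfies $\ad(\chi-\chi')=0$, placing $\chi-\chi'$ in $\z(\gr_\p(\g))\cap\s$, which vanishes by Theorem~\ref{t:red}. The only substantive input here is the innerness of derivations of a semisimple Lie algebra; since this is already embedded in Theorem~\ref{t:red}, the rest of the argument is purely formal and I anticipate no real obstacle.
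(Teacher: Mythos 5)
Your argument is correct and follows essentially the same route as the paper: define the grading derivation $D$, observe it annihilates the centre (which lies in $\p_0$ by the preceding proposition) and preserves the semisimple part, then invoke the identification $\der([\h,\h])\cong[\h,\h]$ from Theorem~\ref{t:red} to realize $D=\ad\chi$ with $\chi$ unique in $[\gr_\p(\g),\gr_\p(\g)]$. The only micro-gloss is that degree-matching forces each $\chi_k$ ($k\neq 0$) to be \emph{central} rather than immediately zero, but since the centre sits in $\g_0$ this gives $\chi\in\p_0$ as you claim.
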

Indeed, the derivation $D$ of $\gr_\p(\g)$ defined by $Dx=jx$ for $x\in\g_j$
vanishes on the centre of $\gr_\p(\g)$ and preserves its semisimple
complement.  It follows that $D$ is an inner derivation, \ie, $D=\ad\chi$ for
$\chi\in \z(\p_0)$ which is determined uniquely by requiring it is in the
complement $[\gr_\p(\g),\gr_\p(\g)]$ to centre of $\gr_\p(\g)$.

\subsection{Pairs of parabolic subalgebras}

\begin{defn}[see \cite{Cla:phd}] Parabolic subalgebras $\p,\q$ of a reductive
Lie algebra $\g$ are said to be \emph{costandard} if $\p\cap\q$ is parabolic
in $\g$.
\end{defn}
\begin{prop}\label{p:costd-equiv} For parabolics $\p,\q\leq\g$, with $\g$
reductive, the following are equivalent\textup:
\begin{numlist}
\item $\p$ and $\q$ are costandard\textup;
\item $\p\cap\q$ contains a minimal parabolic subalgebra of $\g$\textup;
\item for some \textup(hence any\textup) admissible form on $\g$,
$\p^\perp\leq \q$ \textup(or equivalently, $\q^\perp\leq \p$\textup).
\end{numlist}
\end{prop}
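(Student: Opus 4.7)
The plan is to handle the three equivalences in turn. Observe first that (1) $\Leftrightarrow$ (2) is immediate from the definition of parabolic subalgebra: the minimal parabolics are precisely the normalizers of maximal nil subalgebras, and a subalgebra is parabolic iff it contains such a normalizer. Thus $\r:=\p\cap\q$ is parabolic iff it contains a minimal parabolic. The internal equivalence in (3), $\p^\perp\leq\q\Leftrightarrow\q^\perp\leq\p$, is perp duality modulo $\nil(\g)=\g^\perp$: since $\nil(\g)\leq\p,\q$, we have $\p^{\perp\perp}=\p$ and $\q^{\perp\perp}=\q$, so the two conditions are obtained from one another by taking perps.

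For (1) $\Rightarrow$ (3), suppose $\r$ is parabolic. For any admissible form on $\g$, Theorem~\ref{t:par-equiv} gives $\r^\perp=\nil(\r)\leq\r$. Since $\r\leq\p$, taking perps yields $\p^\perp\leq\r^\perp\leq\r\leq\q$, as required. Because (1) makes no reference to the form, this simultaneously shows that if (3) holds for some admissible form then it holds for any.

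For (3) $\Rightarrow$ (1), fix an admissible form satisfying $\p^\perp\leq\q$ (equivalently $\q^\perp\leq\p$) and set $\r=\p\cap\q$. Since the form descends to a nondegenerate form on $\g/\nil(\g)$ and $\nil(\g)\leq\r$, the standard formula for the perp of an intersection lifts to $\r^\perp=\p^\perp+\q^\perp$. Because $\p^\perp=\nil(\p)\idealin\p$ (Theorem~\ref{t:par-equiv}(6)), the containment $\p^\perp\leq\q$ gives $[\p^\perp,\q^\perp]\leq[\q,\q^\perp]\leq\q^\perp$; symmetrically $[\p^\perp,\q^\perp]\leq\p^\perp$. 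Hence $\p^\perp$ and $\q^\perp$ are nilpotent ideals of $\r^\perp$, so by the classical fact that a sum of nilpotent ideals of a Lie algebra is nilpotent, $\r^\perp$ is itself a nilpotent Lie subalgebra of $\g$, and it lies in $[\g,\g]$ since each summand does. Moreover $\nil(\g)\leq\nil(\r)$ because $\nil(\g)$ is a nilpotent ideal of $\r$. Criterion (1) of Theorem~\ref{t:par-equiv} then yields that $\r$ is parabolic.

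The main obstacle is the identification $\r^\perp=\p^\perp+\q^\perp$ together with the verification that this sum is a nilpotent subalgebra of $[\g,\g]$. Once the hypothesis in (3) furnishes the mutual normalization of $\p^\perp$ and $\q^\perp$, the nilpotency reduces to the classical result on sums of nilpotent ideals, and everything else is a direct appeal to Theorem~\ref{t:par-equiv}.
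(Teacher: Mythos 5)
Your proof is correct, and its skeleton matches the paper's: (1)$\Leftrightarrow$(2) by definition, (1)$\Rightarrow$(3) via $\p^\perp\leq(\p\cap\q)^\perp=\nil(\p\cap\q)\leq\p\cap\q\leq\q$, and the converse reduced to showing $\p^\perp+\q^\perp$ is a nilpotent subalgebra of $[\g,\g]$ so that Theorem~\ref{t:par-equiv} applies. The one place you diverge is the nilpotency of $(\p\cap\q)^\perp=\p^\perp+\q^\perp$. You observe that (3) forces $[\p^\perp,\q^\perp]\sub\p^\perp\cap\q^\perp$, so $\p^\perp$ and $\q^\perp$ are nilpotent ideals of their sum, and then invoke the classical fact that a sum of nilpotent ideals is nilpotent. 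The paper instead builds the explicit bi-graded family $\r^{(-i)}=\sum_{j+k=i}\p^{(-j)}\cap\q^{(-k)}$ and checks $[\r^{(-1)},\r^{(-i)}]\sub\r^{(-i-1)}$, which proves nilpotency directly (with an explicit bound on the class) and keeps the development self-contained --- the sum-of-nilpotent-ideals lemma is never established in this paper, whose stated aim is to avoid importing structure theory. Your route is shorter if one grants that lemma, but note that its standard proof is essentially the same filtration count the paper writes out, so nothing is really saved; if you want your argument to live inside this paper you should either prove the lemma or inline the filtration. Everything else --- the reduction $\p^{\perp\perp}=\p$ modulo $\g^\perp=\nil(\g)$, the identity $(\p\cap\q)^\perp=\p^\perp+\q^\perp$, and the appeal to Theorem~\ref{t:par-equiv}(1) --- is sound.
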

\begin{proof} The first two conditions are manifestly equivalent. Now for any
admissible form on $\g$, $(\p\cap\q)^\perp=\p^\perp+\q^\perp$. Thus (1) and (2)
imply (3). For the converse, it suffices to show that (3) implies
$\p^\perp+\q^\perp$ is nilpotent. For this, let $\r^{(-i)}=\sum_{j\geq0,k\geq0,i=j+k}
\p^{(-i)}\cap \q^{(-j)}$. Then $\r^{(-1)}=\p^\perp+\q^\perp$, and
$[\r^{(-1)},\r^{(-i)}]\sub \r^{(-i-1)}$, so $\r^{(-1)}$ is nilpotent.
\end{proof}

\begin{prop}\label{p:costd-pars} If $\p^1,\ldots \p^k\leq\g$ are
parabolic and pairwise costandard in a reductive Lie algebra $\g$, then
$\p^1\cap\cdots\cap \p^k$ is parabolic in $\g$.
\end{prop}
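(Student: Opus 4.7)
The plan is to argue by induction on $k$, reducing to the two-parabolic case encoded in the definition of costandard. The base cases $k=1$ and $k=2$ are immediate. For the inductive step, set $\q := \p^1 \cap \cdots \cap \p^{k-1}$; by the inductive hypothesis $\q$ is parabolic in $\g$, so it suffices to show that $\q$ and $\p^k$ are costandard, as then $\p^1 \cap \cdots \cap \p^k = \q \cap \p^k$ is parabolic in $\g$ by the very definition.

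To establish that $\q$ and $\p^k$ are costandard, I would fix an admissible form on $\g$ and invoke Proposition~\ref{p:costd-equiv}~(3). Since $\g$ is reductive, $\g^\perp = \nil(\g) = 0$, so the admissible form is nondegenerate on $\g$; taking perps therefore converts intersections into sums,
\[
\q^\perp = (\p^1)^\perp + (\p^2)^\perp + \cdots + (\p^{k-1})^\perp.
\]
By pairwise costandardness, applied via Proposition~\ref{p:costd-equiv}~(3), one has $(\p^i)^\perp \leq \p^k$ for every $i < k$; summing these inclusions yields $\q^\perp \leq \p^k$, which is precisely criterion~(3) of Proposition~\ref{p:costd-equiv} for the pair $(\q,\p^k)$. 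The implication (3)$\Rightarrow$(1) of that proposition then closes the induction.

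No serious obstacle arises: the whole argument hinges on the observation that Proposition~\ref{p:costd-equiv}~(3) encodes costandardness as a linear inclusion of perp spaces which is manifestly stable under summation. The only point worth double-checking is the identity $(\bigcap_i \p^i)^\perp = \sum_i (\p^i)^\perp$, which is the standard duality for finite-dimensional spaces equipped with a nondegenerate bilinear form, and nondegeneracy is guaranteed precisely by the reductivity of $\g$.
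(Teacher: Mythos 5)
Your proof is correct and follows essentially the same route as the paper: induction on $k$, reducing the inductive step to verifying criterion (3) of Proposition~\ref{p:costd-equiv} for the pair $(\q,\p^k)$ with $\q=\p^1\cap\cdots\cap\p^{k-1}$. The only (cosmetic) difference is that you check the inclusion $\q^\perp\leq\p^k$ by summing the $(\p^i)^\perp$, which requires the duality identity $\bigl(\bigcap_i\p^i\bigr)^\perp=\sum_i(\p^i)^\perp$, whereas the paper checks the equivalent inclusion $(\p^k)^\perp\leq\q$ simply by intersecting the inclusions $(\p^k)^\perp\sub\p^i$, which avoids that identity altogether.
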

\begin{proof} Induction on $k$, with $k=1$ being trivial:
if $\p^1\cap\cdots\cap \p^{k-1}$ is parabolic, and $\p^k$ is costandard with
each $\p^i$, then $(\p^k)^\perp\sub\p^i$ for all $i=1,\ldots k-1$, hence
$(\p^k)^\perp\sub\p^1\cap\cdots\cap\p^{k-1}$, which proves the result by
Proposition~\ref{p:costd-equiv}.
\end{proof}

\begin{defn} Parabolic subalgebras $\p,\hp$ of a reductive Lie algebra $\g$
are said to be \emph{opposite} if (for some, hence any, admissible form)
$\p+\hp^\perp=\g$ (\ie, $\p^\perp\cap\hp=0$) and $\p^\perp+\hp=\g$ (\ie,
$\p\cap\hp^\perp=0$); in other words, $\g=\p\dsum\hp^\perp$ (\ie, $\p\cap\hp$
is a Levi subalgebra of $\hp$) which means equivalently that
$\g=\p^\perp\dsum\hp$ (\ie, $\p\cap\hp$ is a Levi subalgebra of $\p$).
\end{defn}
\begin{rems}\label{r:weyl-tors} If $\p\leq\g$ is a parabolic subalgebra, then
the set of parabolic subalgebras $\hp$ opposite to $\p$ is an
$\exp(\nil(\p))$-torsor (a \emph{$G$-torsor} for a group $G$ is a simply
transitive $G$-set); Proposition~\ref{p:h-split} yields canonical isomorphisms
between the following $\exp(\nil(\p))$-torsors:
\begin{bulletlist}
\item lifts of the grading element $\chi\in\p_0$ to $\p=\p^{(0)}$\textup;
\item splittings $\gr_\p(\g)\cong \g$\textup;
\item parabolic subalgebras $\hp$ opposite to $\p$ in $\g$\textup;
\item Levi subalgebras of $\p$.
\end{bulletlist}
An (algebraic) \emph{Weyl structure} for $\p$ is an element of this (\ie, any
of these) torsor(s). An opposite pair $\p,\hp\leq\g$ gives a vector space
direct sum decomposition of $\g$ into subalgebras $\nil(\p)\dsum
\p\cap\hp\dsum \nil(\hp)$, where $\p\cap\hp$ is a Levi subalgebra of both $\p$
and $\hp$, hence is orthogonal to both $\nil(\p)$ and $\nil(\hp)$ with respect
to any admissible form on $\g$. Thus any admissible form on $\g$ restricts to
an admissible form on $\p\cap\hp$ and a duality between $\nil(\p)$ and
$\nil(\hp)$.
\end{rems}
\begin{prop}\label{p:par-pair} Let $\p$ and $\q$ be parabolic in $\g$.
Then $\r:=\p\cap\q+\nil(\q)$ is parabolic in $\g$ with
$\nil(\r)=\nil(\p)\cap\q+\nil(\q)$.
\end{prop}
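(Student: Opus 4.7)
The plan is to work with $\g$ reductive (the section's running assumption), so admissible forms are nondegenerate. Fixing one such form and using the identities $\p^\perp = \nil(\p)$, $\q^\perp = \nil(\q)$ from Theorem~\ref{t:par-equiv}(6), together with the nondegeneracy identities $(V+W)^\perp = V^\perp \cap W^\perp$ and $(V\cap W)^\perp = V^\perp + W^\perp$, I first check that $\r$ is a Lie subalgebra, since $\p\cap\q$ is a subalgebra of $\q$ and $\nil(\q)\idealin\q$. A perp computation then yields
\[
\r^\perp = (\p\cap\q)^\perp \cap \nil(\q)^\perp = \bigl(\nil(\p)+\nil(\q)\bigr)\cap\q = \nil(\p)\cap\q + \nil(\q),
\]
the last equality coming from the modular law since $\nil(\q)\leq\q$. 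The same subalgebra check applied to $\r^\perp$ (with $\nil(\p)\cap\q$ in place of $\p\cap\q$) shows that $\r^\perp$ is also a Lie subalgebra.

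My strategy is to invoke Theorem~\ref{t:par-equiv}(2) to conclude that $\r$ is parabolic, which requires showing that $\r^\perp$ is a \emph{nil} subalgebra of $\g$. Once that is established, Theorem~\ref{t:par-equiv}(6) will automatically give $\nil(\r) = \r^\perp = \nil(\p)\cap\q + \nil(\q)$, completing the proof. So everything reduces to verifying $\r^\perp\subseteq\cN(\g)$. For a typical $x = a+b$ with $a\in\nil(\p)\cap\q$ and $b\in\nil(\q)$, I have $x\in[\g,\g]$ since $\cN(\g)\subseteq[\g,\g]$ (Proposition~\ref{p:nil-cone-char}); by that same proposition it then remains only to show $\ad_\g(a+b)$ is nilpotent on $\g$.

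The hard part is precisely this nilpotency, which does not follow formally from $\ad(a)$ and $\ad(b)$ being individually nilpotent. I plan to exploit the filtration $\q^{(j)}: j\in\Z$ of $\g$ induced by $\nil(\q)\idealin\q$ from Proposition~\ref{p:pair}; since $\nil(\q)$ is nilpotent, this filtration has finite depth, say $D$, with $\q^{(-D-1)} = 0$. The elements $a\in\q = \q^{(0)}$ and $b\in\nil(\q) = \q^{(-1)}$ then satisfy $[a,\q^{(j)}]\subseteq\q^{(j)}$ and $[b,\q^{(j)}]\subseteq\q^{(j-1)}$, so $\ad(a)$ preserves the filtration while $\ad(b)$ strictly lowers it; moreover $\ad(a)^L = 0$ for some $L$ because $a\in\nil(\p)\subseteq\cN(\g)$. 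Expanding $\ad(a+b)^N$ as a sum of $2^N$ monomials in $\ad(a)$ and $\ad(b)$: any monomial with more than $D$ factors of $\ad(b)$ pushes $\g$ below $\q^{(-D-1)}=0$ and vanishes, while any monomial with at most $D$ factors of $\ad(b)$ has at least $N-D$ factors of $\ad(a)$ distributed among at most $D+1$ maximal consecutive runs, so by pigeonhole some run reaches length $L$ once $N \geq L(D+1)$ and the monomial vanishes via $\ad(a)^L = 0$. Hence $\ad(a+b)^N = 0$ for $N$ sufficiently large, establishing the required nilpotency and completing the argument.
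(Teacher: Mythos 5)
Your computation $\r^\perp=\nil(\p)\cap\q+\nil(\q)$ is exactly the paper's, and your overall strategy---verify one of the criteria of Theorem~\ref{t:par-equiv} for $\r$ and then read off $\nil(\r)=\r^\perp$ from part (6)---is also shared. Where you genuinely diverge is in which criterion you verify and where. The paper passes to the Levi quotient $\q_0=\q/\nil(\q)$ (via Corollary~\ref{c:levi-par}) and checks criterion (1) there: $\r^\perp/\nil(\q)\cong(\nil(\p)\cap\q)/(\nil(\p)\cap\nil(\q))$ is a quotient of a subalgebra of the nilpotent algebra $\nil(\p)$, hence abstractly nilpotent, and lies in $[\q_0,\q_0]$; the already-established equivalence (1)$\Leftrightarrow$(2) then silently does the work of upgrading abstract nilpotency to ad-nilpotency. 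You stay in $\g$ and prove criterion (2) directly, showing each $a+b\in\r^\perp$ is $\ad$-nilpotent by combining the filtration of $\g$ induced by $\nil(\q)\idealin\q$ (which $\ad(b)$ strictly lowers and $\ad(a)$ preserves, by Proposition~\ref{p:pair}) with $\ad(a)^L=0$ and a pigeonhole count on words in $\ad(a),\ad(b)$. This is correct and more self-contained at this step---it avoids the quotient and Corollary~\ref{c:levi-par} entirely---at the cost of an explicit combinatorial lemma that the paper's route gets for free from the machinery of Theorem~\ref{t:par-equiv}.

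Two small repairs, neither affecting the substance. First, ``more than $D$ factors of $\ad(b)$'' is off by a constant: the filtration has $\q^{(K)}=\g$ for some $K\geq 0$ which is typically positive, so a word annihilates $\g$ only once it contains more than $K+D$ factors of $\ad(b)$; the pigeonhole step goes through verbatim with $M=K+D$ in place of $D$. Second, the proposition carries no reductivity hypothesis and the section has no such running assumption; your reduction to the reductive case is legitimate (both $\nil(\p)$ and $\nil(\q)$ contain $\nil(\g)$, so everything descends to $\g/\nil(\g)$), but it should be stated as a reduction rather than attributed to a standing convention.
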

\begin{proof} It suffices to prove that $\r/\nil(\q)$ is parabolic
in $\q_0:=\q/\nil(\q)$.  Introduce an admissible form on $\g$ and the induced
admissible form on $\q_0$.  First note that
$(\p\cap\q+\q^\perp)^\perp=(\p\cap\q)^\perp\cap\q =(\p^\perp+\q^\perp)\cap
\q$.  Since $\q^\perp\leq\q$, this gives $\r^\perp = \p^\perp\cap\q +
\q^\perp$.  Then $(\r/\nil(\q))^\perp= \r^\perp/\nil(\q)$ is contained in
$\r/\nil(\q)$ and in $[\q_0,\q_0]$, since
$\p^\perp=[\p,\p^\perp]+\nil(\g)$. Now $\r^\perp/\nil(\q)$ is nilpotent since
$\p^\perp=\nil(\p)$ is nilpotent.
\end{proof}

\begin{prop}\label{p:par-pair-tors} Let $\p,\q$ be parabolic subalgebras of a
reductive Lie algebra $\g$. Then the following are canonically isomorphic
$\exp(\nil(\p)\cap\nil(\q))$-torsors\textup:
\begin{bulletlist}
\item lifts $\xi_\p$ and $\xi_\q$ of the grading elements of $\p$ and $\q$
  with $[\xi_\p,\xi_\q]=0$ \textup(thus
  $\xi_\p,\xi_\q\in\p\cap\q$\textup)\textup;
\item Levi subalgebras of $\p$ in $\p\cap\q$\textup;
\item Levi subalgebras of $\q$ in $\p\cap\q$.
\end{bulletlist}
\end{prop}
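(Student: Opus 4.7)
My plan is to establish canonical bijections between the three sets and then verify the $\exp(\nil(\p)\cap\nil(\q))$-torsor structure. By the symmetry of $\p$ and $\q$ in the statement, it suffices to exhibit the bijection between the first and second bullets; the third is handled analogously.

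First I would justify the parenthetical claim that $[\xi_\p,\xi_\q]=0$ implies $\xi_\p,\xi_\q\in\p\cap\q$. By Proposition~\ref{p:h-split} together with Remark~\ref{r:weyl-tors}, the lift $\xi_\q$ is a semisimple element with integer $\ad$-eigenvalues whose centralizer $\c_\g(\xi_\q)$ (the $0$-eigenspace of $\ad\xi_\q$) equals the Levi subalgebra of $\q$ that $\xi_\q$ determines, and this Levi lies inside $\q$. So $[\xi_\p,\xi_\q]=0$ places $\xi_\p\in\c_\g(\xi_\q)\subseteq\q$, whence $\xi_\p\in\p\cap\q$, and symmetrically $\xi_\q\in\p\cap\q$. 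Moreover, the commuting semisimple pair $(\xi_\p,\xi_\q)$ yields a simultaneous eigenspace decomposition $\g=\Dsum_{i,j\in\Z}\g^{i,j}$ of $\g$ with $\p=\Dsum_{i\leq 0}\g^{i,*}$ and $\q=\Dsum_{j\leq 0}\g^{*,j}$.

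For the forward map I would send $(\xi_\p,\xi_\q)$ to the Levi subalgebra $\ml_\p:=\c_\g(\xi_\p)=\g^{0,*}$ of $\p$ furnished by Proposition~\ref{p:h-split}. The key nontrivial step is to verify that $\ml_\p\subseteq\p\cap\q$, equivalently that $\g^{0,j}=0$ for $j>0$. For this I would exploit the duality between $\g^{i,j}$ and $\g^{-i,-j}$ under an admissible form on $\g$ (Remark~\ref{r:weyl-tors}), combined with the perp-relations $\nil(\p)^\perp=\p$ and $\nil(\q)^\perp=\q$ from Theorem~\ref{t:par-equiv}, to deduce the required vanishing. Conversely, given a Levi subalgebra $\ml_\p\subseteq\p\cap\q$ of $\p$, the unique lift of $\chi_\p$ in $\ml_\p$ (Proposition~\ref{p:h-split}) defines $\xi_\p$; to produce $\xi_\q$, I would observe that $\ml_\p$ is reductive and that $\ml_\p+\nil(\q)\leq\q$ is parabolic in $\g$ by Proposition~\ref{p:par-pair}, so the image of $\ml_\p$ in $\q_0$ is a reductive subalgebra containing $\chi_\q$; lifting uniquely into $\ml_\p$ gives $\xi_\q$, and commutativity $[\xi_\p,\xi_\q]=0$ follows since $\xi_\p$ lies in the centre of $\ml_\p$ by its defining property as the grading element of that reductive algebra.

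The bijection with the third bullet is exactly symmetric, and the $\exp(\nil(\p)\cap\nil(\q))$-torsor structure then follows: this group acts simultaneously by conjugation on all three sets, the bijections are equivariant by construction, and both freeness and transitivity descend from the torsor properties in Proposition~\ref{p:h-split} upon checking that the difference between two compatible lifts, which a priori lies in $\nil(\p)$ for the first component and in $\nil(\q)$ for the second, is forced by the commutativity and joint-lift conditions to be governed exactly by $\nil(\p)\cap\nil(\q)$. I expect the main obstacle to be the duality argument showing $\ml_\p=\c_\g(\xi_\p)\subseteq\q$ in the forward direction; this is the step that uses the admissible form to link the two parabolic structures, and genuinely goes beyond the single-parabolic content of Proposition~\ref{p:h-split}.
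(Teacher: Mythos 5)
There is a genuine gap at precisely the step you flag as the main obstacle: the claim that $\ml_\p:=\c_{\g}(\xi_\p)\sub\q$ (equivalently $\g^{0,j}=0$ for $j>0$) is false, so no duality argument with an admissible form can establish it. Take $\g=\gl_3(\F)$, let $\p$ be the stabilizer of the line $L=\spn\{e_1\}$ and $\q$ the stabilizer of the plane $P=\spn\{e_1,e_2\}$. The canonical lifts of the grading elements are $\xi_\p=\mathrm{diag}(-\tfrac23,\tfrac13,\tfrac13)$ and $\xi_\q=\mathrm{diag}(-\tfrac13,-\tfrac13,\tfrac23)$; these commute, yet $\c_{\g}(\xi_\p)=\gl_1\dsum\gl_2$ (block diagonal) contains $E_{32}$, which does not preserve $P$, so $E_{32}\in\g^{0,1}\neq 0$ and $\c_{\g}(\xi_\p)\not\sub\q$. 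Worse, in this example \emph{no} Levi subalgebra of $\p$ is contained in $\q$ at all: every such Levi is $\gl(L)\dsum\gl(C)$ for a complement $C$ to $L$, and $\gl(C)$ acts irreducibly on $C$, hence cannot preserve the line $P\cap C$. So under your reading of the second bullet that set is empty while the first bullet is nonempty, and the proposed bijection cannot exist. (The containment you want does hold when $\p$ is \emph{minimal}: then $\c_{\g}(\xi_\p)$ is a minimal Levi subalgebra, hence anisotropic, and Proposition~\ref{p:ml} applied to $\xi_\q\in\c_{\g}(\xi_\p)$ gives $\c_{\g}(\xi_\p)\sub\c_{\g}(\xi_\q)\sub\q$ --- which is the case actually needed for Corollary~\ref{c:par-reg-apt-comp}.)

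The paper's proof is structured quite differently and never asserts that the centralizer of a lift lies in $\q$. It works with the lifts themselves: by Proposition~\ref{p:par-pair}, $\r=\p\cap\q+\nil(\q)$ is parabolic, hence contains every lift of $\chi_\q$, so the lifts of $\chi_\q$ lying in $\p\cap\q$ form an affine subspace modelled on $\p\cap\nil(\q)$ (and symmetrically for $\chi_\p$); the second and third bullets are the Levi subalgebras \emph{parametrized} by these lifts via Proposition~\ref{p:h-split} (i.e.\ Weyl structures for $\p$, resp.\ $\q$, lying in $\p\cap\q$), not subalgebras contained in $\p\cap\q$. The two families are then matched using the observation that $[\xi_\p,\xi_\q]\in\nil(\p)\cap\nil(\q)$, where $\ad\xi_\p$ and $\ad\xi_\q$ are invertible. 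Your justification of the parenthetical ($[\xi_\p,\xi_\q]=0$ forces $\xi_\p,\xi_\q\in\p\cap\q$) is correct and is a nice touch, but the forward map, the converse, and the torsor verification (which is in any case too vague to assess) all need to be rebuilt along the lines above.
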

\begin{proof} Let $\r=\p\cap\q+\nil(\q)$. Then $\r$ is a parabolic subalgebra
of $\g$ contained in $\q$, so that $\r/\nil(\q)$ is a parabolic subalgebra of
$\q_0:=\q/\nil(\q)$. The grading element $\chi_\q$ of $\q$ lies in the centre of
$\q/\nil(\q)$ and hence in $\r/\nil(\q)$. Consequently the lifts of $\chi_\q$
belong to an affine subspace of $\r$ modelled on $\nil(\q)$, which therefore
meets $\p\cap\q$ in an affine subspace modelled on $\p\cap\nil(\q)$. Mutatis
mutandis, there is an affine subspace of lifts of the grading element of $\p$
to $\p\cap\q$, modelled on $\nil(\p)\cap\nil(\q)$.

Now if $\xi_\p$ and $\xi_\q$ are lifts of grading elements of $\p$ and $\q$ to
$\p\cap\q$, then $[\xi_\p,\xi_\q]$ belongs to $\nil(\p)\cap \nil(\q)$, on
which $\ad\xi_\p$ and $\ad\xi_\q$ are invertible. Hence the equation
$[\xi_\p,\xi_\q]=0$ uniquely determines either lift to $\p\cap\q$ from the
other, and the compatible lifts form an affine space modelled on
$\nil(\p)\cap\nil(\q)$, hence a torsor for $\exp(\nil(\p)\cap\nil(\q))$.
\end{proof}

\subsection{Minimal Levi subalgebras and maximal anisotropic subalgebras}

We refer to Levi subalgebras of parabolic subalgebras of $\g$ as Levi
subalgebras of $\g$. In particular a \emph{minimal Levi subalgebra} of $\g$ is
a Levi subalgebra of a minimal parabolic subalgebra $\pb\leq\g$.

\begin{cor}\label{c:par-reg-apt-comp} If $\p,\q\leq\g$ are parabolic,
$\p\cap\q$ contains a minimal Levi subalgebra of $\g$.
\end{cor}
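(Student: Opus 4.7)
The plan is to exhibit an explicit minimal parabolic subalgebra $\pb$ of $\g$ containing a Levi subalgebra that sits inside $\p\cap\q$, using only the two preceding propositions of this subsection.

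First, I would apply Proposition~\ref{p:par-pair} to produce the parabolic subalgebra $\r:=\p\cap\q+\nil(\q)\leq\g$. By construction $\r$ is contained in $\q$. Since $\r$ is parabolic, the very definition of parabolicity gives a maximal nil subalgebra $\m\sub\cN(\g)$ with $\pb:=\n_{\g}(\m)\leq\r$, and $\pb$ is a minimal parabolic subalgebra of $\g$. In particular $\pb\leq\r\leq\q$.

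Second, I would apply Proposition~\ref{p:par-pair-tors} to the pair $\pb,\p$ of parabolic subalgebras of $\g$. This produces a Levi subalgebra $\l$ of $\pb$ with $\l\sub\pb\cap\p$ (the relevant torsor is nonempty, as established in the proof of that proposition). Since $\pb\leq\q$, one has $\pb\cap\p\leq\p\cap\q$, and therefore $\l\leq\p\cap\q$. Because $\l$ is a Levi subalgebra of the minimal parabolic $\pb$, it is by definition a minimal Levi subalgebra of $\g$, which is what we wanted.

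I do not anticipate a serious obstacle: the genuine work has already been performed in Propositions~\ref{p:par-pair} and~\ref{p:par-pair-tors}, and the corollary is essentially a two-step combination of them, with the key geometric idea being that replacing $\q$ by the smaller parabolic $\r$ allows us to descend to a minimal parabolic $\pb$ still lying inside $\q$, so that the compatible-Levi construction applied to $(\pb,\p)$ automatically lands in $\p\cap\q$.
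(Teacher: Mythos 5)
Your argument is correct and essentially the same as the paper's, which takes a minimal parabolic subalgebra $\pb\leq\p$ and applies Proposition~\ref{p:par-pair-tors} to the pair $(\pb,\q)$ to find a Levi subalgebra of $\pb$ inside $\pb\cap\q\leq\p\cap\q$. The only difference is your harmless but unnecessary detour through $\r=\p\cap\q+\nil(\q)$: since $\q$ is itself parabolic, it already contains a minimal parabolic subalgebra directly from the definition, so Proposition~\ref{p:par-pair} is not needed here.
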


Indeed, $\p$ contains a minimal parabolic subalgebras $\pb$ and by
Proposition~\ref{p:par-pair-tors} there is a minimal Levi subalgebra in
$\pb\cap\q\leq\p\cap\q$.

\begin{prop} If $\h$ is a Levi subalgebra of $\g$ then $\cN(\h)=\cN(\g)\cap\h$,
and if $\h$ is a minimal Levi subalgebra, every $x\in\h$ is semisimple
in \textup(the adjoint representation of\textup) $\g$.
\end{prop}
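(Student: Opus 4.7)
The plan is to establish the equality $\cN(\h)=\cN(\g)\cap\h$ by two inclusions and then derive the semisimplicity statement from it. The inclusion $\cN(\h)\sub\cN(\g)\cap\h$ is immediate from the remark after the definition of the nilpotent cone: the inclusion $\h\into\g$ is a Lie algebra homomorphism, hence maps $\cN(\h)$ into $\cN(\g)$. For the reverse, fix $x\in\cN(\g)\cap\h$; the restriction $\ad_\h(x)=\ad_\g(x)|_\h$ is nilpotent, so by Proposition~\ref{p:nil-cone-char} it remains to show $x\in[\h,\h]$. Decompose $\h=\z(\h)\dsum[\h,\h]$ (Theorem~\ref{t:red}) and write $x=z+y$ with $z\in\z(\h)$ and $y\in[\h,\h]$. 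Then $\ad_\h(y)=\ad_\h(x)$ is nilpotent and $y\in[\h,\h]$, so $y\in\cN(\h)\sub\cN(\g)$ by Proposition~\ref{p:nil-cone-char}. As $z$ and $y$ commute, $\ad_\g(z)=\ad_\g(x)-\ad_\g(y)$ is a difference of commuting nilpotent operators, hence nilpotent; together with $z=x-y\in[\g,\g]$, this gives $z\in\cN(\g)$.

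To force $z=0$, fix a representation $\sigma\colon\g\to\gl(V)$ inducing an admissible form on $\g$; by Remark~\ref{r:weyl-tors} its restriction to $\h$ is admissible, and hence nondegenerate since $\h$ is reductive. For any $h\in\h$, $\sigma(z)$ commutes with $\sigma(h)$ (as $z\in\z(\h)$) and thus with the Jordan components $\sigma(h)_s,\sigma(h)_n\in\gl(V)$, which are polynomials in $\sigma(h)$. Since $\sigma(z)$ is nilpotent, the commuting products $\sigma(z)\sigma(h)_s$ (nilpotent times semisimple) and $\sigma(z)\sigma(h)_n$ (commuting nilpotents) are both nilpotent, giving $\ip{z,h}=\trace(\sigma(z)\sigma(h))=0$. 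Nondegeneracy of the form on $\h$ then yields $z=0$, and so $x=y\in\cN(\h)$.

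For the second assertion, let $\h$ be the Levi of a minimal parabolic $\pb\leq\g$, so that $\nil(\pb)$ is a maximal nil subalgebra of $\g$. I first aim to show $\cN(\h)=0$. Suppose $n\in\cN(\h)\setminus\{0\}$ and fix a Weyl structure for $\pb$ giving a grading $\g=\Dsum_k\g_k$ with $\h=\g_0$; since $[\h,\nil(\pb)]\sub\nil(\pb)$, the subspace $\m:=\F n\dsum\nil(\pb)$ is a subalgebra strictly containing $\nil(\pb)$. For $\xi=cn+\nu\in\m$, the operator $\ad_\g(\xi)$ respects the grading-filtration (as $\ad(n)$ preserves the grading while $\ad(\nu)$ strictly lowers degree), with induced action $c\ad_\g(n)|_{\g_k}$ on each $\g_k$; this is nilpotent because $n\in\cN(\g)$ by the first part. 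Thus $\ad_\g(\xi)$ is nilpotent and $\xi\in[\g,\g]$, so $\xi\in\cN(\g)$ by Proposition~\ref{p:nil-cone-char}, making $\m$ a nil subalgebra properly containing the maximal nil $\nil(\pb)$, a contradiction. Finally, with $\cN(\h)=0$ in hand, the Jordan decomposition $x=x_s+x_n$ in the reductive $\g$ of any $x\in\h$ has $x_s,x_n\in\h$, because $\h=\c_\g(\chi)$ is the zero-eigenspace of $\ad(\chi)$ and $x_s,x_n$ commute with $\chi$ as polynomials in $x$ in a faithful semisimple representation; hence $x_n\in\h\cap\cN(\g)=\cN(\h)=0$, and $x=x_s$ is semisimple in $\g$.

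I expect the main obstacle to be the orthogonality argument $\ip{z,h}=0$ producing $z=0$, which weaves the trace form together with the classical Jordan decomposition in $\gl(V)$ and the admissibility of the restricted form. The invocation of abstract Jordan decomposition in the reductive $\g$ at the end, and the centralizer identity $\h=\c_\g(\chi)$, are secondary points that follow from the grading associated to a Weyl structure and the Jordan-decomposition facts summarized in the appendix.
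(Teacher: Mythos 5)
Your proof is correct and follows essentially the same strategy as the paper's: use the admissible form, which restricts to a nondegenerate form on the reductive $\h$, to show that any $x\in\cN(\g)\cap\h$ is orthogonal to $\z(\h)$ and hence lies in $[\h,\h]$, then conclude via Proposition~\ref{p:nil-cone-char}; and for minimality, establish $\cN(\h)=0$ and finish with closure of $\ad_\g(\h)$ under Jordan decomposition. The only local differences are that you re-derive the orthogonality of $x$ to $\z(\h)$ by an explicit trace computation (unpacking what the paper gets in one line from Corollary~\ref{c:tf}~(4) applied to $\c_\g(x)$), and you obtain $\cN(\h)=0$ by contradicting the maximality of $\nil(\pb)$ as a nil subalgebra rather than the minimality of $\pb$ as a parabolic---both variants are sound.
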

\begin{proof} Any admissible form on $\g$ induces an admissible form on $\h$
(see Remarks~\ref{r:weyl-tors}). Now by Corollary~\ref{c:tf}, any $x\in
\cN(\g)\cap\h$ is also in $\c_{\g}(x)^\perp\cap\h\sub\z(\h)^\perp\cap\h=[\h,\h]$.
Since $x$ is $\ad_\h$-nilpotent, it is in $\cN(\h)$, and the other inclusion
is automatic. If $\h$ is minimal, $\cN(\h)=0$ (else $\h$ would contain a proper
parabolic subalgebra). Now $\h=\c_{\g}(\z(\h))$ and hence
$\ad_\g(\h)=\c_{\der(\g)}(\z(\h))$, which is closed under Jordan decomposition
by Proposition~\ref{p:Jordans}.
\end{proof}

\begin{defn} A subalgebra $\k$ of a reductive Lie algebra $\g$ is called
\emph{anisotropic}, \emph{toral} or \emph{ad-semisimple} if every
element of $\k$ is semisimple in (the adjoint representation of) $\g$.
\end{defn}
Over an algebraically closed field, anisotropic subalgebras are
abelian~\cite{Hum:lar,Mil:lag}.
\begin{prop}\label{p:ml} Let $\k$ be an anisotropic subalgebra of $\g$
containing a lift $\xi_\p$ of the grading element of a parabolic $\p$. Then
$\xi_\p\in\z(\k)$, \ie, $\k$ is in the Levi subalgebra $\c_{\g}(\xi_\p)$.
\end{prop}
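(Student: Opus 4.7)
The plan is to exploit the fact that $\ad(\xi_\p)$ is a semisimple endomorphism of $\g$ (since $\xi_\p\in\k$ and $\k$ is anisotropic) and has integer spectrum. Combined with semisimplicity of the other elements of $\k$, this will force every $y\in\k$ to commute with $\xi_\p$.

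First, since $\xi_\p$ is a lift of the grading element of $\p$, $\ad(\xi_\p)$ preserves the filtration $\p^{(k)}:k\in\Z$ of $\g$ induced by $\nil(\p)\idealin\p$ and acts as multiplication by $j$ on each graded piece $\g_j=\p^{(j)}/\p^{(j-1)}$. Hence its characteristic polynomial on $\g$ agrees with that on $\gr_\p(\g)$, namely $\prod_j(t-j)^{\dimn\g_j}$, so every eigenvalue is an integer. Since $\ad(\xi_\p)$ is also semisimple, there is a genuine eigenspace decomposition $\g=\Dsum_j\g^{\xi_\p}_j$ refining and splitting the filtration, with $\g^{\xi_\p}_j\cong\g_j$; in particular $\c_{\g}(\xi_\p)=\g^{\xi_\p}_0\leq\p$ is a Levi subalgebra of $\p$, realising the Weyl structure of Remarks~\ref{r:weyl-tors} associated to $\xi_\p$.

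Given $y\in\k$, decompose $y=\sum_j y_j$ with $y_j\in\g^{\xi_\p}_j$. Because $\k$ is a subalgebra containing $\xi_\p$, each iterate $\ad(\xi_\p)^n y=\sum_j j^n y_j$ lies in $\k$, so Vandermonde inversion expresses each $y_j$ as a linear combination of these iterates and yields $y_j\in\k$ for every $j$. The crux, and the step I expect to be the main obstacle, is to show $y_j=0$ whenever $j\neq 0$: we do not know a priori that $\ad(\xi_\p)$ and $\ad(y_j)$ commute, so we cannot diagonalise them simultaneously. Instead, I would work on the two-dimensional $\ad(y_j)$-invariant subspace $\spn\{\xi_\p,y_j\}$, on which $\ad(y_j)^2$ vanishes, since $\ad(y_j)(y_j)=0$ and $\ad(y_j)(\xi_\p)=-jy_j$. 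Since $y_j\in\k$ is anisotropic, $\ad(y_j)$ is semisimple on $\g$, hence also on any invariant subspace; as a semisimple nilpotent operator must be zero, we conclude $\ad(y_j)(\xi_\p)=-jy_j=0$, forcing $y_j=0$. Therefore $y=y_0\in\c_{\g}(\xi_\p)$, proving both $\xi_\p\in\z(\k)$ and $\k\leq\c_{\g}(\xi_\p)$.
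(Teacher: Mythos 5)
Your proof is correct and follows essentially the same route as the paper: decompose $\k$ into eigenspaces of the semisimple operator $\ad(\xi_\p)$ with integer eigenvalues, then observe that a component $y_j$ with $j\neq 0$ satisfies $\ad(y_j)(\xi_\p)=-jy_j$ and $\ad(y_j)^2(\xi_\p)=0$, which contradicts semisimplicity of $\ad(y_j)$ unless $y_j=0$. The extra detail you supply (the filtration argument for integrality of the spectrum, and the Vandermonde inversion showing $y_j\in\k$) is a more explicit version of the paper's one-line appeal to the eigenspace decomposition of the invariant subspace $\k$, not a different idea.
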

\begin{proof} Since $\ad(\xi_\p)$ is semisimple (with integer eigenvalues),
the invariant subspace $\k \leq\g$ is a direct sum of eigenspaces for
$\ad(\xi_\p)\restr{\k}$. Now if $[\xi_\p,x]=jx$ for some $x\in \k$ and $j\in
\Z$, then $\ad(x)(\xi_\p)=-jx$ and $\ad(x)^2(\xi_\p)=0$. Now $\ad(x)$ is
semisimple, so $x=0$ or $j=0$.
\end{proof}
\begin{cor} Minimal Levi subalgebras $\h\leq\g$ are maximal anisotropic
subalgebras.
\end{cor}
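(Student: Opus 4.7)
The plan is to first observe that $\h$ is anisotropic by the preceding proposition (every element of a minimal Levi subalgebra is semisimple in $\g$), so all the content lies in establishing maximality. I would then reduce maximality to a direct application of Proposition~\ref{p:ml} by identifying $\h$ with the centralizer $\c_{\g}(\xi)$ of a suitable semisimple element $\xi\in\h$.

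More precisely, I would write $\h$ as a Levi subalgebra of a minimal parabolic $\pb\leq\g$ and invoke Proposition~\ref{p:h-split} to pick out the lift $\xi\in\h$ of the grading element $\chi\in\z(\pb_0)$ of $\pb$ corresponding to $\h$. Since $\h$ is anisotropic, $\ad\xi$ is semisimple on all of $\g$ with integer eigenvalues, and its eigenspace decomposition recovers the splitting $\g\isom\gr_{\pb}(\g)=\Dsum_{j\in\Z}\g_j$, identifying $\pb$ with $\Dsum_{j\leq 0}\g_j$ and $\h$ with $\g_0$. Hence $\c_{\g}(\xi)=\g_0=\h$.

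Given any anisotropic subalgebra $\k$ of $\g$ containing $\h$, the element $\xi$ lies in $\k$, so Proposition~\ref{p:ml} yields $\k\leq\c_{\g}(\xi)=\h$, forcing $\k=\h$. The only subtle point is the identification $\c_{\g}(\xi)=\h$: I need to verify that the $0$-eigenspace of $\ad\xi$ on $\g$ does not stick out beyond the Levi subalgebra $\c_{\pb}(\xi)$. This follows because the parabolic $\pb$ contains the full non-positive part of the $\xi$-grading on $\g$, so the $0$-eigenspace is automatically trapped inside $\pb$ where it coincides with $\c_{\pb}(\xi)=\h$.
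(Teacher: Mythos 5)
Your proposal is correct and follows exactly the route the paper intends: the corollary is stated as an immediate consequence of Proposition~\ref{p:ml} (applied to the lift $\xi$ of the grading element of a minimal parabolic $\pb$ singled out by $\h$ via Proposition~\ref{p:h-split}), together with the preceding proposition giving anisotropy of $\h$ and the identification $\c_{\g}(\xi)=\h$ coming from the eigenspace splitting $\g\isom\gr_{\pb}(\g)$. You have simply filled in the details the paper leaves implicit, and done so accurately.
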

The converse does not necessarily hold unless the underlying field is
algebraically closed, in which case minimal Levi subalgebras are abelian and
called \emph{Cartan subalgebras}. A minimal parabolic subalgebra $\b$ with
abelian Levi factor is solvable (\ie, $[\b,\b]$ is nilpotent) and called a
\emph{Borel subalgebra}.

Let $\ml$ be a minimal Levi subalgebra of $\g$, also called an
\emph{anisotropic kernel}. Since $\z(\ml)$ is anisotropic and abelian, there
is a splitting field for its action on $\g$, \ie, a field extension $\F^c$
such that the adjoint action of $\z(\ml)$ on $\g^c=\g\tens_\F\F^c$ is
simultaneously diagonalizable. Let $\a$ be the subspace of $\z(\ml)$ whose
elements have all eigenvalues in $\F$.

\begin{defn} For $\alpha\in\a^*$, let $\g_\alpha=\{x\in\g\st [h,x]=\alpha(h) x
\text{ for all } h\in\a\}$. If $\alpha\neq 0$ and $\g_\alpha\neq 0$, we say
that $\alpha\in\a^*$ is a (restricted) \emph{root} of $\g$, and call
$\g_\alpha$ the \emph{root space} of $\alpha$.  The \emph{root lattice}
$\Lam_r$ of $(\g,\ml)$ is the \textup(free\textup) $\Z$-submodule of $\a^*$
generated by the set $\Phi$ of roots. Elements of its dual
$\Lam_{cw}=\{\xi\in\a\st \alpha(\xi)\in\Z$ for all $\alpha\in\Phi\}$ are
called \emph{coweights}.
\end{defn}
Since $\ml$ is the centralizer of $\z(\ml)$, $\g$ has a root space
decomposition
\begin{equation*}
\g=\ml\dsum \Dsum_{\alpha\in\Phi} \g_\alpha,
\end{equation*}
with $\g^*_\alpha\cong\g_{-\alpha}$ (using any admissible form on $\g$). The
kernels of the roots have intersection $\z(\g)$ in $\a=\z(\g)\dsum(\a\cap
[\g,\g])$, hence span its annihilator $\z(\g)^\circ\cong(\a\cap[\g,\g])^*$.
We shall also need the following basic fact concerning the existence of
\emph{coroots} $h_\alpha:\alpha\in\Phi$ in $\a$.

\begin{prop}\label{p:coroots} For any $\alpha\in\Phi$, there is a
unique $h_\alpha\in\a\cap[\g_\alpha,\g_{-\alpha}]$ with $\alpha(h_\alpha)=2$;
furthermore, for any nonzero $x_\alpha\in\g_\alpha$, there exists
$y_\alpha\in\g_{-\alpha}$ with $h_\alpha=[x_\alpha,y_\alpha]$.
\end{prop}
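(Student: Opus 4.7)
I would fix an admissible form $\langle\cdot,\cdot\rangle$ on the reductive $\g$, which is nondegenerate by Theorem~\ref{t:red}. Using $\ad\a$-invariance one checks that $(\beta+\gamma)(h)\langle x,y\rangle=0$ for $x\in\g_\beta$, $y\in\g_\gamma$ and $h\in\a$, so $\g_\beta\perp\g_\gamma$ whenever $\beta+\gamma\neq 0$. Combined with overall nondegeneracy, this yields a nondegenerate pairing $\g_\alpha\times\g_{-\alpha}\to\F$ and a nondegenerate restriction to $\ml=\g_0$; applying Theorem~\ref{t:red} to the reductive $\ml$, the form is further nondegenerate on $\z(\ml)$, and in particular on $\a\cap[\g,\g]$ (which pairs dually with the $\F$-span of the restricted roots). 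Let $t_\alpha\in\a\cap[\g,\g]$ be dual to $\alpha$ under this restricted form, so $\alpha(t_\alpha)=\langle t_\alpha,t_\alpha\rangle$ is nonzero.

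For any $x_\alpha\in\g_\alpha$ and $y\in\g_{-\alpha}$, invariance of the form gives $\langle[x_\alpha,y],h\rangle=\alpha(h)\langle x_\alpha,y\rangle$ for all $h\in\a$, so the orthogonal projection of $[x_\alpha,y]\in\g_0=\ml$ onto $\a\cap[\g,\g]$ equals $\langle x_\alpha,y\rangle\,t_\alpha$. This already gives uniqueness: any element of $\a\cap[\g_\alpha,\g_{-\alpha}]$ lies in $[\g,\g]$ and is the projection of itself, hence a scalar multiple of $t_\alpha$; the condition $\alpha(h_\alpha)=2$ then forces $h_\alpha=(2/\alpha(t_\alpha))\,t_\alpha$.

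The main obstacle is the ``furthermore'' clause: producing, for each nonzero $x_\alpha$, an actual $y_\alpha\in\g_{-\alpha}$ with $[x_\alpha,y_\alpha]=h_\alpha$ rather than merely a bracket whose $(\a\cap[\g,\g])$-projection is $h_\alpha$. My plan is to restrict attention to the reductive rank-one subalgebra $\l:=\c_\g(\ker\alpha)=\ml\dsum\bigoplus_{k}\g_{k\alpha}$ (the sum over $k\in\F^\times$ with $k\alpha\in\Phi$), whose anisotropic kernel contains $\ml$ and whose maximal split toral part modulo the centre of $\l$ is $\F t_\alpha$. Choosing first any $y$ with $\langle x_\alpha,y\rangle=2/\alpha(t_\alpha)$, the bracket $h:=[x_\alpha,y]\in\ml\cap[\l,\l]$ has $(\a\cap[\g,\g])$-projection exactly $h_\alpha$, so the deviation $d:=h-h_\alpha$ lies in $\ml\cap[\l,\l]$ orthogonal to $\F t_\alpha$. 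I would then modify $y$ by an element of the codimension-one subspace $\ker\langle x_\alpha,\cdot\rangle\subseteq\g_{-\alpha}$ to kill $d$: the anisotropy of $\ml$ (which forces $\cN(\g)\cap\ml=0$ by Proposition~\ref{p:nil-cone-char}) eliminates nilpotent obstructions, while a dimension count inside $\l$ pairing $\g_{-\alpha}$ with $\ml\cap[\l,\l]$ via the bracket with $x_\alpha$ produces the available cancellation. I expect this cancellation step---blending the nondegenerate $\g_\alpha\times\g_{-\alpha}$ pairing with the anisotropy of $\ml$ inside the rank-one subalgebra $\l$---to be the technical heart of the argument.
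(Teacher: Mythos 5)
There are two genuine gaps in your proposal, and they sit exactly where the paper's proof does its real work. First, you assert that $\alpha(t_\alpha)=\ip{t_\alpha,t_\alpha}$ is nonzero as though this followed from nondegeneracy of the form on $\a\cap[\g,\g]$. It does not: a nondegenerate symmetric bilinear form over a field of characteristic zero (even over $\C$) may be isotropic, so $\ip{t_\alpha,t_\alpha}=0$ is not excluded by anything you have established, and without it both the existence of $h_\alpha$ and your formula $h_\alpha=(2/\alpha(t_\alpha))t_\alpha$ collapse. This is precisely where the paper invokes a nilpotency argument: if $x\in\g_\alpha$, $y\in\g_{-\alpha}$ and $[x,y]\in\a$ with $\alpha([x,y])=0$, then $[x,y]$ is central in $\s=\spn\{x,y,[x,y]\}$ and lies in $[\s,\s]$, hence in $\cN(\s)\sub\cN(\g)$; being also in the anisotropic $\ml$ it is semisimple, hence zero. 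Note the wrinkle that makes the textbook version non-quotable here: $[x,y]$ lies in $\ml$ but not necessarily in $\a$, so one must restrict to brackets that genuinely land in $\a$ (as the paper does) rather than argue with projections.

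Second, the ``furthermore'' clause is only a plan, and the plan's key step is not an argument. Your reduction is fine as far as it goes: modifying $y$ by $y'$ with $\ip{x_\alpha,y'}=0$ changes $[x_\alpha,y]$ within $[x_\alpha,\g_{-\alpha}]$ without changing the $\a$-projection, so what you must show is that the deviation $d=[x_\alpha,y]-h_\alpha$ lies in $[x_\alpha,\g_{-\alpha}]$. Since $[x_\alpha,\g_{-\alpha}]$ is the orthogonal complement of $\c_{\g}(x_\alpha)\cap\ml$ in $\ml$, this amounts to showing $t_\alpha\perp\c_{\g}(x_\alpha)\cap\ml$, i.e., that no $m\in\ml$ with $[m,x_\alpha]=0$ has $\a$-component on which $\alpha$ is nonzero; neither ``a dimension count'' nor ``anisotropy eliminates nilpotent obstructions'' addresses this. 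The paper supplies exactly the missing ingredient in one line via the identity $[x_\alpha,\g_{-\alpha}]\cap\a=\c_{\g}(x_\alpha)^\perp\cap\a=(\kernel\alpha)^\perp\cap\a$, which simultaneously produces an exact bracket in $\a$ and, combined with the first step, the nonvanishing of $\alpha$ on it. This gap also infects the first clause of the proposition: to get any $h_\alpha\in\a\cap[\g_\alpha,\g_{-\alpha}]$ at all, some combination of brackets must land exactly in $\a$, which your projection formula alone does not give. What does survive is your uniqueness argument --- every element of $\a\cap[\g_\alpha,\g_{-\alpha}]$ equals its own $\a$-projection, hence is a multiple of $t_\alpha$ --- which is clean and correct granted nondegeneracy of the form on $\a$.
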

\begin{proof}[\pfin{Bou:gal,Hum:lar}] For any $x\in\g_\alpha$ and $y\in
\g_{-\alpha}$, if $[x,y]\in\kernel\alpha\sub\a$ then $x,y,[x,y]$ span a
subalgebra $\s\leq\g$ with $[x,y]\in[\s,\s]\cap\z(\s)\sub\cN(\s)$, so
$[x,y]\in[\g,\g]\cap\ml$ is nilpotent and semisimple, hence zero. For
$x=x_\alpha\neq 0$, $[x_\alpha,\g_{-\alpha}]\cap\a=\c_{\g}(x_\alpha)^\perp\cap\a=
(\ker\alpha)^\perp\cap\a$ so there is a unique $h_\alpha\in\a$ with
$\alpha(h_\alpha)=2$ and $[x_\alpha,y_\alpha]=h_\alpha$ for some
$y_\alpha\in\g_{-\alpha}$.
\end{proof}
For any parabolic subalgebra $\p$ containing $\ml$, the unique lift $\xi_\p$
of its grading element to $\z(\ml)$ acts on $\g$ with integer eigenvalues, so
$\xi_\p\in\Lam_{cw}$. For any $\xi\in\Lam_{cw}$ and $j\in\Z$, let
$\Phi^j_\xi=\{\alpha\in\Phi\st \alpha(\xi)=k\}$ and
$\Phi^\pm_{\xi}=\{\alpha\in\Phi\st \pm\alpha(\xi)\in\Z^+\}$. Then:
\begin{bulletlist}
\item $\Phi^\pm_\xi$ and $\Phi^0_\xi$ are (relatively) additively closed in
  $\Phi$, with $\Phi=\Phi^-_\xi\sqcup\Phi^0_\xi\sqcup\Phi^+_\xi$;
\item $\p_\xi:=\ml\dsum\Dsum_{\alpha\in\Phi\setminus\Phi^+_\xi} \g_\alpha$ is
  a parabolic subalgebra with nilpotent radical
  $\p^{\,\perp}_\xi=\Dsum_{\alpha\in\Phi^-_\xi}\g_\alpha$;
\item for any parabolic $\p\supseteq\ml$, $\{\xi\in\Lam_{cw} \st \p_\xi=\p\}$
  is additively closed and contains $\xi_\p$.
\end{bulletlist}

\begin{rem} It may be illuminating to compare the above theory with standard
approaches to the theory of real reductive Lie
algebras~\cite{Hel:dglg,Kna:lgbi,OnVi:lgla,Wal:rrg}, in which the main
ingredient is a Cartan decomposition of $\g$, \ie, a symmetric decomposition
$\g=\h\dsum\m$ (into the $+1$ and $-1$ eigenspaces of an involution) such that
$\h$ is a maximal compact subalgebra. Then $\a$ is a maximal abelian subspace
of $\m$ and $\k=(\k\cap\h)\dsum\a$ is the centralizer of $\a$ in $\g$. It
follows that $\a$ is the ``split part'' $\t\cap\m$ of a ``maximally split''
Cartan subalgebra $\t$, where $\t\cap\h$ is a Cartan subalgebra of
$\k\cap\h$. The (restricted) roots in $\a^*$ are restrictions of roots in
$\t^*$. We shall refer to $\a\leq\ml$ in general as a \emph{split Cartan
  subalgebra} of $\g$.
\end{rem}
\begin{example}\label{ex:3A} Let $V$ be as in
Examples~\ref{ex:1A}--\ref{ex:2A}. The parabolic subalgebras of $\gl(V)$ are
the stabilizers of flags in the incidence system $\grph^V$ of proper
nontrivial subspaces $W\leq V$. Elements $W_1,W_2$ are incident if and only if
their stabilizers (maximal proper parabolics) are costandard. A minimal
parabolic subalgebra is the stabilizer of full flag, \ie, a Borel subalgebra,
while a minimal Levi subalgebra $\ml$ is the stabilizer of a direct sum
decomposition of $V$ into one dimensional subspaces, \ie, a Cartan subalgebra
of $\gl(V)$. If $\F=\R$ and $V=\R^n$, then $\so_n\dsum\R\,\iden$ is a maximal
anisotropic subalgebra of $\gl_n(\R)$, but not a minimal Levi subalgebra.
\end{example}
\begin{example}\label{ex:3B}  Let $U,Q_U$ be as in
Examples~\ref{ex:1B} and \ref{ex:2B}. The parabolic subalgebras of
$\so(U,Q_U)$ are stabilizers of flags in the incidence system $\grph^{U,Q_U}$,
with incident flags corresponding to costandard parabolics.  A minimal
parabolic subalgebra is the stabilizer of a full flag
$\pi_1\leq\pi_2\leq\cdots\leq\pi_n$ of isotropic subspaces, while a minimal
Levi subalgebra $\ml$ is the stabilizer of an orthogonal decomposition $U=
\bigl(\Dsum \cR_\ml)\dsum W_\ml$ where $Q_U$ is positive definite on $W_\ml$,
$\dim W_\ml=k$, and the $n$ elements of $\cR_\ml$ have signature $(1,1)$.  If
$W_\ml\cong\R^k$ and $\Dsum \cR_\ml\cong\R^{n,n}$, then
$\ml\cong\so_k\dsum\a\sub\so_{n+k,n}$ where $\a$ is a Cartan subalgebra of
$\so_{n,n}$. Thus $\so_{n+k}\dsum\so_n\sub\so_{n+k,n}$ is maximal anisotropic,
but not a minimal Levi subalgebra.
\end{example}

\subsection{Minimal parabolic subalgebras and lowest weight representations}

Let $\pb$ be a minimal parabolic subalgebra containing a minimal Levi
subalgebra $\ml$.  The corresponding Weyl structure $\xi_{\pb}$ induces a
grading $\g=\Dsum_{j\in\Z} \pb_j$ such that $\pb$ is the nonpositive part,
with $\pb_0=\ml$ and $\pb_j=\Dsum_{\alpha\in\Phi^j}\g_\alpha$ for $j\neq 0$
(where $\Phi^j=\Phi^j_{\xi_\pb}$). Thus $\Phi^0=\empt$ and $\Phi$ is the
disjoint union of \emph{positive roots} $\alpha\in\Phi^+$ and negative roots
$\alpha\in\Phi^-$. For $\lam,\mu\in\a^*$, we write $\lam\geq \mu$ ($\lam>\mu$)
if $\lam-\mu$ is a (nonzero) sum of positive roots.

\begin{defn} Let $\rho\colon\g\to\gl(V)$ be a representation and $\ml$ a
minimal Levi subalgebra with split Cartan subalgebra $\a$. For $\lam\in \a^*$,
the \emph{$\lam$-weight space} is the simultaneous eigenspace $V_\lam=\{v\in
V\st \forall\,h\in\a,\;\rho(h)v=\lam(h) v\}$, which is a representation of
$\ml$.  If $V_\lam\neq 0$, $\lam$ is called a \emph{weight} of $V$. Let $\pb$
be a minimal parabolic containing $\ml$. A \emph{lowest weight vector} with
\emph{lowest weight} $\lam$ is a vector $v\in V_\lam$ with
$\rho(\nil(\pb),v)=0$. If $V$ is generated by a lowest weight vector, it is
called a \emph{lowest weight representation}.
\end{defn}
\begin{prop}\label{p:lwr} For $\ml\sub\pb\sub\g$ as above, let $V$ be a
lowest weight representation with lowest weight $\lam$. Then $V$ is a direct
sum of weight spaces with weights $\mu\geq\lam$, the lowest weight $\lam$ is
unique, and $V_\lam=\rho(\pb,v)=\rho(\ml,v)$ for any lowest weight vector $v$.
\end{prop}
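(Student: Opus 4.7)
Set $\n^+=\Dsum_{\alpha\in\Phi^+}\g_\alpha$, so $\g=\n^+\dsum\ml\dsum\nil(\pb)$ as a vector space and $\rho(\nil(\pb))v=0$. Let $W=U(\ml)\act v$ denote the $\ml$-submodule of $V$ generated by $v$; since $\a\sub\z(\ml)$ the $\ml$-action preserves each $\a$-eigenspace of $V$, so $W\sub V_\lam$. The plan is first to show
\[
V=\textstyle\sum_{k\geq 0}\rho(\n^+)^{k}W,
\]
and then to read off the $\a$-weight decomposition.

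For the spanning statement, set $V'=\sum_{k\geq 0}\rho(\n^+)^{k}W\sub V$; since $V$ is generated by $v\in V'$, it suffices to prove $V'$ is a $\g$-submodule. Invariance under $\n^+$ is tautological, and invariance under $\ml$ follows from $[\ml,\n^+]\sub\n^+$ together with $\ml W\sub W$. The delicate point is $\nil(\pb)$-invariance. First I check $\rho(\nil(\pb))W=0$: for $z\in\nil(\pb)$ and $y_1\cdots y_n v\in W$, push $z$ to the right through the $y_j$; since $\nil(\pb)\idealin\pb$, every commutator $[z,y_j]$ stays in $\nil(\pb)$, and after $n$ steps everything lands on $v$ and vanishes. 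For a general monomial $\rho(x_{\beta_1})\cdots\rho(x_{\beta_k})w$ with $w\in W$, $x_{\beta_i}\in\g_{\beta_i}$, $\beta_i\in\Phi^+$, push $z$ one factor to the right and decompose $[z,x_{\beta_1}]\in\n^+\dsum\ml\dsum\nil(\pb)$; each of the three components acts on $\rho(\n^+)^{k-1}W$ back inside $V'$ by induction on $k$, with the $\nil(\pb)$-component handled by the inductive hypothesis. This is essentially a PBW-style triangular ordering, reframed to avoid invoking PBW directly, and is the main technical obstacle of the proof.

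With $V=\sum_{k\geq 0}\rho(\n^+)^{k}W$ in hand, the weight analysis is routine. Using $[h,x_{\beta_i}]=\beta_i(h)x_{\beta_i}$, $[h,\ml]=0$ (from $\a\sub\z(\ml)$), and $\rho(h)w=\lam(h)w$ for $w\in W$, a short induction on $k$ shows that each monomial $\rho(x_{\beta_1})\cdots\rho(x_{\beta_k})w$ is an $\a$-eigenvector of eigenvalue $\lam(h)+\sum_i\beta_i(h)$. Since distinct $\a$-eigenspaces are linearly independent, $V=\Dsum_\mu V_\mu$ and every weight $\mu$ lies in $\lam+\N\Phi^+$, so $\mu\geq\lam$ and $\lam$ is the unique lowest weight. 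The monomial has weight exactly $\lam$ iff $\sum_i\beta_i=0$, which forces $k=0$ (positive roots cannot sum to zero); hence $V_\lam=W=U(\ml)\act v$. The identity $V_\lam=\rho(\pb,v)=\rho(\ml,v)$ then follows on interpreting these as the submodules generated by $v$, noting $\rho(\nil(\pb))v=0$ absorbs the $\nil(\pb)$-contribution to $\rho(\pb,v)$.
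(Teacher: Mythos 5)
Your proof is correct and follows essentially the same route as the paper's: both arguments span $V$ by monomials in $\ml$ and the positive root spaces applied to $v$, prove $\g$-invariance of that span by the commutator identity $\rho(x)\rho(y)=\rho([x,y])+\rho(y)\rho(x)$ (pushing elements of $\nil(\pb)$ rightward until they annihilate $v$), and then read off the weights. The only cosmetic difference is that you order the monomials PBW-style as $\rho(\n^+)^k\bigl(U(\ml)\act v\bigr)$, whereas the paper allows arbitrary interleavings of $\ml$ and $\g_\alpha$ factors; this changes nothing essential, and your closing remark correctly identifies $V_\lam$ with the $\ml$-submodule generated by $v$, exactly as the paper's ``equality only holds if all $y_j$ belong to $\ml$'' does.
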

\begin{proof} Let $v$ be a lowest weight vector and let $W$ be the span of
elements of the form $\rho(y_1)\cdots\rho(y_k)v$ with each $y_j$ either in
$\ml$ or $\g_\alpha$ with $\alpha\in\Phi^+$. A standard inductive argument
using $\rho(x)\rho(y)=\rho([x,y])+\rho(y)\rho(x)$ shows (with $x\in\pb_{-1}$)
that $W$ is $\rho(\g)$-invariant, hence $W=V$, and (with $x\in\a$) that $W$ is
a sum of weight spaces $V_\mu$ with $\mu\geq\lam$, where equality only holds
if all $y_j$ belong to $\ml$.
\end{proof}

\begin{thm}\label{t:std-pars} Let $\pb$ be a minimal parabolic subalgebra
of $\g$ containing a minimal Levi subalgebra $\ml$, so that
$\Phi=\Phi^+\sqcup\Phi^-=\bigsqcup_{j\in\Z}\Phi^j$ and $\g=\Dsum_{j\in\Z} \pb_j$.
\begin{numlist}
\item Any parabolic $\q$ containing $\pb$ is the stabilizer of a
  $1$-dimensional lowest weight space $L_\q=V_{\q}$ \textup(for $\pb$\textup)
  in a lowest weight representation $V_\q$ of $\g$, and $\lam(h_\alpha)=0$ if
  $\alpha(\xi_\q)=0$.
\item For any $\alpha\in\Phi^1$, $\g_\alpha$ is irreducible for $\ml$, and
  there is a maximal proper parabolic subalgebra $\q^\alpha$ such that for all
  $\beta\in\Phi^1$, $\beta(\xi^\alpha)=\delta_{\alpha\beta}$, where
  $\xi^\alpha=\xi_{\q_\alpha}$. If $L_{\q^\alpha}\sub V_{\q^\alpha}$ as in
  \textup{(1)}, then the lowest weight is a negative multiple of
  $\lam^\alpha\in\a^*$ with $\lam^\alpha(h_\beta)=\delta_{\alpha\beta}$.
\item $\Phi^1$ is a basis for $\Lam_r$, and parabolic subalgebras containing
  $\pb$ are in bijection with subsets $J$ of $\Phi^1$, where the parabolic
  $\q_J\leq\g$ corresponding to $J$ is $\bigcap_{\alpha\in J} \q^\alpha$ and
  $\xi_{\q_J}=\sum_{\alpha\in J}\xi^\alpha$.
\end{numlist}
\end{thm}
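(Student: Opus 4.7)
For (1), the plan is to realize $V_\q$ as a subrepresentation of a wedge power. Set $N:=\dim\nil(\q)$ and give $\bigwedge^N\g$ its canonical $\g$-module structure from the adjoint derivation. Take $L_\q:=\bigwedge^N\nil(\q)$, a one-dimensional $\ml$-weight line of weight $\lam_\q:=\sum_{\alpha\in\Phi(\nil(\q))}\alpha$. Since $\pb\leq\q$ forces $\nil(\q)\leq\nil(\pb)$, for $x\in\nil(\pb)$ the operator $\ad x$ preserves $\nil(\q)$ and is nilpotent, so $x\cdot L_\q=\trace(\ad x|_{\nil(\q)})L_\q=0$. Thus $L_\q$ is a $\pb$-lowest weight vector, and $V_\q:=\g\cdot L_\q$ is a lowest weight representation with $(V_\q)_{\lam_\q}=L_\q$ by Proposition~\ref{p:lwr}. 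A Leibniz-rule computation yields $\Stab_\g(L_\q)=\q$: for $x\in\g_\beta$ with $\beta\neq 0$, $x\cdot L_\q\in L_\q$ forces $x\cdot L_\q=0$, equivalent to $[x,\nil(\q)]\leq\nil(\q)$ (which gives $x\in\n_\g(\nil(\q))=\q$) together with a vanishing trace (automatic for $\ad x$ nilpotent). Finally $\lam_\q(h_\beta)=\trace(\ad h_\beta|_{\nil(\q)})=0$ when $\beta(\xi_\q)=0$, since $h_\beta=[x_\beta,y_\beta]$ with $x_\beta,y_\beta\in\ml_\q\leq\q$ preserving $\nil(\q)$ (trace of a commutator).

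For the structural parts (2) and (3), I would first establish the spanning property: every $\beta\in\Phi^h$ with $h\geq 2$ admits some $\alpha\in\Phi^1$ with $\beta-\alpha\in\Phi^+$. The argument is an $\sgl_2$-root-string computation using Proposition~\ref{p:coroots} and an admissible form: if $\beta-\alpha\notin\Phi^+$ for all $\alpha\in\Phi^1$, then $\beta(h_\alpha)\leq 0$ for all simple $\alpha$, contradicting the positive self-pairing of the positive root $\beta$. Thus $\Phi^1$ generates $\Lam_r$ as a $\Z$-module, and linear independence of $\Phi^1$ follows by a rank count on $\{h_\alpha\}_{\alpha\in\Phi^1}$. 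Define $\xi^\alpha\in\a$ dually by $\beta(\xi^\alpha)=\delta_{\alpha\beta}$ on $\Phi^1$; integrality $\xi^\alpha\in\Lam_{cw}$ follows since every root is an integer combination of $\Phi^1$. Set $\q^\alpha:=\p_{\xi^\alpha}$, a maximal proper parabolic containing $\pb$.

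To complete (2), $\ml$-irreducibility of $\g_\alpha$ follows by contradiction: a nonzero $\ml$-invariant $U\lneq\g_\alpha$ would make $\pb+U+[U,U]$ a subalgebra (the delicate check $[\nil(\pb),[U,U]]\leq\pb+U$ goes through Jacobi using $[\g_{-\alpha},U]\leq\ml$; also $[U,[U,U]]\leq\g_{3\alpha}=0$ since $3\alpha$ is never a root). This subalgebra contains $\pb$ and hence is parabolic; being parabolic and containing $\ml$, it has the form $\p_\xi$, a direct sum of full root spaces, contradicting $U\lneq\g_\alpha$. The lowest weight $\lam_{\q^\alpha}$ from (1) vanishes on $h_\gamma$ for $\gamma\in\Phi^1\setminus\{\alpha\}$, so is proportional to $\lam^\alpha$; $\lam_{\q^\alpha}(h_\alpha)=\trace(\ad h_\alpha|_{\nil(\q^\alpha)})$ is a negative integer via positivity of $\alpha$-string contributions. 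For the bijection in (3), the mutually costandard $\q^\alpha$ (all containing $\pb$) give $\q_J:=\bigcap_{\alpha\in J}\q^\alpha$ parabolic by Proposition~\ref{p:costd-pars}, with grading element $\sum_{\alpha\in J}\xi^\alpha$ via additive closedness of $\{\xi:\p_\xi=\q_J\}$; conversely any $\q\geq\pb$ recovers $J:=\{\alpha\in\Phi^1:\alpha(\xi_\q)>0\}$ with $\q_J=\q$. The main obstacle is the spanning argument for $\Phi^1$, which must accommodate possibly non-reduced restricted root systems; this same subtlety forces including the $[U,U]$-term in the irreducibility argument.
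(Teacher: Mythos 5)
Your part (1) is essentially the paper's proof: the same wedge-power construction $L_\q=\Wedge^d\nil(\q)$, the same stabilizer computation, and the same observation that $h_\alpha\in[\q,\q]$ when $\alpha(\xi_\q)=0$. The irreducibility argument via the subalgebra $\pb+U+[U,U]$ is a legitimate alternative to the paper's route (which instead takes a maximal proper $\ml$-invariant subspace $\s\leq\pb_1$, shows the subalgebra it generates together with $\pb$ is parabolic, and reads off irreducibility from the fact that the complement of $\s$ must be a full root space), modulo the unproved claim that $3\alpha\notin\Phi$ in a possibly non-reduced restricted system.

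The genuine gap is in your spanning argument for $\Phi^1$, which is where the whole of (2)--(3) hangs. You propose: if $\beta\in\Phi^h$, $h\geq 2$, and $\beta-\alpha\notin\Phi^+$ for all $\alpha\in\Phi^1$, then $\beta(h_\alpha)\leq 0$ for all $\alpha\in\Phi^1$, ``contradicting the positive self-pairing of $\beta$.'' But to turn $\ip{\beta,\alpha}\leq 0$ for all $\alpha\in\Phi^1$ into $\ip{\beta,\beta}\leq 0$ you must already know that $\beta$ lies in the nonnegative span of $\Phi^1$ --- which is precisely the statement being proved. (The classical version of this lemma is not circular only because the simple system is \emph{defined} as the indecomposable positive roots, for which spanning is a trivial induction; your $\Phi^1$ is defined by $\alpha(\xi_\pb)=1$, and its indecomposability is part of the claim.) The argument also leans on the full root-string relation $p-q=\beta(h_\alpha)$ and on positivity of the trace form on the rational span of $\Phi$, neither of which is developed in the paper, which deliberately avoids $\sgl_2$ representation theory. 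The paper's route is one line and sidesteps all of this: $\hpb=\Dsum_{j\geq 0}\pb_j$ is parabolic, and the induced filtration of any parabolic satisfies $\f^{(-j-1)}=[\nil,\f^{(-j)}]$, so $\pb_1$ generates $\nil(\hpb)=\Dsum_{\alpha\in\Phi^+}\g_\alpha$ as a Lie algebra; hence every positive root is a sum of elements of $\Phi^1$. A second, related gap: you need linear independence of $\Phi^1$ \emph{before} you can define $\xi^\alpha$ by duality, and your ``rank count on $\{h_\alpha\}$'' is not an argument (the standard proof again uses obtuseness plus positivity of the form, i.e., the same missing machinery). In the paper the logic runs the other way: the coweights $\xi^\alpha=\xi_{\q^\alpha}$ are produced geometrically from the maximal parabolics $\q^\alpha=\pb\dsum\n$ built from maximal $\ml$-invariant subspaces of $\pb_1$, and linear independence of $\Phi^1$ then follows for free from $\beta(\xi^\alpha)=\delta_{\alpha\beta}$. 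I would rework (2)--(3) around the generation statement for $\nil(\hpb)$ and the maximal-invariant-subspace construction.
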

\begin{proof}[\pfin{Bou:gal,Hum:lar,Pro:lg}] (1) Let $L_\q:=\Wedge^d\nil(\q)
\leq \Wedge^d\g$ with $d=\dimn\nil(\q)$ so $\dimn L_\q=1$, and let
$V_\q\sub\Wedge^d\g$ be the $\g$-submodule generated by $L_\q$. Since
$L_\q\sub V_\q$ has stabilizer $\q$, $\nil(\pb)\leq [\q,\q]$ acts trivially
and $L_\q$ is a lowest weight space for $\pb$. If $\alpha(\xi_\q)=0$ then
$\g_{\pm\alpha}\sub\q$ and hence $h_\alpha\in[\q,\q]$.

(2)--(3) Since $\hpb=\sum_{j\in\N} \pb_j$ is a parabolic (opposite to $\pb$),
$\pb_1$ generates $\pb_+=\nil(\hpb)=\sum_{j\in\Z^+}\pb_j
=\sum_{\alpha\in\Phi^+} \g_\alpha$---hence $\Phi^1$ spans $\Lam_r$ and for any
proper (parabolic) subalgebra $\q\leq\g$ containing $\pb$, $\q\cap\pb_1$ is a
proper $\ml$-invariant subspace of $\pb_1$.

If $\s$ is a maximal proper $\ml$-invariant subspace of $\pb_1$, and $\n$ is
the (nilpotent) Lie subalgebra of $\pb_+$ generated by $\s$, then an inductive
argument shows that $[\pb,\n]\sub\pb\dsum\n$, so that $\q:=\pb\dsum\n$ is
parabolic in $\g$ with $\q\cap\pb_1=\s$. The positive eigenspaces of $\xi_\q$
meet $\pb_1$ in an irreducible complement to $\s$, which must be a root space
$\g_\alpha$ with $\alpha\in\Phi^1$. Hence all such $\s$ have the form
$\s=\Dsum\{\g_\beta\st \beta\in\Phi^1,\beta\neq\alpha\}$ and the corresponding
$\q=\q^\alpha$ has $\beta(\xi_{\q})=\delta_{\alpha\beta}$ for all
$\beta\in\Phi^1$. The rest of the theorem follows straightforwardly.
\end{proof}

Thus $\Phi^1$ is a \emph{basis of simple roots}, \ie, a $\Z$-basis for
$\Lam_r$ with respect to which any $\alpha\in\Phi\sub\Lam_r$ either has all
coefficients nonnegative, or all coefficients nonpositive. The corresponding
\emph{fundamental coweights} $\xi^\alpha:\alpha\in\Phi^1$ form a basis for
$\a\cap[\g,\g]$ so that $\rank\Lam_r=\dimn(\a\cap[\g,\g])$. The weights
$\lam^\alpha\in(\a\cap[\g,\g])^*$ with
$\lam^\alpha(h_\beta)=\delta_{\alpha\beta}$ are also uniquely determined, and
called the \emph{fundamental weights}.

Conversely, any basis of simple roots $\Psi\sub\Phi$ determines a unique
element $\xi$ of $\Lam_{cw}$ with $\alpha(\xi)=1$ for all $\alpha\in\Psi$, and
then $\p_\xi$ is a minimal parabolic subalgebra of $\g$ with $\Psi=\Phi^1$.

\section{Global theory and parabolic buildings}\label{s:gta}

We now have sufficient information to adopt a more global perspective on
parabolic subalgebras of a reductive Lie algebra $\g$.  Let $G$ be a connected
algebraic group with Lie algebra $\g$ (over a field $\F$ of characteristic
zero), so that the centre $Z(G)$ has Lie algebra $\z(\g)$, and $G/Z(G)\cong
G^{ss}\leq\Aut(\g)$ is the identity component of the automorphism group,
called the \emph{adjoint group} of $\g$; $G^{ss}$ is generated by $\exp(\ad
x)$ for $x\in \cN(\g)$ (these generate a connected normal subgroup of
$\Aut(\g)$ whose Lie algebra meets every simple component of $\g$
nontrivially).  For a subspace $\s\sub\g$ we let $N_G(\s)\leq G$ be the
stabilizer of $\s$ and $C_G(\s)$ the kernel of the action of $N_G(\s)$ on
$\s$.  These subgroups have Lie algebras $\n_{\g}(\s)$ and $\c_{\g}(\s)$
respectively.

\subsection{Homogeneity and generalized flag varieties}

Via its adjoint group, $G$ acts on the set $\PF$ of parabolic subalgebras
$\q$ of $\g$, and its orbits are called \emph{generalized flag varieties}.
Since any such $\q$ is self-normalizing ($\n_{\g}(\q)=\q$), its stabilizer
$N_G(\q)$ has Lie algebra $\q$.

\begin{prop} Any generalized flag variety embeds into the projective space
of a lowest weight representation of $\g$.
\end{prop}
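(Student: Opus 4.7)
The plan is to invoke Theorem~\ref{t:std-pars}(1), which is tailored for exactly this purpose. Fix a generalized flag variety $\cO = G\act\q$ and a representative parabolic $\q\in\cO$. First I would verify that $\q$ contains a minimal parabolic subalgebra $\pb$ of $\g$: the reductive Levi quotient $\q_0 = \q/\nil(\q)$ admits a minimal parabolic subalgebra (for instance the normalizer of any maximal nil subalgebra of $\q_0$), and lifting it via Corollary~\ref{c:levi-par} produces $\pb\leq\q$. Picking any minimal Levi $\ml\leq\pb$, Theorem~\ref{t:std-pars}(1) then yields a lowest weight representation $V_\q\sub\Wedge^d\g$ with $d=\dimn\nil(\q)$, generated by the $1$-dimensional lowest weight space $L_\q = \Wedge^d\nil(\q)$, and asserts that the Lie algebra stabilizer of $L_\q$ in $\g$ is exactly $\q$.

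Next, I would define the orbit map $\varphi\colon\cO\to\Proj(V_\q)$ by $g\act\q\mapsto[g\act L_\q]$, and show that $\varphi$ is well-defined and injective by establishing $N_G(\q) = N_G(L_\q)$, where $N_G(L_\q)$ denotes the $G$-stabilizer of the line $L_\q\sub V_\q$. The inclusion $N_G(\q)\sub N_G(L_\q)$ is immediate since $\nil$ is a canonical construction: $g\in N_G(\q)$ implies $g\act\nil(\q)=\nil(\q)$, hence $g\act L_\q = \Wedge^d(g\act\nil(\q)) = L_\q$. For the converse, $g\act L_\q = L_\q$ forces $g\act\nil(\q) = \nil(\q)$ via injectivity of the Pl\"ucker map $\Gr_d(\g)\to\Proj(\Wedge^d\g)$ (recover the subspace as $\{v\in\g\st v\vwedge L_\q=0\}$), and then $g$ normalizes $\q = \n_{\g}(\nil(\q))$, so $g\in N_G(\q)$.

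To upgrade injectivity to an embedding, I would verify that the differential of $\varphi$ at the basepoint is injective. It is the map $\g/\q\to V_\q/L_\q$ sending $x+\q$ to $x\act L_\q$ modulo $L_\q$; if $x\act L_\q\sub L_\q$ then $x$ lies in the Lie algebra stabilizer of $L_\q$, which is $\q$ by Theorem~\ref{t:std-pars}(1), so $x+\q=0$. As a $G$-equivariant morphism of homogeneous $G$-varieties in characteristic zero that is both injective and an immersion, $\varphi$ is an isomorphism onto its locally closed image $G\act[L_\q]\sub\Proj(V_\q)$.

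The main (and mild) obstacle is ensuring that Theorem~\ref{t:std-pars}(1) applies to the chosen $\q$, i.e., that $\q$ contains some minimal parabolic $\pb$; this is handled by lifting a minimal parabolic of the reductive quotient $\q_0$ through Corollary~\ref{c:levi-par}. Once that is in hand, the rest is a short chain of standard implications built on the identification of $\q$ as both the stabilizer of itself under the adjoint action and the stabilizer of the line $L_\q$.
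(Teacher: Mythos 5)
Your proof is correct and follows essentially the same route as the paper: invoke Theorem~\ref{t:std-pars}~(1) and identify the adjoint orbit of $\q$ with the $G$-orbit of the line $L_\q$ in $\Proj(V_\q)$. The only superfluous step is the detour through Corollary~\ref{c:levi-par} to produce a minimal parabolic inside $\q$ --- by definition $\q$ already contains the normalizer of a maximal nil subalgebra, which is a minimal parabolic --- and the paper gets injectivity slightly more directly by noting that the infinitesimal stabilizer of $g\act L_\q$ is $g\act\q$, rather than via the Pl\"ucker map.
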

\begin{proof} Use Theorem~\ref{t:std-pars} (1): for any $g\in G$, the
infinitesimal stabilizer of $g\act L_\q=L_{g\act\q}$ is $g\act\q$ and hence
the adjoint orbit of $\q$ is isomorphic to the $G$-orbit of $L_\q\in
\Proj(V_\q)$.
\end{proof}

\begin{prop}\label{p:rs} For $\alpha\in\Phi$, let $h_\alpha=[x_\alpha,y_\alpha]\in
\a$ with $\alpha(h_\alpha)=2$, $x_\alpha\in\g_\alpha$ and $y_\alpha\in\g_{-\alpha}$.
\begin{numlist}
\item The automorphism $g= \exp(\ad(x_\alpha))\exp(\ad(-y_\alpha))
  \exp(\ad(x_\alpha))$ of $\g$ preserves $\a\cap[\g,\g]$, sending $h$ to
  $g\act h=h - \alpha(h) h_\alpha$\textup;
\item For all $\alpha,\beta\in\Phi$, $\beta(h_\alpha)\in\Z$ and
  $\sigma_\alpha(\beta):=\beta-\beta(h_\alpha)\alpha\in \Phi$.
\end{numlist}
Finally, if $\alpha_1,\ldots \alpha_k$ is a basis of simple roots, and
$\Phi_+$ the corresponding set of positive roots, then for any $i\in\{1,\ldots
k\}$, $\sigma_{\alpha_i}$ permutes $\Phi_+\setminus\spn\{\alpha_i\}$.
\end{prop}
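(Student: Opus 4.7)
The plan is to verify (1) by direct computation with the $\sgl_2$-triple $\{x_\alpha,y_\alpha,h_\alpha\}$, deduce (2) from the automorphism action plus a trace argument, and obtain the final assertion from the simple-root expansion. First note that $\ad x_\alpha$ and $\ad y_\alpha$ shift the $\a$-grading of $\g$ by the nonzero weights $\pm\alpha$, hence are nilpotent on finite-dimensional $\g$; their exponentials thus define Lie algebra automorphisms. The $\sgl_2$-relations $[x_\alpha,y_\alpha]=h_\alpha$, $[h_\alpha,x_\alpha]=2x_\alpha$, $[h_\alpha,y_\alpha]=-2y_\alpha$ follow from Proposition~\ref{p:coroots} together with $\alpha(h_\alpha)=2$. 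Applying the three exponentials to $h\in\a$ in turn (each series truncating at the quadratic term): $\exp(\ad x_\alpha)h=h-\alpha(h)x_\alpha$; then $\exp(-\ad y_\alpha)$ contributes $-\alpha(h)(y_\alpha+h_\alpha)$ at degree one and $\alpha(h)y_\alpha$ at degree two (using $[y_\alpha,h_\alpha]=2y_\alpha$), which simplifies to $-\alpha(h)h_\alpha$ after cancellation of the $y_\alpha$-terms; and a final $\exp(\ad x_\alpha)$ cancels the residual $x_\alpha$-term, yielding $g\act h=h-\alpha(h)h_\alpha$. Since $h_\alpha=[x_\alpha,y_\alpha]\in[\g,\g]$, this formula shows that $g$ preserves $\a\cap[\g,\g]$ as claimed.

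For (2), $g$ is a Lie algebra automorphism preserving $\a$ and acting there as the linear involution $\sigma_\alpha\colon h\mapsto h-\alpha(h)h_\alpha$, so by functoriality of $\a$-weight spaces, $g(\g_\beta)=\g_{\beta\circ\sigma_\alpha}=\g_{\sigma_\alpha(\beta)}$, whence $\sigma_\alpha(\beta)=\beta-\beta(h_\alpha)\alpha\in\Phi$. The delicate step, and the main technical obstacle, is the integrality of $\beta(h_\alpha)$. I would pick $q$ maximal with $\g_{\beta+q\alpha}\neq 0$ (maximality then forces $\beta+q\alpha\in\Phi$) and choose a nonzero $v\in\g_{\beta+q\alpha}$; by maximality $\ad x_\alpha(v)\in\g_{\beta+(q+1)\alpha}=0$. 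Setting $w_j=(\ad y_\alpha)^j v$, induction using $[\ad x_\alpha,\ad y_\alpha]=\ad h_\alpha$ shows that $\ad x_\alpha(w_j)$ is a scalar multiple of $w_{j-1}$, so the span $W=\spn\{w_0,\dots,w_{n-1}\}$ of the nonzero $w_j$'s is stable under both $\ad x_\alpha$ and $\ad y_\alpha$. Since $\ad h_\alpha|_W=[\ad x_\alpha,\ad y_\alpha]|_W$ has zero trace, while acting on $w_j$ by the scalar $\beta(h_\alpha)+2(q-j)$, summation gives $\beta(h_\alpha)=n-1-2q\in\Z$.

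Finally, let $\alpha_i$ be simple and $\beta\in\Phi_+\setminus\spn\{\alpha_i\}$, writing $\beta=\sum_j c_j\alpha_j$ with all $c_j\in\Z_{\geq 0}$, so $c_j>0$ for some $j\neq i$. The vector $\sigma_{\alpha_i}(\beta)=\beta-\beta(h_{\alpha_i})\alpha_i$ retains the same coefficients $c_j$ for $j\neq i$, so its simple-root expansion contains a strictly positive coefficient; combined with $\sigma_{\alpha_i}(\beta)\in\Phi$ by (2) and Theorem~\ref{t:std-pars}(3) (which forces all coefficients of a root to share a common sign), this places $\sigma_{\alpha_i}(\beta)\in\Phi_+$. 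Since $\sigma_{\alpha_i}$ also preserves $\spn\{\alpha_i\}$ as a linear involution of $\a^*$, the image stays out of $\spn\{\alpha_i\}$, and $\sigma_{\alpha_i}^2=\iden$ then makes the map a permutation of $\Phi_+\setminus\spn\{\alpha_i\}$.
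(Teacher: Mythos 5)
Your proof is correct, and parts (1) and the final claim follow the paper's own argument essentially verbatim (explicit truncating exponential computation for (1); the ``some coefficient $c_j>0$ with $j\neq i$ survives'' argument for the permutation claim). The one place you genuinely diverge is the integrality $\beta(h_\alpha)\in\Z$, which is indeed the delicate step. You run the classical $\alpha$-string argument: go to the top of the string, generate $w_j=(\ad y_\alpha)^jv$, check the span is stable under the $\sgl_2$-triple, and extract $\beta(h_\alpha)=n-1-2q$ from $\trace\bigl(\ad h_\alpha\restr{W}\bigr)=\trace[\ad x_\alpha\restr{W},\ad y_\alpha\restr{W}]=0$. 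The paper instead reuses the automorphism $g$ it has already built: since $g$ is a product of exponentials of $\ad x_\alpha$ and $\ad y_\alpha$, which shift $\ad h_\alpha$-eigenvalues by $\pm2$, the eigenvalue $\sigma_\alpha(\beta)(h_\alpha)=-\beta(h_\alpha)$ on $g(\g_\beta)=\g_{\sigma_\alpha(\beta)}$ must differ from $\beta(h_\alpha)$ by an element of $2\Z$, whence $\beta(h_\alpha)\in\Z$. The paper's route is shorter and stays entirely within the machinery of part (1); yours is self-contained at the level of $\sgl_2$-strings and has the side benefit of producing the sharper string-length formula $\beta(h_\alpha)=n-1-2q$, which is often useful later. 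Two trivial quibbles: your parenthetical ``maximality then forces $\beta+q\alpha\in\Phi$'' can fail when $\beta+q\alpha=0$ (possible only if $\beta\in\Z\alpha$), but you never use it, and the trace computation is unaffected since the $w_j$ lie in distinct weight spaces regardless; and the relations $[h_\alpha,x_\alpha]=2x_\alpha$, $[h_\alpha,y_\alpha]=-2y_\alpha$ come directly from $h_\alpha\in\a$, $\alpha(h_\alpha)=2$ and the definition of the root spaces rather than from Proposition~\ref{p:coroots} itself.
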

\begin{proof}[\pfin{Bou:gal,Hum:lar}]  (1) The automorphism $g=
\exp(\ad(x_\alpha))\exp(\ad(-y_\alpha))\exp(\ad(x_\alpha))$ restricts to the
identity on $\kernel\alpha$ and sends $h_\alpha$ to $-h_\alpha$.  It thus
sends any $h\in\a$ to $h - \alpha(h) h_\alpha\in \a$ as required.

(2) For any $\alpha\in\Phi$, such a triple $x_\alpha,y_\alpha,h_\alpha$ exists
and spans an $\sgl_2$ subalgebra of $\g$.  Since $\beta(g\act
h)=\beta(h-\alpha(h)h_\alpha)=(\beta-\beta(h_\alpha)\alpha)(h)
=\sigma_\alpha(\beta)(h)$, and $g\cdot[h,z] =[g\cdot h,g\cdot z]$, $g$
restricts to an isomorphism $\g_\beta\cong\g_{\sigma_\alpha^{-1}(\beta)}
=\g_{\sigma_\alpha(\beta)}$.  Hence if $\beta\in\Phi$,
$\sigma_\alpha(\beta)\in\Phi$. The $\sgl_2$ relations $\ad h_\alpha\circ \ad
x_\alpha= \ad x_\alpha\circ(\ad h_\alpha +2)$ and $\ad h_\alpha\circ \ad
y_\alpha= \ad y_\alpha\circ(\ad h_\alpha-2)$ then show that the eigenvalue
$\sigma_\alpha(\beta)(h_\alpha)=-\beta(h_\alpha)$ differs from
$\beta(h_\alpha)$ by $2k\in 2\Z$, \ie, $\beta(h_\alpha)\in\Z$.

The last part is standard: any $\alpha=\sum_{j=1}^k n_j \alpha_j\in
\Phi_+\setminus\spn\{\alpha_i\}$ has $n_j>0$ for some $j\neq i$, as does
$\sigma_{\alpha_i}(\alpha)=\alpha-\alpha(h_i)\alpha_i$; hence
$\sigma_{\alpha_i}(\alpha)$ is positive.
\end{proof}

This result shows that $\Phi$ is a \emph{root system} in $\Lam_r$, and we
refer to $\sigma_\alpha$, for $\alpha\in\Phi$, as a \emph{root reflection}.
The system need not be ``reduced'': if $\alpha$ is a root, then there may be
integer multiples of $\alpha$ other than $\pm\alpha$ which are roots; however,
$\sigma_{m\alpha}=\sigma_{\alpha}$ for any $m\in\Z\setminus\{0\}$. There are
irreducible nonreduced systems denoted $BC_n$ in addition to the Dynkin
classification~\cite{Hum:lar}.

\begin{prop} $N_G(\ml)$ acts transitively on minimal parabolic subalgebras
containing $\ml$.
\end{prop}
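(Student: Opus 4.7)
The plan is to identify minimal parabolic subalgebras of $\g$ containing $\ml$ with positive systems in the restricted root system $\Phi\sub\a^*$, realize each root reflection $\sigma_\alpha$ by an element of $N_G(\ml)$ coming from Proposition~\ref{p:rs}, and then invoke the classical transitivity of the Weyl group on positive systems. By the discussion around Theorem~\ref{t:std-pars}, each minimal parabolic $\pb\supseteq\ml$ has the form $\pb_{\xi_\pb}$ for a unique ``generic'' Weyl structure $\xi_\pb\in\a$ (meaning $\alpha(\xi_\pb)\neq 0$ for every $\alpha\in\Phi$), and $\pb\mapsto\Phi^+(\pb):=\{\alpha\in\Phi:\alpha(\xi_\pb)>0\}$ is a bijection onto the positive systems of $\Phi$.

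For each $\alpha\in\Phi$, Proposition~\ref{p:coroots} provides $x_\alpha\in\g_\alpha$ and $y_\alpha\in\g_{-\alpha}$ with $[x_\alpha,y_\alpha]=h_\alpha$, from which I would form the automorphism $g_\alpha:=\exp(\ad x_\alpha)\exp(\ad(-y_\alpha))\exp(\ad x_\alpha)\in G^{ss}$. By Proposition~\ref{p:rs}(1) it acts on $\a\cap[\g,\g]$ as the reflection $h\mapsto h-\alpha(h)h_\alpha$, and it fixes $\z(\g)$ pointwise because $\ad x_\alpha$ and $\ad y_\alpha$ vanish on $\z(\g)$. Hence $g_\alpha$ preserves $\a=\z(\g)\dsum(\a\cap[\g,\g])$, and with it the whole $\a$-weight decomposition of $\g$; in particular it preserves the zero-weight component $\ml=\c_\g(\a)$. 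So $g_\alpha$ lies in the image of $N_G(\ml)$ in $G^{ss}$, its dual action on $\a^*$ is the reflection $\sigma_\alpha$, and (by a short computation with grading elements) it carries a minimal parabolic $\pb\supseteq\ml$ to the one with positive system $\sigma_\alpha(\Phi^+(\pb))$.

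Given $\pb_1,\pb_2\supseteq\ml$ with positive systems $\Phi^+_1,\Phi^+_2$, I would induct on $n:=|\Phi^+_1\cap(-\Phi^+_2)|$. If $n=0$ then $\Phi^+_1=\Phi^+_2$ and $\pb_1=\pb_2$. Otherwise, writing any $\alpha\in\Phi^+_1\cap(-\Phi^+_2)$ as a nonnegative combination of the simple roots of $\Phi^+_1$, at least one of these simple roots, say $\alpha_i$, must itself lie in $-\Phi^+_2$ (else $\alpha\in\Phi^+_2$ would contradict $\alpha\in-\Phi^+_2$). By the last assertion of Proposition~\ref{p:rs}, $\sigma_{\alpha_i}$ permutes $\Phi^+_1\setminus\spn\{\alpha_i\}$ and flips the signs of the one or two elements of $\Phi^+_1\cap\spn\{\alpha_i\}$, each of which lies in $-\Phi^+_2$, so $|\sigma_{\alpha_i}(\Phi^+_1)\cap(-\Phi^+_2)|=n-|\Phi^+_1\cap\spn\{\alpha_i\}|<n$. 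The induction hypothesis applied to the parabolic with positive system $\sigma_{\alpha_i}(\Phi^+_1)$, namely the image of $\pb_1$ under $g_{\alpha_i}$, completes the proof. The delicate step is really just verifying that $g_\alpha$ normalizes all of $\ml$ and not only $\a\cap[\g,\g]$; this is handled by the identification $\ml=\c_\g(\a)$ together with the preservation of $\a$ itself.
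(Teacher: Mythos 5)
Your proposal is correct and follows essentially the same route as the paper: induction on the number of roots positive for one minimal parabolic and negative for the other, using the elements $g_\alpha\in N_G(\a)\leq N_G(\c_\g(\a))=N_G(\ml)$ furnished by Proposition~\ref{p:rs} to realize simple reflections and strictly decrease that count. The extra verifications you supply (that $g_\alpha$ fixes $\z(\g)$ and hence preserves all of $\a$ and $\ml$, and that some simple root of $\Phi^+_1$ must lie in $-\Phi^+_2$) are exactly the details the paper leaves implicit.
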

\begin{proof} To show any two minimal parabolic subalgebras $\pb,\pc$
containing $\ml$ are conjugate by an element of $N_G(\ml)$, let us say a root
$\alpha$ is ``shared'' if the multiples $\alpha$ which are positive for $\pb$
are also positive for $\pc$; otherwise, they are negative for $\pc$ and we say
$\alpha$ is ``unshared''. We now use complete induction on the number of
unshared roots.

If there are none then $\pb=\pc$, otherwise there is a root space
$\g_{\alpha_i}$ in $\pb_1\cap\pc$, so $\alpha_i$ (and its multiples) is
unshared.  By Proposition~\ref{p:rs}, there exists $g\in N_G(\a)$ such that
$g\act\pb$ has positive roots $\sigma_{\alpha_i}(\Phi_+)$, where $\Phi_+$ is
the set of positive roots for $\pb$; now $g\act\pb$ and $\pc$ have fewer
unshared roots, hence are conjugate under $N_G(\ml)$.
\end{proof}

Note that the proof shows more: if we consider $\pc$ to be fixed, then
the element of $N_G(\a)$ needed to conjugate $\pb$ to $\pc$ is obtained by
an iterative application of simple root reflections.

\subsection{The global Weyl group and parabolic incidence system}

Let $\bc$ be the set of all minimal parabolic subalgebras of $\g$, let $\bk$
be the set of all ``\BK/-pairs'' $(\pb,\ml)$ where $\pb$ is a minimal
parabolic subalgebra of $\g$ and $\ml$ is a Levi subalgebra of $\pb$, and let
$\Ac$ be the set of minimal Levi subalgebras in $\g$.  Given $\ml\in\Ac$, let
$\a\leq\z(\ml)$ be the corresponding split Cartan subalgebra, so that
$\ml=\c_{\g}(\a)=\n_{\g}(\a)$ is the Lie algebra of $\ML:=C_G(\a)$.

\begin{thm}\label{t:bk-act} $G$ acts transitively on $\bk$, hence also
on $\bc$ and $\Ac$.  The stabilizer of $(\pb,\ml)\in\bk$ is $\ML=C_G(\a)$.
\end{thm}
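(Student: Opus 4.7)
The plan is to establish transitivity of $G$ on $\bk$ in two stages—first matching the parabolic, then the Levi—and then identify the stabilizer of a BK-pair via the regularity of its grading element.

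\emph{Transitivity.} Given BK-pairs $(\pb_1,\ml_1)$ and $(\pb_2,\ml_2)$, I would first produce $g_1\in G$ with $g_1\cdot\pb_1=\pb_2$. By Corollary~\ref{c:par-reg-apt-comp}, $\pb_1\cap\pb_2$ contains a minimal Levi subalgebra $\ml$ of $\g$, and I claim $\ml$ is a Levi subalgebra of each $\pb_i$. Indeed, the unique lift $\xi_{\pb_i}\in\z(\ml)$ of the grading element of $\pb_i$ is regular on $\g$: by Theorem~\ref{t:std-pars}, $\alpha(\xi_{\pb_i})=1$ for every simple root, so $\alpha(\xi_{\pb_i})\neq0$ for all $\alpha\in\Phi$ and $\c_\g(\xi_{\pb_i})=\ml$. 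The Levi subalgebra of $\pb_i$ determined by $\xi_{\pb_i}$ via Proposition~\ref{p:h-split} is therefore $\c_{\pb_i}(\xi_{\pb_i})=\pb_i\cap\c_\g(\xi_{\pb_i})=\ml$. The proposition preceding the theorem now yields $g_1\in N_G(\ml)$ with $g_1\cdot\pb_1=\pb_2$. At the second stage, $g_1\cdot\ml_1$ and $\ml_2$ are both Levi subalgebras of $\pb_2$; by Proposition~\ref{p:h-split}, $\exp(\ad\nil(\pb_2))\leq N_G(\pb_2)$ acts freely and transitively on such Levis, producing $g_2\in\exp(\ad\nil(\pb_2))$ with $g_2\cdot g_1\cdot\ml_1=\ml_2$. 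Then $g_2g_1\cdot(\pb_1,\ml_1)=(\pb_2,\ml_2)$, and transitivity on $\bc$ and $\Ac$ follows by projecting to either factor.

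\emph{Stabilizer.} Fix $(\pb,\ml)\in\bk$ with split Cartan $\a\leq\z(\ml)$. The inclusion $\ML=C_G(\a)\subseteq\Stab(\pb,\ml)$ is immediate: any $g\in C_G(\a)$ fixes $\xi_\pb\in\a$, preserves the grading $\g=\bigoplus_j\g_j$ induced by $\ad\xi_\pb$, and therefore stabilizes both $\pb=\bigoplus_{j\leq0}\g_j$ and $\ml=\g_0=\c_\g(\xi_\pb)$. Conversely, suppose $g$ stabilizes $(\pb,\ml)$. Then $g$ preserves $\z(\ml)$ and its $\F$-diagonalizable subspace $\a$ (characterized intrinsically by diagonalizability of the $\ad$-action), so $g\in N_G(\a)$; by uniqueness of the grading element of $\pb$ and of its lift to $\z(\ml)$, $g\cdot\xi_\pb=\xi_\pb$. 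The image $[g]$ in the Weyl group $W=N_G(\a)/C_G(\a)$ thus fixes the regular element $\xi_\pb$, which forces $[g]=1$ and hence $g\in C_G(\a)=\ML$.

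\emph{Main obstacle.} The technical core is the claim that the stabilizer in $W$ of a regular element of $\a$ is trivial, equivalently, that $W$ acts simply transitively on systems of positive roots (or on minimal parabolic subalgebras containing $\ml$). Transitivity is already built into the proposition preceding the theorem via the iterative simple-reflection argument; the refinement to \emph{simple} transitivity—that no nontrivial product of simple reflections fixes the regular element $\xi_\pb=\sum_{\alpha\in\Phi^1}\xi^\alpha$—is the subtle step. Individual simple reflections visibly move $\xi_\pb$ (since $\sigma_\alpha(\xi_\pb)=\xi_\pb-\alpha(\xi_\pb)h_\alpha$ with $\alpha(\xi_\pb)\neq0$), but ruling out cancellation in longer words requires working carefully with the root-system structure of Theorem~\ref{t:std-pars} over an arbitrary characteristic-zero field, rather than appealing to the Weyl-chamber geometry available over $\R$. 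This is the principal challenge in completing the proof.
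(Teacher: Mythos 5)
Your transitivity argument is correct and is essentially the paper's: find a shared minimal Levi subalgebra via Corollary~\ref{c:par-reg-apt-comp}, conjugate the minimal parabolics by $N_G(\ml)$ using the preceding proposition, then move the Levi subalgebras into coincidence using the $\exp(\nil(\pb_2))$-torsor structure of Proposition~\ref{p:h-split}. Your extra verification that the shared $\ml$ is a Levi subalgebra of \emph{both} $\pb_i$ (via regularity of $\xi_{\pb_i}$) is a reasonable filling-in of a detail the paper leaves implicit.

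The stabilizer half, however, has a genuine gap, and it sits exactly where all the content of the theorem lies. You reduce the claim to: the stabilizer of the regular element $\xi_\pb$ in $N_G(\a)/C_G(\a)$ is trivial. But this statement is essentially \emph{equivalent} to the stabilizer assertion you are trying to prove (both say that the local Weyl group acts simply transitively on minimal parabolic subalgebras containing $\ml$), so the reduction makes no progress; and the tools you propose for closing it --- a length-function or reduced-word analysis of products of simple reflections --- are not available at this point in the paper and would not suffice anyway, since you have not shown that an arbitrary $g\in N_G(\a)$ fixing $(\pb,\ml)$ has image in the subgroup \emph{generated} by the root reflections. You correctly identify this as the principal obstacle, but the proof as written does not overcome it.

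The paper circumvents the issue entirely with representation theory rather than root-system combinatorics. If $g$ fixes $(\pb,\ml)$, then for each simple root $\alpha\in\Phi^1$ it preserves the maximal parabolic $\q^\alpha\supseteq\pb$ (unique in its adjoint orbit by Proposition~\ref{p:std-unique}), hence preserves the lowest weight line $L_{\q^\alpha}=\Wedge^d\nil(\q^\alpha)$ in the lowest weight representation $V_{\q^\alpha}$ of Theorem~\ref{t:std-pars}. Since $g$ sends weight spaces to weight spaces, the induced action of $g$ on $(\a\cap[\g,\g])^*$ fixes the corresponding fundamental weight $\lam^\alpha$. The fundamental weights span $(\a\cap[\g,\g])^*$, and $g$ acts trivially on $\z(\g)$, so $g\in C_G(\a)$. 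This is the step you need to import (or reprove) to complete your argument; without it, the proof is incomplete.
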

\begin{proof} Any two minimal parabolic subalgebras $\pb,\pc\leq\g$
share a (minimal) Levi subalgebra $\ml$, hence are conjugate by an element of
$N_G(\ml)\leq G$. On the other hand, the set of such $\ml$ contained in a
given $\pb$ is a torsor for $\exp(\nil(\pb))\leq G$. Now suppose that $g\in G$
fixes $(\pb,\ml)$ and let $\lam_1,\ldots\lam_k\in(\a\cap[\g,\g])^*$ be the
fundamental weights corresponding to $\pb$.  For each $j$ there is an
irreducible lowest weight representation $V_j$ whose lowest weight is a
positive multiple of $\lam_j$. Since the action of $g$ on $V_j$ sends weight
spaces to weight spaces and lowest weight vectors to lowest weight vectors,
the induced action on $\a\cap[\g,\g]$ fixes $\lam_j$. Since
$\a=\z(\g)\dsum(\a\cap[\g,\g])$ and the fundamental weights span
$(\a\cap[\g,\g])^*$, $g\in C_G(\a)$.
\end{proof}

\begin{prop}\label{p:std-unique} Let $\cQ$ be an adjoint orbit of maximal
\textup(proper\textup) parabolic subalgebras of $\g$.  Then for any minimal
parabolic subalgebra $\pb$, there is a unique $\q\in\cQ$ with $\pb\leq\q$.
\end{prop}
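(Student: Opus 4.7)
Existence is immediate from Theorem~\ref{t:bk-act}: given any $\q'\in\cQ$ and any minimal parabolic $\pc\leq\q'$, there is $g\in G$ with $g\act\pc = \pb$, whence $g\act\q'\in\cQ$ contains $\pb$. The content of the proposition lies in uniqueness.

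Suppose $\q_1,\q_2\in\cQ$ both contain $\pb$, and fix a Levi subalgebra $\ml\leq\pb$. By Theorem~\ref{t:std-pars}(3), $\q_i = \q^{\alpha_i}$ for unique simple roots $\alpha_i\in\Phi^1$, so it suffices to show $\alpha_1 = \alpha_2$. By Theorem~\ref{t:std-pars}(1)--(2), each $\q^{\alpha_i}$ is the stabilizer of a lowest weight line $L_i$ in a $\g$-submodule $V_i \leq \Wedge^d\g$ (with common $d = \dimn\nil(\q^{\alpha_i})$ since $\q_1,\q_2$ are $G$-conjugate), and the lowest weight of $V_i$ with respect to $\pb$ is a negative multiple of the fundamental weight $\lam^{\alpha_i}$.

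The key step is that $V_1$ and $V_2$ are isomorphic as abstract $\g$-modules. Any $g\in G$ with $g\act\q_1 = \q_2$ sends $L_1$ to $L_2$ and hence $V_1$ to $V_2$ inside $\Wedge^d\g$, yielding a linear isomorphism with the $\Ad g$-twisted equivariance $g(y\act v) = \Ad g(y)\act g(v)$. To remove this twist, recall that $G^{ss} = G/Z(G)$ is generated by $\exp(\ad x)$ for $x\in\cN(\g)$: for such $x$ and any representation $\rho\colon\g\to\gl(W)$, $\rho(x)$ is nilpotent, so $\exp(-\rho(x))$ is a well-defined linear automorphism of $W$, and the standard identity $\rho(\exp(\ad x)(y)) = \exp(\rho(x))\rho(y)\exp(-\rho(x))$ shows that it intertwines the action $y\mapsto\rho(\exp(\ad x)(y))$ with the action $y\mapsto\rho(y)$. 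Composing such intertwiners untwists $g\colon V_1\to V_2$ into an abstract $\g$-module isomorphism.

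Isomorphic $\g$-modules have identical $\a$-weight decompositions, so by Proposition~\ref{p:lwr} the lowest weights of $V_1$ and $V_2$ with respect to $\pb$ coincide. Theorem~\ref{t:std-pars}(2) then equates negative scalar multiples of $\lam^{\alpha_1}$ and $\lam^{\alpha_2}$; since the fundamental weights form a basis of $(\a\cap[\g,\g])^*$, this forces $\alpha_1 = \alpha_2$. The main obstacle is the untwisting step: converting the $\Ad g$-twisted linear isomorphism $g\colon V_1\to V_2$ into an abstract $\g$-module isomorphism, which relies essentially on the fact that elements of $\cN(\g)$ act nilpotently in every finite-dimensional representation.
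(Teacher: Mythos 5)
Your proof is correct, but it takes a genuinely different route from the paper's. The paper argues purely group-theoretically: if $\q$ and $\qq=g\act\q$ both contain $\pb$, it successively adjusts $g$ --- first by an element of $N_G(\q)$ (using transitivity of $N_G(\q)$ on minimal parabolic subalgebras of the Levi quotient $\q/\q^\perp$) to arrange $g\in N_G(\pb)$, then by an element of $\exp(\nil(\pb))$ to arrange $g\in N_G(\ml)\cap N_G(\pb)=C_G(\a)$ via Theorem~\ref{t:bk-act} --- and an element of $C_G(\a)$ preserves every root space and hence fixes $\q$. You instead detect the adjoint orbit of a maximal parabolic through the lowest weight of its module $V_\q\leq\Wedge^d\g$, using Theorem~\ref{t:std-pars} and Proposition~\ref{p:lwr} where the paper uses Theorem~\ref{t:bk-act}; this has the side benefit of exhibiting the type of a maximal parabolic as an isomorphism invariant of its lowest weight representation. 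Your untwisting step is sound (and can even be shortcut: since $\Wedge^d\Ad(\exp(\ad x))=\exp(\rho(x))$ is a polynomial in $\rho(x)$, it preserves every $\g$-submodule of $\Wedge^d\g$, so $V_1$ and $V_2$ are in fact \emph{equal} as submodules, not merely isomorphic). Two small points deserve a word in a final write-up: the passage from ``identical weight decompositions'' to ``equal lowest weights'' uses $\lam_1\geq\lam_2$ and $\lam_2\geq\lam_1$ together with antisymmetry of $\geq$ (a nonzero sum of positive roots pairs positively with $\xi_\pb$); and the conclusion $\alpha_1=\alpha_2$ uses that the fundamental weights are linearly independent, being the dual basis to the simple coroots $h_\beta$.
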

\begin{proof} Fix a Levi subalgebra $\ml\leq\pb$. Since $G$ acts
transitively on minimal parabolic subalgebras, for any $\qq\in\cQ$, there
exists $g\in G$ with $g\act\pb\sub\qq$, so $\pb\sub\q:=g^{-1}\act\qq\in\cQ$.
Suppose now that $\q,\qq=g\act\q\in \cQ$ both contain $\pb$; then $\q$
contains both $\b$ and $g^{-1}\act\b$, which are therefore conjugate by an
element of $N_G(\q)$ (since $N_G(\q)$ acts transitively on minimal parabolic
subalgebras of $\q/\q^\perp$). Thus we may assume $\qq=g\act\q$ with $g\in
N_G(\pb)$, and fix a (minimal) Levi subalgebra $\ml$ of $\pb$. Now since $\ml$
and $g^{-1}\act\ml$ are both Levi subalgebras of $\pb$, they are related by an
element of $\exp(\pb^\perp)\sub N_G(\pb)\sub N_G(\q)$ so we may assume $g\in
N_G(\ml)\cap N_G(\pb)=C_G(\a)$ by Theorem~\ref{t:bk-act}. Hence $\qq=\q$.
\end{proof}
Thus costandard maximal parabolic subalgebras in the same adjoint orbit are
equal.

We summarize the development so far with a double fibration of $G$-homogeneous
spaces
\begin{diagram}[size=1em]
&&\bk&&\\
&\ldTo &&\rdTo\\
\Ac&&&&\bc
\end{diagram}
in which the fibre over $\pb\in\bc$ is an $\exp(\nil(\pb))$-torsor (the Weyl
structures for $\bc$). If we choose a basepoint $(\pb,\ml)\in\bk$, then the
orbit-stabilizer theorem provides isomorphisms $\bk\cong G/\ML$, $\bc\cong
G/N_G(\pb)$ and $\Ac\cong G/N_G(\ml)$.  The fibre of $\bk$ over $\ml\in\Ac$ is
a torsor for the \emph{local Weyl group}
$W_\ml(\g):=N_G(\a)/C_G(\a)=N_G(\ml)/\ML$.

\begin{defn} The \emph{global Weyl group} $W(\g)$ of $\g$ is the automorphism
group $\Aut_G(\bk)$ of the \emph{Weyl space} $\bk$ of \BK/-pairs $(\pb,\ml)$,
\ie, $W(\g$) is the set of bijections $\bk\to\bk$ commuting with the $G$-action.
\end{defn}
We shall write the $G$-action on $\bk$ on the left, and the $W(\g)$ action on
the right.
\begin{prop} The Weyl space $\bk$ is a principal $W(\g)$-bundle over $\Ac$,
\ie, the action of $W(\g)$ on $\bk$ is fibre-preserving, and each fibre is a
$(W_\ml(\g), W(\g))$-bitorsor. In particular, any basepoint $(\pb,\ml)\in\bk$
yields an isomorphism between $W_\ml(\g)$ and $W(\g)$.
\end{prop}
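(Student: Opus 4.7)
The plan is to use Theorem~\ref{t:bk-act} to identify $\bk$ with the coset space $G/\ML$ as a $G$-set, after fixing a basepoint $(\pb,\ml)\in\bk$ with $\ML=C_G(\a)$. Since the group of $G$-equivariant automorphisms of a transitive $G$-set $G/H$ is $N_G(H)/H$ acting by right translation $gH\mapsto gnH$, this yields a basepoint-dependent isomorphism $W(\g)=\Aut_G(\bk)\cong N_G(\ML)/\ML$.

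The crucial identity to establish is $N_G(\ML)=N_G(\ml)$. The inclusion $N_G(\ML)\sub N_G(\ml)$ is automatic, since $\ml$ is the Lie algebra of $\ML$ and adjoint conjugation carries Lie algebras of subgroups to Lie algebras of their conjugates. For the reverse inclusion I would invoke the intrinsic characterization of $\a$ from its definition: it is the subspace of $\z(\ml)$ whose adjoint eigenvalues on $\g$ all lie in $\F$. Any $g\in N_G(\ml)$ therefore preserves $\z(\ml)$ and this eigenvalue condition, so $g\act\a=\a$, and hence $g\ML g^{-1}=g\,C_G(\a)\,g^{-1}=C_G(g\act\a)=C_G(\a)=\ML$.

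Granted this identity, the projection $\bk\to\Ac$ becomes the canonical quotient $G/\ML\to G/N_G(\ml)$, and right translation by any $n\in N_G(\ML)=N_G(\ml)$ preserves each right coset $gN_G(\ml)$; this shows that the $W(\g)$-action on $\bk$ is fibre-preserving. The fibre over $\ml$ identifies with $N_G(\ml)/\ML$. The restriction of the left $G$-action to this fibre is the transitive action of $N_G(\ml)$ with point stabilizer $\ML$, so the induced left $W_\ml(\g)$-action is simply transitive; the right translation action of $W(\g)\cong N_G(\ml)/\ML$ is also simply transitive; and the two commute because left and right multiplications in a group commute. This gives the $(W_\ml(\g),W(\g))$-bitorsor structure. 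Finally, the basepoint simultaneously identifies $W(\g)$ and $W_\ml(\g)$ with the single quotient $N_G(\ml)/\ML$, yielding the asserted isomorphism. The main obstacle is the reverse inclusion $N_G(\ml)\sub N_G(\ML)$, which relies on the intrinsic characterization of $\a$ inside $\z(\ml)$; everything else is orbit--stabilizer bookkeeping.
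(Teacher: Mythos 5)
Your proof is correct, and it is organized around a different pivot than the paper's. The paper never passes to the coset model or computes $\Aut_G(G/\ML)$: it observes that each $w\in W(\g)$ induces a $G$-equivariant self-map of $\Ac$, which must be the identity because every $\ml\in\Ac$ is recovered as the Lie algebra of its own stabilizer (using $\n_{\g}(\ml)=\ml$) and commutation with $G$ forces $N_G(\ml w)=N_G(\ml)$; freeness of the $W(\g)$-action is then immediate from transitivity of $G$ on $\bk$, and transitivity on fibres is obtained by exhibiting, for $(\pb,\ml)$ and $(\pc,\ml)$ in one fibre, the automorphism $g\act(\pb,\ml)\mapsto g\act(\pc,\ml)$, well defined because $W_\ml(\g)$ acts freely there. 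Your version instead isolates the group-theoretic identity $N_G(\ML)=N_G(\ml)=N_G(\a)$, with the nontrivial inclusion resting, correctly, on the intrinsic eigenvalue characterization of $\a$ inside $\z(\ml)$, and then reads everything off from $W(\g)\cong N_G(\ML)/\ML$ acting by right translation on $G/\ML$. Both arguments hinge on the same phenomenon --- the $G$-set structure of $\bk$ determines the Levi subalgebra --- but yours has the merit of making explicit a fact the paper leaves implicit, and which is needed anyway: that $\ML=C_G(\a)$ is normal in $N_G(\ml)=N_G(\a)$ (being the kernel of the $N_G(\a)$-action on $\a$), so that $W_\ml(\g)=N_G(\ml)/\ML$ really is a group acting on the fibre. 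You should flag that normality explicitly where you pass from ``transitive action of $N_G(\ml)$ with point stabilizer $\ML$'' to ``the induced $W_\ml(\g)$-action is simply transitive'', since the stabilizer of a non-basepoint of the fibre is a priori only a conjugate of $\ML$; with that one sentence added, the argument is complete.
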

\begin{proof} Note that $W(\g)$ preserves the fibration of $\bk$ over $\Ac$ and
thus induces a right action of $W(\g)$ on $\Ac$ commuting with $G$. However,
any $\ml\in \Ac$ is the Lie algebra of its stabilizer $N_G(\ml)$ in $G$, and
for any $w\in W(\g)$, $N_G(\ml w)=N_G(\ml)$, so $\ml w=\ml$. Thus the induced
action of $W(\g)$ on $\Ac$ is trivial, \ie, the action of $W(\g)$ on $\bk$ is
fibre-preserving. Since the $G$-action is transitive, the $W(\g)$ action is
free, and so any $w\in W$ is uniquely determined by what it does to a
base-point $(\pb,\ml)\in\bk_{\;\;\;\ml}$. If also
$(\pc,\ml)\in\bk_{\;\;\;\ml}$, there is an automorphism sending
$g\act(\pb,\ml)$ to $g\act(\pc,\ml)$ for all $g\in G$, because the effective
quotient $W_\ml(\g)=N_G(\ml)/\ML$ acts freely on $\bk_{\;\;\;\ml}$. Hence
$W(\g)$ also acts transitively on fibres.
\end{proof}

\begin{rem} Another description of $W(\g)$ uses a natural groupoid structure
on $\Ac$: the set of morphisms from $\ml$ to $\mlp$ in $\Ac$ is
$N_G(\ml,\mlp)/\ML$, \ie, the set of elements of $G$ which conjugate $\ml$ to
$\mlp$, modulo the right action of $\ML$ (or equivalently, the left action of
$\ML'$). Since $\Hom_{\Ac[]}(\ml,\ml)=W_\g(\ml)$, we refer to $\Ac$, with this
structure, as the \emph{Weyl groupoid of $\g$}.

Now $\bk$ determines a representation (or action) of the Weyl groupoid $\Ac$:
any $[g]\in\Hom_{\Ac[]}(\ml,\mlp)=N_G(\ml,\mlp)/\ML$ induces a function
$\bk_{\;\;\;\ml}\to\bk_{\;\;\;\mlp}$ sending $(\pb,\ml)$ to $(g\act\pb,
g\act\ml)=(g\act\pb,\mlp)$; these determine a functor $\Ac\to\mathbf{Set}$.
Furthermore, on each fibre $\bk_{\;\;\;\ml}$, the action of $W_\g(\ml)$ is
free and transitive. Thus we may equip $\bk$ with a trivial (2-connected)
groupoid structure over $\Ac$, in which there is a unique morphism between any
two points $(\pb,\ml)$ and $(\pc,\mlp)$, labelled by the unique element of
$\Hom_{\Ac[]}(\ml,\mlp)= N_G(\ml,\mlp)/\ML$ whose representatives send $\pb$
to $\pc$. In other words, $\bk$ is a \emph{universal groupoid cover} of $\Ac$,
and $W(\g)$ is its group of ``deck transformations''.
\end{rem}

In addition to its $G$-space structure, $\PF$ has an incidence relation:
$\p,\pp\in \PF$ are incident if they are costandard. This incidence structure
of $\PF$ is determined by the maximal proper parabolic subalgebras $\p$.  Let
$\typ_\g$ be the set of adjoint orbits of such $\p$.

\begin{defn} The \emph{parabolic incidence system} $\PI{\g}$ of $\g$ is the
set of all maximal proper parabolic subalgebras $\p\leq\g$, equipped with the
incidence relation
\begin{equation*}
\p\edge{}\pp\quad\text{iff}\quad \text{$\p$ and $\pp$ are costandard, \ie,
  $\p\cap\pp$ is parabolic in $\g$}
\end{equation*}
and the type function $t_{\PI{\g}}\colon|\PI{\g}|\to\typ_\g$ sending $\p$ to
its adjoint orbit $G\act\p$.
\end{defn}
\begin{thm} \label{t:par-flag-geom} Let $\g$ be a reductive Lie algebra. Then
the map
\begin{equation*}
\Lam\colon\bigsqcup_{J\sub\typ_\g}\cF\PI{\g}(J)\to\PF;\;\sigma\mapsto
\bigcap_{\p\in\sigma}\p
\end{equation*}
induces an incidence isomorphism over $\Pow(\typ_\g)$, where parabolics
$\p,\q\in \PF$ are incident if they are costandard, and the fibres of the type
function $\PF\to\Pow(\typ_\g)$ are adjoint orbits.
\end{thm}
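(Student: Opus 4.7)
The plan is to construct an inverse map $\sigma\colon\PF\to\bigsqcup_J\cF\PI{\g}(J)$ by $\sigma(\q):=\{\p\in\PI{\g}\st\q\leq\p\}$, and verify that $\Lam$ and $\sigma$ are mutually inverse, type-preserving, and incidence-preserving. That $\Lam$ lands in $\PF$ is immediate from Proposition~\ref{p:costd-pars}: elements of a flag are pairwise incident, hence pairwise costandard, so their intersection is parabolic. Dually, any two $\p_1,\p_2\in\sigma(\q)$ satisfy $\p_1\cap\p_2\supseteq\q$ parabolic, hence $\p_1\cap\p_2$ is itself parabolic and $\p_1,\p_2$ are costandard, making $\sigma(\q)$ a clique in $\PI{\g}$. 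To see $\sigma(\q)$ has at most one element per adjoint orbit, I first note that every $\q\in\PF$ contains a minimal parabolic $\pb$: by Corollary~\ref{c:levi-par}, take the preimage in $\q$ of any minimal parabolic of the reductive Levi quotient $\q/\nil(\q)$. If $\p_1,\p_2\supseteq\q$ lie in a common adjoint orbit, both contain $\pb$, and Proposition~\ref{p:std-unique} forces $\p_1=\p_2$.

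The crux is $\Lam\circ\sigma=\iden$: every parabolic is the intersection of the maximal parabolics containing it. Choosing $\pb\leq\q$ minimal, Theorem~\ref{t:std-pars}(3) identifies $\q=\q_J=\bigcap_{\alpha\in J}\q^\alpha$ for a unique $J\sub\Phi^1$, and the inclusion-reversing bijection $J\mapsto\q_J$ shows directly that the maximal parabolics containing $\q$ are exactly $\{\q^\alpha:\alpha\in J\}$. Thus $\sigma(\q)=\{\q^\alpha:\alpha\in J\}$ and $\Lam(\sigma(\q))=\q$. The reverse identity $\sigma\circ\Lam=\iden$ is then a cardinality count: for $\tau\in\cF\PI{\g}(J)$, one has $\tau\sub\sigma(\Lam(\tau))$ by definition, and both sides have $|J|$ elements by the previous paragraph.

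For types and the identification of fibres with adjoint orbits, set $t_\PF(\q):=\{G\cdot\p:\p\in\sigma(\q)\}\in\Pow(\typ_\g)$, which is tautologically preserved by $\Lam$ and $\sigma$, and manifestly $G$-invariant. Conversely, if $t_\PF(\q)=t_\PF(\q')$, use Theorem~\ref{t:bk-act} to find $g\in G$ sending some minimal $\pb\leq\q$ to some $\pb'\leq\q'$. Since $\alpha\mapsto G\cdot\q^\alpha$ is a bijection $\Phi^1(\pb')\to\typ_\g$ by Proposition~\ref{p:std-unique}, the subsets $J,J'\sub\Phi^1(\pb')$ satisfying $g\cdot\q=\q_J$ and $\q'=\q_{J'}$ must coincide, so $g\cdot\q=\q'$. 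Finally, two flags $\tau,\tau'$ are incident iff $\tau\cup\tau'$ is a flag iff $\Lam(\tau)\cap\Lam(\tau')=\Lam(\tau\cup\tau')$ is parabolic iff $\Lam(\tau),\Lam(\tau')$ are costandard; so incidence is preserved.

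The principal obstacle is the inversion step---proving every parabolic is the intersection of its maximal parabolic overgroups---which reduces, via Theorem~\ref{t:std-pars}(3), to combinatorics of subsets of simple roots, but only after the reduction to a minimal parabolic $\pb\leq\q$ provided by Corollary~\ref{c:levi-par}. A subsidiary subtlety is that ``adjoint orbit type'' must be well-defined independently of the choice of $\pb$, which is exactly the content of Proposition~\ref{p:std-unique}.
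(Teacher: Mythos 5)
Your proposal is correct and follows essentially the same route as the paper: well-definedness via Proposition~\ref{p:costd-pars}, reduction to a minimal parabolic $\pb\leq\q$, Theorem~\ref{t:std-pars}~(3) to show $\q$ is the intersection of the maximal parabolics containing it, and Proposition~\ref{p:std-unique} for uniqueness within each adjoint orbit and the labelling of orbits. Your explicit inverse $\sigma$ and the injectivity check on types merely spell out details the paper leaves terse.
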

\begin{proof} By Proposition~\ref{p:costd-pars}, $\Lam$ is well-defined. For
any $\q\in\PF$, after choosing a minimal parabolic subalgebra $\pb$ contained
on $\q$, Theorem~\ref{t:std-pars} and Proposition~\ref{p:std-unique} show
that the maximal parabolic subalgebras containing $\q$ have intersection $\q$
and form a $J$-flag in $\cF\PI{\g}(J)$ for some $J\in \Pow(\typ_\g)$. Now if
$\q=\p^1\cap\cdots\cap \p^k$ and $g\in G$ then $g\act\q= g\act\p^1\cap\cdots
g\act\p^k$, so the adjoint orbit of $\q$ is in the image of $\cF\PI{\g}(J)$
and we may label it by $J$. Distinct adjoint orbits have distinct labels by
Proposition~\ref{p:std-unique} and $\Lam$ intertwines incidence of flags
with incidence of parabolic subalgebras. Hence it is an incidence isomorphism.
\end{proof}
Henceforth we label parabolic adjoint orbits (\ie, generalized flag varieties)
$\PF(J)$ in $\g$ by subsets $J$ of $\typ_\g$ using this isomorphism.

\begin{example}\label{ex:4A} Continuing Examples~\ref{ex:1A}--\ref{ex:3A},
we take $G=\GL(V)$, and the adjoint group is $\PGL(V)$. There is a bijection
between minimal Levi subalgebras $\ml\leq\gl(V)$ and $n+1$ element subsets
$\cS_\ml\sub\Proj(V)=\Gr_1(V)$ with $V=\Dsum \cS_\ml$:
$\cS_\ml=\{L\in\Proj(V)\st \ml\cdot L\sub L\}$ and $\ml=\bigcap_{L\in\cS_\ml}
\stab_{\gl(V)}(L)$, which is the Cartan subalgebra of diagonal matrices with
respect to any basis representing the elements of $\cS_\ml$, hence $\a=\ml$.
Thus $N_G(\ml)$ is the subgroup of $\GL(V)$ preserving the decomposition
$V=\Dsum \cS_\ml$ and the local Weyl group $W_{\ml}(\gl(V))=N_G(\ml)/C_G(\ml)$
is canonically isomorphic to $\Sym(\cS_\ml)$.

The minimal parabolic subalgebra $\pb$ stabilizing the full flag $0=W_0\leq
W_1\leq\cdots \leq W_n\leq W_{n+1}=V$ contains $\ml$ if and only if there is a
bijection $j\mapsto L_j\colon\{1,\ldots n+1\}\to\cS_\ml$ such that
$W_j=W_{j-1}\dsum L_j$, and hence $W_j=L_1\dsum\cdots\dsum L_j$. Hence the
fibre of $\bk$ over $\ml\in\Ac$ may be identified canonically with the set of
bijections $\{1,\ldots n+1\}\to \cS_{\ml}$, and the global Weyl group $W(\g)$
with $\Sym_{n+1}=\Sym(\{1,\ldots n+1\})$. Note that for each $j\in\{1,\ldots
n\}$ the transposition $\sigma_j=(j\ j+1)$ sends $\pb$ to the stabilizer of
the full flag $\tilde W_1\leq\cdots\leq \tilde W_n$ with $\tilde W_i=W_i$ for
$i\neq j$ and $\tilde W_j=W_{j-1}\dsum L_{j+1}$. These transpositions generate
$\Sym_{n+1}$.

A maximal (proper) parabolic subalgebra contains $\ml$ if and only if the
subspace it stabilizes is a sum of elements of $\cS_\ml$. Hence the incidence
system of such parabolic subalgebras is isomorphic to the incidence system of
proper nontrivial subsets of $\cS_\ml$, cf. Example~\ref{ex:1A}.
\end{example}
\begin{example}\label{ex:4B} Continuing Examples~\ref{ex:1B}--\ref{ex:3B},
we take $G$ to be the adjoint group $\SO_0(U,Q_U)$.  Minimal Levi subalgebras
$\ml$ now correspond to orthogonal direct sum decompositions $U= \bigl(\Dsum
\cR_\ml)\dsum W_\ml$ where $Q_U$ is positive definite on $W_\ml$, $\dim
W_\ml=k$, and $\cR_\ml$ is a set of $n$ signature $(1,1)$ subspaces of $U$.
Again $N_G(\ml)=N_G(\a)$ is the subgroup of $\SO_0(U,Q_U)$ preserving this
decomposition of $U$, and the local Weyl group
$W_{\ml}(\so(U,Q_U)))=N_G(\ml)/\ML$ is isomorphic to
$\Sym(\cR_\ml)\subnormal{\Z_2}^{\cS_\ml}$, where any representative of
$f\in{\Z_2}^{\cS_\ml}$ preserves any $R\in\cR_\ml$, and if $f(R)=0$, it
preserves the two isotropic lines in $R$; otherwise, it swaps them.

The minimal parabolic subalgebra $\pb$ stabilizing the full flag
$\pi_1\leq\cdots\leq\pi_n$ of isotropic subspaces of $U$ contains $\ml$ if and
only if for each $j\in\{1,\ldots n\}$ there is an isotropic line $L_j\leq
R\in\cR_\ml$ such that $\pi_j=\pi_{j-1}\dsum L_j$ (where $\pi_0=0$). The fibre
of $\bk$ over $\ml\in\Ac$ is isomorphic to the space of such maps, which
identifies the global Weyl group with $\Sym_n\subnormal{\Z_2}^n$. The
canonical generators are $\rho_j=(j\ j+1)$ for $j\in\{1,\ldots n-1\}$, while
$\rho_n\in{\Z_2}^n$ with $\rho_n(j)=\delta_{jn}$. Thus for any $j\in\{1,\ldots
n\}$, $\rho_j$ sends $\pb$ to the stabilizer of
$\tilde\pi_1\leq\cdots\leq\tilde\pi_n$ where $\tilde\pi_i=\pi_i$ for $i\neq
j$, $\tilde\pi_j=\pi_{j-1}\oplus L_{j+1}$ for $j\neq n$, and otherwise, if
$j=n$, $\tilde\pi_n=\pi_{n-1}\dsum\tilde L_n$ where $\tilde L_n$ is isotropic
with $L_n\dsum\tilde L_n\in\cR_\ml$.

Let $\cR$ be a set of $n$ disjoint two element sets. A subset of $\bigcup\cR$
is \emph{admissible} if it contains at most one element from each two element
set. The nonempty admissible subsets form an incidence system
$\grph^{\cR,\pm}$ over $\cI_n$, with the number of elements as the type, and
incidence by containment. The incidence system of maximal parabolic
subalgebras of $\so(U,Q_U)$ containing $\ml$ is isomorphic to
$\grph^{\cR,\pm}$; this is the discrete model promised in Example~\ref{ex:1B}.
\end{example}

In general, the preceding analysis of parabolic subalgebras can be used to
show that $\PF$ is a Tits building~\cite{AbBr:bta,Gar:bcg,Tits:bst} with
apartment complex $\Ac$.  We find it more convenient to do this in the
framework of chamber \grsys/s~\cite{BuCo:dg,Ron:lb,Wei:ssb}, initiated by Tits
in~\cite{Tits:lab}.

\subsection{Chamber \grsys/s and Coxeter groups}

\begin{defn}[\cite{Ron:lb,Wei:ssb}] A \emph{chamber \grsys/} over $\typ$ is a
graph $\cgph$ with an edge labelling $\lam\colon E_\cgph\to \typ$ such that
for each $i\in\typ$, the \emph{$i$-adjacency} relation $b\edge{i}c$ (\ie,
$b=c$ or $\{b,c\}$ is an edge with label $i$) is an equivalence relation. A
\emph{chamber sub\grsys/} is a subgraph with the induced edge labelling, and
a \emph{chamber morphism} $\cm\colon\cgph\to\cgph'$ over $\typ$ is morphism of
edge labelled graphs, \ie, a map on vertices such that $b\edge{i}c$ implies
$\cm(b)\edge{i}\cm(c)$.
\end{defn}
The vertices of a chamber \grsys/ are called \emph{chambers}, and the
equivalence class of a chamber $c$ under $i$-adjacency is called its
\emph{$i$-panel} $p_i(c)$. We shall often blur the distinction between $\cgph$
and its underlying set $|\cgph|$ of chambers. For an $i$-panel $p$ and a
chamber sub\grsys/ $A$, we say $p\in A$ if $A\cap p$ is nonempty (\ie, for
some chamber $c\in A$, $p=p_i(c)$).

\begin{prop}\label{p:is-cs} Let $\grph$ be an incidence system over $\typ$.
Then the set of full flags in $\grph$ can be made into a chamber \grsys/
$\cgph=\cC\grph$ via the relation $b\edge{i}c$ iff their subflags of type
$\typ\setminus\{i\}$ are equal: $b\cap t_\grph^{\;-1}(\typ\setminus\{i\})
=c\cap t_\grph^{\;-1}(\typ\setminus\{i\})$.
\end{prop}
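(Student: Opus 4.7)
The plan is to exhibit the labelled graph structure directly and then check both the well-definedness of the labelling and the equivalence-relation condition. For full flags $b,c\in\cF\grph(\typ)$ and $i\in\typ$, write $b\sim_i c$ for the relation $b\cap t_\grph^{\,-1}(\typ\setminus\{i\})=c\cap t_\grph^{\,-1}(\typ\setminus\{i\})$; this is visibly an equivalence relation on $\cF\grph(\typ)$ for every $i$, being the pullback of equality under the ``subflag of cotype $i$'' map. The goal is to manufacture an edge-labelled graph $\cgph$ whose $i$-adjacency (in the sense of the definition of chamber \grsys/) recovers $\sim_i$.

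First I would define the edge set of $\cgph$ to consist of those two-element subsets $\{b,c\}\sub\cF\grph(\typ)$ for which $b\sim_i c$ holds for some $i\in\typ$, and declare the label of such an edge to be this $i$. The key verification is that the label is unambiguous: if $b\neq c$ and $b\sim_i c$ and $b\sim_j c$ with $i\neq j$, then $b$ and $c$ agree on $t_\grph^{\,-1}(\typ\setminus\{i\})\cup t_\grph^{\,-1}(\typ\setminus\{j\})=t_\grph^{\,-1}(\typ)$, forcing $b=c$, a contradiction. This is the one place where something has to be checked, and it uses only set-theoretic manipulation of the types $\typ\setminus\{i\}$; the incidence structure of $\grph$ is not needed for this step.

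Next I would identify the $i$-adjacency relation of $\cgph$ with $\sim_i$: unpacking the definition, $b\edge{i}c$ in $\cgph$ says $b=c$ or $\{b,c\}$ is an edge labelled $i$, which by construction is equivalent to $b\sim_i c$ (the case $b=c$ being absorbed into reflexivity of $\sim_i$). Since $\sim_i$ is an equivalence relation for each $i$, so is $i$-adjacency, and $\cgph=\cC\grph$ is a chamber \grsys/ over $\typ$.

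There is no serious obstacle; the only subtlety is the uniqueness of the label on a given edge, which I would record as the main point of the argument. Everything else is formal from the observation that equality of subflags of a fixed cotype is an equivalence relation.
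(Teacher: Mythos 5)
Your argument is correct and is exactly the routine verification the paper leaves implicit (Proposition~\ref{p:is-cs} is stated without proof). You rightly isolate the only nontrivial point, namely that an edge $\{b,c\}$ with $b\neq c$ can carry at most one label, since agreement on $t_\grph^{\,-1}(\typ\setminus\{i\})$ and on $t_\grph^{\,-1}(\typ\setminus\{j\})$ for $i\neq j$ forces $b=c$; the rest follows because each $\sim_i$ is the pullback of equality along the cotype-$i$ face map.
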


\begin{defn} A \emph{firm} $i$-panel is one containing at least two chambers;
it is \emph{thin} if it has exactly two, and \emph{thick} if it has more than
two. A firm, thin or thick chamber \grsys/ is one whose $i$-panels (for all
$i\in\typ$) are all firm, thin or thick respectively.
\end{defn}

In a thin chamber \grsys/ $\cgph$, for each $i\in\typ$ there is a free
involution of $|\cgph|$, which interchanges the two chambers in each
$i$-panel. These involutions are not chamber automorphisms; they generate a
right action on $|\cgph|$ of the free group over $\typ$ with the property that
$b\edge{i}c$ iff $b=ci$ iff $bi=c$.

\begin{defn} The \emph{structure group} of a thin chamber \grsys/ $\cgph$
is the subgroup of $\Sym(|\cgph|)$ generated by the involutions $b\mapsto bi$
for $i\in \typ$ (the effective quotient of the free group action generated by
$\typ$). Its group elements are the vertices of a chamber \grsys/ with
$w\edge{i}w'$ iff $w'=wi$ (the \emph{Cayley graph} of the presentation) and we
denote the structure group (viewed in this way) by $W_\cgph$, or by $\typ\into
W_\cgph$ (to indicate the generating set).
\end{defn}
\begin{prop} Let $\cgph$ be a thin chamber \grsys/.
\begin{bulletlist}
\item A permutation of $|\cgph|$ is a chamber morphism if and only if
  it commutes with $W_\cgph$.
\item $\cgph$ is connected if and only if $W_\cgph$ acts transitively. In this
  case the automorphism group $\Aut(\cgph)$ acts freely and the following are
  equivalent\textup:
\begin{numlist}
\item for any adjacent $b,c\in \cgph$, there is an automorphism of $\cgph$
interchanging $b$ and $c$\textup;
\item $\cgph$ is homogeneous, \ie, $\Aut(\cgph)$ acts transitively on
\textup(the chambers of\textup) $\cgph$\textup;
\item $W_\cgph$ acts freely on $\cgph$\textup;
\item there is a unique function $\delta_\cgph\colon \cgph\times\cgph \to
  W_\cgph$ with $\delta_\cgph(b,c)=w$ iff $c=bw$.
\end{numlist}
\end{bulletlist}
\end{prop}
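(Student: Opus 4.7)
For the first bullet, the plan is to exploit that in a thin chamber system each $i$-panel is a pair $\{b,bi\}$ with $bi\neq b$, so the involutions $\tau_i\colon b\mapsto bi$ are fixed-point free. If $\cm$ is a chamber morphism that is also a bijection, then $\cm(b)$ and $\cm(bi)$ are $i$-adjacent and distinct, so thinness forces $\cm(bi)=\cm(b)i$; hence $\cm$ commutes with each generator $\tau_i$ of $W_\cgph$, and so with the whole group. The converse is immediate: if $\cm(xi)=\cm(x)i$ for each $i\in\typ$ then $i$-adjacencies are preserved.

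For the second bullet, I would observe that a gallery from $b$ with successive $i_k$-adjacencies is exactly a sequence $b, bi_1, bi_1i_2,\ldots$, so the connected component of $b$ is its $W_\cgph$-orbit, and connectedness is equivalent to transitivity. Granted connectedness, any automorphism $\cm$ fixing a chamber $b$ satisfies $\cm(bw)=\cm(b)w=bw$ for all $w\in W_\cgph$ by the first bullet, hence $\cm=\iden$, so $\Aut(\cgph)$ acts freely.

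The equivalence (iii)$\Leftrightarrow$(iv) is essentially a restatement: under the standing transitivity assumption, for every pair $b,c$ there exists $w\in W_\cgph$ with $c=bw$, and the assignment $(b,c)\mapsto w$ is a well-defined function exactly when this $w$ is unique, i.e.\ when $W_\cgph$ acts freely. For (i)$\Rightarrow$(ii), I would pick a gallery $b=b_0,\ldots,b_k=c$ with consecutive adjacencies and automorphisms $\sigma_j$ swapping $b_{j-1}$ and $b_j$; then $\sigma_k\cdots\sigma_1$ sends $b$ to $c$, giving transitivity. For (ii)$\Rightarrow$(iii), suppose $bw=b$ and choose, for each chamber $c$, an automorphism $\cm$ with $\cm(b)=c$; then $cw=\cm(b)w=\cm(bw)=c$, so $w$ fixes every chamber and is therefore $\iden$ in $W_\cgph\leq\Sym(|\cgph|)$.

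The main obstacle is (iii)$\Rightarrow$(i). Given $b\edge{i}c$, so $c=bi$, I would define $\cm(bw):=cw$ for all $w\in W_\cgph$. Freeness makes $\cm$ well-defined and bijective on $|\cgph|$; it commutes with the right $W_\cgph$-action by construction, hence is a chamber morphism by the first bullet; and $\cm(c)=\cm(bi)=ci=bi^2=b$ because the generator $\tau_i$ is an involution, so $\cm$ swaps $b$ and $c$ as required. The care needed here is to track that well-definedness, the bijection property, and the relation $i^2=\iden$ are all furnished by the structure of $W_\cgph$ as the effective quotient of the free group on $\typ$ modulo the involution relations; this is what makes the transition from the combinatorial data (a single adjacency) to a global symmetry of $\cgph$ possible.
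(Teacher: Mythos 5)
Your proof is correct and complete. The paper states this proposition without proof (it is treated as a routine verification), so there is nothing to compare against; your argument — deriving equivariance of bijective chamber morphisms from thinness, identifying connected components with $W_\cgph$-orbits, and in particular constructing the swapping automorphism for (iii)$\Rightarrow$(i) by $bw\mapsto cw$ using freeness for well-definedness and $i^2=\iden$ to see that it exchanges $b$ and $c$ — is exactly the intended standard argument.
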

Thus a connected homogeneous thin chamber \grsys/ $\cgph$ is an
$(\Aut(\cgph),W_\cgph)$-bitorsor, and, for fixed $b\in\cgph$, the map
$\delta_\cgph(b,\cdot)\colon \cgph\to W_\cgph$ is an isomorphism of chamber
\grsys/s.

\subsection{Apartments and buildings} Henceforth, all chamber \grsys/s
will be nonempty, firm and connected.

\begin{defn} Let $\cgph$ be a chamber \grsys/ and let $\typ\into W$ be a group
generated by involutions. An \emph{apartment} in $\cgph$ is a
\textup(connected\textup) homogeneous thin chamber sub\grsys/ $A$ of
$\cgph$. A \emph{$W$-distance} on $\cgph$ is a map $\delta_\cgph
\colon\cgph\times\cgph\to W$ such that for any $b,c,c'\in\cgph$,
$\delta_\cgph(b,c)=\delta_\cgph(c,b)^{-1}$ and $c'\edge{i} c$ in $\cgph$
implies $\delta_\cgph(b,c')\edge{i}\delta_\cgph(b,c)$ in $W$.  An apartment
$A$ is \emph{compatible} with a $W$-distance $\delta_\cgph$ if $W_A\cong W$
with $\delta_A=\delta_\cgph\restr{A}$.  We say $(\cgph,\delta_\cgph)$ is a
\emph{building} (of type $W$) if for any $b,c\in \cgph$ there is a compatible
apartment $A$ containing $b$ and $c$.
\end{defn}
A thin building $(\cgph,\delta_\cgph)$ is just a homogeneous thin chamber
\grsys/ with its natural function $\delta_\cgph\colon \cgph\times\cgph\to
W=W_\cgph$.  Strictly speaking, we should require that $W$ is a \emph{Coxeter
  group} (as it will be in our examples). In the literature, the notion of a
building is usually defined either purely in terms of $\delta_\cgph$ (with no
mention of apartments) or in terms of apartments (often using the language of
simplicial complexes rather than chamber \grsys/s). We now relate our hybrid
approach to the latter.

The idea is that we can define $\delta_\cgph(b,c)=\delta_A(b,c)$ for an
apartment $A$ containing $b$ and $c$, provided that the apartments we admit
give the same answer, and yield a $W$-distance.

\begin{lemma}\label{l:reg-complex} Let $\cgph$ a chamber \grsys/ and
let $\cm\colon A_1\to A_2$ be an isomorphism between apartments $A_1$ and
$A_2$ fixing $b\in A_1\cap A_2$. Then\textup:
\begin{numlist}
\item $\cm$ fixes $A_1\cap A_2$ pointwise if and only if for any $c\in A_1\cap
  A_2$, $\delta_{A_1}(b,c)=\delta_{A_2}(b,c)$\textup;
\item for any $c_1\in A_1$ and $c_2\in A_2$ with $c_1\edge{i} c_2$, \ie,
  $p:=p_i(c_2)=p_i(c_2)$, we have that
  $\delta_{A_1}(b,c_1)\edge{i}\delta_{A_2}(b,c_2)$ if and only if $\cm$ sends
  $p\cap A_1$ to $p\cap A_2$.
\end{numlist}
\end{lemma}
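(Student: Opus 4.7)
My plan is to derive both parts from a single structural observation: the chamber isomorphism $\cm$ commutes with the right actions of the structure groups on $A_1$ and $A_2$. To set this up, I would first note that $\cm$ preserves $i$-adjacency for every $i\in\typ$, and that the generator of $W_{A_j}$ labelled $i$ is precisely the free involution swapping the two chambers of each $i$-panel of $A_j$. Hence the induced isomorphism $\cm^*\colon W_{A_1}\to W_{A_2}$ is the identity on the common generating set $\typ$, and, since $\cm(b)=b$, the equation $\cm(bw)=bw$ holds for every $w$ in the common structure group $W$.

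For (1), I would fix $c\in A_1\cap A_2$ and set $w_j=\delta_{A_j}(b,c)$, so that $c=bw_1$ when computed in $A_1$ and $c=bw_2$ when computed in $A_2$. The preceding intertwining then gives $\cm(c)=\cm(bw_1)=bw_1$, and freeness of the $W$-action on a thin chamber \grsys/ shows $\cm(c)=c$ iff $w_1=w_2$. Running this equivalence over all $c\in A_1\cap A_2$ yields the stated biconditional.

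For (2), writing $w_j=\delta_{A_j}(b,c_j)$, I would compute the two two-element subsets of $A_2$
\begin{equation*}
\cm(p\cap A_1)=\{bw_1,\,bw_1 i\}\quad\text{and}\quad p\cap A_2=\{bw_2,\,bw_2 i\},
\end{equation*}
and observe that, by freeness (in particular $w_j i\neq w_j$), these subsets coincide exactly when $w_2\in\{w_1,w_1 i\}$, which is by definition the $i$-adjacency $w_1\edge{i}w_2$ in the Cayley graph of $W$.

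The only potentially awkward step---the main obstacle, such as it is---is the coherent identification of $W_{A_1}$ and $W_{A_2}$ with a single ambient group $W$, needed for the assertion $\cm(bw)=bw$ to make literal sense. Once one pins down the generators by their $i$-labels and notes that a chamber morphism intertwines $i$-adjacency on either side, this identification is forced, and both parts reduce to the short calculations above.
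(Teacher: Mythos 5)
Your proposal is correct and follows essentially the same route as the paper: both arguments rest on the observation that a chamber isomorphism fixing $b$ intertwines the right $W$-actions, so $\cm(bw)=bw$, after which (1) follows from freeness and (2) from comparing the two-element sets $\{bw_1,bw_1i\}$ and $\{bw_2,bw_2i\}$. Your explicit justification of the identification $W_{A_1}\cong W_{A_2}$ via the labelled generators is a welcome elaboration of a step the paper leaves implicit.
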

\begin{proof} (1) If $c=b w\in A_1$ then $\cm(c)=b w$ in $A_2$, which is $c$
iff $\delta_{A_2}(b,c)=w=\delta_{A_2}(b,c)$.

(2) If $w=\delta_{A_1}(b,c_1)$, then $\cm$ sends $p\cap A_1=\{c_1,c_1i\}$ to
$\{c',c'i\}\sub A_2$ for the unique $c'\in A_2$ with
$\delta_{A_2}(b,c')=w$. This is $p\cap A_2=\{c_2,c_2 i\}$ if and only if
$\delta_{A_1}(b,c_1)\edge{i}\delta_{A_2}(b,c_2)$.
\end{proof}
In (2), we say that $A_1$ and $A_2$ \emph{share} the $i$-panel $p$, that
$p\in A_1$ and $p\in A_2$, and that an isomorphism $\cm$ sending $p\cap A_1$
to $p\cap A_2$ \emph{preserves} $p$.
\begin{defn} An \emph{apartment complex} $\Ac[]$ in $\cgph$ is a set of
apartments such that any two chambers in $\cgph$ belong to a common apartment
in $\Ac[]$.  We say $\Ac[]$ is \emph{regular} iff for any two intersecting
apartments $A_1,A_2\in \Ac[]$, there is a chamber isomorphism $A_1\to A_2$
fixing $A_1\cap A_2$ pointwise and preserving any $i$-panel they share.
\end{defn}
\begin{prop} A chamber \grsys/ $\cgph$ is a building with respect to
some $\delta_\cgph\colon\cgph\times\cgph\to W$ if and only if it admits a
regular apartment complex $\Ac[]$ \textup(whose apartments have type
$W$\textup), in which case there is a unique $\delta_\cgph$ such that the
apartments in $\Ac[]$ are compatible.
\end{prop}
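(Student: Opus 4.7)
The plan is to take $\Ac[]$ to be the set of all compatible apartments in the forward direction, and to define $\delta_\cgph(b,c) := \delta_A(b,c)$ for any apartment $A \in \Ac[]$ containing $b$ and $c$ in the reverse direction, then verify the axioms and establish uniqueness.

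For $(\Rightarrow)$, given a building $(\cgph,\delta_\cgph)$ of type $W$, let $\Ac[]$ consist of all compatible apartments; the building axiom provides, for any pair $b,c$, a compatible apartment through them. To verify regularity, suppose $A_1, A_2 \in \Ac[]$ share a chamber $b$. Both $A_i$ are homogeneous thin chamber \grsys/s of type $W$, so $\delta_{A_i}(b,\cdot) \colon A_i \to W$ is a bijection, and I would define $\cm \colon A_1 \to A_2$ by $\delta_{A_2}(b,\cm(c)) = \delta_{A_1}(b,c)$; this is tautologically a chamber isomorphism. Compatibility of each $A_i$ with $\delta_\cgph$ gives $\delta_{A_1}(b,c) = \delta_\cgph(b,c) = \delta_{A_2}(b,c)$ for every $c \in A_1 \cap A_2$, so Lemma~\ref{l:reg-complex}(1) forces $\cm$ to fix $A_1 \cap A_2$ pointwise. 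If $p = p_i(c_1) = p_i(c_2)$ is a panel shared by $A_1$ and $A_2$ with $c_j \in A_j$, the $W$-distance axiom on $\delta_\cgph$ gives $\delta_\cgph(b,c_1)\edge{i}\delta_\cgph(b,c_2)$, hence $\delta_{A_2}(b,\cm(c_1))\edge{i}\delta_{A_2}(b,c_2)$; this places $\cm(c_1)$ in $p\cap A_2$, and by injectivity $\cm$ sends $p \cap A_1$ to $p \cap A_2$, which is panel preservation by Lemma~\ref{l:reg-complex}(2).

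For $(\Leftarrow)$, with $\Ac[]$ regular of type $W$, the definition of $\delta_\cgph$ is well posed because any two apartments $A_1, A_2 \in \Ac[]$ through $b$ and $c$ admit, by regularity, an isomorphism fixing $A_1 \cap A_2 \ni b,c$ pointwise, so Lemma~\ref{l:reg-complex}(1) gives $\delta_{A_1}(b,c) = \delta_{A_2}(b,c)$. The symmetry $\delta_\cgph(b,c) = \delta_\cgph(c,b)^{-1}$ is inherited from each $\delta_A$. For the adjacency axiom, if $c \edge{i} c'$ in $\cgph$, I would choose $A \in \Ac[]$ through $b,c$ and $A' \in \Ac[]$ through $b,c'$. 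The $i$-panel $p := p_i(c) = p_i(c')$ lies in both $A$ and $A'$, so regularity supplies a chamber isomorphism $\cm\colon A\to A'$ fixing $A \cap A'$ (in particular $b$) pointwise and preserving $p$; Lemma~\ref{l:reg-complex}(2) then yields $\delta_A(b,c) \edge{i} \delta_{A'}(b,c')$, as required. Uniqueness is immediate: compatibility forces $\delta_\cgph|_A = \delta_A$ on each $A \in \Ac[]$, and every pair of chambers lies in some apartment.

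The main obstacle is the adjacency verification in $(\Leftarrow)$, where $c$ and $c'$ need not lie in a single apartment, forcing a comparison across two apartments via their shared panel. Regularity is calibrated precisely for this situation, and Lemma~\ref{l:reg-complex} provides the translation between ``$\cm$ preserves a panel'' and ``the corresponding $W$-distances are $i$-adjacent'', reducing the rest to bookkeeping with the bijections $A_i \cong W$ induced by the basepoint $b$.
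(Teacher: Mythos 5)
Your proof is correct and follows the same route as the paper: take $\Ac[]$ to be the set of all compatible apartments in one direction, define $\delta_\cgph$ apartment-by-apartment in the other, and use Lemma~\ref{l:reg-complex} to translate between panel preservation and $i$-adjacency of $W$-distances. The paper leaves these verifications as ``follows easily from Lemma~\ref{l:reg-complex}''; you have supplied exactly the details it omits, including the one genuinely delicate point (the adjacency axiom across two apartments sharing only a panel).
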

\begin{proof} If $(\cgph,\delta_\cgph)$ is a building then the set $\Ac[]$
of compatible apartments form an apartment complex, while given an apartment
complex $\Ac[]$, there is at most one $\delta_{\cgph}$ with
$\delta_\cgph\restr{A}=\delta_A$ for all $A\in\Ac[]$.  The equivalence now
follows easily from Lemma~\ref{l:reg-complex}.
\end{proof}
The chamber isomorphisms $A_1\to A_2$ often arise as restrictions of
automorphisms of $\cgph$.
\begin{defn} A group $G$ of chamber automorphisms of a chamber \grsys/ $\cgph$
preserving an apartment complex $\Ac[]$ is said to be \emph{strongly
  transitive} if it acts transitively on $\bk[]:=\{(a,A)\st a\in A\in\Ac[]\}$.
In this situation $\Ac[]$ is regular if and only if
\begin{itemize}
\item[(R)] for any chamber $b\in\cgph$ and any $i$-panel or chamber $p$,
$\Stab_G(b)\cap\Stab_G(p)$ acts transitively on $\{A\in\Ac[]:b,p\in A\}$.
\end{itemize}
\end{defn}
Indeed condition (R) implies regularity by Lemma~\ref{l:reg-complex}.
Conversely, if $\Ac[]$ is regular, then for any $A_1,A_2\in\Ac[]$ and any
$b\in A_1\cap A_2$, the unique isomorphism $A_1\to A_2$ fixing $b$ is the
restriction of some $g\in G$ by strong transitivity, so condition (R) holds.

We can now summarize what we have established in the homogeneous case.

\begin{thm}[Abstract Bruhat Decomposition]\label{t:abstract-bruhat}
Suppose $\cgph$ is a chamber \grsys/ over $\typ\into W$ with apartment complex
$\Ac[]$ and a strongly transitive group $G$ of automorphisms satisfying
condition \textup{(R)}. Then $\cgph$ is a building, where the apartments in
$\Ac[]$ are compatible with a $W$-distance $\delta_\cgph$ which induces a
bijection $G\backslash(\cgph\times\cgph)\to W$.
\end{thm}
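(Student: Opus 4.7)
The plan is to dispose of the ``building'' assertion by invoking the material immediately preceding the theorem, and then to establish the bijection $G\backslash(\cgph\times\cgph)\to W$ by combining strong transitivity with the rigidity of thin chamber \grsys/s. The remark just before the theorem observes that condition (R), together with strong transitivity, implies that $\Ac[]$ is regular in the sense of the preceding definition. The proposition established just before this one then delivers the first conclusion at once: $\cgph$ is a building, and there is a unique $W$-distance $\delta_\cgph\colon\cgph\times\cgph\to W$ for which every apartment in $\Ac[]$ is compatible, \ie, $\delta_\cgph\restr{A}=\delta_A$ for all $A\in\Ac[]$.

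For the bijection, I would verify three points in turn. First, $G$-invariance: given $g\in G$ and $(b,c)\in\cgph\times\cgph$, pick any $A\in\Ac[]$ containing $b$ and $c$; since $G$ preserves $\Ac[]$, the image $g(A)$ lies in $\Ac[]$ and contains $g(b),g(c)$, and the restriction $g\colon A\to g(A)$ is a chamber isomorphism of compatible apartments, so $\delta_{g(A)}(g(b),g(c))=\delta_A(b,c)=\delta_\cgph(b,c)$. Hence $\delta_\cgph$ descends to a map $G\backslash(\cgph\times\cgph)\to W$. Second, surjectivity: any compatible apartment $A\in\Ac[]$ with basepoint $a\in A$ affords a bijection $\delta_A(a,\cdot)\colon|A|\to W$, so every $w\in W$ equals $\delta_\cgph(a,c)$ for some $c\in A$. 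Third, injectivity: suppose $\delta_\cgph(b,c)=\delta_\cgph(b',c')=w$, and choose apartments $A,A'\in\Ac[]$ with $b,c\in A$ and $b',c'\in A'$. Strong transitivity on $\bk[]=\{(a,A)\st a\in A\in\Ac[]\}$ yields $g\in G$ with $g(b)=b'$ and $g(A)=A'$. Then $g(c)\in A'$ and $\delta_{A'}(b',g(c))=\delta_A(b,c)=w=\delta_{A'}(b',c')$; since $A'$ is a connected homogeneous thin chamber \grsys/ the map $\delta_{A'}(b',\cdot)\colon|A'|\to W$ is a bijection, forcing $g(c)=c'$. Thus $g$ sends $(b,c)$ to $(b',c')$.

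The argument has very little friction: the only point requiring care is the injectivity step, where one must match basepoint and apartment simultaneously, and this is precisely what strong transitivity on $\bk[]$ (as opposed to mere transitivity on $\cgph$) is designed to do. Once the right pair $(b',A')$ has been hit by $g$, freeness of the $W_{A'}$-action on the thin apartment $A'$ promotes the equality of distances to equality of chambers, completing the proof.
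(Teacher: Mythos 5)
Your proposal is correct and follows essentially the same route as the paper: regularity of $\Ac[]$ (from condition (R)) yields the unique compatible $W$-distance via the preceding proposition, surjectivity and $G$-invariance are read off from a single apartment, and injectivity of $G\backslash(\cgph\times\cgph)\to W$ comes from strong transitivity on pairs $(b,A)$ together with the bijectivity of $\delta_{A'}(b',\cdot)$ on the thin apartment $A'$. The extra care you take at the invariance and injectivity steps merely makes explicit what the paper's parenthetical argument compresses.
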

\begin{proof} Since $\Ac[]$ is a regular apartment complex, it determines
a $W$-distance $\delta_\cgph\colon \cgph\times\cgph\to W$ which agrees with
$\delta_A$ on any apartment $A\in\Ac[]$. Thus $\delta_\cgph$ is surjective and
$G$-invariant. Furthermore, for any $w\in W$, $G$ acts transitively on pairs
$(b,c)$ with $\delta_\cgph(b,c)=w$ (if $(b',c')$ is another such pair, then
after choosing apartments $A,A'\in\Ac[]$ containing $b,c$ and $b',c'$
respectively, the element of $G$ sending $(b,A)$ to $(b',A')$ also sends $c$
to $c'$). Hence the induced surjection $G\backslash(\cgph\times\cgph)\to W$ is
also injective.
\end{proof}

\subsection{Parabolic buildings and the Bruhat decomposition}

We now apply Proposition~\ref{p:is-cs} to the parabolic incidence system
$\PI{\g}$, to obtain a chamber \grsys/ whose chambers are the full flags of
$\PI{\g}$, which we may identify with the minimal parabolic subalgebras of
$\g$, and hence denote $\bc$; two such subalgebras $\pb,\pc$ are then
$i$-adjacent (for $i\in \typ_\g$) iff there is a (necessarily unique)
parabolic subalgebra of type $\typ_\g\setminus\{i\}$ containing both $\pb$ and
$\pc$.

\begin{thm}\label{t:bruhat} The chamber \grsys/ $\bc$ of minimal parabolic
subalgebras is a building over the global Weyl group $W(\g)$, which is a
Coxeter group, and there is a canonical bijection between
$G\backslash(\bc\times\bc)$ and $W(\g)$.
\end{thm}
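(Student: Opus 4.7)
The plan is to apply the Abstract Bruhat Decomposition (Theorem~\ref{t:abstract-bruhat}) to the chamber system $\bc$, using the minimal Levi subalgebras to generate the apartment complex. For each $\ml\in\Ac$, define $A_\ml\sub\bc$ to be the set of minimal parabolic subalgebras containing $\ml$. By Theorem~\ref{t:std-pars}, elements of $A_\ml$ correspond to bases of simple roots for the restricted root system $\Phi$ of $(\g,\ml)$, and by the last part of Proposition~\ref{p:rs}, the simple reflections $\sigma_{\alpha}$ ($\alpha\in\Phi^1$) act freely and transitively on such bases. This identifies $A_\ml$ with a homogeneous thin chamber subsystem whose $i$-panels (for $i\in\typ_\g$) have exactly two chambers, namely a minimal parabolic $\pb\supseteq\ml$ and its reflection across the simple root of type $i$. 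The local Weyl group $W_\ml(\g)$ acts simply transitively on $A_\ml$, so $A_\ml$ is an apartment, and its structure group is isomorphic to $W(\g)$ via any basepoint.

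Next, I would verify that $\Ac[]:=\{A_\ml\st\ml\in\Ac\}$ is an apartment complex: given any two minimal parabolics $\pb,\pc\in\bc$, Corollary~\ref{c:par-reg-apt-comp} provides a minimal Levi subalgebra $\ml\leq\pb\cap\pc$, so $\pb,\pc\in A_\ml$. Strong transitivity of $G$ on $\bk[]=\{(\pb,A_\ml)\st\ml\leq\pb\}\cong\bk$ is exactly Theorem~\ref{t:bk-act}. It remains to check condition~(R): for $\pb\in\bc$ and either a chamber $p=\pb$ or an $i$-panel $p$ containing $\pb$, the subgroup $\Stab_G(\pb)\cap\Stab_G(p)$ should act transitively on $\{A_\ml\in\Ac[]\st \pb,p\in A_\ml\}$. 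For $p=\pb$, this means $N_G(\pb)$ acts transitively on Levi subalgebras of $\pb$, which follows from Proposition~\ref{p:h-split} (the $\exp(\nil(\pb))$-torsor structure on such Levis). For an $i$-panel $p$, let $\q\supseteq\pb$ be the unique parabolic of type $\typ_\g\setminus\{i\}$ corresponding to $p$; then $A_\ml$ contains both $\pb$ and $p$ iff $\ml\leq\pb\cap\q$, and Proposition~\ref{p:par-pair-tors} identifies such $\ml$ with compatible lifts of the grading elements of $\pb$ and $\q$, forming an $\exp(\nil(\pb)\cap\nil(\q))$-torsor inside $N_G(\pb)\cap N_G(\q)=\Stab_G(\pb)\cap\Stab_G(p)$.

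With (R) established, Theorem~\ref{t:abstract-bruhat} immediately yields a $W(\g)$-distance $\delta_{\bc}$ making $\bc$ into a building compatible with $\Ac[]$, and a canonical bijection $G\backslash(\bc\times\bc)\to W(\g)$. To see that $W(\g)$ is a Coxeter group, I would use Proposition~\ref{p:rs}: after choosing a basepoint $(\pb,\ml)\in\bk$, $W(\g)\cong W_\ml(\g)$ is generated by the simple root reflections $\sigma_{\alpha_i}$ ($\alpha_i\in\Phi^1$), which satisfy relations $\sigma_{\alpha_i}^2=1$ and braid relations $(\sigma_{\alpha_i}\sigma_{\alpha_j})^{m_{ij}}=1$ determined by the rank-two root subsystems $\Phi\cap\spn_\Z\{\alpha_i,\alpha_j\}$; standard arguments for finite root systems (applicable since $\Phi$ was shown to be a root system in the sense of Proposition~\ref{p:rs}) then exhibit a Coxeter presentation.

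The main obstacle will be the verification of condition~(R) for $i$-panels: identifying precisely which apartments contain both a chamber $\pb$ and its neighbouring $i$-panel, and exhibiting the transitive stabilizer action. The key input is Proposition~\ref{p:par-pair-tors} applied to the pair $(\pb,\q)$, which converts the geometric transitivity into an explicit statement about compatible grading-element lifts. Once this is in hand, every remaining ingredient (strong transitivity, thinness, apartment covering, Coxeter generation) reduces cleanly to results established earlier in the paper.
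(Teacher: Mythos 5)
Your proposal is correct and follows essentially the same route as the paper: apartments $A_\ml$ indexed by minimal Levi subalgebras, the apartment complex via Corollary~\ref{c:par-reg-apt-comp}, strong transitivity via Theorem~\ref{t:bk-act}, condition (R) via Propositions~\ref{p:h-split} and~\ref{p:par-pair-tors}, and then Theorem~\ref{t:abstract-bruhat}. The paper's own proof is terser (it attributes (R) wholesale to Corollary~\ref{c:par-reg-apt-comp}); your explicit verification is the intended argument, noting only that (R) as stated also covers an arbitrary second chamber $\pc$ and panels not containing $\pb$, which follow by the identical application of Proposition~\ref{p:par-pair-tors} to the relevant pair.
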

\begin{proof} For any minimal Levi subalgebra $\ml\in\Ac$, the subgraph
of $\bc$ given by the minimal parabolic subalgebras containing $\ml$ is a thin
chamber \grsys/ of type $W(\g)$ whose automorphism group is the local Weyl group
$N_G(\ml)/C_G(\ml)$ (whose restricted root system action implies $W(\g)$ is a
Coxeter group).  Any two minimal parabolic subalgebras belong to such an
apartment by Corollary~\ref{c:par-reg-apt-comp}, which also shows that the
apartment complex of type $W(\g)$ defined by $\Ac$ satisfies (R).  The result
now follows from Theorem~\ref{t:abstract-bruhat}.
\end{proof}
We refer to $\bc$ as the \emph{parabolic building} of $\g$. If we fix a
minimal parabolic subalgebra $\pb$ with stabilizer $B=N_G(\pb)$ then $\bc\cong
G/B$ (as a $G$-space) and $G\backslash(\bc\times\bc)\cong
G\backslash(G/B\times G/B)\cong B\backslash G/B$, the set of double cosets of
$B$ in $G$.

\begin{prop} There is a unique and involutive map $\op_\g\colon\typ_\g\to
\typ_\g$ \textup(\ie, ${\op_\g}^2=\iden_{\typ_\g}$\textup) such that for any
$J\sub\typ_\g$ and any parabolic subalgebra $\p$ of type $J$, its opposite
parabolic subalgebras have type $\op_\g(J)$.
\end{prop}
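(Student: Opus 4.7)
The plan is to define $\op_\g$ first on singletons, using the fact that opposites of maximal parabolic subalgebras are maximal, and then verify the formula on arbitrary subsets by apartment-theoretic bookkeeping via the flag structure of Theorem~\ref{t:par-flag-geom}.

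First I would observe that if $\p,\hp\leq\g$ are opposite parabolics, then $\dimn\p=\dimn\hp$: the direct sum decomposition $\g=\nil(\p)\dsum(\p\cap\hp)\dsum\nil(\hp)$ of Remarks~\ref{r:weyl-tors}, together with the duality between $\nil(\p)$ and $\nil(\hp)$ induced by any admissible form, forces $\dimn\nil(\p)=\dimn\nil(\hp)$. For a maximal proper parabolic $\p$ of type $\{i\}$ with opposite $\hp$, Corollary~\ref{c:par-reg-apt-comp} supplies a minimal Levi $\ml\leq\p\cap\hp$, and one can choose a minimal parabolic $\pb$ with $\ml\leq\pb\leq\p$. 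Theorem~\ref{t:std-pars} then identifies $\xi_\p$ with a fundamental coweight $\xi^\alpha$ of $\pb$ (for some $\alpha\in\Phi^1$), while Remarks~\ref{r:weyl-tors} identifies $\hp$ as the unique opposite containing $\ml$, with grading element $-\xi^\alpha$. Passing to the opposite minimal parabolic $\hpb$ of $\pb$ within the apartment (whose basis of simple roots is $-\Phi^1$), the vector $-\xi^\alpha$ becomes the fundamental coweight of $-\alpha$, so Theorem~\ref{t:std-pars}~(2) identifies $\hp$ as a maximal parabolic. Setting $\op_\g(i)$ to be the type of $\hp$ is well-defined: opposites of a fixed $\p$ form an $\exp(\nil(\p))$-torsor (Remarks~\ref{r:weyl-tors}), hence lie in a single $G$-orbit; and any two parabolics of type $\{i\}$ are $G$-conjugate, so their opposites are conjugate as well.

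To extend to arbitrary $J\sub\typ_\g$, I would take $\p$ of type $J$ with opposite $\hp$ and again choose a minimal Levi $\ml\leq\p\cap\hp$. By Theorem~\ref{t:par-flag-geom} we have $\p=\bigcap_{i\in J}\p^i$ where $\p^i\supseteq\ml$ is the unique maximal parabolic of type $\{i\}$ containing $\p$. For each $i\in J$ let $\hp^i$ be the unique opposite of $\p^i$ containing $\ml$; by the singleton case $\hp^i$ is maximal of type $\op_\g(i)$, with grading element $-\xi_{\p^i}$. Applying Theorem~\ref{t:std-pars}~(3) in the apartment of $\ml$ with base minimal parabolic $\hpb$, the intersection $\bigcap_{i\in J}\hp^i$ is parabolic with grading element $\sum_{i\in J}(-\xi_{\p^i})=-\xi_\p=\xi_\hp$, hence equals $\hp$. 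Since the $\hp^i$ are mutually costandard maximal parabolics of distinct types $\op_\g(i)$, Theorem~\ref{t:par-flag-geom} now reads off the type of $\hp$ as $\{\op_\g(i):i\in J\}=\op_\g(J)$.

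Involutivity is immediate from the symmetry of the opposite relation: if $\hp$ of type $\op_\g(J)$ is opposite to $\p$ of type $J$, then $\p$ is opposite to $\hp$, which gives $\op_\g(\op_\g(J))=J$; uniqueness is clear from the construction. I expect the main obstacle to be the apartment-level step that identifies $\bigcap_{i\in J}\hp^i$ with $\hp$. It relies on the existence of a common apartment for $\p$ and $\hp$ (via the minimal Levi in $\p\cap\hp$) so that the grading-element calculus of Theorem~\ref{t:std-pars}~(3) applies, turning the algebraic claim that the opposite of an intersection equals the intersection of opposites into a transparent additive statement about grading elements.
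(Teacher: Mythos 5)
Your proof is correct, but it takes a more explicit route than the paper's. Both arguments reduce the problem to maximal proper parabolic subalgebras and use the fact that the opposites of a fixed $\p$ form a single $\exp(\nil(\p))$-orbit (Remarks~\ref{r:weyl-tors}) together with $G$-equivariance of opposition to get well-definedness on singleton types. Where you diverge is in the reduction itself: the paper argues via costandard pairs, invoking Corollary~\ref{c:weak-op-char} to produce, for $\pp$ costandard with $\p$, an opposite $\hpp$ costandard with $\hp$, and then asserts that $\hp\cap\hpp$ is opposite to $\p\cap\pp$; you instead place $\p$ and its opposite $\hp$ in a common apartment via a minimal Levi subalgebra $\ml\leq\p\cap\hp$ (Corollary~\ref{c:par-reg-apt-comp}) and carry out the whole computation with grading elements and fundamental coweights, using Theorem~\ref{t:std-pars}~(3) to identify $\bigcap_{i\in J}\hp^i$ with $\hp$ from the additive identity $\xi_{\hp}=-\xi_\p=\sum_{i\in J}(-\xi_{\p^i})$. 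What your version buys is that the two facts the paper leaves implicit---that the opposite of a maximal proper parabolic is again maximal proper, and that the opposite of an intersection is the intersection of the opposites---are actually proved rather than asserted, and you avoid the forward reference to Corollary~\ref{c:weak-op-char}, which in the paper is only stated (as a lemma) in the following section. The cost is more bookkeeping; the paper's argument is shorter and stays at the level of incidence relations without descending into the root-space picture.
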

\begin{proof} For any parabolic subalgebra $\p$, the parabolic subalgebras
opposite to $\p$ belong to a single adjoint orbit by
Remarks~\ref{r:weyl-tors}. Furthermore, if $\hp$ is opposite to $\p$
then $g\act\hp$ is opposite to $g\act\p$, and if $\pp$ is costandard with
$\p$ then by Corollary~\ref{c:weak-op-char}, there exists $\hpp$ opposite to
$\pp$ and costandard with $\hp$; it follows that $\hp\cap\hpp$ is opposite to
$\p\cap\pp$.  Hence the opposition involution of adjoint orbits is determined
uniquely by the opposition relation on maximal parabolic subalgebras.
\end{proof}
\begin{defn}
The map $\op_{\g}$ is called the \emph{duality involution} of $\g$.
\end{defn}

\section{Parabolic projection and geometric configurations}\label{s:ppgc}

\subsection{Parabolic projection}

Let $\q$ be parabolic in $\g$, let $\q_0=\q/\nil(\q)$ and let $Q=N_G(\q)\sub
G$. By Corollary~\ref{c:levi-par}, the inverse image of any parabolic in
$\q_0$ is a parabolic subalgebra of $\g$ contained in $\q$. By
Proposition~\ref{p:par-pair}, this process has a right inverse.

\begin{defn} Let $\q$ be a parabolic subalgebra of a reductive Lie algebra
$\g$. Then \emph{parabolic projection} $\PP{\q}\colon\PF\to\PF[\q_0]$ is
defined by
\begin{equation*}
\PP{\q}(\p)=(\p\cap\q+\nil(\q))/\nil(\q).
\end{equation*}
\end{defn}
The following property is immediate from the definition and the special case
$\p\sub\pp$.
\begin{prop}\label{p:PP-incidence} If $\p,\pp\leq\g$ are costandard parabolic
subalgebras then $\PP{\q}(\p\cap\pp)\sub \PP{\q}(\p)\cap\PP{\q}(\pp)$.  Thus
$\PP{\q}$ preserves incidence.
\end{prop}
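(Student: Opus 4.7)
The plan is to handle the two assertions in sequence: the set-theoretic inclusion is a straightforward modular-law computation at the level of subspaces, while the incidence-preservation statement rests on combining this inclusion with Proposition~\ref{p:par-pair}.

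First, I would verify the inclusion directly. Unpacking the definition,
\begin{equation*}
\PP{\q}(\p\cap\pp) = \bigl((\p\cap\pp)\cap\q + \nil(\q)\bigr)/\nil(\q),
\end{equation*}
and similarly $\PP{\q}(\p)\cap\PP{\q}(\pp)$ is the quotient by $\nil(\q)$ of $(\p\cap\q+\nil(\q))\cap(\pp\cap\q+\nil(\q))$. Any element $y+z$ with $y\in\p\cap\pp\cap\q$ and $z\in\nil(\q)$ lies in both $\p\cap\q+\nil(\q)$ (since $y\in\p\cap\q$) and $\pp\cap\q+\nil(\q)$ (since $y\in\pp\cap\q$), which gives the desired containment. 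This step is purely formal and is the easy part.

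Next, I would interpret ``preserves incidence'' to mean that if $\p,\pp$ are costandard in $\g$ then $\PP{\q}(\p)$ and $\PP{\q}(\pp)$ are costandard in $\q_0$. By definition of costandard, $\p\cap\pp$ is parabolic in $\g$. Applying Proposition~\ref{p:par-pair} to the pair $(\p\cap\pp,\q)$, the subalgebra $(\p\cap\pp)\cap\q+\nil(\q)$ is parabolic in $\g$ and contains $\nil(\q)$; by Corollary~\ref{c:levi-par} its image in $\q_0$, which is exactly $\PP{\q}(\p\cap\pp)$, is parabolic in $\q_0$.

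Finally, I would combine the two observations: the inclusion proved first shows $\PP{\q}(\p)\cap\PP{\q}(\pp)$ contains the parabolic subalgebra $\PP{\q}(\p\cap\pp)$ of $\q_0$, and any subalgebra of a reductive Lie algebra containing a parabolic subalgebra is itself parabolic (as noted just after the definition of parabolic subalgebras in Section~\ref{s:ps}). Hence $\PP{\q}(\p)\cap\PP{\q}(\pp)$ is parabolic in $\q_0$, which is exactly the statement that $\PP{\q}(\p)$ and $\PP{\q}(\pp)$ are costandard. I do not anticipate a genuine obstacle here; the only thing to watch is that the inclusion is not in general an equality (equality would require a modular-law identity which fails for arbitrary subspaces), so the argument must go through ``contains a parabolic'' rather than through an explicit identification.
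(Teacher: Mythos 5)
Your argument is correct and is essentially the paper's: the paper dismisses the inclusion as immediate from the definition (via monotonicity in the special case $\p\sub\pp$, which is equivalent to your direct element check), and the incidence claim follows exactly as you say, since $\PP{\q}(\p\cap\pp)$ is parabolic in $\q_0$ by Proposition~\ref{p:par-pair} and any subalgebra containing a parabolic subalgebra is parabolic. Your closing caveat that the inclusion need not be an equality is also the right thing to flag.
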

By Theorem~\ref{t:par-flag-geom}, any maximal (proper) parabolic subalgebra
$\p$ of $\q$ is the intersection with $\q$ of a maximal parabolic subalgebra
of $\g$ whose adjoint orbit in $\typ_\g$ is uniquely determined by the adjoint
orbit $Q\act\p$. There is thus a natural inclusion $\iota_\q\colon
\typ_{\q_0}\to\typ_\g$ whose image is the complement of the type
$J_\q\in\Pow(\typ_\g)$ of $\q$ in $\g$. More generally, if $\p\in\PF[\q_0]$
has type $J\in \Pow(\typ_{\q_0})$ then its inverse image in $\q$ has type
$\iota_\q(J)\cup J_\q\in\Pow(\typ_\g)$.

Parabolic projection has a more complicated behaviour in general, although on
the set $\PF\cstd{\q}$ of parabolic subalgebras of $\g$ costandard with $\q$,
the behaviour is straightforward.
\begin{prop}\label{p:costd} The restriction of $\PP{\q}$ to $\PF\cstd{\q}$
sends parabolic subalgebras $\p$ of type $J\in\Pow(\typ_\g)$ to parabolic
subalgebras of type $\iota_\q^{\,-1}(J)\in\Pow(\typ_{\q_0})$.
\end{prop}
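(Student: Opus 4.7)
The plan is to reduce the claim to a type computation for the intersection $\p\cap\q$ in $\g$, using the incidence-geometric description of parabolic types provided by Theorem~\ref{t:par-flag-geom}.

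First, since $\p$ and $\q$ are costandard, Proposition~\ref{p:costd-equiv} gives $\nil(\q)=\q^\perp\leq\p$ (using any admissible form, together with Theorem~\ref{t:par-equiv}), so $\nil(\q)\leq\p\cap\q$ and
\begin{equation*}
\PP{\q}(\p)=(\p\cap\q+\nil(\q))/\nil(\q)=(\p\cap\q)/\nil(\q).
\end{equation*}
Moreover $\p\cap\q$ is parabolic in $\g$ by the costandard assumption, and contains $\nil(\q)$, so by Corollary~\ref{c:levi-par} the quotient $\PP{\q}(\p)$ is already parabolic in $\q_0$; only its type remains to be identified.

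The main step is to determine the type of $\p\cap\q$ in $\g$. Using the isomorphism $\Lam$ of Theorem~\ref{t:par-flag-geom}, I would write $\p=\bigcap_{i\in J}\p^i$ for a $J$-flag $\{\p^i:i\in J\}$ of maximal parabolics and $\q=\bigcap_{i\in J_\q}\q^i$ for a $J_\q$-flag $\{\q^i:i\in J_\q\}$. Since $\p$ and $\q$ are costandard, $\p\cap\q$ contains a minimal parabolic of $\g$ by Proposition~\ref{p:costd-equiv}, and any two members of $\{\p^i\}\cup\{\q^i\}$ contain this common minimal parabolic, hence are pairwise costandard (again by Proposition~\ref{p:costd-equiv}). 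Whenever $i\in J\cap J_\q$, the two maximal parabolics $\p^i$ and $\q^i$ lie in the same adjoint orbit and are costandard, so Proposition~\ref{p:std-unique} forces $\p^i=\q^i$. Thus $\{\p^i:i\in J\}\cup\{\q^i:i\in J_\q\}$ is a well-defined $(J\cup J_\q)$-flag whose intersection equals $\p\cap\q$; applying $\Lam$ in reverse, the type of $\p\cap\q$ in $\g$ is exactly $J\cup J_\q$.

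Finally, the type-correspondence recalled in the paragraph preceding the proposition states that a parabolic of $\q_0$ of type $K\in\Pow(\typ_{\q_0})$ pulls back to a parabolic of $\g$ (contained in $\q$) of type $\iota_\q(K)\cup J_\q$. Applying this to $\PP{\q}(\p)$, whose preimage in $\q$ is $\p\cap\q$ of type $J\cup J_\q$, and using that $\iota_\q$ is injective with image $\typ_\g\setminus J_\q$, gives $\iota_\q(K)=(J\cup J_\q)\setminus J_\q=J\setminus J_\q$, so $K=\iota_\q^{\,-1}(J\setminus J_\q)=\iota_\q^{\,-1}(J)$, as claimed. The only delicate point in the argument is the identification $\p^i=\q^i$ for $i\in J\cap J_\q$; this is the place where Proposition~\ref{p:std-unique} does the essential work, and without it one could not even write down $\sigma_\p\cup\sigma_\q$ as a flag in the incidence system $\PI{\g}$.
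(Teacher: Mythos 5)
Your proof is correct and follows essentially the same route as the paper's: reduce to $\PP{\q}(\p)=(\p\cap\q)/\nil(\q)$, identify the type of $\p\cap\q$ in $\g$ as $J\cup J_\q$, and read off the type in $\q_0$ via the pullback formula $\iota_\q(K)\cup J_\q$. The only difference is that you spell out the step the paper asserts without comment --- that $\p\cap\q$ has type $J\cup J_\q$, via Proposition~\ref{p:std-unique} applied to costandard maximal parabolics of the same type --- which is a worthwhile clarification but not a different argument.
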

\begin{proof} If $\p$ is costandard with $\q$ then $\PP{\q}(\p)=\p\cap\q/
\nil(\q)$. Since $\p\cap\q$ is in the adjoint orbit $J\cup J_\q$, its
projection to $\q_0$ is in $\iota_\q^{\,-1}(J\setminus J_\q)=
\iota_\q^{\,-1}(J)$.
\end{proof}

Parabolic subalgebras of $\g$ are rarely costandard with $\q$; generically,
they are weakly opposite to $\q$ in the following sense.
\begin{defn}[\cite{Pan:ifs}] Parabolic subalgebras $\p,\q\leq\g$ are said to
be \emph{weakly opposite} if $\p+\q=\g$ (equivalently
$\nil(\p)\cap\nil(\q)=0$).
\end{defn}
The weakly opposite case is also amenable to analysis.
\begin{lemma}\label{c:weak-op-char} Parabolic subalgebras $\p$ and $\q$
of $\g$ are weakly opposite if and only if there is a parabolic subalgebra
$\hp$ opposite to $\p$ and costandard with $\q$.
\end{lemma}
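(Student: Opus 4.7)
The plan is to deduce the reverse direction directly and the forward direction by producing $\hp$ from a commuting pair of Weyl structures for $\p$ and $\q$.

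For the easier direction, suppose $\hp$ is opposite to $\p$ and costandard with $\q$. Opposition gives $\g=\p\dsum\nil(\hp)$ (since $\hp^\perp=\nil(\hp)$). Fix an admissible form on $\g$; by Proposition~\ref{p:costd-equiv} applied to $\hp$ and $\q$, we have $\hp^\perp\leq\q$, \ie{} $\nil(\hp)\leq\q$. Therefore $\p+\q\supseteq \p+\nil(\hp)=\g$, so $\p$ and $\q$ are weakly opposite.

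For the forward direction, suppose $\p+\q=\g$. By Proposition~\ref{p:par-pair-tors}, we may choose commuting lifts $\xi_\p,\xi_\q\in\p\cap\q$ of the grading elements of $\p$ and $\q$, \ie{} $[\xi_\p,\xi_\q]=0$. Since grading elements have integer spectrum and $\xi_\p,\xi_\q$ are commuting semisimple elements of $\g$, their adjoint actions yield a $\Z^2$-bigrading $\g=\Dsum_{(j,k)\in\Z^2}\g_{j,k}$, where $\g_{j,k}$ is the simultaneous eigenspace with eigenvalues $j$ for $\ad\xi_\p$ and $k$ for $\ad\xi_\q$. By the identification of parabolic subalgebras via their Weyl structures in Remarks~\ref{r:weyl-tors}, we have
\begin{equation*}
\p=\Dsum_{j\leq 0,\,k}\g_{j,k},\qquad \q=\Dsum_{j,\,k\leq 0}\g_{j,k},
\end{equation*}
and the opposite parabolic $\hp$ corresponding to the Weyl structure $\xi_\p$ is
\begin{equation*}
\hp=\Dsum_{j\geq 0,\,k}\g_{j,k},\qquad \nil(\hp)=\Dsum_{j>0,\,k}\g_{j,k}.
\end{equation*}

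The key observation is that $\p+\q=\Dsum_{j\leq 0\text{ or }k\leq 0}\g_{j,k}$, so the hypothesis $\p+\q=\g$ forces $\g_{j,k}=0$ whenever $j>0$ and $k>0$. Therefore $\nil(\hp)=\Dsum_{j>0,\,k\leq 0}\g_{j,k}\leq\q$, and by Proposition~\ref{p:costd-equiv} this means $\hp$ is costandard with $\q$; by construction $\hp$ is opposite to $\p$. The main point requiring care is the existence of commuting lifts of the grading elements, which is exactly the content of Proposition~\ref{p:par-pair-tors}; once this is in hand, the bigrading argument reduces the lemma to a transparent combinatorial statement about the support of $\g$ in the $(j,k)$-plane.
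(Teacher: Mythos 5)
Your proof is correct and follows essentially the same route as the paper: both directions hinge on Proposition~\ref{p:costd-equiv} and, for the forward implication, on choosing a Weyl structure $\xi_\p$ for $\p$ inside $\p\cap\q$ and taking $\hp$ to be the corresponding opposite. The paper gets by with the single lift $\xi_\p$ (observing that $\nil(\q)$ is $\ad\xi_\p$-invariant and, by $\nil(\p)\cap\nil(\q)=0$, has no negative eigenvalues, hence lies in $\hp$), so your commuting pair and $\Z^2$-bigrading are slightly more machinery than needed, but the argument is the same in substance and your bookkeeping is valid.
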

\begin{proof} If $\hp$ is opposite to $\p$ and costandard with $\q$ then
$\nil(\q)\leq\hp$ so $\nil(\p)\cap\nil(\q)=0$. Conversely, if
$\nil(\p)\cap\nil(\q)=0$, we may take $\hp$ opposite to $\p$ using a
Weyl structure $\xi_\p$ in $\p\cap\q$: $\nil(\q)$ then has nonnegative
eigenvalues for $\xi_\p$, hence lies in $\hp$.
\end{proof}
\begin{lemma} \label{l:comp-par} Let $\p,\hp$ be parabolic in $\g$ with
$\hp$ opposite to $\p$ and costandard with $\q$. Then $\PP{\q}(\hp)$
is opposite to $\PP{\q}(\p)$ in $\q_0$.
\end{lemma}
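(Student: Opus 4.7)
The plan is to verify $\q_0 = \PP{\q}(\p) \dsum \nil(\PP{\q}(\hp))$, which characterises opposition in the reductive algebra $\q_0$ with respect to any admissible form (under which $\nil(\PP{\q}(\hp)) = \PP{\q}(\hp)^\perp$). By Proposition~\ref{p:par-pair} and costandardness ($\nil(\q) \leq \hp$), one has $\PP{\q}(\p) = (\p \cap \q + \nil(\q))/\nil(\q)$ and $\nil(\PP{\q}(\hp)) = (\nil(\hp) \cap \q + \nil(\q))/\nil(\q)$.

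The decisive first step is to produce a Weyl structure $\xi_\p$ for $\p$ lying inside $\q$. Since $\hp$ and $\q$ are costandard, $\hp \cap \q$ is parabolic in $\g$, so Proposition~\ref{p:par-pair-tors} applied to the pair $\p$ and $\hp \cap \q$ yields a lift $\xi_\p$ of the grading element of $\p$ in $\p \cap (\hp \cap \q) = \p \cap \hp \cap \q$. Since $\p \cap \hp$ is a Levi subalgebra of $\p$ and lifts contained in a prescribed Levi are unique (Proposition~\ref{p:h-split}), $\xi_\p$ is precisely the Weyl structure for $\p$ determined by the opposite $\hp$; consequently $\ad \xi_\p$ is semisimple with integer eigenvalues and the decomposition $\g = \nil(\p) \dsum (\p \cap \hp) \dsum \nil(\hp)$ is its strictly-negative/zero/strictly-positive eigenspace decomposition.

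Because $\xi_\p \in \q$, this decomposition restricts to the $\ad \xi_\p$-invariant subspace $\q$ as $\q = (\nil(\p) \cap \q) \dsum (\p \cap \hp \cap \q) \dsum (\nil(\hp) \cap \q)$. Hence $\p \cap \q + \nil(\hp) \cap \q = \q$, and quotienting by $\nil(\q)$ gives $\PP{\q}(\p) + \nil(\PP{\q}(\hp)) = \q_0$. To see this sum is direct, use that $\nil(\q)$ is $\ad \xi_\p$-invariant and lies in $\hp$, so $\nil(\q) = (\nil(\q) \cap \p \cap \hp) \dsum (\nil(\q) \cap \nil(\hp))$; a short dimension count based on this decomposition yields $\dimn \PP{\q}(\p) + \dimn \nil(\PP{\q}(\hp)) = \dimn \q - \dimn \nil(\q) = \dimn \q_0$, forcing $\PP{\q}(\p) \cap \nil(\PP{\q}(\hp)) = 0$. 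The main obstacle is securing the common Weyl structure in the first step: applying Proposition~\ref{p:par-pair-tors} to $\p$ and $\q$ directly would place $\xi_\p$ in $\p \cap \q$ but not in the Levi $\p \cap \hp$ prescribed by the given opposite, so one must invoke the proposition instead on the pair $\p$ and $\hp \cap \q$.
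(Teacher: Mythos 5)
Your argument is correct, and it takes a route that is recognizably parallel to the paper's but packaged differently. The paper verifies the two sum conditions $\PP{\q}(\p)+\nil(\PP{\q}(\hp))=\q_0$ and $\PP{\q}(\hp)+\nil(\PP{\q}(\p))=\q_0$ separately: the first falls out of $\p+\nil(\hp)=\g$ and $\nil(\hp)\sub\q$ by a modular-law computation (no common refinement needed), and only for the second does it invoke Corollary~\ref{c:par-reg-apt-comp} to place a minimal Levi subalgebra $\ml$ inside $\p\cap(\hp\cap\q)$ and split $\q$ into root spaces. You instead verify the single direct-sum characterization $\q_0=\PP{\q}(\p)\dsum\nil(\PP{\q}(\hp))$, obtaining surjectivity of the sum from the eigenspace decomposition $\q=(\nil(\p)\cap\q)\dsum(\p\cap\hp\cap\q)\dsum(\nil(\hp)\cap\q)$ under a grading-element lift $\xi_\p\in\p\cap\hp\cap\q$ (supplied by Proposition~\ref{p:par-pair-tors} applied to the pair $\p$, $\hp\cap\q$ --- and you are right that applying it to $\p$, $\q$ would not pin the zero eigenspace to the prescribed Levi $\p\cap\hp$), and directness from a dimension count. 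The key input is the same in both proofs --- a semisimple object in $\p\cap\hp\cap\q$ whose decomposition of $\g$ refines $\p$, $\hp$ and $\q$ simultaneously --- but your version gets by with the coarser grading element in place of the full anisotropic kernel, at the price of the extra dimension count (which does check out, since $\nil(\q)\sub\hp$ forces $\nil(\q)=(\nil(\q)\cap\p\cap\hp)\dsum(\nil(\q)\cap\nil(\hp))$, whence the two dimensions add to $\dimn\q_0$), where the paper simply dualizes each sum condition to the corresponding intersection condition by taking perps.
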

\begin{proof} Since $\PP{\q}(\hp)=\hp\cap\q/\nil(\q)$, $\nil(\PP{\q}(\hp))
=(\nil(\hp)+\nil(\q))/\nil(\q)$.  As $\p+\nil(\hp)=\g$ with $\nil(\hp)\sub\q$,
we have $\p\cap\q+\nil(\hp)=\q$ and hence
$(\p\cap\q+\nil(\q))+(\nil(\hp)+\nil(\q))=\q$. It follows that
$\PP{\q}(\p)+\nil(\PP{\q}(\hp))=\q_0$.

It remains to show that $\PP{\q}(\hp)+\nil(\PP{\q}(\p))=\q_0$, \ie,
$\hp\cap\q +\nil(\p)\cap\q = \q$ (since $\nil(\q)\sub\hp\cap\q$). For this, we
use Corollary~\ref{c:par-reg-apt-comp} to introduce a minimal Levi
subalgebra $\ml$ of $\g$ contained in the parabolic subalgebras $\p$ and
$\hp\cap\q$. In particular, $\ml\sub\p\cap\hp$ and so the root space
decomposition of $\g$ associated to $\ml$ refines the direct sum decomposition
$\g=\hp\dsum\nil(\p)$. Since $\q$ contains $\ml$, it is a sum of root spaces,
so $\q=\g\cap\q=\hp\cap\q\dsum \nil(\p)\cap\q$.
\end{proof}

Let $\PF\wop{\q}$ be the set of all parabolic subalgebras of $\g$
which are weakly opposite to $\q$.
\begin{prop}\label{p:wop} The restriction of $\PP{\q}$ to $\PF\wop{\q}$
sends parabolic subalgebras $\p$ of type $J\in\Pow(\typ_\g)$ to parabolic
subalgebras of type $(\op_\g\circ\iota_\q\circ\op_{\q_0})^{-1}(J)
\in\Pow(\typ_{\q_0})$.
\end{prop}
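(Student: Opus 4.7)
The plan is to reduce the weakly opposite case to the already-treated costandard case of Proposition~\ref{p:costd} by passing through an intermediate opposite parabolic. Given $\p\in\PF\wop{\q}$ of type $J\in\Pow(\typ_\g)$, I would first invoke Corollary~\ref{c:weak-op-char} (which is precisely the characterization of weak oppositeness) to produce $\hp\leq\g$ that is simultaneously opposite to $\p$ and costandard with $\q$. Since $\hp$ is opposite to $\p$, the defining property of the duality involution $\op_\g$ forces $\hp$ to have type $\op_\g(J)$ in $\g$. Since $\hp$ is also costandard with $\q$, Proposition~\ref{p:costd} applies directly and yields that $\PP{\q}(\hp)\in\PF[\q_0]$ has type $\iota_\q^{-1}(\op_\g(J))\in\Pow(\typ_{\q_0})$.

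I would then apply Lemma~\ref{l:comp-par}, which asserts that $\PP{\q}(\hp)$ is opposite to $\PP{\q}(\p)$ inside the reductive Lie algebra $\q_0$. Consequently the duality involution $\op_{\q_0}$ of $\q_0$ relates their types: the type of $\PP{\q}(\p)$ is $\op_{\q_0}$ of the type of $\PP{\q}(\hp)$, that is,
\[
\op_{\q_0}\bigl(\iota_\q^{-1}(\op_\g(J))\bigr)\sub\typ_{\q_0}.
\]

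To finish, I would simply observe that $\op_\g$ and $\op_{\q_0}$ are involutions, hence self-inverse, so
\[
\op_{\q_0}\circ \iota_\q^{-1}\circ \op_\g \;=\; \op_{\q_0}^{-1}\circ \iota_\q^{-1}\circ \op_\g^{-1} \;=\; (\op_\g\circ \iota_\q\circ \op_{\q_0})^{-1},
\]
where the right-hand side is interpreted as the preimage map on subsets of $\typ_{\q_0}$. Applying this equality of operators to $J$ gives precisely the formula claimed in the proposition.

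The main obstacle, and the content one needs to have in hand to run the plan, is the pair of preparatory results: the existence statement (Corollary~\ref{c:weak-op-char}) that turns weak oppositeness into the conjunction of oppositeness and costandardness, and the compatibility lemma (Lemma~\ref{l:comp-par}) saying parabolic projection sends an $\hp$ opposite-and-costandard to $\p$ to a parabolic of $\q_0$ opposite to $\PP{\q}(\p)$. These supply the translation of \emph{oppositeness in $\g$} into \emph{oppositeness in $\q_0$} through $\PP{\q}$, which is exactly the bridge that makes the type bookkeeping commute with the duality involutions. A minor subtlety worth verifying is that the interpretation of $\iota_\q^{-1}$ on a general subset $K\sub\typ_\g$ agrees with $\iota_\q^{-1}(K\setminus J_\q)$ (so the formula is insensitive to whether $\op_\g(J)$ meets $J_\q$), exactly as in the proof of Proposition~\ref{p:costd}.
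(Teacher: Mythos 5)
Your proposal is correct and follows exactly the route the paper intends: the paper's proof simply states that the result is immediate from Corollary~\ref{c:weak-op-char} and Lemma~\ref{l:comp-par}, and your argument fills in precisely the steps that make this immediate (passing to the opposite-and-costandard $\hp$, applying Proposition~\ref{p:costd}, and unwinding the two duality involutions). No gaps.
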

\begin{proof}
This is immediate from Corollary~\ref{c:weak-op-char} and
Lemma~\ref{l:comp-par}.
\end{proof}

Minimal parabolic subalgebras also have straightforward projections.

\begin{prop} If $\pb\in\bc$ then $\PP{\q}(\pb)\in\bc[\q_0]$.  Furthermore,
if $\pb_0\in \bc[\q_0]$ then there exists $\pb\in\bc$ weakly opposite to $\q$
with $\PP{\q}(\pb)=\pb_0$.
\end{prop}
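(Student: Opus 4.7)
The first assertion is a direct root-space computation. By Corollary~\ref{c:par-reg-apt-comp}, $\pb\cap\q$ contains a minimal Levi subalgebra $\ml\leq\g$; let $\a\leq\z(\ml)$ be the split Cartan. Since both $\pb$ and $\q$ contain $\ml$, they are unions of $\a$-root spaces, determined by grading elements $\xi_\pb,\xi_\q\in\a$, and the minimality of $\pb$ forces $\alpha(\xi_\pb)\neq 0$ for every $\alpha\in\Phi$. A direct evaluation of $(\pb\cap\q+\nil(\q))/\nil(\q)$ identifies $\PP{\q}(\pb)$ with the image in $\q_0$ of the sum of $\ml$ and the root spaces $\g_\alpha$ for which $\alpha(\xi_\q)=0$ and $\alpha(\xi_\pb)\leq 0$; since $\alpha(\xi_\pb)$ is nonzero on the restricted root system $\{\alpha\in\Phi:\alpha(\xi_\q)=0\}$ of $\q_0$, this is a minimal parabolic of $\q_0$.

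For the second assertion, the plan is to construct $\pb$ explicitly by means of an opposite to $\q$. Pick a parabolic $\hq$ opposite to $\q$ (Remarks~\ref{r:weyl-tors}); then $\q\cap\hq$ is a common Levi subalgebra, and the projection $\pi\colon\q\to\q_0$ restricts to an isomorphism $\q\cap\hq\cong\q_0$. Let $\tilde\pb_0\leq\q\cap\hq$ be the minimal parabolic corresponding to $\pb_0$ under this isomorphism, and set
\[
\pb:=\tilde\pb_0+\nil(\hq)\;\leq\;\hq.
\]
Applying Corollary~\ref{c:levi-par} to the parabolic $\hq\leq\g$ shows that $\pb$, being the preimage in $\hq$ of the parabolic $\tilde\pb_0\leq\q\cap\hq\cong\hq/\nil(\hq)$, is parabolic in $\hq$ and hence in $\g$. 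The same correspondence shows that $\pb$ is minimal in $\g$: a strictly smaller parabolic $\pb'\lneq\pb$ of $\g$ would satisfy $\nil(\pb')\supseteq\nil(\pb)\supseteq\nil(\hq)$ (using the admissible-form characterization $\nil(\cdot)=(\cdot)^\perp$ of Theorem~\ref{t:par-equiv}), so it would descend to a parabolic of $\hq/\nil(\hq)$ strictly smaller than $\tilde\pb_0$, contradicting minimality.

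Finally, the oppositeness $\g=\q\dsum\nil(\hq)$ gives $\nil(\hq)\cap\q=0$, whence $\pb\cap\q=\tilde\pb_0$ and
\[
\PP{\q}(\pb)=\bigl(\tilde\pb_0+\nil(\q)\bigr)/\nil(\q)=\pi(\tilde\pb_0)=\pb_0,
\]
while weak oppositeness is immediate from $\pb+\q\supseteq\nil(\hq)+\q=\g$. The main obstacle I anticipate is the minimality step in part (b): converting a hypothetical strict inclusion $\pb'\lneq\pb$ of parabolics of $\g$ into a strict inclusion in $\hq/\nil(\hq)$ rests on the monotonicity $\nil(\pb)\leq\nil(\pb')$ for nested parabolics, which drops out of Theorem~\ref{t:par-equiv} by taking orthogonals under any admissible form.
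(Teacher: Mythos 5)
Your proof is correct, and while the first assertion is handled by essentially the same computation as the paper (the paper introduces an opposite $\hq\supseteq\ml$ and observes that $\pb$ splits across $\g=\nil(\q)\dsum(\q\cap\hq)\dsum\nil(\hq)$ with exactly one of $\g_{\pm\alpha}$ in $\pb$ for each root of $\q\cap\hq$ --- the same root-space argument you phrase via the grading elements $\xi_\pb,\xi_\q$), your second assertion takes a genuinely different route. The paper lifts $\pb_0$ to the minimal parabolic $\pc\leq\q$ of $\g$, takes $\hpc$ opposite to $\pc$ (hence weakly opposite to $\q$), and then corrects by some $q\in Q=N_G(\q)$, invoking the transitivity of $Q$ on minimal parabolic subalgebras of $\q_0$ so that $\pb=q\act\hpc$ projects onto $\pb_0$. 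You instead fix one opposite $\hq$ of $\q$, realize $\pb_0$ as a minimal parabolic of the common Levi $\q\cap\hq\cong\q_0$, and take its preimage under $\hq\to\hq/\nil(\hq)$; the identities $\pb\cap\q=(\text{copy of }\pb_0)$ (by the modular law, since $\nil(\hq)\cap\q=0$) and $\pb+\q\supseteq\nil(\hq)+\q=\g$ then give the projection formula and weak oppositeness at once. Your construction trades the group-theoretic transitivity step (which the paper uses without explicit proof for $\q_0$) for Corollary~\ref{c:levi-par} applied to $\hq$ together with the monotonicity $\p'\leq\p\Rightarrow\nil(\p)\leq\nil(\p')$, which, as you note, is immediate from $\nil(\p)=\p^\perp$ in Theorem~\ref{t:par-equiv}; this is arguably more economical and entirely within the linear-algebraic framework of Sections 2--3. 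The only step you leave implicit is that the preimage of a parabolic of $\hq_0$ really is parabolic in $\g$ with the expected nilradical (one must check $\nil(\hq)\leq\nil(\pb)$ intrinsically before Corollary~\ref{c:levi-par} applies), but the paper glosses over exactly the same verification when it declares that inverse images of parabolics of $\q_0$ are parabolic in $\g$, so this is not a gap by the paper's own standards.
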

\begin{proof} Using Corollary~\ref{c:par-reg-apt-comp}, let $\ml$ be a minimal
Levi subalgebra contained in $\pb$ and $\q$, and let $\hq$ be the opposite
subalgebra to $\q$ containing $\ml$.  Then the root space decomposition of
$\ml$ refines the direct sum decomposition
$\g=\nil(\q)\dsum\q\cap\hq\dsum\nil(\hq)$, and so $\pb$ splits across this
decomposition. For any root $\alpha$ with $\g_\alpha\sub\q\cap\hq$, precisely
one of $\g_\alpha$ and $\g_{-\alpha}$ is contained in $\pb$, so that
$\pb\cap\q\cap\hq$ is minimal in $\q\cap\hq$. This is an isomorph of
$\PP{\q}(\pb)$ in $\q_0$.

For the second part, let $\pb_0=\pc/\nil(\q)$, where $\pc$ is a minimal
parabolic subalgebra of $\g$. Let $\hpc$ be a minimal parabolic subalgebra of
$\g$ opposite to $\pc$. Then $\hpc$ is weakly opposite to $\q$.  Since both
$\PP{\q}(\pc)$ and $\PP{\q}(\hpc)$ are minimal parabolic subalgebras of $\q$,
there exists $q\in Q$ such that
\[
\pc=q\act(\hpc\cap\q+\nil(\q))=(q\act\hpc)\cap\q+\nil(\q);
\]
whence $\pb:=q\act\hpc$ is weakly opposite to $\q$ with $\PP{\q}(\pb)=\pb_0$.
\end{proof}

\begin{prop} \label{p:dense-open-orbit} If $\pb,\pc\in\bc$ are both
weakly opposite to $\q$, then $\pb\in Q\cdot\pc$.
\end{prop}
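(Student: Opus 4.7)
My plan is to prove this in three steps: first reduce to the case where $\PP{\q}(\pb)=\PP{\q}(\pc)$; then arrange that $\pb$ and $\pc$ share a common minimal Levi subalgebra; and finally deduce $\pb=\pc$ by a short root-theoretic calculation.

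First I would use the preceding proposition to observe that $\PP{\q}(\pb)$ and $\PP{\q}(\pc)$ lie in $\bc[\q_0]$, and apply Theorem~\ref{t:bk-act} to $\q_0$: its adjoint group acts transitively on $\bc[\q_0]$, and this action factors the natural map $Q=N_G(\q)\to\Aut(\q_0)$. I can therefore choose $q_1\in Q$ with $\PP{\q}(q_1\act\pb)=\PP{\q}(\pc)$, and after replacing $\pb$ by $q_1\act\pb$ may assume $\PP{\q}(\pb)=\PP{\q}(\pc)=:\pb_0$; then $\r:=\pb\cap\q+\nil(\q)=\pc\cap\q+\nil(\q)$ is the common preimage of $\pb_0$ in $\q$, parabolic in $\g$ by Proposition~\ref{p:par-pair}.

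Next I would invoke Proposition~\ref{p:par-pair-tors} for the pairs $(\pb,\q)$ and $(\pc,\q)$: weak opposition trivialises the torsor group $\exp(\nil(\pb)\cap\nil(\q))$ (and its analogue for $\pc$), yielding unique Levi subalgebras $\ml_\pb\leq\pb\cap\q$ of $\pb$ and $\ml_\pc\leq\pc\cap\q$ of $\pc$. The crucial observation is that $\ml_\pb$ is actually a Levi subalgebra of the larger $\r$: anisotropy of $\ml_\pb$ together with $\nil(\r)\sub\cN(\g)$ gives $\ml_\pb\cap\nil(\r)=0$, while the decomposition $\pb\cap\q=\ml_\pb+\nil(\pb)\cap\q$ (valid as $\ml_\pb\sub\q$) combined with Proposition~\ref{p:par-pair}'s formula $\nil(\r)=\nil(\pb)\cap\q+\nil(\q)$ gives $\ml_\pb+\nil(\r)=\r$; similarly $\ml_\pc$ is a Levi of $\r$. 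Proposition~\ref{p:h-split} then supplies a unique $u\in\exp(\nil(\r))\leq Q$ with $u\act\ml_\pb=\ml_\pc$; the image of $u$ in $\q_0$ lies in $\exp(\nil(\pb_0))$ and hence stabilises $\pb_0$, so after replacing $\pb$ by $u\act\pb$ I may further assume $\ml_\pb=\ml_\pc=:\ml$.

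Finally, $\pb$ and $\pc$ both contain the common minimal Levi $\ml\sub\q$; with $\a\leq\z(\ml)$ the split Cartan and $\Phi$ the root system, they correspond to regular $\xi_\pb,\xi_\pc\in\a$ via $\pb=\p_{\xi_\pb}$ and $\pc=\p_{\xi_\pc}$, while $\q=\p_{\xi_\q}$ for some $\xi_\q\in\a$. Weak opposition $\pb+\q=\g$ forces $\Phi^+_{\xi_\q}\sub\Phi^-_{\xi_\pb}$, so negating roots yields $\Phi^-_{\xi_\q}\sub\Phi^+_{\xi_\pb}$, while $\PP{\q}(\pb)=\pb_0$ pins down the restriction of $\Phi^+_{\xi_\pb}$ to $\Phi^0_{\xi_\q}$. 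The identical three conditions hold for $\xi_\pc$, so $\Phi^+_{\xi_\pb}=\Phi^+_{\xi_\pc}$ across the disjoint union $\Phi=\Phi^+_{\xi_\q}\sqcup\Phi^0_{\xi_\q}\sqcup\Phi^-_{\xi_\q}$, forcing $\pb=\pc$. The main obstacle will be the Levi-in-$\r$ argument in the second step: this is what converts Proposition~\ref{p:h-split}'s abstract conjugacy into an element of $Q$ and thereby reduces the whole problem to the rigid combinatorial picture of the third step.
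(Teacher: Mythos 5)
Your proof is correct, but it takes a genuinely different route from the paper's, which is much shorter. The paper uses Lemma~\ref{c:weak-op-char} to pick minimal parabolic subalgebras $\hpb$ and $\hpc$ opposite to $\pb$ and $\pc$ and costandard with $\q$; these lie in $\q$ and descend to chambers of $\q_0$, so some $q\in Q$ sends $\hpb$ to $\hpc$, and then $\pb$ and $q^{-1}\act\pc$ are both opposite to $\hpb$, hence conjugate by an element of $\exp(\nil(\hpb))\sub Q$ by Remarks~\ref{r:weyl-tors}. You instead normalize in two stages---first matching the projections under $\PP{\q}$, then matching the canonical Levi subalgebras supplied by the trivialized torsor of Proposition~\ref{p:par-pair-tors}---and close with a root-combinatorial rigidity argument showing that a minimal parabolic weakly opposite to $\q$ and containing a fixed minimal Levi subalgebra $\ml\sub\q$ is determined by its projection. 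Both proofs rest on the same group-theoretic input, namely that $Q$ acts transitively on $\bc[\q_0]$ (already invoked in Proposition~\ref{p:std-unique}), and on the same torsor mechanisms. Your individual steps all check out: $\pb\cap\q$ does decompose as the canonical Levi plus $\nil(\pb)\cap\q$; that Levi meets $\nil(\r)$ trivially because an ad-semisimple element of $\cN(\g)$ is central and $\z(\g)\cap[\g,\g]=0$; and the conjugating element $u\in\exp(\nil(\r))$ does fix the common projection since $\nil(\r)/\nil(\q)$ is the nilradical of that chamber of $\q_0$. Your route is longer but buys more: an explicit normal form for the $Q$-orbit, and the observation---not made explicit in the paper---that the canonical Levi subalgebra of $\pb$ in $\pb\cap\q$ is simultaneously a Levi subalgebra of $\r=\pb\cap\q+\nil(\q)$. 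The paper's route is the more economical application of the opposition torsor.
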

\begin{proof} Using Corollary~\ref{c:weak-op-char}, let $\hpb$ and $\hpc$ be
respectively minimal parabolic subalgebras of $\g$ opposite to $\pb$ and
$\pc$, and costandard with $\q$. Then $\hpb/\nil(\q)$ and $\hpc/\nil(\q)$ are
minimal parabolic subalgebras of $\q_0$, so there exists $q\in Q$ such that
$\hpc=q\act\hpb$. It follows that $q^{-1}\act\pc$ is opposite to $\hpb$, hence
conjugate to $\pb$ by an element $q'\in\exp(\nil(\hpb))$ by
Remarks~\ref{r:weyl-tors}. Since $\nil(\hpb)\sub\q$, $\pb$ and $\pc$ are in
the same $Q$-orbit.
\end{proof}
\begin{cor} Any two parabolic subalgebras of $\g$ in the same adjoint orbit
and weakly opposite to $\q$ are in the same $Q$-orbit.
\end{cor}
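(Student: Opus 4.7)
The plan is to reduce the statement to the minimal parabolic case established in Proposition~\ref{p:dense-open-orbit}. Given $\p_1,\p_2\in\PF$ in a common adjoint orbit and both weakly opposite to $\q$, I shall first produce minimal parabolic subalgebras $\pb_1\leq\p_1$ and $\pb_2\leq\p_2$ which are themselves weakly opposite to $\q$; then Proposition~\ref{p:dense-open-orbit} furnishes $q\in Q$ with $\pb_2=q\act\pb_1$, and a uniqueness step forces $q\act\p_1=\p_2$.

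For the existence of $\pb_i\leq\p_i$ weakly opposite to $\q$, consider the parabolic projection $\PP{\p_i}(\q)=(\p_i\cap\q+\nil(\p_i))/\nil(\p_i)$, a parabolic subalgebra of the reductive Levi quotient $(\p_i)_0:=\p_i/\nil(\p_i)$. The proposition preceding Proposition~\ref{p:dense-open-orbit}, applied inside $(\p_i)_0$, produces a minimal parabolic subalgebra $\pb_{i,0}\leq(\p_i)_0$ weakly opposite to $\PP{\p_i}(\q)$. Let $\pb_i\leq\p_i$ be its inverse image under the quotient map $\p_i\to(\p_i)_0$: by Corollary~\ref{c:levi-par}, $\pb_i$ is parabolic in $\g$, and the bijection it encodes between parabolic subalgebras of $\g$ lying between $\nil(\p_i)$ and $\p_i$ and parabolic subalgebras of $(\p_i)_0$ transports minimality of $\pb_{i,0}$ to minimality of $\pb_i$. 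The relation $\pb_{i,0}+\PP{\p_i}(\q)=(\p_i)_0$ lifts (using $\nil(\p_i)\leq\pb_i$) to $\pb_i+\p_i\cap\q=\p_i$, and therefore
\[
\pb_i+\q\;\supseteq\;(\pb_i+\p_i\cap\q)+\q\;=\;\p_i+\q\;=\;\g,
\]
showing that $\pb_i$ is weakly opposite to $\q$.

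With $q\in Q$ provided by Proposition~\ref{p:dense-open-orbit} so that $\pb_2=q\act\pb_1$, both $q\act\p_1$ and $\p_2$ are parabolic subalgebras of $\g$ in the same adjoint orbit, hence of the same type $J\in\Pow(\typ_\g)$, and each contains the minimal parabolic $\pb_2$. The main remaining obstacle is the uniqueness step: I would argue that each is the intersection of its flag of maximal proper parabolic subalgebras, one of each type $i\in J$ (Theorem~\ref{t:par-flag-geom}), all containing $\pb_2$, and Proposition~\ref{p:std-unique} asserts that the maximal parabolic of each type $i\in J$ containing $\pb_2$ is unique; so the two flags coincide, forcing $q\act\p_1=\p_2$.
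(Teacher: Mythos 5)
Your proof is correct and follows essentially the same route as the paper, whose own proof is just a citation of Theorem~\ref{t:par-flag-geom} and Propositions~\ref{p:std-unique} and~\ref{p:dense-open-orbit}. The one substantive thing you add is a detail the paper leaves implicit, namely the construction inside each $\p_i$ of a minimal parabolic subalgebra that is still weakly opposite to $\q$ (which is not automatic for an arbitrary minimal parabolic contained in $\p_i$), and your argument for this via the Levi quotient $\p_i/\nil(\p_i)$ is sound.
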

\begin{proof} This follows from Theorem~\ref{t:par-flag-geom}
and Propositions~\ref{p:std-unique} and~\ref{p:dense-open-orbit}.
\end{proof}

Let $\nu_\q:=\op_\g\circ\iota_\q\circ\op_{\q_0}\colon\typ_{\q_0}\to\typ_\g$,
and observe that the direct and inverse image operations in induced by
$\nu_\q$ may be used to pull back or push forward presheaves from $\typ_\g$ to
$\typ_{\q_0}$ or vice versa. In particular,
$\nu_\q^*\PF\wop{\q}(J):=\PF\wop{\q}(\nu_\q(J))$ is isomorphic to the flag
complex of the incidence system
$\nu_{\q}^*\PI{\g}\wop{\q}:=\bigsqcup_{i\in\typ_{\q_0}} \PF\wop{\q}(\nu_\q(i))$
(with the obvious type function), while $\nu_{\q*}\PF[\q_0](J):=
\PF[\q_0](\nu_{\q}^{-1}(J))$ defines a chamber \grsys/ $\nu_{\q*}\bc[\q_0]$
over $\typ_g$ with vertices $\PF[\q_0](\typ_{\q_0})$. These operations are
adjoint functors, so that morphisms from $\nu_\q^*\PF\wop{\q}$ to $\PF[\q_0]$
over $\typ_{\q_0}$ correspond bijectively to morphisms from $\PF\wop{\q}$ to
$\nu_{\q*}\PF[\q_0]$. Parabolic projection is such a morphism.

\begin{thm} \label{t:pp-morphism} $\PP{\q}$ defines an incidence morphism
$\sm_\q\colon \nu_{\q}^*\PI{\g}\wop{\q} \to\PI{\q_0}$ and a chamber morphism
$\cm_\q\colon \bc\wop{\q}\to\nu_{\q*}\bc[\q_0]$.
\end{thm}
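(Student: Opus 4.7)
The plan is to split the theorem into its two assertions and assemble the earlier results; before beginning, I note that $\nu_\q$ is injective because $\iota_\q$ is an inclusion and $\op_\g,\op_{\q_0}$ are involutions. This injectivity will be used implicitly throughout the type bookkeeping.

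First, for the incidence morphism $\sm_\q$, I would verify type-preservation and incidence-preservation separately. By construction, a vertex of $\nu_\q^*\PI{\g}\wop{\q}$ of type $i\in\typ_{\q_0}$ is a maximal parabolic $\p\leq\g$, weakly opposite to $\q$, of type $\nu_\q(i)\in\typ_\g$; Proposition~\ref{p:wop} then tells me $\PP{\q}(\p)$ has type $\nu_\q^{-1}(\{\nu_\q(i)\})=\{i\}$ in $\q_0$, so $\sm_\q(\p)$ is a vertex of $\PI{\q_0}$ of the correct type. Incidence in $\PI{\g}\wop{\q}$ is costandardness, and Proposition~\ref{p:PP-incidence} shows that $\PP{\q}$ sends costandard pairs to costandard pairs; together with type-preservation, this gives that $\sm_\q$ is a morphism of incidence systems over $\typ_{\q_0}$.

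Second, for the chamber morphism $\cm_\q$, I would unpack the pushforward chamber structure on $\nu_{\q*}\bc[\q_0]$ via Proposition~\ref{p:is-cs} applied to $\nu_{\q*}\PF[\q_0]$: two chambers (still minimal parabolics of $\q_0$) are $i$-adjacent in $\nu_{\q*}\bc[\q_0]$ (for $i\in\typ_\g$) iff either $i\notin\nu_\q(\typ_{\q_0})$ and the chambers coincide, or $i=\nu_\q(j)$ and they are $j$-adjacent in $\bc[\q_0]$. That $\cm_\q$ lands in the vertex set is already the first assertion of the proposition immediately preceding Proposition~\ref{p:dense-open-orbit}. For adjacency, suppose $\pb\edge{i}\pc$ in $\bc\wop{\q}$ with $\pb\neq\pc$; by Theorem~\ref{t:par-flag-geom} there is a unique parabolic $\p\leq\g$ of type $\typ_\g\setminus\{i\}$ containing both. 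The key small observation is that $\p$ is itself weakly opposite to $\q$: from $\pb\sub\p$ and $\pb+\q=\g$ I get $\p+\q=\g$. Proposition~\ref{p:wop} then pins down the type of $\PP{\q}(\p)$ as $\typ_{\q_0}\setminus\nu_\q^{-1}(\{i\})$, and Proposition~\ref{p:PP-incidence} gives $\PP{\q}(\pb),\PP{\q}(\pc)\sub\PP{\q}(\p)$. The two cases of the pushforward structure then follow automatically: if $i\notin\nu_\q(\typ_{\q_0})$, then $\PP{\q}(\p)$ has full type $\typ_{\q_0}$ and is therefore a minimal parabolic of $\q_0$, forcing $\PP{\q}(\pb)=\PP{\q}(\p)=\PP{\q}(\pc)$; and if $i=\nu_\q(j)$, then $\PP{\q}(\p)$ has type $\typ_{\q_0}\setminus\{j\}$, so $\PP{\q}(\pb)$ and $\PP{\q}(\pc)$ are $j$-adjacent in $\bc[\q_0]$, hence $i$-adjacent in $\nu_{\q*}\bc[\q_0]$.

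The main potential obstacle is not substantive but organizational: correctly unpacking the pushforward chamber structure and carefully tracking how types propagate under $\nu_\q$ and its inverse in both cases $i\in\nu_\q(\typ_{\q_0})$ and $i\notin\nu_\q(\typ_{\q_0})$. All the substantive ingredients---the type formula for $\PP{\q}$ on weakly opposite parabolics, preservation of incidence, minimality of the projection of a minimal parabolic, and inheritance of weak oppositeness by larger parabolics---have already been established in the immediately preceding propositions, so the proof amounts essentially to an assembly step.
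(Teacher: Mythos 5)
Your proposal is correct and follows exactly the route the paper intends: the paper states Theorem~\ref{t:pp-morphism} without a written proof, treating it as an assembly of Proposition~\ref{p:PP-incidence}, Proposition~\ref{p:wop}, the minimality of projections of minimal parabolics, and the adjunction between $\nu_\q^*$ and $\nu_{\q*}$. Your write-up supplies precisely those details (including the two small points the paper leaves tacit: that weak oppositeness to $\q$ is inherited by larger parabolics, and the case split on whether $i$ lies in the image of $\nu_\q$ when unpacking the pushforward chamber structure), so there is nothing to correct.
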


\subsection{Geometric configurations and their projections}

We end this paper by returning full circle to Section~\ref{s:eig} and the
motivation from incidence geometry. We have noted in Proposition~\ref{p:is-cs}
that any incidence system $\grph$ has an associated chamber \grsys/
$\cgph=\cC\grph$ whose chambers are the full flags of $\grph$. In fact $\cC$
is functorial and has an adjoint~\cite{Sch:hig}.

\begin{defn} Let $\cgph$ be a chamber \grsys/ over $\typ$. For $i\in\typ$,
an \emph{$i$-coresidue} of $\cgph$ is a connected component of the graph
obtained from $\cgph$ by removing all edges with label $i$.
\end{defn}
\begin{prop} For a chamber \grsys/ $\cgph$ over $\typ$, let $\cE\cgph$ be
the graph with type function $t_{\cE\cgph}\colon |\cE\cgph|\to \typ$ whose
vertices of type $i$ are the $i$-coresidues $v$ of $\cgph$ with edges $v\edge
{}w$ if and only if $v$ and $w$ contain a common chamber. Then $\cE\cgph$ is
an incidence system.
\end{prop}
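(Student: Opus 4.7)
The plan is to verify the two defining properties of an incidence system over $\typ$: first, that $\cE\cgph$ is a well-defined graph (undirected, with no loops or multiple edges) equipped with the given type function; second, the $\typ$-multipartite condition that incident vertices of the same type must coincide.

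The first point is essentially formal. The incidence relation $v\edge{} w$ (understood, per the conventions of Section~\ref{s:eig}, as the reflexive closure of ``$v\neq w$ and $v,w$ share a chamber'') is manifestly symmetric; the edge set is the corresponding symmetric irreflexive relation, so there are no loops, and by definition at most one edge joins any pair of distinct vertices. The type function $t_{\cE\cgph}$ is defined by assigning each coresidue the (unique) label $i\in\typ$ whose edges were deleted to produce it, so it is well-defined provided that an $i$-coresidue is never simultaneously a $j$-coresidue for $j\neq i$. Strictly this requires distinguishing coresidues as pairs (label, connected component), but this is harmless and is the standard convention.

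The substantive step is the multipartite property: if $v$ and $w$ are both $i$-coresidues and $v\edge{} w$ then $v=w$. This is where the key observation lies. By construction, an $i$-coresidue is a connected component of the graph $\cgph^{(i)}$ obtained from $\cgph$ by deleting all edges labelled $i$. Connected components of any graph partition its vertex set, so every chamber $c\in|\cgph|$ belongs to a \emph{unique} $i$-coresidue, namely the connected component of $c$ in $\cgph^{(i)}$. Hence if $v$ and $w$ are $i$-coresidues both containing some chamber $c$, then both equal this unique component and so $v=w$, as required.

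There is no real obstacle here: once one recalls that ``coresidue'' means ``connected component of a certain subgraph'', the multipartite condition is automatic from the disjointness of connected components. The only minor care needed is the bookkeeping around edges of different labels, which is handled by treating a coresidue as tagged with the label whose edges were removed, so that $t_{\cE\cgph}$ is unambiguous. No results from earlier in the paper are required beyond the basic graph-theoretic conventions of Section~\ref{s:eig}.
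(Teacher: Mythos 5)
Your proof is correct and follows the same route as the paper, whose entire argument is the single observation that two $i$-coresidues containing a common chamber must be equal (since coresidues are connected components and hence partition the chambers). The extra bookkeeping you include about labelling and the reflexive closure is harmless and accurate.
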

This is immediate: if two $i$-coresidues contain a common chamber, they are
equal.

If $\cgph=\cC\grph$ is the chamber \grsys/ of full flags of a incidence system
$\grph$ then all the full flags in an $i$-coresidue have the same element of
type $i$; the map assigning this element to the $i$-coresidue is in fact an
incidence morphism $\cE\cC\grph\to\grph$ (the co-unit of the adjunction), and
is an isomorphism if and only if $\grph$ is:
\begin{bulletlist}
\item \emph{flag regular}, \ie, any element belongs to a full flag; and
\item \emph{residually connected}, \ie, for any $i\in\typ$, any two full flags
  containing a common element of type $i$ have the same $i$-coresidue.
\end{bulletlist}
The parabolic incidence system $\PI{\g}$ has both properties: identifying full
flags with minimal parabolics using Theorem~\ref{t:par-flag-geom}, we first
note that any maximal (proper) parabolic contains a minimal parabolic, which
implies flag regularity. Now if minimal parabolic $\pb,\pc\sub\q$ for a
maximal parabolic $\q$, then $\pb/\nil(\q)$ and $\pc/\nil(\q)$ are chambers in
$\bc[\q_0]$, which is connected. Thus $\pb,\pc$ have the same
$G\cdot\q$-coresidue, and so $\PI{\g}$ is residually connected.

If $\cgph^W$ is a homogeneous thin chamber \grsys/ over $\typ$ with structure
group $W$ generated by $\typ\into W$, then its $i$-coresidues are the orbits of
the subgroup of $W$ generated by $\typ\setminus\{i\}$. If $W=W(\g)$ then the
image of an injective chamber morphism $\cgph^{W(\g)}\to\bc$ is an apartment,
and such a labelled apartment thus determines an incidence morphism
$\grph^{W(\g)}\to\PI{\g}$.

\begin{defn} Let $\g$ be a reductive Lie algebra. A \emph{geometric
    configuration} in $\PI{\g}$ with \emph{combinatorial type} (or
  \emph{combinatorics}) $\grph$ is an incidence morphism $\grph\to\PI{\g}$.  A
  \emph{standard configuration} in $\PI{\g}$ is an incidence morphism
  $\sm\colon\grph^{W(\g)}\to\PI{\g}$ corresponding to a labelled apartment
  $\cgph^{W(\g)}\into\bc$.
\end{defn}

Standard configurations are rather special, but can be used to obtain more
general configurations by parabolic projection. Suppose then that $\q$ is
parabolic in $\g$, define $\nu_\q\colon\typ_{\q_0}\to\typ_\g$ as in the
previous section, and let $\sm\colon \grph^{W(\g)} \to\PI{\g}$ be a standard
configuration weakly opposite to $\q$, \ie, with image in
$\PI{\g}\wop{\q}$. Then $\sm$ immediately defines a configuration
$\nu_{\q}^*\sm\colon \nu_{\q}^*\grph^{W(\g)} \to \nu_{\q}^*\PI{\g}\wop{\q}$ by
restricting along $\nu_{\q}$ to elements with types in $\typ_{\q_0}$.

\begin{defn} For $\q$ parabolic in $\g$, the \emph{parabolic projection} of a
  standard configuration $\sm$ weakly opposite to $\q$ is the geometric
  configuration $\sm_\q\circ\nu_{\q}^*\sm\colon \nu_{\q}^*\grph^{W(\g)}\to
  \PI{\q_0}$.
\end{defn}

\begin{example} Recall from Example~\ref{ex:1A} the incidence system
$\grph^\cS$ of proper nonempty subsets of an $n+1=\dim V$ element set $\cS$.
An injective incidence system morphism $\grph^\cS\to\grph^{\gl(V)}$ defines a
\emph{simplex} in $\Proj(V)$---the complete configuration of all projective
subspaces spanned by subsets of $n+1$ distinct points in $\Proj(V)$; these are
the standard configurations in $\grph^{\gl(V)}$.

If $\q$ is the infinitesimal stabilizer of a point $L\in\Proj(V)$, then
parabolic projection defines an incidence system morphism
$\nu_{\q}^*\grph^{\gl(V)}\wop{\q}\to\grph^{\gl(V/L)}$, where
$\grph^{\gl(V/L)}$ is an incidence system over $\cI_{n-1}$ with the natural
inclusion of types sending $j$ to $j+1$. Since linear subspaces of
complementary dimensions are opposite, $\nu_{\q}\colon\cI_{n-1}\to\cI_n$ is
the composite $j\mapsto n-j\mapsto n-j+1 \mapsto n+1-(n-j+1)=j$. Explicitly,
if $W\leq V$ does not contain $L$ then the infinitesimal stabilizer of
$\Proj(W)$ is weakly opposite to $\q$ and is projected to (the infinitesimal
stabilizer of) $\Proj((W+L)/L)$. The image of a simplex weakly opposite to
$\q$ is thus a complete configuration of $n+1$ points in general position in
$\Proj(V/L)$.  For example, the parabolic projection of a tetrahedron in
$\RP3$ is a quadrilateral (four points and six lines) in $\RP2$.
\end{example}
\begin{example} Example~\ref{ex:4B} shows that for standard configurations
in $\grph^{\so(U,Q_U)}$, we may take the combinatorial type to be the
incidence system $\grph^{\cR,\pm}$ over $\cI_n$ of admissible subsets of
$\bigcup\cR$, where $\cR$ is a set of $n$ disjoint two element sets. This is
isomorphic to the set of all faces of an \emph{$n$-cross polytope} or
\emph{$n$-orthoplex} with the singleton subsets as vertices, or, dually, to
the set of all faces of an \emph{$n$-cube}, with the $n$-element subsets as
vertices. A standard configuration maps the vertices of the $n$-cross to
points in the quadric of isotropic $1$-dimensional subspaces of $U$, such that
the admissible subsets of vertices (corresponding to faces of the $n$-cross)
span subspaces which are isotropic, \ie, lie entirely in the quadric.

If $\q$ is the infinitesimal stabilizer of a point $L$ on the quadric, then
(the infinitesimal stabilizer of) any isotropic subspace which does not contain
$L$ and is not contained in $L^\perp$ is weakly opposite to $L$ (\ie, to
$\q$).  Its projection onto the quadric of isotropic lines in $L^\perp/L$ is
$(W\cap L^\perp+L)/L$. For example if $U=\R^{4,3}$, then $L^\perp/L$ is
isomorphic to $\R^{3,2}$, and parabolic projection maps lines and planes in a
real $5$-quadric $Q^5$ to points and lines in a $3$-quadric $Q^3$. A $3$-cross
is an octahedron, and so parabolic projection constructs configurations of
$12$ points and $8$ lines in $Q^3$. This has an interpretation in Lie circle
geometry~\cite{Cec:lsg}: points in $Q^3$ parametrize oriented circles in
$S^2\cong\R^2\cup\{\infty\}$, which have oriented contact when the line
joining the points is isotropic (\ie, lies in $Q^3$).
\end{example}

Given a geometric configuration $\nu_{\q}^*\grph^{W(\g)}\to\PI{\q_0}$, it is
natural to ask if and when it may be obtained by parabolic projection from a
standard configuration in $\PI{\g}$. This is a lifting problem which is
studied in~\cite{Nop:pp} by comparing the moduli of such configurations to the
moduli of those obtained by parabolic projection. Here a crucial role is
played by the fundamental result that all apartments in $\bc$ belong to the
apartment complex $\Ac$, hence are all conjugate by $G$.  We shall return to
these ideas in subsequent work.

\appendix
\section{Standard Lie theory background}\label{s:a}

\subsection{Semisimplicity and reducibility}\label{a:sr}

Let $A\sub \End_\F(V)$ for a finite dimensional vector space $V$ over a field
$\F$; then $(V,A)$ is \emph{simple} or \emph{irreducible} if $V$ has exactly
two invariant subspaces $0,V\neq 0$, and \emph{semisimple} or \emph{completely
  reducible} if every $A$-invariant subspace of $V$ has an $A$-invariant
complement, \ie, $V$ has a direct sum decomposition into irreducibles. These
notions depend only on the (associative) $\F$-subalgebra of $\End_\F(V)$
generated by $A$, and extend to the case that $V$ carries a representation
$\rho$ of a group, associative algebra or Lie algebra by taking $A$ to be be
the image of $\rho$ in $\End_\F(V)$.

For any field extension $\F\sub\F^c$ and any $\F$-vector space $V$, let
$V^c:=\F^c\tens_\F V$ be the induced $\F^c$-vector space, which is
functorial in $V$ (any linear map $\alpha\colon V\to W$ induces
$\alpha^c\colon V^c\to W^c$). If $\F$ is perfect (\ie, any algebraic extension
is separable) and $(V,A)$ is semisimple, then so is $(V^c,A^c)$. In
particular, if $\alpha\in \End_\F(V)$ then $\alpha$ (\ie, $(V,\{\alpha\})$) is
semisimple iff its minimal polynomial $p_\alpha$ has distinct irreducible
factors iff $\alpha^c$ is diagonalizable (\ie, $V^c$ has a basis of
eigenvectors for $\alpha^c$) in a (separable) splitting field $\F^c$ for
$p_\alpha$.

\begin{lemma}\label{l:ad-ss} If $\alpha\in\End_\F(V)$ is semisimple
and $\alpha^c$ has eigenvalues $\cS\sub \F^c$ \textup(in a separable splitting
field $\F^c$ for $p_\alpha$\textup) then $\ad\alpha\in\End_\F(\gl(V))$ is
semisimple and $(\ad\alpha)^c=\ad(\alpha^c)$ has eigenvalues $\{\lam-\mu\st
\lam,\mu\in \cS\}$.  Furthermore if, for some additive group homomorphism
$f\colon\F^c\to\F^c$, $\beta\in\End_{\F^c}(V^c)$ is scalar multiplication by
$f(\lam)$ on the $\lam$-eigenspace of $\alpha^c$ for all
$\lam\in\cS$, then $\ad\beta$ is a polynomial in $\ad\alpha^c$ with no
constant term.
\end{lemma}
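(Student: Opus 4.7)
The plan is to reduce to the split case by extending scalars to $\F^c$, where $\alpha^c$ is diagonalizable, and then work explicitly with a basis of eigenvectors.

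First, since $\alpha$ is semisimple and $\F$ is perfect, $\alpha^c$ is diagonalizable on $V^c$. Choose a basis $e_1,\ldots,e_n$ of $V^c$ with $\alpha^c(e_i)=\lam_i e_i$ where $\lam_i\in\cS$, and let $E_{ij}\in\gl(V^c)=\gl(V)^c$ be the dual basis of matrix units. A direct computation gives $\ad(\alpha^c)(E_{ij})=(\lam_i-\lam_j)E_{ij}$, so $\ad(\alpha^c)$ is diagonalizable with eigenvalues $\{\lam-\mu\st \lam,\mu\in\cS\}$. Since extension of scalars commutes with $\ad$ (as $\ad$ is defined by the $\F$-linear commutator bracket), $(\ad\alpha)^c=\ad(\alpha^c)$ is diagonalizable; because $\F$ is perfect, this forces $\ad\alpha\in\End_\F(\gl(V))$ to be semisimple.

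For the second assertion, observe that $\beta$ has the same eigenvectors as $\alpha^c$, so the same calculation as above shows that $\ad\beta$ acts diagonally on the $E_{ij}$ via $\ad\beta(E_{ij})=(f(\lam_i)-f(\lam_j))E_{ij}$, while $\ad(\alpha^c)$ acts via $\lam_i-\lam_j$. Thus it suffices to find a polynomial $p\in\F^c[t]$ with $p(0)=0$ such that $p(\lam_i-\lam_j)=f(\lam_i)-f(\lam_j)$ for every pair $(i,j)$: then $p(\ad\alpha^c)$ and $\ad\beta$ agree on the basis $\{E_{ij}\}$, hence are equal.

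The main obstacle is that the prescribed values $f(\lam_i)-f(\lam_j)$ must be consistent, \ie, depend only on the difference $\lam_i-\lam_j$ and not on the pair $(i,j)$. This is precisely where additivity of $f$ enters: if $\lam_i-\lam_j=\lam_k-\lam_\ell$, then $\lam_i+\lam_\ell=\lam_j+\lam_k$, and applying $f$ gives $f(\lam_i)-f(\lam_j)=f(\lam_k)-f(\lam_\ell)$. Hence the assignment $(\lam_i-\lam_j)\mapsto f(\lam_i)-f(\lam_j)$ is a well-defined function on the finite set $D=\{\lam-\mu\st\lam,\mu\in\cS\}\sub\F^c$, and its value at $0\in D$ is $0$. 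A Lagrange interpolation over the finite set $D$ then produces the desired polynomial $p$, which can be chosen with no constant term since $p(0)=0$.
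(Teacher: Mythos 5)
Your proof is correct and follows essentially the same route as the paper's: diagonalize $\alpha^c$, read off the eigenvalues of $\ad(\alpha^c)$ on the weight decomposition of $\gl(V^c)$, and produce the polynomial by Lagrange interpolation on the finite set of eigenvalue differences. You usefully make explicit the well-definedness of the assignment $\lam-\mu\mapsto f(\lam)-f(\mu)$ via additivity of $f$, a step the paper's proof leaves implicit.
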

\begin{proof}[\pfin{Bou:gal,Hum:lar}] Let $V_\lam:\lam\in\cS$ denote the
eigenspaces of $\alpha^c$ in $V^c$. Clearly $(\ad\alpha)^c=\ad(\alpha^c)$ has
eigenvalue $\lam-\mu$ on the subspace consisting of all $\gl(V^c)$ which map
$V_\mu$ to $V_\lam$ and all other eigenspaces to zero. These subspaces span
$\gl(V^c)$ so $\ad\alpha^c$, hence $\ad\alpha$, is semisimple. Finally,
$\ad\beta=q(\ad\alpha^c)$ where $q(\lam-\mu)=f(\lam)-f(\mu)$ for all
$\lam,\mu\in \cS$. Since $f$ is additive, such a polynomial $q$ exists by
Lagrange interpolation, and has $q(0)=0$.
\end{proof}

\subsection{Jordan decomposition}\label{a:jd}

Let $V,W$ be vector spaces over a perfect field $\F$. Any $\alpha\in\gl(V)$
has a unique (additive) \emph{Jordan decomposition} $\alpha=\alpha_s+\alpha_n$
such that $\alpha_s$ is semisimple, $\alpha_n$ is nilpotent (\ie,
${\alpha_n}^k=0$ for $k$ sufficiently large) and
$[\alpha_s,\alpha_n]=0$. Further, $\alpha_s$ and $\alpha_n$ are polynomials in
$\alpha$ with no constant term. The following properties of Jordan
decomposition, for $\alpha\in\gl(V)$ and $\beta\in\gl(W)$, are
straightforward and standard~\cite{Bou:gal}.
\begin{bulletlist}
\item If $\alpha\in \c_{\gl(V)}(A,B)$ for some $B\sub A\sub V$, then
  $\alpha_s,\alpha_n\in\c_{\gl(V)}(A,B)$.
\item If $\phi\colon V\to W$ is a linear map with
  $\phi\circ\alpha=\beta\circ\phi$ then $\phi\circ\alpha_s=\beta_s\circ\phi$
  and $\phi\circ\alpha_n=\beta_n\circ\phi$.
\item In $\gl(V\tens W)$, $(\alpha\tens 1+1\tens\beta)_s
=\alpha_s\tens 1+1\tens\beta_s$ and $(\alpha\tens 1+1\tens\beta)_n
=\alpha_n\tens 1+1\tens\beta_n$.
\item In $\gl(V^*)$, $\alpha\transp=(\alpha_s)\transp+(\alpha_n)\transp$ is
the Jordan decomposition of the transpose $\alpha\transp$.
\end{bulletlist}
Thus Jordan decomposition is preserved in tensor representations of $\gl(V)$
and the first property extends to subspaces $B\sub A$ in such a tensor
representation. In particular, if $V$ has an algebra structure, and $\alpha$
is a derivation, so are $\alpha_s$ and $\alpha_n$. Also recall that a linear
map $\rho\colon\g\to \gl(V)$ is a representation iff for any $x\in \g$,
$\ad(\rho(x))\circ\rho=\rho\circ \ad(x)\colon \g\to\gl(V)$.

\begin{prop}\label{p:Jordans} Let $\rho\colon\g\to \gl(V)$ be a representation
and $x\in \g$. Then $\ad(x)_s,\ad(x)_n\in\der(\g)$ \textup(the Lie algebra of
derivations of $\g$\textup), $\ad(\rho(x))_s= \ad(\rho(x)_s)$,
$\ad(\rho(x))_n=\ad(\rho(x)_n)$, $\ad(\rho(x)_s)\circ \rho= \rho\circ\ad(x)_s$
and $\ad(\rho(x)_n)\circ \rho=\rho\circ \ad(x)_n$.
\end{prop}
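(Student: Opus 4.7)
My plan is to deduce the proposition from the basic properties of Jordan decomposition collected in Appendix~\ref{a:jd}, in three small packages. First I would dispatch the derivation claim: since $\ad(x)$ is a derivation of $\g$ by the Jacobi identity, the fact recorded in the appendix (a consequence of the subspace bullet together with the tensor-product bullet applied to the multiplication $\g\tens\g\to\g$) that the Jordan parts of a derivation of an algebra are again derivations immediately gives $\ad(x)_s,\ad(x)_n\in\der(\g)$.

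The second step is to identify the Jordan decomposition of $\ad(\rho(x))\in\gl(\gl(V))$. Writing $\rho(x)=\rho(x)_s+\rho(x)_n$ in $\gl(V)$ and using that $\ad\colon\gl(V)\to\gl(\gl(V))$ is a Lie algebra homomorphism, I get $\ad(\rho(x))=\ad(\rho(x)_s)+\ad(\rho(x)_n)$ with the two summands commuting, since $[\rho(x)_s,\rho(x)_n]=0$. Lemma~\ref{l:ad-ss} shows $\ad(\rho(x)_s)$ is semisimple, and $\ad(\rho(x)_n)$ is nilpotent because left and right multiplication by a nilpotent element of $\gl(V)$ commute and are both nilpotent, so their difference is too. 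Uniqueness of Jordan decomposition in $\gl(\gl(V))$ then forces $\ad(\rho(x))_s=\ad(\rho(x)_s)$ and $\ad(\rho(x))_n=\ad(\rho(x)_n)$.

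For the final two intertwining formulas, the hypothesis that $\rho$ is a representation is precisely $\rho\circ\ad(x)=\ad(\rho(x))\circ\rho$, i.e.\ $\rho\colon\g\to\gl(V)$ is an intertwiner between $\ad(x)\in\gl(\g)$ and $\ad(\rho(x))\in\gl(\gl(V))$. The intertwining bullet of Appendix~\ref{a:jd} then yields $\rho\circ\ad(x)_s=\ad(\rho(x))_s\circ\rho$ and $\rho\circ\ad(x)_n=\ad(\rho(x))_n\circ\rho$, and substituting the identifications from the previous step gives the stated formulas.

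The one subtle point, where I expect the argument really to earn its keep, is in this last step: one must resist the temptation to write $\ad(x)_s$ as a polynomial in $\ad(x)$ and simply push it along $\rho$, because the polynomial extracting the semisimple part of $\ad(x)$ on $\g$ need not coincide with the one extracting the semisimple part of $\ad(\rho(x))$ on $\gl(V)$. Invoking the intertwining bullet sidesteps this by building in uniqueness of Jordan decomposition on both sides simultaneously, which is precisely why it is the right black-box to use.
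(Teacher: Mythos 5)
Your proof is correct and follows exactly the route the paper intends: the proposition is stated without an explicit proof, as a consequence of the bulleted standard properties of Jordan decomposition in Appendix~\ref{a:jd}, and your three steps (derivation property for $\ad(x)$, uniqueness in $\gl(\gl(V))$ via Lemma~\ref{l:ad-ss} plus nilpotency of $\ad$ of a nilpotent, and the intertwining bullet applied to $\rho\circ\ad(x)=\ad(\rho(x))\circ\rho$) fill in precisely the intended details. Your closing caveat about not naively transporting the semisimple-part polynomial along $\rho$ is well taken and is indeed why the intertwining property is the right black box.
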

In particular, for $\rho\colon\g\to\gl(V)$ faithful and $x\in\g$ with
$\rho(x)_s=\rho(x_s)$ and $\rho(x)_n=\rho(x_n)$ for some $x_s,x_n\in\g$, it
follows that $\ad(x_s) +\ad(x_n)$ is the Jordan decomposition of $\ad(x)$.
Conversely, the following is the key to the proof of the abstract Jordan
decomposition (or preservation of Jordan decomposition) for semisimple Lie
algebras~\cite{Bou:gal,Hum:lar,Mil:lag}.

\begin{prop}\label{p:J2} Let $\rho\colon\g\to\gl(V)$ be a semisimple
representation over a field $\F$ of characteristic zero, and suppose
$x=x_s+x_n$ where $\ad(x_s)$ is semisimple, $\ad(x_n)$ is nilpotent, $x_n\in
[\g,\g]$ and $[x_s,x_n]=0$. Then $\rho(x)_s=\rho(x_s)$ and
$\rho(x)_n=\rho(x_n)$.
\end{prop}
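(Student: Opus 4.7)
The plan is to show that the hypothetical ``defect'' $\delta := \rho(x)_s - \rho(x_s) = \rho(x_n) - \rho(x)_n$ vanishes, which, together with $\rho(x_s)+\rho(x_n)=\rho(x)$ and $[\rho(x_s),\rho(x_n)]=\rho([x_s,x_n])=0$, will give $\rho(x)_s=\rho(x_s)$ and $\rho(x)_n=\rho(x_n)$ by uniqueness of the Jordan decomposition in $\gl(V)$. First, uniqueness of the Jordan decomposition in $\gl(\g)$ applied to $\ad(x)=\ad(x_s)+\ad(x_n)$ (a commuting sum of a semisimple and a nilpotent element) gives $\ad(x)_s=\ad(x_s)$ and $\ad(x)_n=\ad(x_n)$. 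Proposition~\ref{p:Jordans} then yields $\ad(\rho(x)_s)\circ\rho=\rho\circ\ad(x_s)=\ad(\rho(x_s))\circ\rho$, so $[\delta,\rho(y)]=0$ for every $y\in\g$, i.e., $\delta$ commutes with $\rho(\g)$.

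Next, I would reduce to the case where $\F$ is algebraically closed and $V$ is irreducible. Since $\F$ has characteristic zero, extending scalars to an algebraic closure $\F^c$ preserves the semisimplicity of $\rho$, of $\ad(x_s)$, and of $V$ as a $\rho(\g)$-module, and is compatible with Jordan decomposition, while the hypotheses $x_n\in[\g,\g]$, $\ad(x_n)$ nilpotent, and $[x_s,x_n]=0$ pass to $\F^c$ trivially. Once $\F$ is algebraically closed, $V$ is a direct sum of irreducible $\rho(\g)$-submodules, and the Jordan decomposition of $\rho(x)$ respects this decomposition (since both its summands preserve every $\rho(\g)$-invariant subspace), so it suffices to show $\delta\restr{W}=0$ for each irreducible constituent $W$.

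On such a $W$, Schur's lemma gives $\End_{\rho(\g)}(W)=\F$, so $\delta\restr{W}=c\,\iden_W$ for some $c\in\F$, with $\trace(\delta\restr{W})=c\dimn W$. On the other hand, $\delta\restr{W}=\rho(x_n)\restr{W}-\rho(x)_n\restr{W}$, and both summands are traceless: since $x_n\in[\g,\g]$, $\rho(x_n)\restr{W}$ is a sum of commutators in $\gl(W)$, and $\rho(x)_n\restr{W}$ is nilpotent. Hence $c\dimn W=0$, and since $\dimn W\neq 0$ in characteristic zero, $c=0$, so $\delta\restr{W}=0$ as needed.

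The delicate point is the last step, where the three substantive hypotheses of the proposition conspire at once: the commutator condition $x_n\in[\g,\g]$ kills the trace of $\rho(x_n)\restr{W}$, semisimplicity of $\rho$ provides the irreducible decomposition and the Schur-scalar form of $\delta$, and characteristic zero converts $c\dimn W=0$ into $c=0$. Everything else is formal bookkeeping via Proposition~\ref{p:Jordans} and the uniqueness of the Jordan decomposition.
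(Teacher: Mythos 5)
Your proof is correct. It relies on the same core machinery as the paper's proof---Proposition~\ref{p:Jordans} to show the relevant operator centralizes $\rho(\g)$, extension to the algebraic closure, Schur's lemma on each irreducible constituent, and the trace computation that uses $x_n\in[\g,\g]$, nilpotency of the other summand, and characteristic zero---but it packages the argument differently. The paper establishes two separate vanishing statements: $\rho(x_s)_n=0$ (so that $\rho(x_s)$ is semisimple, via an argument about an ideal acting nilpotently on $V$) and $\rho(x_n)_s=0$ (so that $\rho(x_n)$ is nilpotent, via the Schur-plus-trace argument), and then appeals to uniqueness of the Jordan decomposition using $[\rho(x_s),\rho(x_n)]=0$. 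You instead fold both discrepancies into the single defect $\delta=\rho(x)_s-\rho(x_s)=\rho(x_n)-\rho(x)_n$, which commutes with $\rho(\g)$ (because $\ad(x)_s=\ad(x_s)$ by uniqueness in $\gl(\g)$ applied to the commuting sum $\ad(x_s)+\ad(x_n)$), preserves every invariant subspace (all four terms are either in $\rho(\g)$ or are polynomials in $\rho(x)$), and is traceless on each irreducible constituent; a single application of Schur's lemma then kills it. This buys a small simplification: you never need to argue separately that $\rho(x_s)$ is semisimple, and the paper's slightly delicate ``inverse image of its span is an ideal'' step is avoided entirely.
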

\begin{proof} Since $\ad(x_s)$ is semisimple, $\rho(x_s)_n\in
\c_{\gl(V)}(\rho(\g))$ by Proposition~\ref{p:Jordans}, hence the inverse image
of its span is an ideal in $\g$ acting nilpotently on $V$, so $\rho(x_s)_n=0$
since $\nil_\rho(\g)=\ker\rho$; thus $\rho(x_s)$ is semisimple. Similarly,
since $\ad(x_n)$ is nilpotent, $\rho(x_n)_s\in\c_{\gl(V)}(\rho(\g))$, hence
$\rho(x_n)_s^c\in\c_{\gl(V^c)}(\rho(\g)^c)$ for any field extension
$\F\sub\F^c$.  Since $\rho(x_n)_s$ is a polynomial in $\rho(x_n)$,
$\rho(x_n)_s^c$ preserves any $\g$-invariant subspace of $V^c$; but also
$\rho(x_n)_s=\rho(x_n)-\rho(x_n)_n$, where the first term is in
$[\rho(\g),\rho(\g)]$ and the second term is nilpotent, so $\rho(x_n)_s^c$ has
vanishing trace on any such subspace. If $\F^c$ is the algebraic closure of
$\F$, $\rho(x_n)_s^c$ then vanishes on any minimal invariant subspace of
$V^c$. Since $\rho$ is semisimple, $\rho(x_n)_s=0$, \ie, $\rho(x_n)$ is
nilpotent. Now $[\rho(x_s),\rho(x_n)]=\rho([x_s,x_n])=0$, so the result
follows.
\end{proof}

\subsection{Invariant forms}\label{a:if}

An \emph{invariant form} on a Lie algebra $\g$ is a symmetric bilinear form
$(x,y)\mapsto \ip{x,y}$ such that $\ip{[z,x],y}+\ip{x,[z,y]}=0$ for all
$x,y,z\in \g$. For a subspace $\s\sub \g$, we define $\s^\perp=\{x\in \g\st
\forall y\in \s, \; \ip{x,y}=0\}$. For $\s,\t\sub \g$, $\s\sub\t^\perp$ iff
$\t\sub \s^\perp$, and we say $\s,\t$ are \emph{orthogonal}, written
$\s\perp\t$. If $\s\perp\s$, we say $\s$ is \emph{isotropic}; for any subspace
$\s$, $\s\cap\s^\perp$ is isotropic. Invariance means that $(x,y,z)\mapsto
\ip{[x,y],z}$ is a $3$-form on $\g$, and hence for any subspaces $\r,\s,\t\sub
\g$, $\r\perp [\s,\t]$ iff $\s\perp [\t,\r]$ iff $\t\perp [\r,\s]$. Thus
$\r\sub[\s,\t]^\perp$ iff $[\r,\s]\sub\t^\perp$ iff
$\r\sub\c_{\g}(\s,\t^\perp)$, and so
$\c_{\g}(\s,\t^\perp)=[\s,\t]^\perp=[\t,\s]^\perp=\c_{\g}(\t,\s^\perp)$.

\begin{prop}\label{a:pif} Let $\ip{\cdot,\cdot}$ be an invariant form on $\g$
and let $\s$ be a subspace of $\g$.
\begin{numlist}
\item $\n_{\g}(\s)\sub \c_{\g}(\s,\s^{\perp\perp})=[\s,\s^\perp]^\perp
=\n_{\g}(\s^\perp)$, $\c_{\g}(\s)\sub\c_{\g}(\s,\g^\perp)=[\s,\g]^\perp$ and
$\z(\g)\sub \c_{\g}(\g,\g^\perp)=[\g,\g]^\perp$ with equality \textup(in each
containment\textup) if $\ip{\cdot,\cdot}$ is nondegenerate.
\item If $\s\leq\g$, then $[\s,\s^\perp]\sub\s^\perp$, hence also
  $\s\cap\s^\perp\idealin\s$\textup; in particular,
  $\g^\perp$ is an isotropic ideal in $\g$ \textup(which is zero iff
  $\ip{\cdot,\cdot}$ is nondegenerate\textup).
\item If $\s\idealin\g$, then $\s^\perp\idealin\g$, hence also
  $\s\cap\s^\perp\idealin\g$, $[\s,\s^\perp]\idealin\g$ and
  $\s+\s^\perp\idealin\g$.
\item $\g\perp [\s,\s]$ if and only if $[\g,\s]\perp \s$. In particular, if
  $\ip{\cdot,\cdot}$ is nondegenerate, any isotropic ideal in $\g$ is abelian.
\end{numlist}
\end{prop}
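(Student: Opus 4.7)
The whole proposition rests on the alternating $3$-form identity $(x,y,z)\mapsto\ip{[x,y],z}$ that was recorded just before the statement, from which the ``moving identity'' $\c_{\g}(\s,\t^\perp)=[\s,\t]^\perp=[\t,\s]^\perp=\c_{\g}(\t,\s^\perp)$ follows for any subspaces $\s,\t\sub\g$. My plan is to deduce each of (1)--(4) from this identity together with the tautological inclusions $\s\sub\s^{\perp\perp}$, $0\sub\g^\perp$ and the symmetry of $\perp$.

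For (1) I would plug into the moving identity three times. First, applying it with $\t=\s^\perp$ yields $\c_{\g}(\s,\s^{\perp\perp})=[\s,\s^\perp]^\perp=\c_{\g}(\s^\perp,\s^\perp)=\n_{\g}(\s^\perp)$, and the inclusion $\n_{\g}(\s)=\c_{\g}(\s,\s)\sub\c_{\g}(\s,\s^{\perp\perp})$ comes from $\s\sub\s^{\perp\perp}$. Specializing next to $\t=\g$ gives $\c_{\g}(\s,\g^\perp)=[\s,\g]^\perp$, and $\c_{\g}(\s)\sub\c_{\g}(\s,\g^\perp)$ since $0\sub\g^\perp$; taking further $\s=\g$ yields the centre statement. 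When $\ip{\cdot,\cdot}$ is nondegenerate, $\s^{\perp\perp}=\s$ and $\g^\perp=0$, and the three inclusions collapse to equalities.

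For (2), invariance rewritten as $\ip{[x,y],z}=-\ip{y,[x,z]}$ with $x,z\in\s$ and $y\in\s^\perp$ shows $[\s,\s^\perp]\sub\s^\perp$ when $\s\leq\g$, and intersecting with $\s$ makes $\s\cap\s^\perp$ an ideal of $\s$; specializing to $\s=\g$ produces the isotropic ideal $\g^\perp$, which is trivial precisely when the form is nondegenerate. The same identity, but now allowing $x\in\g$ and exploiting $[x,z]\in\s$ because $\s\idealin\g$, yields (3): $\s^\perp\idealin\g$, from which $\s\cap\s^\perp$ and $\s+\s^\perp$ are automatically ideals and $[\s,\s^\perp]\idealin\g$ follows by Jacobi (or again by noting it lies inside both ideals $\s$ and $\s^\perp$ and invoking the ideal-bracket closure argument).

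Finally, (4) is essentially the cyclic symmetry of the $3$-form: $\ip{x,[y,z]}=\ip{[x,y],z}$ up to sign, so $\g\perp[\s,\s]$ and $[\g,\s]\perp\s$ are literally the same condition. The corollary then drops out by a one-line chain: if $\s\idealin\g$ is isotropic then $[\g,\s]\sub\s\sub\s^\perp$, hence $[\g,\s]\perp\s$, hence $[\s,\s]\sub\g^\perp=0$ under nondegeneracy. The only potential snag in the whole argument is clerical --- keeping the ``$\s$-slot'' and ``$\t$-slot'' straight when applying the moving identity with $\t$ taken successively to be $\s^\perp$, $\g$, and $\s$ itself --- but there is no genuine obstacle, since every step is a direct specialization of the alternation property of $\ip{[\cdot,\cdot],\cdot}$.
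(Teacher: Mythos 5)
Your proof is correct and is essentially the argument the paper intends: the paper states this proposition without proof, having already recorded the alternation identity $\c_{\g}(\s,\t^\perp)=[\s,\t]^\perp=[\t,\s]^\perp=\c_{\g}(\t,\s^\perp)$ in the preceding paragraph, and your four deductions are exactly the specializations ($\t=\s^\perp$, $\t=\g$, $\s=\g$, and $\r=\g$, $\t=\s$) that it is meant to license. All steps check, including the use of $\s^{\perp\perp}=\s$ and $\g^\perp=0$ in the nondegenerate case and the ideal-closure arguments in (2) and (3).
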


\begin{prop}\label{a:rad-ss} Suppose $\g$ is a Lie algebra with a nondegenerate
invariant form and no nontrivial abelian ideals. Then $\g$ is semisimple.
\end{prop}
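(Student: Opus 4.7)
The plan is to argue by induction on $\dimn\g$, peeling off a simple ideal at each step by using the nondegenerate form to split $\g$ orthogonally. The base case $\g=0$ is vacuous (and note that the hypotheses force $\g$ to be nonabelian when nonzero, since $\g$ itself would otherwise be a nontrivial abelian ideal).

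For the inductive step, I would pick a minimal nonzero ideal $\s\idealin\g$. By Proposition~\ref{a:pif}(3), $\s\cap\s^\perp$ is an ideal of $\g$ contained in $\s$, so minimality forces $\s\cap\s^\perp$ to be either $0$ or $\s$. The case $\s\cap\s^\perp=\s$ says $\s$ is an isotropic ideal, whence Proposition~\ref{a:pif}(4) makes $\s$ abelian, contradicting the hypothesis. Thus $\s\cap\s^\perp=0$, and since $\ip{\cdot,\cdot}$ is nondegenerate, $\g=\s\dsum\s^\perp$ as a vector space, with both summands being ideals whose restrictions carry nondegenerate invariant forms.

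The key observation is then that $[\s,\s^\perp]\sub\s\cap\s^\perp=0$, so the two ideals commute. This has two consequences. First, any ideal of $\s$ is automatically an ideal of $\g$ (it is stable under $\s$ by definition and under $\s^\perp$ by commutativity), so by minimality of $\s$ in $\g$ there are no proper nonzero ideals in $\s$; combined with $\s$ being nonabelian (else $\s$ would be a nontrivial abelian ideal of $\g$), this makes $\s$ simple in the sense defined in the paper. Second, any nontrivial abelian ideal of $\s^\perp$ would likewise be a nontrivial abelian ideal of $\g$, so $\s^\perp$ inherits the hypotheses and (having strictly smaller dimension) is a direct sum of simple ideals by the induction hypothesis.

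Combining, $\g=\s\dsum\s^\perp$ is a direct sum of simple ideals, hence semisimple. There is no real obstacle here, since all the machinery is already furnished by Proposition~\ref{a:pif}; the only subtle point is carefully using part (4) to convert isotropy into abelianness, which is where nondegeneracy of the form enters crucially.
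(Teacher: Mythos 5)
Your proof is correct and follows essentially the same route as the paper: induction on dimension, take a minimal nontrivial ideal, use Proposition~\ref{a:pif}(4) to show its isotropic part is an abelian ideal and hence vanishes, split $\g$ orthogonally, and apply the induction hypothesis to the complement. You spell out a few details the paper leaves implicit (that $[\s,\s^\perp]=0$ makes ideals of the summands into ideals of $\g$, so that $\s$ is simple and $\s^\perp$ inherits the hypotheses), which is a useful clarification but not a different argument.
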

\begin{proof}[\pfin{Die:slg}] We induct on the dimension of $\g$. Let $\l$
be a minimal nontrivial ideal in $\g$; then $\l\cap\l^\perp$ is an isotropic
ideal in $\g$, hence abelian by Proposition~\ref{a:pif} (4). Since $\l$ is
nonabelian (hence simple) by assumption, $\l\cap\l^\perp=0$.  Hence $\g$ is
the orthogonal direct sum of $\l$ and $\l^\perp$, and the latter is semisimple
by induction.
\end{proof}

%
\newcommand{\noopsort}[1]{} \newcommand{\bauth}[1]{\mbox{#1}}
\newcommand{\bart}[1]{\textit{#1}} \newcommand{\bjourn}[4]{#1\ifx{}{#2}\else{
    \textbf{#2}}\fi{ (#4)}} \newcommand{\bbook}[1]{\textsl{#1}}
\newcommand{\bseries}[2]{#1\ifx{}{#2}\else{ \textbf{#2}}\fi}
\newcommand{\bpp}[1]{#1} \newcommand{\bdate}[1]{ (#1)} \def\band/{and}
\newif\ifbibtex
\ifbibtex
\nocite{}
\bibliographystyle{genbib}
\bibliography{papers}
\else

\fi
\end{document}